\newcommand{\e}{\varepsilon}
\newcommand{\f}{\frac}
\newcommand{\im}{\implies}
\newcommand{\ra}{\rightarrow}
\newcommand{\la}{\lambda}
\newcommand{\al}{\alpha}
\newcommand{\be}{\beta}
\newcommand{\p}{\partial}
\newcommand{\lra}{\longrightarrow}
\newcommand{\w}{\omega}
\newcommand{\bu}{\mathbf{u}}
\newcommand{\bv}{\mathbf v}
\newcommand{\C}{\mathbb{C}}
\newcommand{\R}{\mathbb{R}}
\newcommand{\Z}{\mathbb{Z}}
\newcommand{\N}{\mathbb{N}}
\newcommand{\cL}{\mathcal{L}}
\newcommand{\cK}{\mathcal{K}}
\newcommand{\cD}{\mathcal{D}}
\newcommand{\cT}{\mathcal{T}}
\newcommand{\cZ}{\mathcal{Z}}
\newcommand{\sgn}{\operatorname{sign}}
\newcommand{\ran}{\operatorname{ran}}
\newcommand{\gker}{\operatorname{gker}}
\newcommand{\imp}{\operatorname{Im}}
\newcommand{\spec}{\operatorname{Spec}}
\newcommand{\dom}{\operatorname{dom}}
\newcommand{\spn}{\operatorname{Span}}
\DeclareMathOperator{\Mas}{Mas}
\DeclareMathOperator{\Tr}{Tr}
\newcommand{\graph}{\operatorname{graph}}
\newcommand{\pde}[2]{\frac{\partial #1}{\partial #2 }} 
\newcommand{\pdes}[2]{\frac{\partial^2 #1}{\partial #2^2 }} 
\newcommand{\de}[2]{\frac{d #1}{d #2 }} 
\newcommand{\des}[2]{\frac{d^2 #1}{d #2^2 }} 
\newtheorem{lemma}{Lemma}[section]
\newtheorem{prop}[lemma]{Proposition}
\newtheorem{theorem}[lemma]{Theorem}
\newtheorem{cor}[lemma]{Corollary}
\newtheorem{hypo}[lemma]{Hypothesis}
\newtheorem{define}[lemma]{Definition}
\theoremstyle{definition}
\newtheorem{ex}[lemma]{Example}
\newtheorem{rem}[lemma]{Remark}
\newlist{enumhypo}{enumerate}{1}
\setlist[enumhypo]{label=(\roman*),	ref=\thehypo(\roman*),partopsep=0pt, topsep=0pt, parsep=0pt}
\crefname{rem}{Remark}{Remarks}
\crefname{conj}{Conjecture}{Conjectures}
\crefname{hypo}{Hypothesis}{Hypotheses}
\crefname{enumhypoi}{Hypothesis}{Hypotheses}
\crefname{theorem}{Theorem}{Theorems}
\crefname{cor}{Corollary}{Corollaries}
\crefname{prop}{Proposition}{Propositions}
\crefname{define}{Definition}{Definitions}
\definecolor{skyblue}{rgb}{0.85,0.85,1}
\begin{document}
	\title[Hamiltonian spectral flows and the Maslov index]{Hamiltonian spectral flows, the Maslov index, and the stability of standing waves in the nonlinear Schr\"odinger equation}
	\author{G. Cox, M. Curran, Y. Latushkin, R. Marangell}
	\date{\today}
	\begin{abstract}
		We use the Maslov index to study the spectrum of a class of linear Hamiltonian differential operators. We provide a lower bound on the number of positive real eigenvalues, which includes a contribution to the Maslov index from a non-regular crossing. A close study of the eigenvalue curves, which represent the evolution of the eigenvalues as the domain is shrunk or expanded, yields formulas for their concavity at the non-regular crossing in terms of the corresponding Jordan chains. This enables the computation of the Maslov index at such a crossing via a homotopy argument. We apply our theory to study the spectral (in)stability of standing waves in the nonlinear Schr\"odinger equation on a compact interval. We derive stability results in the spirit of the Jones--Grillakis instability theorem and the Vakhitov--Kolokolov criterion, both originally formulated on the real line. A fundamental difference upon passing from the real line to the compact interval is the loss of translational invariance, in which case the zero eigenvalue of the linearised operator is (typically) geometrically simple. Consequently, the stability results differ depending on the boundary conditions satisfied by the wave. We compare our lower bound to existing results involving constrained eigenvalue counts, finding a direct relationship between the correction factors found therein and the objects of our analysis, including the second-order Maslov crossing form.
		
	\end{abstract}

	\maketitle
	\parskip=0em
	{\tableofcontents}
	\numberwithin{equation}{section}
	\allowdisplaybreaks

	\section{Introduction}
	\parskip=1em
	
	We use the Maslov index to study the real spectrum of Hamiltonian differential operators of the form
	\begin{equation*}
		N = \begin{pmatrix}
			0 & -L_- \\ L_+ & 0
		\end{pmatrix},
	\end{equation*}
	where $L_\pm$ are scalar-valued Schr\"odinger operators with arbitrary $C^2$ potentials on a compact interval $[0,\ell]$. In particular, we provide a lower bound on the number of positive real eigenvalues of the operator $N$ (\cref{thm:N_bound}). 
	
	Our approach is to restrict $N$ to a subinterval $[0,s\ell]$, $s\in(0,1]$, and, rescaling back to $[0,\ell]$, study the $s$-dependent spectrum of the one-parameter family of operators in the spatial parameter $s$. We are thus led to a characterisation of the eigenvalues of the rescaled operators as a locus of points in the $\lambda s$-plane (with $\la$ the spectral parameter), which we refer to as {\em eigenvalue curves}. We interpret the eigenvalue curves as loci of intersections, or \textit{crossings}, of a path in the manifold of Lagrangian planes with a certain codimension one subvariety. This affords the use of the Maslov index, a signed count of such crossings. Formulas for the concavity of the eigenvalue curves are given (\cref{thm:sL,thm:sL:general,thm:touching}), and are used to compute a correction term appearing in the lower bound in \cref{thm:N_bound}.

	Operators of the form of $N$ arise in the linearisation about a standing wave solution $\widehat\psi(x,t) = e^{i\be t} \phi(x)$ of the nonlinear Schr\"odinger (NLS) equation
	\begin{equation}\label{nls_eqn}
		i\psi_t=\psi_{xx} + f\left (|\psi|^2\right )\psi,
	\end{equation}	
	where $\psi: [0,\ell]\times [0,\infty) \lra \C$, the nonlinearity $f:\R^+\lra \R$  is a $C^3$ function and $\be\in\R$ is the temporal frequency. The wave around which we linearise is said to be \textit{spectrally unstable} if there exists spectrum of $N$ in the open right half plane, and \emph{spectrally stable} otherwise.   By applying \cref{thm:N_bound}, we establish stability criteria for standing waves in the NLS equation on a compact interval subject to perturbations satisfying Dirichlet boundary conditions. Namely, we derive analogues of the \textit{Jones--Grillakis instability theorem} (\cref{cor:JonesGrillakis}) and the \textit{Vakhitov--Kolokolov (VK) criterion} (\cref{thm:VK}). While \cref{cor:JonesGrillakis} is also a consequence of the abstract result of \cite[Theorem 3.2]{KP12art}, \cref{thm:VK}, which makes use of the concavity formulas of \cref{thm:sL}, appears to be new for the case of the compact interval.   These two stability results actually remain valid for a spatially dependent nonlinearity $f(x,|\psi|^2)$; see \cref{rem:nonautonomous}.

	Along the way, we find \emph{Hadamard-type} formulas for the slope of the eigenvalue curves as the ratio of certain quadratic forms, called \textit{crossing forms}, whose signatures locally determine the Maslov index (\cref{prop:M,cor:Hadamard}). Variational formulas for the eigenvalues of boundary value problems with respect to perturbation of the domain are classical and go back to the work of Hadamard \cite{hadamard1908}, Rayleigh \cite{Rayleigh45} and Rellich \cite{Rellich69}; see also \cite{DanHenry,Grinfeld10} and \cite[\S VII.6.5]{Kato}. Recently such formulas have been given in terms of the (Maslov) crossing form for families of Schr\"odinger \cite{LSHad17, LS20metric} and abstract selfadjoint operators \cite{LS20first}. Our formulas agree with and build on those found therein. 
	
	We also encounter a \emph{non-regular} crossing when $\la=0$, corresponding to a degeneracy of the associated crossing form and points of zero slope for the eigenvalue curves. Geometrically, this corresponds to the Lagrangian path tangentially intersecting the relevant codimension one subvariety. Some care is then required in order to compute the Maslov index, and it is a key feature of the current work that we are able to do so (\cref{thm:compute_c}). In particular, it is sufficient to know the concavity of the eigenvalue curve through the non-regular crossing, as well as whether or not the operators $L_+$ and $L_-$ have a nontrivial kernel. To the best of our knowledge, no such computation has previously been made in the literature. Analysing the non-regular crossing in the context of the NLS equation leads to stability criteria that resemble the VK criterion in certain cases, furnishing an interesting connection between the concavity of the eigenvalue curve at the non-regular crossing, the Maslov index there, and the classical VK result; see \cref{sec:applications}.
	
	In the case when the spatial domain is the entire real line, if zero is a hyperbolic fixed point of the standing wave equation
	\begin{align}\label{standwave1}
		\phi_{xx} + f(\phi^{2})\phi + \beta \phi = 0
	\end{align} 
	and there exists an orbit that is homoclinic to it in the phase plane, a localised solution to \eqref{nls_eqn} exists and belongs to $L^{2}(\R)$ for all time.  In this case $L_+$ and $L_-$, which are unbounded operators on $L^2(\R)$, both have a nontrivial kernel. Indeed, the stationary state $\phi$ and its derivative $\phi_x$ satisfy $L_-\phi=0$ (the stationary equation \eqref{standwave1}) and $L_+\phi_x=0$ (the associated variational equation) respectively, and decay exponentially as $x\ra \pm \infty$. By the results of Jones \cite{J88} and Grillakis \cite{Grill88}, one then has that if $P-Q \neq 0 ,1$, where $P$ and $Q$ are the numbers of negative eigenvalues (or \textit{Morse indices}) of $L_+$ and $L_-$, then $N$ has at least one positive real eigenvalue, and hence the standing wave solution to \eqref{nls_eqn} is unstable. In the edge case when $P=1$ and $Q=0$, the results of Vakhitov and Kolokolov \cite{VK73} and Grillakis, Shatah and Strauss \cite{GSS87,GSS90} dictate that the wave is spectrally (and orbitally) stable if the $\be$-derivative of the mass of the wave
	\begin{equation}
		\label{intro:VK}
		\pde{}{\beta} \int_{-\infty}^{\infty} \phi^2 \,dx,
	\end{equation}
	is negative, and spectrally unstable if \eqref{intro:VK} is positive (see \cite[Theorem 4.4, p.215]{pelinovsky}).
	
	One of the key differences upon passing from the real line to the compact interval is that, generically, the operators $L_+$ and $L_-$ (equipped with Dirichlet boundary conditions) do not simultaneously have a nontrivial kernel. Depending on the boundary conditions satisfied by the wave profile $\phi$, typically zero will lie in the spectrum of either $L_+$ or $L_-$ (or neither). A physical reason for this is the loss of translational invariance, which manifests in the failure of the relevant boundary conditions of arbitrary translates of $\phi$. As a consequence, our stability results (\cref{cor:JonesGrillakis,thm:VK}) will differ depending on which of the operators $L_\pm$ has a nontrivial kernel. In the case that $L_-$ has a nontrivial kernel, we can recover the integral expression \eqref{intro:VK} appearing in the classical VK criterion. Such a recovery is not possible when $L_{+}$ has a nontrivial kernel; for details, see the discussion in \cref{sec:VKrecover}.
	
	There is a large body of work relating the Morse index of a selfadjoint operator and its number of conjugate points (which was later interpreted as the Maslov index of an associated Lagrangian path), going back to the middle of last century \cite{A67,A85,Bott56,Duistermaat76,Edwards64,Smale65}. Most of these theorems can be viewed as generalisations of the classical Sturmian theory, and indeed in \cite{Bott56,Edwards64,Smale65} they are framed as such, where the nodal count of an eigenfunction indicates where in the sequence of eigenvalues the corresponding eigenvalue sits. Following on from Jones' seminal work \cite{J88}, the idea of using the Maslov index for spatially Hamiltonian systems to extrapolate temporal spectral information has proven quite fruitful in the ensuing years (see, for example, \cite{JLM13, CJLS14, CJM15, HS16,HLS18, LS18} and the references therein for a partial list of results).

	In more recent times, Deng and Jones in \cite{DJ11} (see also \cite{CJLS14,CJM15}), used the Maslov index to analyse second-order elliptic eigenvalue problems on bounded domains. An important feature of this analysis, as well as that of \cite{BCLJMS18,HS16,HLS18,HS22,HJK18}, is monotonicity of the Maslov index in the spectral parameter. Monotonicity also holds in the spatial parameter under certain boundary conditions \cite{CJLS14,HLS17,JLM13}. This property is convenient since it enables an \emph{equality} of the Morse index with the Maslov index of the Lagrangian path corresponding to $\la = 0$. Importantly, as in \cite{J88}, we do \emph{not} have monotonicity in either the spatial or the spectral parameter. However, the signature of crossings in the $s$-direction when $\la=0$ can always be accounted for, and, consequently, a nonzero Maslov index can nonetheless be used to detect a real, unstable eigenvalue, just as in \cite{JMS10,JMS12,JMS14,MRS20}. This lack of monotonicity thus leads to the \emph{inequality} in \cref{thm:N_bound}.

	Another feature in the aforementioned references, as well as in \cite{BJ95,CH07, CH14, CDB09part1, CDB09, CDB11part2,Corn19,CJ18, CJ20, Howard21} is a dynamical systems approach to eigenvalue problems. In these works, the eigenvalue equations associated with the linearised operators are Hamiltonian, or can be made Hamiltonian under a suitable change of variables. The critical feature of such systems is that they induce a symplectically invariant flow and hence preserve the manifold of Lagrangian planes, which affords the application of the Maslov index. For recent works where the Hamiltonian requirement is relaxed, see \cite{Corn19,CJ18,CJ20}. In \cite{CJ18,CJ20}, a change of variables is used to recover the Hamiltonian structure, and in \cite{Corn19} the system, while not Hamiltonian, still preserves the space of Lagrangian planes. For an example of where the Hamiltonian requirement is dropped altogether, see \cite{BCCJM21}.

	Existing results on the stability of standing wave solutions of \eqref{nls_eqn} on a compact spatial interval have been given for periodic solutions of \eqref{standwave1}, with (quasi)periodic perturbations, and predominantly for cubic focusing ($f(\phi^2) =\phi^2$) or defocusing ($f(\phi^2) =-\phi^2$) NLS. Rowlands in \cite{Rowlands74} studied the spectral stability of spatially periodic elliptic solutions to the cubic NLS, subject to long wavelength disturbances. Pava \cite{Pava07} showed that the Jacobi dnoidal solutions to cubic focusing NLS were orbitally stable with respect to co-periodic perturbations. In \cite{GH07_small}, Gallay and H\v{a}r\v{a}gus showed the orbital stability of spatially periodic and quasiperiodic travelling waves with complex-valued profile for small amplitude solutions in both the focusing and defocusing case. They extended this result to waves of arbitrary amplitude in \cite{GH07_orbital}. For the real-valued (cnoidal) waves, their orbital stability result is restricted to perturbations that are anti-periodic on a half period. This latter condition was done away with in \cite{IL08}, wherein Ivey and Lafortune undertook a spectral stability analysis of the cnoidal travelling wave solutions of the focusing NLS, showing stability with respect to co-periodic perturbations. In \cite{BDN11,GP15_I} the authors extend the orbital stability results for both real- and complex-valued wave profiles to the class of subharmonic perturbations (i.e. perturbations with period an integer multiple of the period of the wave profile) in the defocusing case. In \cite{DS17,DU20} the authors  examine the spectral stability of the elliptic solutions with respect to subharmonic perturbations in the focusing case. Unlike the above works, we are interested in the spectral stability of real-valued solutions of \eqref{standwave1}, for an arbitrary $C^{3}$ nonlinearity $f$, that are subject to perturbations satisfying Dirichlet boundary conditions. Moreover, as previously stated, many of our results hold for a spatially dependent $f$.

	Our theory can be extended in several possible directions. In particular, our theory should hold for the case of quasi-periodic boundary conditions on the perturbations, which is natural to consider given that many of the solutions $\phi$ to \eqref{standwave1} that satisfy Dirichlet boundary conditions are periodic. The Maslov index has already been used to develop eigenvalue counts for selfadjoint matrix-valued Schr\"odinger operators with such boundary conditions in \cite{JLM13,JLS17}. Our theory should also hold when the Schr\"odinger operators $L_\pm$ are selfadjoint and matrix-valued, and indeed in \cref{sec:proof1,sec:symplectic_view} many of our results are stated for the operator $N$ with an $n$-dimensional kernel to accommodate this scenario. Finally, while the analysis is significantly more involved, it should be possible to extend to the case where the spatial domain is multidimensional, as in \cite{CJM15,CJLS14,CoxMarz19}.

	The paper is organised as follows. In \cref{sec:setup} we set up the eigenvalue problem and state the main results. In \cref{sec:symplectic_view} we provide background material on the Maslov index, interpret the (real) eigenvalue problem symplectically and prove \cref{thm:N_bound}. In \cref{sec:proof1} we analyse the eigenvalue curves. After computing formulas for their derivatives and relating these to the Maslov crossing forms (\cref{prop:M,cor:Hadamard}), we compute their concavities at the zero eigenvalue (\cref{thm:sL:general,thm:touching}), facilitating the computation of the Maslov index at the non-regular crossing (\cref{thm:compute_c}).  {We conclude the section by confirming that the signature of the \emph{second}-order Maslov crossing form provides the correct contribution to the Maslov index at this crossing, which is consistent with \cite{DJ11}}.   In \cref{sec:applications} we provide some applications of \cref{thm:N_bound,thm:sL}. In particular, we prove \cref{cor:JonesGrillakis,cor:exact_count_Q=0} and \cref{thm:VK}. We also compute expressions for the concavity (at $s=1$) of the eigenvalue curve passing through $(\la,s)=(0,1)$ for linearised NLS, in each of the cases when $L_+$ and $L_-$ has a nontrivial kernel (\cref{prop:VK_compact,prop:concave_up}). In the latter case, we recover a compact-interval analogue of the classical VK criterion.  {We conclude the paper with a comparison of the lower bound in \cref{thm:N_bound} with existing results which make use of constrained eigenvalue counts. We find that the ``correction" terms appearing in our lower bound and others in the literature are equivalent (\cref{prop:KreinMaslov}), applying our formulas to provide new versions of the Hamiltonian--Krein index theorem in terms of the Maslov index (\cref{prop:HKM_fmls}).}  
	
	\textbf{Notation}: We let $I_n$ and $0_n$ denote the $n\times n$ identity and zero matrices respectively. We denote the canonical $2n\times 2n$ symplectic matrix and the first Pauli matrix by
	\begin{equation}\label{def:JSsig}
		J= \begin{pmatrix}
			0_n & -I_n \\ I_n & 0_n
		\end{pmatrix},
		\qquad 
		S= \begin{pmatrix}
			0 & 1 \\ 1 & 0
		\end{pmatrix},
	\end{equation}
	respectively. We let $\langle\cdot,\cdot\rangle$ and $\|\cdot\|$ denote the $L^2$ inner product and norm, respectively. Subscripts $s$ or $\la$ will indicate dependence of a quantity on these parameters (not derivatives). The spectrum of a linear operator $T$ will be denoted by $\spec(T)$, and its kernel by $\ker(T)$.

	\section{Set-up and statement of main results}\label{sec:setup}
	
	The basic set-up is an eigenvalue problem of the form
	\begin{equation}\label{eq:N_EVP_full}
		N\begin{pmatrix} u \\ v \end{pmatrix}  = \la \begin{pmatrix} u \\ v \end{pmatrix} , \qquad \begin{pmatrix} u(0)  \\ v(0) \end{pmatrix} = \begin{pmatrix} u(\ell) \\ v(\ell) \end{pmatrix} = \begin{pmatrix} 0 \\ 0 \end{pmatrix} ,
	\end{equation} 
	where $N$ is given by 
	\begin{equation}
		\label{Hamiltonian_N}
		N \coloneqq \begin{pmatrix}
			0 & -L_- \\ L_+ & 0
		\end{pmatrix}
	\end{equation}
	and $L_\pm$ are the Schr\"odinger operators 
	\begin{equation}\label{LplusLminus}
		L_+=-\partial_{xx}-g(x), \qquad
		L_-=-\partial_{xx}-h(x),  
	\end{equation}
	with $g$ and $h$ arbitrary functions in $C^2([0,\ell], \R)$. To be precise, we consider $N$ as an unbounded operator in $L^2(0,\ell)\times L^2(0,\ell)$ with dense domain 
	\begin{align}
		\dom(N) &=  \left ( H^2(0,\ell) \cap H^1_0(0,\ell) \right ) \times \left ( H^2(0,\ell) \cap H^1_0(0,\ell) \right )\subset L^2(0,\ell) \times L^2(0,\ell). \label{domN}
	\end{align}
	Hereafter, we drop the product notation on the relevant spaces; it will be clear from the context whether the functions are scalar- or vector-valued. An {\em eigenvalue} of $N$ is thus a value of $\la\in\C$ for which there exists a nontrivial solution $\mathbf{u}\coloneqq(u,v)^{\top}$ to the boundary value problem \eqref{eq:N_EVP_full}. Eigenvalues for the unbounded operators $L_\pm$, with dense domains
	\begin{align}
		\dom(L_\pm) &= H^2(0,\ell) \cap H^1_0(0,\ell) \subset  L^2(0,\ell), \label{domLpm}
	\end{align}
	are similarly defined. Note that the unbounded operators $L_\pm = L_\pm^*$ with domain \eqref{domLpm} are selfadjoint, while $N$ is not. 
	
	\begin{rem} \label{rem:operators} 
		Notationally, we will not distinguish between the formal differential expressions $N$ and $L_\pm$ and the unbounded operators with domains \eqref{domN} and \eqref{domLpm} whose spectra we wish to study. It will be clear from the context in what sense we refer to these objects. 
	\end{rem} 
	
	While it is possible for $N$ to have complex eigenvalues, we will restrict our analysis of \eqref{eq:N_EVP_full} to the case when $\la$ is real and positive. The existence of such an eigenvalue implies instability. On the other hand, there are cases where the spectrum of $N$ lies entirely on the real and imaginary axes, in which case the absence of a real positive eigenvalue implies stability; see \cref{thm:VK} for an example.
	
	Our first result is a lower bound for the number of positive real eigenvalues of $N$. It follows from an application of the Maslov index. The idea is to study the spectral problem in \eqref{eq:N_EVP_full} via a rescaling of the domain. We restrict \eqref{eq:N_EVP_full} to a family of subdomains $[0,s\ell]$ using a parameter $s\in(0,1]$,
	\begin{equation}\label{eq:N_EVP_restricted}
		N\bu = \lambda \bu, \quad \bu(0) = \bu(s\ell) = 0,
	\end{equation}
	and define a \textit{conjugate point} to be a value of $s$ for which there exists a nontrivial solution to \eqref{eq:N_EVP_restricted} with $\la=0$. We then deduce the existence of unstable eigenvalues of \eqref{eq:N_EVP_full} by counting conjugate points (via the Maslov index) as $s$ varies from 0 to 1. Defining the quantities
	\begin{align*}
		P &\coloneqq \# \{\text{negative eigenvalues of } L_+\}, \\
		Q &\coloneqq \# \{\text{negative eigenvalues of } L_-\}, \\
		n_+(N) &\coloneqq   \#\{\text{positive real eigenvalues of } N\},
	\end{align*}
	we have: 
	\begin{theorem}\label{thm:N_bound}
		Let $N$ be an operator as in \eqref{Hamiltonian_N}--\eqref{LplusLminus}. The number of positive real eigenvalues of $N$ satisfies
		\begin{equation}\label{eq:bound_positive_evals}
			n_+(N) \geq |P-Q-\mathfrak{c}|,
		\end{equation}
		where $\mathfrak{c}$ (given in \cref{defn:c}) is the total contribution to the Maslov index in the $s$ and $\la$ directions from the conjugate point at $s=1$. (If there is no such conjugate point, $\mathfrak{c}=0$.)
	\end{theorem}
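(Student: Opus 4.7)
The plan is to encode eigenvalues of \eqref{eq:N_EVP_restricted} as intersections of a Lagrangian path with a fixed "Dirichlet train" in the Lagrangian Grassmannian $\Lambda(2)$, then invoke homotopy invariance of the Maslov index on a rectangle in the $(s,\lambda)$-plane.

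First I would recast $N\bu=\lambda\bu$ as a first-order Hamiltonian system on $\R^4$ by setting $\bp=\bu_x$, so that the fundamental solution starting from $\{\bu=0\}$ at $x=0$ produces, upon evaluation at $x=s\ell$, a Lagrangian plane $\ell(s,\lambda)$. An eigenvalue of the rescaled problem on $[0,s\ell]$ corresponds exactly to a nontrivial intersection of $\ell(s,\lambda)$ with the Dirichlet subspace $\cD=\{(\bu,\bp):\bu=0\}$. Thus the entire spectral problem is translated into a question about intersections of a two-parameter Lagrangian family with the train of $\cD$.

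Next I would consider the rectangle $R=[s_0,1]\times[0,\lambda_\infty]\subset(0,1]\times[0,\infty)$ for $s_0>0$ sufficiently small and $\lambda_\infty>0$ sufficiently large, and apply homotopy invariance: the Maslov index of the Lagrangian loop traced out by $\partial R$ vanishes, so (with appropriate orientation conventions)
\[
\Mas(\ell|_{\text{bot}})+\Mas(\ell|_{\text{right}})-\Mas(\ell|_{\text{top}})-\Mas(\ell|_{\text{left}})=0.
\]
I would then dispose of three sides as follows: (a) the left edge contributes $0$, since for $s_0$ small the interval $[0,s_0\ell]$ is too short to admit any eigenvalue of $N$; (b) the top edge contributes $0$, since a standard large-parameter estimate on $N-\lambda$ rules out real eigenvalues with very large modulus, so for $\lambda_\infty$ large the path $\ell(\,\cdot\,,\lambda_\infty)$ misses the train; (c) the bottom edge, where $\lambda=0$, encodes conjugate points of the decoupled system $L_+u=0$, $L_-v=0$, and the Sturm--Morse--Maslov correspondence together with the block structure of $N$ (which inserts opposite signs for the $u$- and $v$-components in the crossing form at $\lambda=0$) identifies its Maslov index with $P-Q$.

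The remaining contribution $\Mas(\ell|_{\text{right}})$, along the edge $s=1$, is a signed count of positive real eigenvalues of $N$. If the crossing form in the $\lambda$-direction (\cref{prop:M}) were definite at every crossing of the right edge, this signed count would be bounded in absolute value by $n_+(N)$, and the conclusion would follow. The main obstacle is the corner $(s,\lambda)=(1,0)$, where the crossing form in the $\lambda$-direction may degenerate, because $L_\pm$ can have nontrivial kernels; this is precisely the non-regular crossing whose analysis occupies \cref{sec:proof1}. To handle it, I would shrink the rectangle $R$ so that the corner $(1,0)$ is excluded and the right edge and bottom edge terminate at interior points near $(1,0)$; the limit of the excluded contribution as the deformation collapses back to the corner is, by definition, the quantity $\mathfrak{c}$ of \cref{defn:c}. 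Combining these bounds gives $|P-Q-\mathfrak{c}|\le n_+(N)$. The main technical hurdle is thus the treatment of the corner $(1,0)$, which relies on the concavity formulas and Jordan-chain analysis developed later in the paper; everything else reduces to standard Maslov-index bookkeeping and a monotonicity-in-$\lambda$ argument from \cref{prop:M}.
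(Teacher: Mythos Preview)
Your overall strategy matches the paper's proof almost exactly: both encode eigenvalues as Lagrangian intersections, run a homotopy argument on a rectangle in the $(\lambda,s)$-plane, show the small-$s$ and large-$\lambda$ sides contribute nothing, identify the $\lambda=0$ side with $P-Q$ via the decoupled Sturm--Liouville count, and isolate the corner contribution as $\mathfrak{c}$. One cosmetic difference is that you work with the ``shooting'' Lagrangian in $\R^4$ (the image of the Dirichlet plane under the fundamental solution evaluated at $x=s\ell$), whereas the paper uses a two-sided trace map into $\R^8$; these are equivalent and your choice is arguably more economical.

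There is, however, a point of confusion in your justification of the final inequality. You write that the bound $n_+(N)\geq|\Mas(\ell|_{\text{right}})|$ would follow \emph{if the $\lambda$-crossing form were definite}, and later invoke a ``monotonicity-in-$\lambda$ argument.'' This is misleading: the $\lambda$-crossing form \eqref{maslov_horizontal}, namely $\mathfrak m_{\lambda_0}(q)=-2s_0\langle u_{s_0},v_{s_0}\rangle$, is \emph{not} sign-definite in general (see \cref{rem:not_monotone}), and the corner $(1,0)$ is not the only place where this fails. Fortunately definiteness is not needed. The inequality $n_+(N)\geq|\Mas(\Lambda,\cD;\Gamma_3^\e)|$ holds for the trivial reason that a signed count of crossings is bounded in absolute value by the unsigned count; this is exactly how the paper argues in \eqref{eq:lowerbound}. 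The genuine role of the corner is different: the crossing at $\lambda=0$ is \emph{non-regular} (the form vanishes identically there by \cref{cor:degeneracy_of_form_lambda}), so one cannot read off its contribution from the first-order form and must instead define $\mathfrak c$ as in \cref{defn:c} and split $\Gamma_2\cup\Gamma_3$ into $\Gamma_2^\e$, the corner piece, and $\Gamma_3^\e$. Once you drop the monotonicity language and state the bound this way, your argument is complete and coincides with the paper's.
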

	\begin{rem}
		One of the main results of this paper is that we are able to give explicit formulas for this so-called ``corner term" $\mathfrak{c}$ which has the property that $\mathfrak{c}\in\{-1,0,1\}$. The name derives from the location of the associated crossing in terms of the so-called \emph{Maslov box}. For precise statements see \cref{sec:symplectic_view,sec:proof1}, in particular \cref{thm:compute_c}.
	\end{rem}

	\begin{rem}
		In \eqref{eq:N_EVP_restricted} the symbol $N$ denotes a differential expression. For the associated unbounded operator we define
		\begin{equation}\label{}
			N|_{[0,s\ell]}\mathbf{u}\coloneqq N\mathbf{u}, \qquad \mathbf{u}\in\dom(N|_{[0,s\ell]}) =  H^2(0,s\ell) \cap H^1_0(0,s\ell) \subset L^2(0,s\ell),
		\end{equation}
		so that $\lambda \in \spec (N|_{[0,s\ell]})$ if and only if  \eqref{eq:N_EVP_restricted} has a non-trivial solution.
	\end{rem}
	
	 {
		
		\Cref{thm:N_bound} (the proof of which is given in \cref{subsec:proof_thm22}) is in the spirit of a number of lower bounds in the literature.  In contrast to \cite[Assumption 2.1(b)]{HK08}, we do not assume that the operators $L_\pm$ are invertible. If both $L_+$ and $L_-$ are invertible, it will follow that there is no conjugate point at $s=1$, and therefore $\mathfrak{c}=0$. In this case we recover the inequality in \cite[Theorem 2.25]{HK08}. The lower bound for $n_+(N)$ in the case when one or both of $L_+$ and $L_-$ has a nontrivial kernel has been studied in \cite[Thm 3.2]{KP12art}, \cite[Thm 5.6]{KM14}, \cite[Thm 2.3]{LZ22} and \cite[Thm 1.2]{Grill88}, to name a few; see also \cite[\S 7.1.3]{KapProm}. In these works, the authors typically project off the kernels of $L_+$ and $L_-$, and give the lower bound in terms of the associated constrained eigenvalue counts for $L_+$ and $L_-$. By contrast, we require no such projections. The constrained counts for $L_+$ and $L_-$ (given in the current work in \eqref{constrained}) involve the number of negative eigenvalues of certain matrices denoted $D_\pm$.  In \cref{subsec:comparison}, we will show that our ``correction" factor -- given by the corner term $\mathfrak{c}$ -- is equivalent to the ``correction" factor in \cite[Theorem 7.1.16]{KapProm}, given by the difference $n_-(D_+) - n_-(D_-)$ of negative indices of $D_+$ and $D_-$ (see \cref{prop:KreinMaslov}). Thus, \cref{thm:N_bound} together with \cref{prop:KreinMaslov} recovers \cite[Theorem 7.1.16]{KapProm}. The Maslov index interpretation afforded by $\mathfrak{c}$ is convenient because it provides a way of \emph{computing} the difference $n_-(D_+) - n_-(D_-)$. Namely, \eqref{eq:computingD+-} shows that the signs of $D_\pm$ (which in our set-up are scalars) are given by the signs of the concavities of the eigenvalue curves at $(\la,s) = (0,1)$.
	}

	Our main application will be to the linearisation of \eqref{nls_eqn} about a {\em standing wave} solution. This is a solution to \eqref{nls_eqn} of the form $\widehat\psi(x,t)=e^{i\beta t}\phi(x)$ for some $\beta\in\R$, where the real-valued wave profile or \textit{stationary state} $\phi:[0,\ell] \ra \R$ solves the time-independent equation
	\begin{equation}\label{standwave}
		\phi_{xx}+ f(\phi^2)\phi +\be \phi=0. 
	\end{equation}
	The results of this paper hold under fairly general boundary conditions on $\phi$. Two examples that we will often focus on are Dirichlet conditions
	\begin{equation}\label{eq:dirichletBC}
		\phi(0)=\phi(\ell)=0,
	\end{equation}
	or Neumann conditions 
	\begin{equation}\label{eq:NeumannBC}
		\phi'(0)=\phi'(\ell)=0.
	\end{equation}
	In these cases, one possible choice for the interval length $\ell$ is to fix a $T$-periodic solution to \eqref{standwave}, and to set $\ell = kT/2$ for some $k\in\N$. Some example phase portraits for \eqref{standwave} featuring periodic orbits are given in \cref{fig:phase_plane}. As an aside, note that the homoclinic orbits in \cref{fig:phaseA} correspond to strictly positive or negative localised solutions on $\R$.
	
	A natural question to ask is whether the standing wave $\widehat\psi$ is stable in time with respect to small perturbations in $\phi$. Substituting the perturbative solution
	\[
	\psi(x,t) =e^{i\be t} \left [\phi(x) + \e e^{\la t}( u(x)+i v(x))\right ]
	\]
	into \eqref{nls_eqn} and collecting $O(\e)$ terms, we arrive at the differential equations in \eqref{eq:N_EVP_full}, where
	\begin{gather}
		\label{NLS_potentials}
		\begin{aligned}
			g(x)&= 2f'(\phi^2(x))\phi^2(x)+f(\phi^2(x))+\be, \\
			h(x)&= f(\phi^2(x))+\be.
		\end{aligned}
	\end{gather}
	Then, subject to the class of perturbations $\mathbf{u}=(u,v)^\top$ that vanish at both endpoints, the standing wave $\widehat\psi$ is spectrally stable if the spectrum of the linearised operator $N$ is contained in the imaginary axis, since the eigenvalues of $N$ are symmetric with respect to the real and imaginary axes.

	When $\la=0$ the differential equations in \eqref{eq:N_EVP_full} decouple into two independent equations: $N\mathbf{u} = 0$ if and only if $L_+u=0$ and $L_-v=0$. Thus $\ker(N) = \ker(L_+)\oplus \ker(L_-)$, and $0\in\spec(N)$ if and only if $0\in\spec(L_+) \cup \spec(L_-)$. Furthermore, because the eigenvalues of the Sturm-Liouville operators $L_\pm$ are simple,
	\begin{gather}\label{eq:decouple_facts}
		\begin{aligned}
			\dim\ker(N) = 1 &\iff 0\in\spec(L_-)\triangle\spec(L_+), \\
			\dim\ker(N)=2  &\iff 0\in\spec(L_-)\cap\spec(L_+),
		\end{aligned}
	\end{gather}
	where $A \triangle B \coloneqq A\cup B  \setminus A \cap B$ denotes the symmetric difference. In our application to the stability of standing waves of \eqref{nls_eqn}, note that \eqref{standwave} is equivalent to $L_-\phi=0$, while autonomy of this equation yields $L_+\phi' = 0$. The boundary conditions satisfied by $\phi$ therefore influence whether $0\in\spec(L_\pm)$. For instance, if $\phi$ satisfies the Dirichlet conditions \eqref{eq:dirichletBC}, then $0\in\spec(L_-)$ with eigenfunction $\phi$, whereas if $\phi$ satisfies the Neumann conditions \eqref{eq:NeumannBC}, then $0\in\spec(L_+)$ with eigenfunction $\phi'$, provided $\phi$ is nonconstant. It is also possible that $0\notin \spec(L_+)\cup \spec(L_-)$ if, for example, more general Robin boundary conditions are imposed on $\phi$. 
	
	In any of these cases, that $L_+$ and $L_-$ have nontrivial kernel simultaneously is nongeneric, and so we make this an assumption when studying the stability of NLS standing waves.  {We stress that the general set-up of the paper is given by \eqref{eq:N_EVP_full}--\eqref{LplusLminus}, and the following hypothesis is \emph{not} assumed throughout; we will explicitly state whenever we make use of it.}

	\begin{hypo} \mbox{} \vspace{-0.6em}
		\label{hypo:NLS} 
		$N$ is of the form \eqref{Hamiltonian_N}--\eqref{LplusLminus}, where
		\begin{enumerate}[(i)]
			\item the potentials $g$ and $h$ come from the linearisation of the NLS equation \eqref{nls_eqn} about a standing wave $\widehat\psi$ (and hence are given by \eqref{NLS_potentials}), and
			\item $0\notin\spec(L_-)\cap \spec(L_+)$.
		\end{enumerate}
	\end{hypo}

	\begin{figure}[]
		\centering
		\hspace*{\fill}
		\subcaptionbox{ \label{fig:phaseA} }
		{\includegraphics[width=0.3\textwidth]{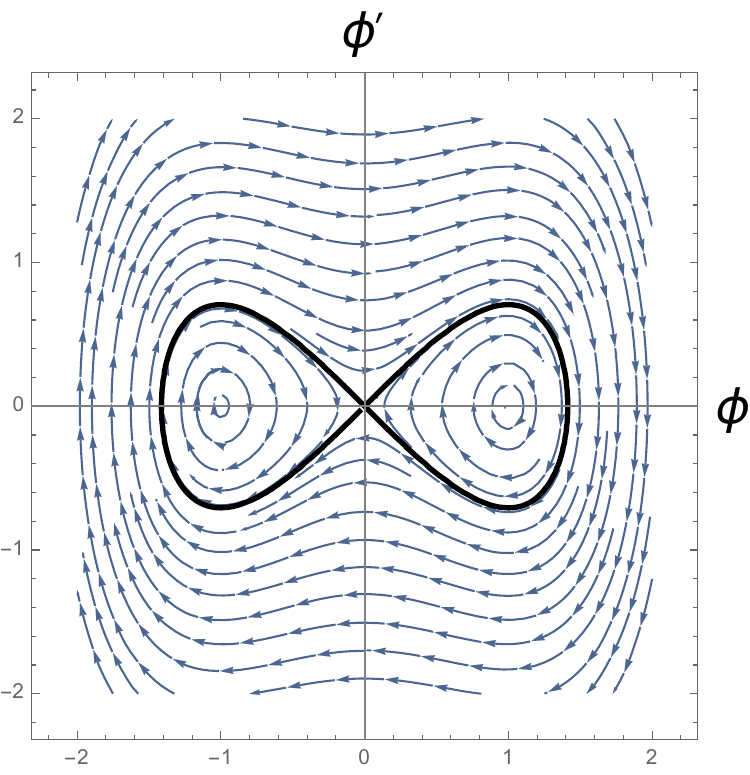}}\hfill 
		\subcaptionbox{ \label{fig:phaseB} }
		{\includegraphics[width=0.3\textwidth]{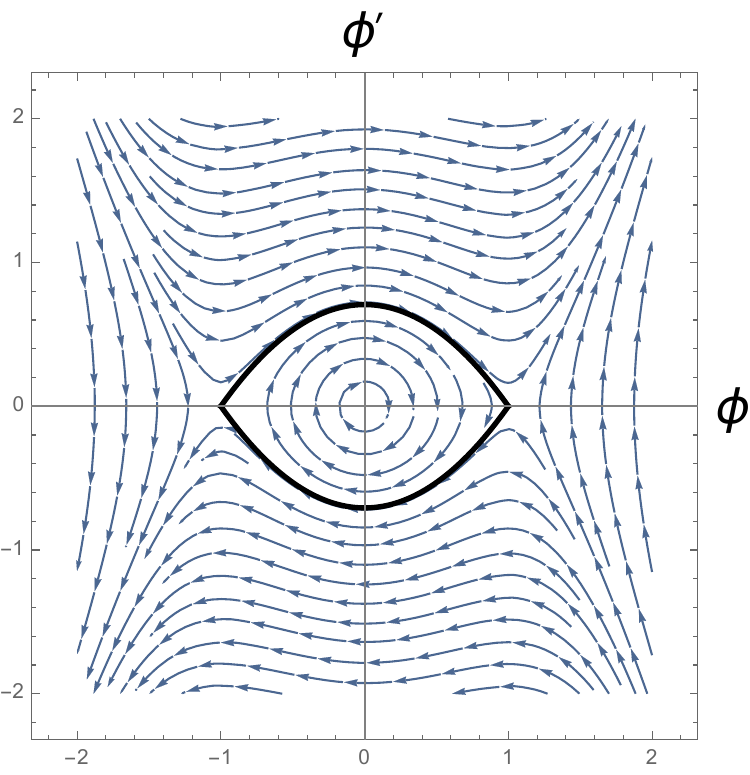}} \hfill
		\subcaptionbox{ \label{fig:phaseC} }
		{\includegraphics[width=0.3\textwidth]{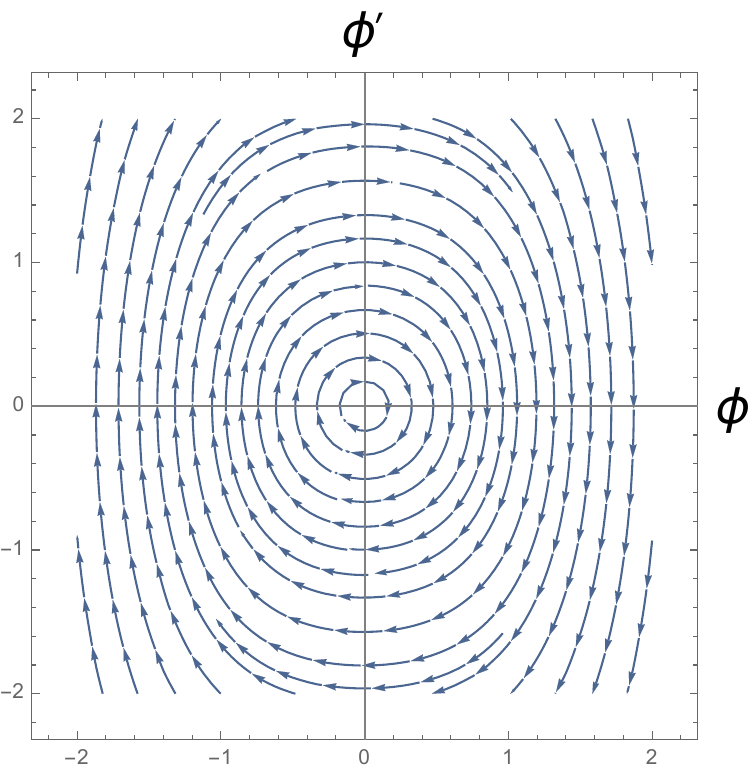}} 
		\hspace*{\fill}
		\caption{Examples of phase portraits for equation \eqref{standwave}. In (a) we have cubic focusing nonlinearity $f(\phi^2) = \phi^2$ and $\be<0$. The homoclinic orbits in black, representing localised solutions on $\R$, separate those inside (nonzero Jacobi dnoidal functions) and those outside (Jacobi cnoidal functions that oscillate evenly about $\phi=0$). In (b) we have cubic defocusing nonlinearity $f(\phi^2) = -\phi^2$ and $\be>0$, with periodic orbits existing only inside the heteroclinic cycle in black. In (c) we have $f(\phi^2) = \phi^2$ and $\be>0$.}
		\label{fig:phase_plane}
	\end{figure}
	
	\begin{rem}\label{rem:nonautonomous}
		With $g$ and $h$ arbitrary functions of $x$ in general, the results of this paper concerning the stability of NLS standing waves are valid for a spatially dependent nonlinearity $f(x,|\psi|^2)$ as appearing in, for example, \cite{J88,Grill88}. In this case, the loss of autonomy in the standing wave equation \eqref{standwave} means that $L_+\phi'\neq0$; thus, only the results which rely on $\phi'$ being an eigenfunction for $L_+$ (\cref{cor:exact_count_Q=0}, \cref{prop:concave_up,cor:concaveup}) do not generalise to the non-autonomous case. 
	\end{rem}
	
	Under the assumptions of \cref{hypo:NLS}, our analogue of the Jones--Grillakis instability theorem will follow from both \cref{thm:N_bound} and a computation of the values of $\mathfrak{c}$ given in  \cref{thm:compute_c}.
	
	\begin{cor}\label{cor:JonesGrillakis}
		Let $N$ be an operator as in \eqref{Hamiltonian_N}--\eqref{LplusLminus}. If $0\in\spec(L_+)\setminus \spec(L_-)$ and $P-Q\neq -1,0$, or $0\in\spec(L_-)\setminus \spec(L_+)$ and $P-Q\neq 0,1$, then $n_+(N)\geq 1$. Under \cref{hypo:NLS}, $\widehat\psi$ is spectrally unstable in these cases. 
	\end{cor}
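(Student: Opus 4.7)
The plan is to combine the general lower bound of \cref{thm:N_bound} with the explicit evaluation of the corner term $\mathfrak{c}$ provided by \cref{thm:compute_c}. Under either hypothesis of the corollary, exactly one of $L_\pm$ has a nontrivial kernel, so by the dichotomy in \eqref{eq:decouple_facts} we have $\dim\ker N = 1$. In particular, $0 \in \spec(N|_{[0,\ell]})$, so $s=1$ is indeed a conjugate point and the corner term $\mathfrak{c}$ appearing in \eqref{eq:bound_positive_evals} is the nontrivial contribution described in \cref{thm:compute_c} (rather than being zero by default).

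The central step is to extract from \cref{thm:compute_c} the admissible values of $\mathfrak{c}\in\{-1,0,1\}$ in each case. The second-order Maslov crossing form that governs the sign of $\mathfrak{c}$ is sensitive to whether the one-dimensional kernel of $N$ lies in the first or second component of $L^2\times L^2$, reflecting the asymmetric roles of $L_+$ and $L_-$ in the off-diagonal entries of $N$ (note the sign difference in \eqref{Hamiltonian_N}). Concretely, when $0\in\spec(L_+)\setminus\spec(L_-)$ the kernel is spanned by $(u,0)^\top$ with $L_+u=0$, and \cref{thm:compute_c} forces $\mathfrak{c} \in \{-1, 0\}$; in the symmetric case $0\in\spec(L_-)\setminus\spec(L_+)$ the kernel is spanned by $(0,v)^\top$ with $L_-v=0$, and the opposite sign convention yields $\mathfrak{c} \in \{0, 1\}$.

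Once these two sign restrictions are established, the corollary follows by an elementary check. In the first case, requiring $|P-Q-\mathfrak{c}|\geq 1$ for both admissible values $\mathfrak{c}\in\{-1,0\}$ is equivalent to $P-Q \notin \{-1,0\}$, and analogously excluding $P-Q\in\{0,1\}$ covers the second case. In each case, \cref{thm:N_bound} then yields $n_+(N)\geq 1$, so $N$ admits a positive real eigenvalue. Under \cref{hypo:NLS} this places spectrum of $N$ in the open right half-plane, so $\widehat\psi$ is spectrally unstable by definition.

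The main obstacle is not the present argument, which is essentially combinatorial once the preceding theorems are in hand, but the sign identification itself: one must keep careful track of the asymmetry between $L_+$ and $L_-$ throughout the computation of the second-order crossing form in \cref{thm:compute_c}, so that the two subsets $\{-1,0\}$ and $\{0,1\}$ align correctly with the kernel structure. A clean way to verify the signs is to test them against the classical Jones--Grillakis dichotomy on the real line ($P-Q \neq 0,1$), recovered here in the case where $\phi$ satisfies Dirichlet conditions and $L_-\phi=0$.
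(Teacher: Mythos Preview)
Your proposal is correct and follows essentially the same approach as the paper's proof: invoke the bound $n_+(N)\geq |P-Q-\mathfrak{c}|$ from \cref{thm:N_bound}, read off from \cref{thm:compute_c} that $\mathfrak{c}\in\{-1,0\}$ when $0\in\spec(L_+)\setminus\spec(L_-)$ and $\mathfrak{c}\in\{0,1\}$ when $0\in\spec(L_-)\setminus\spec(L_+)$, and conclude that $|P-Q-\mathfrak{c}|\geq 1$ exactly when $P-Q$ avoids the two admissible values of $\mathfrak{c}$. Your additional commentary about the second-order crossing form and the sanity check against the real-line case is expository rather than essential, but does no harm.
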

	\vspace{-1mm}
	(The proof is given in \cref{subsec:51}.) This criterion leads to the following instability result. The waves described correspond, for example, to the periodic orbits represented by the phase curves that are contained inside either of the orbits homoclinic to $(0,0)$ in \cref{fig:phaseA}.
	
	\begin{cor}\label{cor:exact_count_Q=0}
		Assume \cref{hypo:NLS}. Standing waves satisfying the Neumann boundary conditions \eqref{eq:NeumannBC} that are nonconstant and nonvanishing over $[0,\ell]$, and have one or more critical points in $(0,\ell)$, are unstable.
	\end{cor}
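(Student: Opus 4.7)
The plan is to reduce the corollary to an application of \cref{cor:JonesGrillakis}. Under the Neumann boundary conditions \eqref{eq:NeumannBC}, differentiating the autonomous standing wave equation \eqref{standwave} gives $L_+\phi' = 0$; combined with $\phi'(0)=\phi'(\ell)=0$ and $\phi$ nonconstant, this places $\phi'$ in $\dom(L_+)$ as a nonzero eigenfunction, so $0\in\spec(L_+)$. \cref{hypo:NLS}(ii) then forces $0\notin\spec(L_-)$, putting us in the first case of \cref{cor:JonesGrillakis}. It remains to show $P - Q \notin \{-1,0\}$, and I will in fact establish $P - Q \geq 1$.

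For the bound $Q=0$, since $\phi$ is continuous and nonvanishing on $[0,\ell]$ we may assume $\phi>0$ there, with $\phi$ bounded below by a positive constant. Using $L_-\phi=0$ to substitute $h=-\phi''/\phi$, the change of variables $f = \phi g$ maps $H^1_0(0,\ell)$ bijectively to itself. A direct integration by parts gives
\begin{equation*}
\langle L_- f, f \rangle = \int_0^\ell \bigl(|f'|^2 - h f^2\bigr)\, dx = \bigl[\phi \phi' g^2\bigr]_0^\ell + \int_0^\ell \phi^2 (g')^2 \, dx = \int_0^\ell \phi^2 (g')^2 \, dx \geq 0,
\end{equation*}
where the boundary term vanishes since $g = f/\phi$ inherits Dirichlet boundary values from $f$. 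Hence $L_-$ is nonnegative on $\dom(L_-)$ and $Q=0$.

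For the bound $P \geq 1$, I would invoke Sturm oscillation theory for the Dirichlet problem for $L_+$: the eigenvalues are simple, and the eigenfunction for the $k$th eigenvalue has exactly $k-1$ simple zeros in $(0,\ell)$. The eigenfunction $\phi'$ for the eigenvalue $0$ has precisely $j\geq 1$ simple zeros in $(0,\ell)$, namely the critical points of $\phi$. Therefore $0$ is the $(j+1)$th Dirichlet eigenvalue of $L_+$, giving $P = j \geq 1$. Combining, $P - Q = j \geq 1$, so $P - Q \notin \{-1,0\}$, and \cref{cor:JonesGrillakis} yields $n_+(N)\geq 1$, proving spectral instability of $\widehat\psi$.

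The main technical point is the $Q=0$ bound: the factorization argument rests delicately on $\phi$ being strictly positive up to and including the endpoints of $[0,\ell]$, which is precisely what the hypothesis ``nonvanishing over $[0,\ell]$'' provides. Were $\phi$ allowed to vanish at an endpoint, $g=f/\phi$ would fail to be an admissible test function and $L_-$ could in principle acquire negative eigenvalues, so this is where the hypothesis is genuinely used.
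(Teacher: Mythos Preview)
Your proof is correct and follows the same overall strategy as the paper---show $0\in\spec(L_+)\setminus\spec(L_-)$, $Q=0$, $P\geq1$, and invoke \cref{cor:JonesGrillakis}---but the supporting arguments differ. For $Q=0$ the paper works at the level of conjugate points: it uses reduction of order to show $0\notin\spec(L_-^{s})$ for every $s\in(0,1]$, which simultaneously gives $0\notin\spec(L_-)$ (without appealing to \cref{hypo:NLS}(ii)) and, via \cref{lemma:morse_schrodinger}, $Q=0$. You instead use the ground-state factorisation $f=\phi g$ to show $\langle L_-f,f\rangle\geq0$ directly. For $P\geq1$ the paper again counts conjugate points (each interior critical point of $\phi$ gives $0\in\spec(L_+^{s_0})$), whereas you invoke classical Sturm oscillation for the Dirichlet problem to read off $P$ from the number of interior zeros of $\phi'$. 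Your route is more self-contained in that it avoids \cref{lemma:morse_schrodinger} entirely; the paper's route is more in keeping with its Maslov-index framework and yields the slightly stronger statement that $L_-^s$ is nonsingular for all $s\in(0,1]$.
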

	\vspace{-1mm}
	(The proof is given in \cref{subsec:51}.) To effectively use \cref{thm:N_bound}, we need to understand the quantity $\mathfrak c$ appearing in \eqref{eq:bound_positive_evals}. Its definition involves the Maslov index at a potentially degenerate crossing, and hence requires some work to calculate. We do this by analysing the curves in the $\la s$-plane that describe the evolution of the real eigenvalues $\la$ of the restricted problem \eqref{eq:N_EVP_restricted} as $s$ is varied. As will be seen in \cref{thm:compute_c}, $\mathfrak c$ is determined by the concavity of these curves. Below, dot denotes $d/d\la$. The proof of the following theorem is given in \cref{subsec:Lyap_Schmidt}.
	
	\begin{theorem}
		\label{thm:sL}
		Let $N$ be an operator as in \eqref{Hamiltonian_N}--\eqref{LplusLminus}. If $\dim\ker (N) = 1$, then there exists a smooth function $s(\lambda)$, defined for $|\lambda| \ll 1$, such that $s(0) = 1$ and $\lambda$ is an eigenvalue of \eqref{eq:N_EVP_restricted} on $[0,s(\lambda)\ell]$. Moreover, $\dot s(0) = 0$  and the concavity of $s(\lambda)$ can be determined as follows:
		\begin{enumerate}
			\item If $0 \in \spec(L_-) \setminus\spec(L_+)$ with eigenfunction $v\in\ker(L_-)$, then
			\begin{equation}
				\label{s''L-}
				\ddot s(0) = \f{2}{\ell}\f{\langle \widehat{u},v \rangle}{ \left( v'(\ell)  \right)^2 }
			\end{equation}
			where $\widehat{u}\in H^2(0,\ell) \cap H^1_0(0,\ell)$ is the unique solution to $L_+ \widehat{u} = v$.
			\item If $0 \in \spec(L_+) \setminus \spec(L_-)$ with eigenfunction $u\in\ker(L_+)$, then
			\begin{equation}
				\label{s''L+}
				\ddot s(0) = -\f{2}{\ell} \f{\langle \widehat{v}, \,u \rangle}{\left ( u'(\ell)  \right )^2 }
			\end{equation}
			where $\widehat{v} \in H^2(0,\ell) \cap H^1_0(0,\ell)$ is the unique solution to $-L_- \widehat{v}= u$.
		\end{enumerate}
	\end{theorem}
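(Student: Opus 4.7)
The plan is to rescale the spatial interval to the fixed domain $[0,\ell]$ and exploit the block-diagonal structure of $\hat N(s)^2$ to reduce the two-component eigenvalue problem to a scalar perturbation problem. Introducing $y = x/s \in [0,\ell]$ together with the rescaled Schr\"odinger operators $\hat L_+(s) = -\partial_y^2 - s^2 g(sy)$ and $\hat L_-(s) = -\partial_y^2 - s^2 h(sy)$, both with Dirichlet boundary conditions on $[0,\ell]$, one checks directly that $\lambda$ is an eigenvalue of \eqref{eq:N_EVP_restricted} if and only if $\mu := s^2\lambda$ is an eigenvalue of the corresponding Hamiltonian $\hat N(s)$ on $[0,\ell]$. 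The key observation is the identity $\hat N(s)^2 = -\diag\bigl(\hat L_-(s)\hat L_+(s),\,\hat L_+(s)\hat L_-(s)\bigr)$, which reduces the analysis to a single scalar family $T(s) := -\hat L_-(s)\hat L_+(s)$.

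In Case 1, with $\ker L_- = \spn\{v\}$ and $L_+$ invertible, the operator $T(1) = -L_-L_+$ has $0$ as an eigenvalue: the right eigenvector is $\widehat u$ (since $L_+\widehat u = v$ and $L_- v = 0$), while the left eigenvector of $T(1)^* = -L_+L_-$ is $v$, with biorthogonal pairing $\langle \widehat u, v\rangle$. In the generic subcase $\langle \widehat u, v\rangle \neq 0$, the eigenvalue $0$ is algebraically simple for $T(1)$, so analytic perturbation theory for isolated simple eigenvalues of holomorphic families yields a smooth branch $\nu(s)$ with $\nu(1) = 0$ and the first-order formula $\nu'(1) = \langle T'(1)\widehat u,\, v\rangle / \langle \widehat u,\,v\rangle$. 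A product-rule computation gives $T'(1) = -A_- L_+ - L_- A_+$ with $A_+ = -(2g + yg')$ and $A_- = -(2h + yh')$; using $L_+\widehat u = v$, $L_- v = 0$, and selfadjointness of $L_-$, the numerator reduces to $-\langle A_-v, v\rangle$. The core integration-by-parts identity $\int_0^\ell (2h + yh')v^2\,dy = \ell\,(v'(\ell))^2$, established from $L_-v = 0$ and $v(0) = v(\ell) = 0$, then yields $\nu'(1) = \ell\,(v'(\ell))^2 / \langle \widehat u, v\rangle$.

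To produce $s(\lambda)$ I would apply the implicit function theorem to $F(s,\lambda) := s^4\lambda^2 - \nu(s)$ near $(1,0)$. Since $F(1,0) = 0$ and $\partial_s F(1,0) = -\nu'(1) \neq 0$, there is a smooth $s(\lambda)$ with $s(0) = 1$ satisfying $s(\lambda)^4\lambda^2 = \nu(s(\lambda))$; the right side is nonnegative for real $\lambda$, so $\mu = s(\lambda)^2\lambda$ satisfies $\mu^2 = \nu(s(\lambda))$ and is thus a real eigenvalue of $\hat N(s(\lambda))$, confirming $\lambda \in \spec(N|_{[0,s(\lambda)\ell]})$. Implicitly differentiating $s(\lambda)^4\lambda^2 = \nu(s(\lambda))$ at $\lambda = 0$ yields $\dot s(0) = 0$ from the first derivative and $\ddot s(0) = 2/\nu'(1) = 2\langle \widehat u, v\rangle / (\ell(v'(\ell))^2)$ from the second, matching \eqref{s''L-}. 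Case 2 proceeds symmetrically with the same $T(s)$: the right eigenvector becomes $u$ and the left eigenvector is $\widehat v$; the sign in \eqref{s''L+} traces back to the relation $L_-\widehat v = -u$, in contrast to $L_+\widehat u = v$ in Case 1.

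The principal obstacle is the degenerate subcase $\langle \widehat u, v\rangle = 0$ (respectively $\langle \widehat v, u\rangle = 0$), in which $0$ is no longer algebraically simple for $T(1)$ and the IFT argument as stated breaks down. The theorem's formulas nonetheless give the correct value $\ddot s(0) = 0$, consistent with an expected higher-order Puiseux-type behavior $s(\lambda) - 1 = O(\lambda^4)$ arising from a longer Jordan chain for $T(1)$. Handling this case rigorously would require either a refined perturbation analysis based on the Jordan structure of $T(1)$, or a continuity argument extending the generic formulas to the degenerate limit.
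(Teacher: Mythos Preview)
Your approach is genuinely different from the paper's and is correct in the generic subcase $\langle\widehat u, v\rangle \neq 0$ (respectively $\langle\widehat v, u\rangle \neq 0$). Where the paper performs a Lyapunov--Schmidt reduction directly on the pencil $N_s - s^2\lambda$ to produce a scalar function $M(\lambda,s)$ whose zero set is exactly the set of crossings, and then applies the implicit function theorem using the $s$-derivative $\partial_s M(0,1) = \ell\big[{-}(u'(\ell))^2 + (v'(\ell))^2\big]$, you instead pass to $\hat N(s)^2$, reduce to the scalar family $T(s) = -\hat L_-(s)\hat L_+(s)$, track its eigenvalue branch $\nu(s)$ through $0$, and solve $s^4\lambda^2 = \nu(s)$ by the implicit function theorem. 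Your route is more elementary --- no crossing forms, no Lyapunov--Schmidt --- and makes transparent contact with the composite operator $L_-L_+$ that appears in classical VK arguments. The paper's route, on the other hand, ties the computation directly into the Maslov-index machinery used elsewhere: the derivatives $\partial_s M$, $\partial_\lambda M$, $\partial_\lambda^2 M$ are precisely (multiples of) the first and second order crossing forms $\mathfrak m_{s_0}$, $\mathfrak m_{\lambda_0}$, $\mathfrak m_{\lambda_0}^{(2)}$.

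The gap you flag in the degenerate subcase is genuine, and it is exactly where the two strategies part ways. When $\langle\widehat u, v\rangle = 0$ the eigenvalue $0$ of $T(1)$ acquires a Jordan block of length at least two; since $\langle T'(1)\widehat u, v\rangle = \ell(v'(\ell))^2 \neq 0$, the perturbation is generic in the sense of Puiseux theory, so the eigenvalues of $T(s)$ near $0$ branch like $(s-1)^{1/m}$ rather than forming a single smooth curve $\nu(s)$. Your implicit function theorem step then has no input to act on, and the very existence and smoothness of $s(\lambda)$ are left unproved --- not just the value of $\ddot s(0)$. The paper avoids this entirely because its scalar $M(\lambda,s)$ has $\partial_s M(0,1) = \pm\ell(v'(\ell))^2$ (or $\pm\ell(u'(\ell))^2$), which is nonzero in \emph{every} case with $\dim\ker N = 1$; the implicit function theorem therefore yields a smooth $s(\lambda)$ unconditionally, after which $\dot s(0) = 0$ and the stated formula for $\ddot s(0)$ follow from $\partial_\lambda M(0,1) = 0$ and $\partial_\lambda^2 M(0,1) = -2\langle\widehat u, v\rangle$. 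To close your argument you would need either to carry out the Jordan-block perturbation analysis for $T(s)$, or to abandon $\nu(s)$ altogether and apply the implicit function theorem one level up, to a function of $(\lambda,s)$ encoding $\spec(\hat N(s))$ directly --- which is essentially what the paper does.
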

	\begin{rem}\label{rem:positivity}
		In applications, we will primarily be interested in the \textit{sign} of $\ddot s(0)$, for which \eqref{s''L-} and \eqref{s''L+} give
		\begin{equation}\label{integrals}
			\sgn \ddot s(0) = \sgn \int_0^{\ell} \widehat{u} \,v \,dx\qquad \text{and}\qquad  \sgn \ddot s(0) = -\sgn \int_0^{\ell} \widehat{v} \,u \,dx , \,\,\,\,
		\end{equation}
		respectively. The integrals in \eqref{integrals} can be rewritten as
		\begin{equation}
			\,\, \,\,\,\,\,\int_0^{\ell} \widehat{u} \,v \,dx = \int_0^{\ell} \widehat{u} \,\left (L_+ \widehat{u} \right ) \,dx\qquad \text{and} \qquad \int_0^{\ell} \widehat{v} \,u \,dx  = \int_0^{\ell} \widehat{v}\, \left (L_- \widehat{v} \right ) \,dx.
		\end{equation}
		Consequently, $\ddot s(0) > 0$ if $0 \in \spec(L_-)$ and $L_+$ is a strictly positive operator, or if $0 \in \spec(L_+)$ and $L_-$ is strictly positive.
	\end{rem}
	
	In \cref{sec:proof1} we will prove a more general version of \cref{thm:sL}; see \cref{thm:sL:general}. An analogous result for the case when $\dim\ker(N) = 2$ is given in \cref{thm:touching}. Using these results, we give a computation of the Maslov index at the non-regular crossing in \cref{thm:compute_c}. 
	
	As an application of our theory, working under \cref{hypo:NLS}, we provide a new formula for the sign of $\ddot s(0)$ by evaluating the integral expression in \eqref{s''L+} for stationary states satisfying \eqref{eq:NeumannBC}; see \cref{prop:concave_up}. In the edge cases when $P-Q=1$ and $0\in\spec(L_-)\setminus\spec(L_+)$, or $P-Q=-1$ and { $0\in\spec(L_+)\setminus\spec(L_-)$}, we show (see \cref{thm:VK}) that spectral stability of the standing wave $\widehat\psi$ is determined by the sign of $\ddot s(0) $. This suggests that on a bounded interval, the integrals $\langle\cdot,\cdot \rangle$ in \eqref{s''L-} and \eqref{s''L+} play the same role that \eqref{intro:VK} plays in the well known VK criterion on the real line. We thus refer to the two integral expressions in \eqref{integrals} as \textit{VK-type integrals}. In \cref{sec:VKrecover} we show that it is possible to recover the classical VK criterion on a compact interval using the numerator in \eqref{s''L-} (but \textit{not} \eqref{s''L+}). 
	
	\begin{theorem}\label{thm:VK}
		Let $N$ be an operator as in \eqref{Hamiltonian_N}--\eqref{LplusLminus}. Consider the case when $P=1$, $Q=0$, and $0\in\spec(L_-)\setminus \spec(L_+)$. If the associated VK-type integral in \eqref{s''L-} is positive, then { $n_+(N) =  1$}, while if the integral is negative, then $\spec(N)\subset i\R$. In particular, under \cref{hypo:NLS},  $\widehat\psi$ is spectrally unstable if \eqref{s''L-} is positive, and spectrally stable if \eqref{s''L-} is negative.
		
		Similarly, consider the case when $Q=1$, $P=0$, and $0\in\spec(L_+)\setminus \spec(L_-)$. If the VK-type integral in \eqref{s''L+} is { negative}, then { $n_+(N) =  1$}, while if the integral is { positive}, then $\spec(N)\subset i\R$. In particular, under \cref{hypo:NLS}, $\widehat\psi$ is spectrally unstable if \eqref{s''L+}  is positive, and spectrally stable if \eqref{s''L+} is negative. 
	\end{theorem}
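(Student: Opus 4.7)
The plan is to combine the Maslov-index lower bound of \cref{thm:N_bound} with the computation of the corner term $\mathfrak{c}$ supplied by the concavity formulas of \cref{thm:sL} and by \cref{thm:compute_c}. Under the first set of hypotheses, $P-Q=1$ and the bound reads $n_+(N)\geq |1-\mathfrak{c}|$; under the second, $P-Q=-1$ and it reads $n_+(N)\geq |1+\mathfrak{c}|$. Since $\mathfrak{c}\in\{-1,0,1\}$, in each case exactly one value of $\mathfrak{c}$ renders the bound vacuous while the other two force at least one positive real eigenvalue. The task is therefore to pin down $\mathfrak{c}$ from the sign of the VK-type integral, and, in the negative-integral case, to upgrade the trivial bound $n_+(N)\geq 0$ to the statement $\spec(N)\subset i\R$.

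For the instability half, I would first invoke \cref{thm:sL} to obtain a smooth branch $s(\lambda)$ of conjugate points through $(1,0)$ with $\dot s(0)=0$, and then use \cref{thm:compute_c} to read off $\mathfrak{c}$ from the sign of $\ddot s(0)$. By \cref{rem:positivity}, this sign agrees (up to an explicit positive factor) with the sign of the relevant VK-type integral. Matching the sign conventions in \eqref{s''L-} and \eqref{s''L+} case-by-case shows that a positive VK-type integral drives $\mathfrak{c}$ into the regime where the Maslov bound is nontrivial, namely $\mathfrak{c}\leq 0$ when $P-Q=1$ and $\mathfrak{c}\geq 0$ when $P-Q=-1$. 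Either way, $n_+(N)\geq 1$, which is the claimed instability.

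The stability half is the main obstacle, since when the VK-type integral is negative the Maslov bound delivers only $n_+(N)\geq 0$, and one must rule out complex eigenvalues off the imaginary axis by separate means. For this I would exploit the Hamiltonian structure of $N$: conjugating by the matrix $J$ of \eqref{def:JSsig} gives $J^{-1}N^{\top}J=-N$, so $\spec(N)$ is invariant under $\lambda\mapsto-\lambda$, and combined with complex conjugation (real coefficients) one obtains the quartet symmetry $\lambda\mapsto-\lambda,\bar\lambda,-\bar\lambda$. A direct block computation yields $N^2=-\diag(L_-L_+,\,L_+L_-)$, so $\spec(N)\subset i\R$ is equivalent to the nonnegativity of $\spec(L_-L_+)$. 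When $Q=0$ and $P=1$, $L_-$ is nonnegative, and a standard Krein/constraint-space argument (of the type indicated at the end of the introduction and formalised in \cref{prop:KreinMaslov}) reduces the count of unstable modes to the negative Morse index of $L_+$ restricted to a codimension-one subspace determined by $\ker(L_-)$. This restricted Morse index drops from $1$ to $0$ precisely when the VK-type integral in \eqref{s''L-} is negative, at which point $\spec(N)\subset i\R$. The second case, with the roles of $L_+$ and $L_-$ interchanged, is symmetric.

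Specialising to \cref{hypo:NLS} is then immediate, since $N$ is precisely the linearisation about $\widehat\psi$: the condition $n_+(N)\geq 1$ is spectral instability, while $\spec(N)\subset i\R$ is spectral stability. The hardest step will be the Krein/constraint-space reduction in the stability case, which requires careful bookkeeping of the kernel direction of $L_-$ (respectively $L_+$); it is precisely the concavity analysis of \cref{sec:proof1}, together with the identification between our corner term $\mathfrak{c}$ and the Krein-type correction in \cref{prop:KreinMaslov}, that supplies the signature information needed to close this part of the argument.
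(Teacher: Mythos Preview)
Your instability argument is essentially the paper's: use \cref{thm:sL} to get $\dot s(0)=0$ and the sign of $\ddot s(0)$ from the VK-type integral, read off $\mathfrak c$ from \cref{thm:compute_c}, and apply \cref{thm:N_bound}. One small correction: by \cref{thm:compute_c}, when $0\in\spec(L_-)\setminus\spec(L_+)$ one has $\mathfrak c\in\{0,1\}$, and when $0\in\spec(L_+)\setminus\spec(L_-)$ one has $\mathfrak c\in\{-1,0\}$, so in each case there are only two possible values, not three.

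Your stability argument, however, takes a genuinely different route from the paper. You propose a Krein/constrained-eigenvalue reduction: pass to the selfadjoint problem via $N^2=-\diag(L_-L_+,L_+L_-)$, and then argue that the Morse index of $L_+$ restricted to $\ker(L_-)^\perp$ drops to zero exactly when $\langle\widehat u,v\rangle<0$. This can be made to work, but \cref{prop:KreinMaslov} alone does not deliver it; that proposition merely identifies $\mathfrak c$ with $n_-(D_+)-n_-(D_-)$. To conclude $\spec(N)\subset i\R$ you would still need the full Krein index identity \eqref{KKS}, or equivalently the constrained-count formula \eqref{constrained} together with the selfadjoint reformulation of \cref{lemma:equiv_problems}. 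Also, the sentence ``$\spec(N)\subset i\R$ is equivalent to the nonnegativity of $\spec(L_-L_+)$'' is loose, since $L_-L_+$ is not selfadjoint; the correct statement goes through $(L_-|_{X_c})^{1/2}L_+(L_-|_{X_c})^{1/2}$ as in \cref{lemma:equiv_problems}.

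The paper's argument is more self-contained within the Maslov framework. When the integral is negative, \cref{thm:compute_c} gives $\mathfrak c=1$ (first case) or $\mathfrak c=-1$ (second), so $\Mas(\Lambda,\cD;\Gamma_3^\e)=P-Q-\mathfrak c=0$ by \eqref{mas_top_fmla}. The key step you are missing is \cref{lemma:sign_definite}: when $Q=0$ (resp.\ $P=0$), the $\lambda$-crossing form along $\Gamma_3^\e$ is sign-definite, so the Maslov index is an \emph{exact} count and $n_+(N)=0$. Then \cref{lemma:imag_eigvals} (which is precisely the selfadjoint reformulation you allude to) confines $\spec(N)$ to $\R\cup i\R$, and together these give $\spec(N)\subset i\R$. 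This avoids importing the Krein machinery; the monotonicity in $\lambda$ is what upgrades the lower bound to an equality.
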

	
	  (The proof is given in \cref{subsec:52}.) The proofs that { $n_+(N)=1$} rely on an argument that allows the replacement of the inequality in \eqref{eq:bound_positive_evals} with an equality, as well as a computation of $\mathfrak{c}$  that yields 1 on the right hand side of \eqref{eq:bound_positive_evals}. The former comes from the fact that the Maslov index is monotone in $\la$ \emph{provided either $P$ or $Q$ is zero} (see \cref{lemma:sign_definite}). On the other hand, to prove $\spec(N)\subset i\R$ in the cases described in \cref{thm:VK}, it will be shown (see \cref{lemma:imag_eigvals}) that the nonnegativity of $L_+$ or $L_-$ forces the spectrum of $N$ to be confined to the real and imaginary axes. It will then follow from monotonicity in $\la$ (i.e. \cref{lemma:sign_definite}) that $n_+(N)=0$ (and therefore that $\spec(N)\subset i\R$).

	\begin{rem}
		In \cref{thm:VK} we recover the equality in \cite[Theorem 2.25]{HK08} without the assumption that the operators $L_\pm$ are invertible (albeit in the case when $P=0$ or $Q=0$). Recovering the equality (when $L_+$ and $L_-$ are invertible) in cases when both $P$ and $Q$ are nonzero via our Maslov index calculations remains an open question. 
	\end{rem}

	\section{A symplectic approach to  the eigenvalue problem}\label{sec:symplectic_view}
	
	In this section we review the definition of the Maslov index and give a symplectic formulation of the eigenvalue problem \eqref{eq:N_EVP_full}, culminating in the proof of \cref{thm:N_bound}.
	
	\subsection{The Maslov index}\label{subsec:maslov_index}
	We begin with some background material on the Maslov index \cite{Maslov65}. We follow the definition given by Robbin and Salamon \cite{RS93}, wherein the Maslov index is first defined for regular paths, and then extended to arbitrary continuous paths by a homotopy argument. For more on the topological properties of the spaces discussed, see \cite{A67}. For a systematic and unified treatement of the Maslov index, featuring an axiomatic description and four equivalent definitions, see \cite{CLM94}.

	The starting point is $\R^{2n}$ equipped with the nondegenerate, skew-symmetric bilinear form 
	\begin{equation}\label{omega}
		\w: \R^{2n} \times \R^{2n} \lra \R, \qquad \w(x,y) = Jx \cdot y 
	\end{equation}
	called a \textit{symplectic form}, where ``$\cdot$'' is the dot product in $\R^{2n}$ and $J$ is given in \eqref{def:JSsig}. A \textit{Lagrangian subspace} or \textit{plane} $\Lambda$ of $\R^{2n}$ is an $n$-dimensional subspace on which the symplectic form vanishes. The \textit{Lagrangian Grassmannian} is the set of all Lagrangian subspaces, $\cL(n) = \left \{ \Lambda \subset \R^{2n} :\, \dim(\Lambda) =n,\quad \w(x,y) = 0, \,\forall \,\,x,y\in\Lambda \right \}$. This space has infinite cyclic fundamental group, i.e. $\pi_1(\cL(n))  = \Z$. A notion of winding therefore exists for paths in $\cL(n)$; this is the Maslov index. Namely, the Maslov index of a loop in $\cL(n)$ is its equivalence class in the fundamental group. Poincar\'e duality \cite[\S 3.3]{hatcher} affords an interpretation of this winding number as the (signed) number of intersections with a distinguished codimension one submanifold, and this allows one to extend the definition to \textit{any} path in $\cL(n)$. This is the approach of Arnol'd, which we briefly review.
	
	Fix a reference plane $\Lambda_0\in\cL(n)$. The distinguished codimension one submanifold of $\cL(n)$ is given by the top stratum $\cT_1(\Lambda_0)$ of the \textit{train} of $\Lambda_0$, 
	\[
	\cT(\Lambda_0) = \big \{ \Lambda \in \cL(n) : \Lambda \cap \Lambda_0 \neq \{0\} \big \} = \bigcup_{k=1}^n \cT_k(\Lambda_0),
	\]
	where $\cT_k(\Lambda_0) = \left \{ \Lambda \in \cL(n) : \dim(\Lambda \cap \Lambda_0 )= k\right \}$. As the fundamental lemma of \cite{A67} states, $\cT_1(\Lambda_0)$ is two sidedly imbedded in $\cL(n)$. This means there exists a continuous vector field transverse to $\cT_1(\Lambda_0)$ and tangent to $\cL(n)$. One can therefore assign a signature to each transverse intersection of a path in $\cL(n)$ with $\cT_1(\Lambda_0)$. Any Lagrangian path with endpoints not in $\cT(\Lambda_0)$ can be perturbed to one that only intersects the top stratum $\cT_1(\Lambda_0)$ of the train, and only does so transversally; the Maslov index is then defined to be the sum of the signatures of all such intersections. 
	
	We next recall the approach of Robbin and Salamon \cite{RS93}, which requires additional regularity but applies to paths whose endpoints are in the train, and also allows for intersections with $\cT_k(\Lambda_0)$ when $k>1$. This approach, while less geometric than the above interpretation of the Maslov index as an intersection number, is more suited to practical computations.
	
	Given a smooth path $\Lambda: [a,b] \lra \cL(n)$, a \textit{crossing} is a point $t=t_0$ where $\Lambda(t_0) \in \cT(\Lambda_0)$. Let $\Lambda_0^\perp\subset \R^{2n}$ be a subspace transverse to $\Lambda_0$. Then $\Lambda(t)$ is transverse to $\Lambda_0$ for all $t$ in a small neighbourhood of $t_0$, so there exists a smooth family of matrices $R_t: \Lambda_0 \ra \Lambda_0^\perp$ such that $R_{t_0}=0_{2n}$ and
	\begin{equation}\label{eq:graph_of_R}
		\Lambda(t) = \graph(R_t) = \{ q + R_t q : q\in\Lambda(t_0) \}
	\end{equation}
	for $|t-t_0| \ll 1$. At a crossing $t_0$, the \textit{crossing form} is the quadratic form
	\begin{equation}\label{maslovcrossform}
		\mathfrak{m}_{t_0}(q) = \de{}{t} \w(q,q+R_tq)\Big|_{t=t_0} = \w(q,\dot R_{t_0}q),\qquad q\in \Lambda(t_0) \cap\Lambda_0,
	\end{equation}
	on the intersection $\Lambda(t_0) \cap \Lambda_0$. The full symmetric bilinear form associated with the quadratic form \eqref{maslovcrossform} may be recovered using the polarisation identity; see, for example, the proof of \cref{cor:higherdims}. A crossing is called \textit{regular} if the form \eqref{maslovcrossform} is nondegenerate, and \textit{simple} if $\Lambda(t_0) \in \cT_1(\Lambda_0)$. Since $\mathfrak{m}_{t_0}$ is quadratic, it may be diagonalised; we let $n_+(\mathfrak{m}_{t_0})$ and $n_-(\mathfrak{m}_{t_0})$ be the number of positive and negative squares obtained in so doing. The signature of $\mathfrak{m}_{t_0}$ is the integer $\sgn(\mathfrak{m}_{t_0}) = n_+(\mathfrak{m}_{t_0})-n_-(\mathfrak{m}_{t_0})$. We then define the Maslov index as follows. 
	\begin{define}\label{defn:maslov}
		The Maslov index for a path $\Lambda:[a,b] \lra \cL(n)$ having only regular crossings is given by
		\begin{equation}\label{eq:maslov_defn}
			\Mas(\Lambda(t),\Lambda_0; [a,b]) \coloneqq -n_-(\mathfrak{m}_{a}) +\sum_{a<t_0<b} \sgn(\mathfrak{m}_{t_0}) + n_+(\mathfrak{m}_b),
		\end{equation}
		where the sum is taken over all crossings $t_0\in(a,b)$. 
	\end{define}
	\vspace{-2mm}
	One can show that regular crossings are isolated and therefore the sum is well-defined. Note the convention at the endpoints: at $t=a$ only the negative squares contribute to the Maslov index, while at $t=b$ only the positive squares contribute. Other conventions are possible, see e.g. \cite[\S 2]{RS93}, but we choose the above in order to ensure the Maslov index is an integer. 
	
	The Maslov index of an arbitrary continuous path $\Lambda_1 :[a,b] \lra \cL(n)$ is then defined to be $\Mas(\Lambda_2(t),\Lambda_0; [a,b])$, where $\Lambda_2$ is any path that is homotopic (with fixed endpoints) to $\Lambda_1$ and has only regular crossings. It is guaranteed by \cite[Lemmas 2.1 and 2.2]{RS93} that such a path exists, and any two such paths have the same index, so the Maslov index of $\Lambda_1$ is well defined.

	The essential properties of the Maslov index that we will use are given in the following proposition, see \cite[Theorem 2.3]{RS93}.
	\begin{prop}\label{prop:enjoys}
		The Maslov index enjoys 
		\begin{enumerate}
			\item Homotopy invariance: if two paths $\Lambda_1, \Lambda_2 : [a,b] \lra \cL(n)$ are homotopic with fixed endpoints, then
			\begin{equation}\label{homotopy_property}
				\Mas(\Lambda_1(t),\Lambda_0; [a,b]) = \Mas(\Lambda_2(t),\Lambda_0; [a,b]).
			\end{equation}
			\item Additivity under concatenation: for $\Lambda(t):[a,c] \lra \cL(n)$ and $a<b<c$, 
			\begin{equation}\label{}
				\Mas(\Lambda(t),\Lambda_0; [a,c]) = \Mas(\Lambda(t),\Lambda_0; [a,b]) + \Mas(\Lambda(t),\Lambda_0; [b,c]).
			\end{equation}
		\end{enumerate}
	\end{prop}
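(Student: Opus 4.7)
I would first establish both properties for smooth paths with only regular crossings, where $\Mas$ is directly given by \cref{defn:maslov}; the extension to arbitrary continuous paths then follows from the fact that the index on such paths is defined via homotopy to a regular representative, so homotopy invariance for that extension is automatic once it is proved for regular paths.

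For additivity under concatenation, assume first that $\Lambda:[a,c]\to\cL(n)$ has only regular crossings. If $b$ is not a crossing, then $\Lambda(b)\cap\Lambda_0=\{0\}$ and the sum in \eqref{eq:maslov_defn} splits immediately, since neither $\Mas(\Lambda,\Lambda_0;[a,b])$ nor $\Mas(\Lambda,\Lambda_0;[b,c])$ receives a contribution at $b$. If $b$ is a (regular) crossing, the contribution at $b$ to $\Mas(\Lambda,\Lambda_0;[a,c])$ is $\sgn(\mathfrak{m}_b)$, while in the split version the combined contribution is $n_+(\mathfrak{m}_b)+(-n_-(\mathfrak{m}_b))=\sgn(\mathfrak{m}_b)$, and the two sides match. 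For non-regular paths, one would choose a homotopy rel $\{a,b,c\}$ to a regular path and invoke (i), with genericity ensuring the perturbation remains well-behaved at $b$.

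For homotopy invariance, given regular paths $\Lambda_1,\Lambda_2$ homotopic rel endpoints, I would consider a smooth homotopy $H:[0,1]\times[a,b]\to\cL(n)$. After a small perturbation fixing the endpoints, $H$ can be taken transverse to every stratum $\cT_k(\Lambda_0)$; by dimension counting, $H^{-1}(\cT_1(\Lambda_0))$ is then a compact $1$-submanifold of $[0,1]\times[a,b]$ meeting the boundary only on the slices $s=0,1$, and $H$ misses $\cT_k$ for $k\geq 2$ in the interior. The Maslov index at a parameter value $s$ is the signed intersection number of the horizontal slice $\{s\}\times[a,b]$ with this $1$-manifold, the signs being those of the crossing forms at the transverse intersections. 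This count is locally constant in $s$, changing only at fold points of the projection onto $[0,1]$ where two crossings of opposite sign are born or die together, and the endpoint contributions at $t=a,b$ do not depend on $s$ since the endpoints are fixed; this yields $\Mas(\Lambda_1(t),\Lambda_0;[a,b])=\Mas(\Lambda_2(t),\Lambda_0;[a,b])$.

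The main obstacle will be the degenerate-crossing reduction: showing that the signature of the crossing form at a non-simple crossing $\Lambda(t_0)\in\cT_k(\Lambda_0)$ equals the sum of the $\pm 1$ signatures of the $k$ simple transverse crossings into which it generically resolves. Near such a crossing the Lagrangian path can be written as a graph $\graph(R_t)$ of symmetric maps $R_t:\Lambda_0\to\Lambda_0^\perp$; intersections with $\Lambda_0$ correspond to zero eigenvalues of $R_t$, and the crossing form in \eqref{maslovcrossform} records the signs with which these eigenvalues move through zero. A small deformation splits the degenerate zero into $k$ simple crossings at nearby $t$-values, and continuity of the eigenvalues together with the identification of $\mathfrak{m}_{t_0}$ with the restriction of $\dot R_{t_0}$ to $\Lambda(t_0)\cap\Lambda_0$ delivers the required equality of signatures, closing the argument.
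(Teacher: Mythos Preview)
The paper does not give its own proof of this proposition: it is stated with the attribution ``see \cite[Theorem 2.3]{RS93}'' and then used as a black box. Your sketch is a reasonable outline of the standard Robbin--Salamon argument (transversality of a generic two-parameter family to the train, cobordism of the crossing locus, and the eigenvalue-splitting interpretation of the crossing form for non-simple crossings), so there is nothing to compare against here beyond noting that you have reproduced the essential ideas behind the cited reference.
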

	
	To conclude our discussion of the Maslov index, we expound the notion of a non-regular crossing, that is, a crossing with degenerate crossing form. Consider a Lagrangian path $\Lambda: [a,b] \lra \cL(n)$ with a non-regular crossing $t=t_0$. In the case that $\mathfrak{m}_{t_0}$ is identically zero, in \cite[Proposition 3.10]{DJ11} the authors state that the contribution to the Maslov index is determined by the second-order crossing form
	\begin{equation}\label{secondorderform}
		\mathfrak{m}^{(2)}_{t_0}(q) \coloneqq \des{}{t} \w(q,q+R_tq)\Big|_{t=t_0} = \w(q,\ddot R_{t_0}q),\qquad q\in \Lambda(t_0) \cap\Lambda_0,
	\end{equation}
	provided it is nondegenerate. Such a crossing can only contribute to the Maslov index if it occurs at one of the endpoints: if $t_0=a$ then it contributes $-n_-(\mathfrak{m}^{(2)}_a)$, and if $t_0=b$ then it contributes $n_+(\mathfrak{m}^{(2)}_b)$.
	
	As an example, consider the case of a simple crossing with $\mathfrak{m}_{t_0}=0$ but $\mathfrak{m}^{(2)}_{t_0} \neq 0$. In the Lagrangian Grassmannian, this corresponds to our path $\Lambda$ tangentially intersecting the train $\cT(\Lambda_0)$ of the fixed reference plane to quadratic order; i.e. $\Lambda$ ``bounces off" the train as $t$ passes through $t_0$. Provided $t_0$ lies in the interior of $[a,b]$, the contribution to the Maslov index will be zero: clearly the path can locally be homotoped to one with no crossings at all. If $t_0=a$, the contribution is $-1$ provided the path leaves in the negative direction (and zero otherwise), while if $t_0=b$, the contribution is $+1$ provided the path arrives in the positive direction (and zero otherwise). If the second order form is degenerate, i.e. $\mathfrak{m}^{(2)}_{t_0} = 0$, higher order derivatives are needed in order to determine the local behaviour of the path $\Lambda$.

	In the present setting, with the spectral parameter $\la$ acting as the independent variable, we will observe that a non-regular crossing occurs at $\la=0$. To determine the contribution to the Maslov index of this non-regular crossing, we use a homotopy argument, made possible by our analysis of the local behaviour of the eigenvalue curves in \cref{subsec:degenerate}. We confirm that our computation agrees with the number of negative squares of the second order form \eqref{secondorderform} used in \cite{DJ11}. For a further discussion of non-regular crossings, see \cite{GPP04,GPP_full}.

	\subsection{Spatial rescaling and construction of the Lagrangian path}\label{subsec:lagrangian}

	We now view the problem through the lens of the Lagrangian formalism by interpreting eigenvalues as nontrivial intersections of Lagrangian planes. Following the approach of \cite{DJ11}, we restrict the eigenvalue problem to a family of subintervals $[0,s\ell]$ for $s\in(0,1]$. Rescaling the equations to the full domain $[0,\ell]$, we construct a two-parameter family of Lagrangian subspaces in $s$ and $\la$ via rescaled boundary traces of solutions to the system of differential equations without any boundary conditions at all. An eigenvalue is produced when this family of subspaces nontrivially intersects a fixed reference plane that encodes Dirichlet boundary conditions. Identifying a Lagrangian structure boils down to a judicious choice of both the symplectic form and the definition of the trace map: if we employ the standard symplectic form $\w$ in \eqref{omega}, then we need to carefully define the trace map \eqref{Trace} such that the space of boundary traces is Lagrangian with respect to $\w$. We begin by introducing some notation. 
	
	We let 
	\begin{equation}\label{}
		N = D+B(x), \qquad 	D\coloneqq \begin{pmatrix}
			0  & \partial_{xx} \\ - \partial_{xx} & 0
		\end{pmatrix}, \qquad B(x)\coloneqq\begin{pmatrix}
			0 & h(x)\\ -g(x) & 0
		\end{pmatrix},
	\end{equation}
	and introduce the $s$-dependent operators acting on functions on $[0,\ell]$,
	\begin{align} \label{rescaled_operators}
		B_s(x) \coloneqq s^2 B(sx), \qquad N_s \coloneqq \begin{pmatrix} 0 & -L_-^s  \\ L_+^s & 0 \end{pmatrix}, \qquad \begin{cases}
			L_+^s \coloneqq -\p_{xx} - s^2 g(sx) \\
			L_-^s \coloneqq -\p_{xx} - s^2 h(sx) 
		\end{cases}
	\end{align}
	so that $N_s = D+B_s(x)$. We define the \textit{rescaled trace} of $\mathbf{u}=(u,v)^{\top} \in H^2(0,\ell)$  as the vector
	\begin{gather} \label{Trace}
		\begin{aligned}
			\Tr_s{\mathbf{u}} &\coloneqq  \left( u(0), v(0), u(\ell), v(\ell), -\f{1}{s}u'(0), \f{1}{s}v'(0), \f{1}{s}u'(\ell), -\f{1}{s}v'(\ell)) \right)^\top \in\R^8,
		\end{aligned}
	\end{gather}
	and denote the vertical subspace of $\R^8$ by $\cD\coloneqq \{0\}\times \R^4$. Using the above notation, we may rewrite the restricted problem \eqref{eq:N_EVP_restricted} as a boundary value problem on $[0,\ell]$. Indeed, if $\mathbf{u}(x) \in H^2(0,s\ell)\cap H^1_0(0,s\ell)$ then $\mathbf{u}_s(x)\coloneqq\mathbf{u}(sx) \in H^2(0,\ell) \cap H^1_0(0,\ell)$. It follows from \eqref{Trace} that $\bu(0) = \bu(s\ell) = 0$ if and only if $\Tr_s{\mathbf{u}_s} \in \cD$. Thus, rescaled to $[0,\ell]$, \eqref{eq:N_EVP_restricted} reads
	\begin{align}\label{eq:N_EVP_rescaled}
		N_s\mathbf{u}_s = s^2\la \mathbf{u}_s, \quad \Tr_s{\mathbf{u}_s} \in \cD.
	\end{align}
	Note that the solution spaces of the boundary value problems \eqref{eq:N_EVP_restricted} and \eqref{eq:N_EVP_rescaled} are isomorphic: $\mathbf{u}=(u,v)^\top \in \dom(N|_{[0,s\ell]})$ solves \eqref{eq:N_EVP_restricted} if and only if $\mathbf{u}_s=(u_s,v_s)^\top\in \dom(N_s)$ solves \eqref{eq:N_EVP_rescaled}. Consequently, $\la$ is an eigenvalue of $N|_{[0,s\ell]}$ if and only if $s^2\la$ is an eigenvalue of $N_s$.

	\begin{rem}
		The rescaled problem \eqref{eq:N_EVP_rescaled} is well-defined for $s>1$ provided the potentials $g$ and $h$ are defined for $x>\ell$. In this case the ``restricted" eigenvalue problem \eqref{eq:N_EVP_restricted} corresponds to a \textit{stretching} of the domain. 
	\end{rem}

	\begin{rem}
		As per \cref{rem:operators}, notationally we will not distinguish between $N_s$ and $L_\pm^s$ as differential expressions and as unbounded operators with dense domains given by \eqref{domN} and \eqref{domLpm}, respectively. Thus, when we write $s^2\la\in\spec(N_s)$ or $\mathbf{u}_s \in \ker(N_s-s^2\la)$, we mean that \eqref{eq:N_EVP_rescaled} is solved for some eigenfunction $\mathbf{u}_s$; similar statements hold  when $\la\in\spec(L^s_\pm) $.
	\end{rem}
	
	That the formulation \eqref{eq:N_EVP_rescaled} lends itself to a symplectic interpretation can be seen via the following modified version of Green's second identity. Using our definition of the rescaled trace map \eqref{Trace} and the symplectic form \eqref{omega}, one can verify that for each $s \in (0,1]$ and all $\mathbf{u},\mathbf{v} \in H^2(0,\ell)$, 
	\begin{equation}\label{greens_rescaled_mod}
		\langle S(N_s-s^2\la) \mathbf{u} ,\mathbf{v} \rangle - \langle \mathbf{u}, S (N_s-s^2\la)\mathbf{v} \rangle = s\w(\Tr_s \mathbf{u}, \Tr_s \mathbf{v} ),
	\end{equation}
	where $S$ is defined in \eqref{def:JSsig}. Now define the space
	\begin{equation}\label{}
		\mathcal{K}_{\la,s}\coloneqq  \left \{ \mathbf{u}\in H^2(0,\ell) : ( N_s - s^2\la )\mathbf{u}=0 \,\,\,\text{in}\,\,\,L^2(0,\ell)\right \}
	\end{equation}
	of all solutions to the homogeneous differential equation $N_s \mathbf{u}= s^2\la\mathbf{u}$ \textit{without} any reference to the boundary conditions, so that $\ker(N_s-s^2\la) = \mathcal{K}_{\la,s} \cap H^1_0(0,\ell)$.
	
	\begin{rem}\label{rem:injective}
		The trace map is an injective linear operator on the space $\mathcal{K}_{\la,s}$. If $\mathbf{u}_s \in\cK_{\la_0,s}$, then $\Tr_s\mathbf{u}_s  = 0$ implies $\mathbf{u}_s=0$, since $\mathbf{u}_s$ solves a system of second order equations. 
	\end{rem}
	
	Taking the (rescaled) boundary trace leads to the desired family of Lagrangian subspaces, with respect to the form $\w$ in \eqref{omega}.
	\begin{lemma}
		The space
		\begin{equation}\label{defn:Lambda}
			\Lambda(\la,s) \coloneqq \Tr_s(\cK_{\la,s}) = \{ \Tr_s(\mathbf{u}) : \mathbf{u}\in \cK_{\la,s} \}
		\end{equation} 
		is a Lagrangian subspace of $\R^8$  for all $s\in (0,1]$ and all $\la\in\R$. 
	\end{lemma}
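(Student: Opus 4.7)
The plan is to verify the two defining properties of a Lagrangian subspace: isotropy under $\omega$ and dimension equal to $n=4$.

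First I would establish the dimension count. The equation $(N_s - s^2\lambda)\mathbf{u} = 0$ is a system of two scalar second-order linear ODEs for $\mathbf{u} = (u,v)^\top$ on $[0,\ell]$. Writing it as a first-order system in the variable $(u, v, u', v')^\top \in \R^4$ gives a standard linear ODE, so by existence and uniqueness the solution space $\cK_{\lambda,s}$ has dimension $4$. By the injectivity of $\Tr_s$ on $\cK_{\lambda,s}$ noted in the remark immediately preceding the lemma, the image $\Lambda(\lambda,s) = \Tr_s(\cK_{\lambda,s})$ is a $4$-dimensional subspace of $\R^8$.

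Next I would establish isotropy using the modified Green's identity \eqref{greens_rescaled_mod}. For any $\mathbf{u}, \mathbf{v} \in \cK_{\lambda,s}$ we have $(N_s - s^2\lambda)\mathbf{u} = 0 = (N_s - s^2\lambda)\mathbf{v}$ in $L^2(0,\ell)$, so the left-hand side of \eqref{greens_rescaled_mod} vanishes, yielding
\begin{equation*}
    s\,\omega(\Tr_s \mathbf{u}, \Tr_s \mathbf{v}) = 0.
\end{equation*}
Since $s \in (0,1]$, this gives $\omega(\Tr_s \mathbf{u}, \Tr_s \mathbf{v}) = 0$ for all $\mathbf{u}, \mathbf{v} \in \cK_{\lambda,s}$, which is exactly the statement that $\omega$ vanishes on $\Lambda(\lambda,s)$.

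Combining these, $\Lambda(\lambda,s)$ is an isotropic subspace of $\R^8$ of dimension equal to half the ambient dimension, so it is Lagrangian. There is no real obstacle here: the work has been front-loaded into the construction of $\Tr_s$ and the choice of the form in \eqref{omega}, which were precisely calibrated so that Green's identity takes the symplectic shape \eqref{greens_rescaled_mod}. The only point to be careful about is the dimension count, which relies on the injectivity of $\Tr_s$ restricted to $\cK_{\lambda,s}$; this in turn is just the uniqueness part of the ODE theory, since $\Tr_s \mathbf{u} = 0$ forces all four components of the first-order reformulation to vanish at $x=0$.
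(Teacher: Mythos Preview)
Your proof is correct and follows essentially the same approach as the paper: both use the Green's-type identity \eqref{greens_rescaled_mod} to obtain isotropy and the ODE solution-space dimension together with injectivity of $\Tr_s$ to obtain $\dim\Lambda(\lambda,s)=4$. The only difference is the order of presentation and that you spell out the first-order reformulation, which the paper leaves implicit.
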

	
	\begin{proof}
		Fix $\la\in\R$ and $s\in(0,1]$. From \eqref{greens_rescaled_mod}, for $\mathbf{u}, \mathbf{v}\in \mathcal{K}_{\la,s} $ we have $ \omega(\Tr_s\mathbf{u}, \Tr_s \mathbf{v})=0 $. Since $\cK_{\la,s}$ is the space of solutions to a system of two second-order differential equations, $\dim \cK_{\la,s}=4$. Hence $\dim \Tr_s(\cK_{\la,s}) = 4$, and $\Tr_s(\cK_{\la,s}) \in \cL(4) $ is Lagrangian. 
	\end{proof}
	We now have the desired interpretation of eigenvalues as nontrivial intersections of Lagrangian subspaces. 
	
	\begin{prop}\label{prop:Lag_interp}
		$s^2\la \in \spec (N_{s})$ if and only if $\Lambda(\la,s)\cap \cD \neq\{0\}$. Moreover, the geometric multiplicity of the eigenvalue is equal to the dimension of the Lagrangian intersection, 
		\begin{equation}\label{eq:dimensions}
			\dim \ker (N_{s} -s^2\la ) = \dim\Lambda(\la,s)\cap \cD.
		\end{equation}
	\end{prop}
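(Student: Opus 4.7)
The plan is to show that the rescaled trace map restricts to a bijection between $\ker(N_s - s^2\lambda)$ and $\Lambda(\lambda,s) \cap \cD$, from which both the iff statement and the dimension equality follow immediately.

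First I would unpack the definitions. An element $\mathbf{u} \in \cK_{\lambda,s}$ lies in $\ker(N_s - s^2\lambda)$ precisely when it additionally satisfies the Dirichlet conditions $\mathbf{u}(0) = \mathbf{u}(\ell) = 0$. Reading off the definition \eqref{Trace} of $\Tr_s$, these four scalar equations are exactly the statement that the first four components of $\Tr_s \mathbf{u}$ vanish, i.e. $\Tr_s \mathbf{u} \in \cD$. Hence as sets,
\begin{equation*}
\ker(N_s - s^2\lambda) = \{\mathbf{u} \in \cK_{\lambda,s} : \Tr_s \mathbf{u} \in \cD\}.
\end{equation*}

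Next I would prove the two directions of the equivalence. If $s^2 \lambda \in \spec(N_s)$, pick a nontrivial $\mathbf{u} \in \ker(N_s - s^2\lambda) \subset \cK_{\lambda,s}$; by the above, $\Tr_s \mathbf{u} \in \Lambda(\lambda,s) \cap \cD$, and by \cref{rem:injective} this trace is nonzero. Conversely, given a nonzero $\xi \in \Lambda(\lambda,s) \cap \cD$, write $\xi = \Tr_s \mathbf{u}$ with $\mathbf{u} \in \cK_{\lambda,s}$; the injectivity noted in \cref{rem:injective} forces $\mathbf{u} \neq 0$, while $\xi \in \cD$ gives the Dirichlet conditions, so $\mathbf{u}$ is a genuine eigenfunction.

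For the dimension identity, consider the linear map $\Tr_s : \ker(N_s - s^2\lambda) \to \Lambda(\lambda,s) \cap \cD$. The argument of the previous paragraph shows this map is surjective, and injectivity on all of $\cK_{\lambda,s}$ (hence on its subspace $\ker(N_s - s^2\lambda)$) is exactly \cref{rem:injective}. Therefore it is a linear isomorphism, giving \eqref{eq:dimensions}.

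I do not expect any real obstacle here; the content has essentially been arranged in the preceding lemmas and remarks. The only point requiring a moment of care is verifying that the first four slots of $\Tr_s \mathbf{u}$ are exactly $(u(0),v(0),u(\ell),v(\ell))^\top$ so that membership in $\cD = \{0\} \times \R^4$ translates cleanly into the homogeneous Dirichlet conditions defining $\dom(N_s)$; this is immediate from the definition \eqref{Trace}.
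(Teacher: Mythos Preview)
Your proposal is correct and takes essentially the same approach as the paper: both argue that $\Tr_s$ restricts to a linear bijection from $\ker(N_s - s^2\lambda) = \cK_{\lambda,s} \cap H^1_0(0,\ell)$ onto $\Lambda(\lambda,s) \cap \cD$, invoking the injectivity noted in \cref{rem:injective}. Your version is simply more explicit in verifying the set equality $\Tr_s(\cK_{\lambda,s} \cap H^1_0(0,\ell)) = \Lambda(\lambda,s) \cap \cD$, which the paper asserts directly.
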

	\begin{proof}
		The first statement follows from the definition of $\Lambda$. Equality \eqref{eq:dimensions} follows from the injectivity (and thus bijectivity) of the trace map acting between the finite dimensional spaces  $\ker (N_{s_0} -s_0^2\la_0 ) = \cK_{\la_0,s_0} \cap H^1_0(0,\ell)$ and $\Tr_{s_0}(\cK_{\la_0,s_0}\cap H^1_0(0,\ell)) =  \Lambda(\la_0,s_0)\cap \cD$. 
	\end{proof}
	
	Hereafter, a \emph{crossing} refers to a pair $(\la,s)=(\la_0,s_0)$ such that $\Lambda(\la_0,s_0)\cap \cD \neq\{0\}$, while a \emph{conjugate point} refers to a crossing for which $\la_0=0$. It follows from \cref{prop:Lag_interp} that crossings where $s_0=1$ correspond to eigenvalues of the operator $N$ on $[0,\ell]$.
	
	To prove \cref{thm:N_bound}, our goal then is to bound from below the number of crossings for which $s_0=1,\la_0>0$. To do so we use a homotopy argument that involves appropriately counting conjugate points.  In order to set this argument up, we introduce in \cref{fig:MaslovBox} the so-called \textit{Maslov box}, given by the boundary $\Gamma$ of the rectangle $ [ 0,\la_{\infty} ]\times [\tau,1]$ in the $\la s$-plane, where $\tau>0$ is small and $\la_\infty>0$ is large. 
	
	Since $\Lambda : [ 0,\la_{\infty} ]\times [\tau,1]\lra \cL(4) $ is a continuous map, the image $\Lambda(\Gamma)$ of the Maslov box is null homotopic, and so
	\begin{align}\label{mas0}
		\Mas(\Lambda,\cD;\Gamma) =0.
	\end{align}
	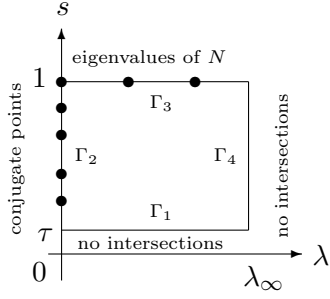
\begin{figure}
		\centering
		\begin{picture}(100,115)(-20,0)
			\put(-1,83){$1$}
			\put(-1,10){$0$}
			\put(10,10){\vector(0,1){95}}
			\put(5,20){\vector(1,0){95}}
			\put(10,85){\line(1,0){70}}
			\put(10,29){\line(1,0){70}}
			\put(80,29){\line(0,1){56}}
			\put(67,55){\text{\tiny $\Gamma_4$}}
			\put(15,55){\text{\tiny$\Gamma_2$}}
			\put(43,75){\text{\tiny$\Gamma_3$}}
			\put(43,34){\text{\tiny$\Gamma_1$}}
			\put(104,16){$\lambda$}
			\put(8,110){$s$}
			\put(1,25){$\tau$}
			\put(78,8){$\lambda_\infty$}
			\put(10,85){\circle*{4}}
			\put(10,65){\circle*{4}}
			\put(10,40){\circle*{4}}
			\put(10,50){\circle*{4}}
			\put(10,75){\circle*{4}}
			\put(35,85){\circle*{4}}
			\put(60,85){\circle*{4}}
			\put(14,92){{\tiny \text{eigenvalues of $N$}}}
			\put(12,22){{\tiny \,\,\,\text{no intersections}}}  
			\put(-10,27){\rotatebox{90}{{\tiny conjugate points}}}
			\put(90,26){\rotatebox{90}{{\tiny no intersections}}}
		\end{picture}
		\caption{Maslov box in the $\la s$-plane. }
		\label{fig:MaslovBox}
	\end{figure}
	We partition $\Gamma$ into its constituent sides such that $\Gamma = \Gamma_1 \cup \Gamma_2 \cup \Gamma_3 \cup \Gamma_4$, where 
	\begin{equation}\label{}
		\begin{aligned}
			& \Gamma_1:  s = \tau, \quad 0\leq \la \leq \la_\infty \qquad  &\Gamma_3:  s = 1, \quad 0\leq \la \leq \la_\infty  \\
			& \Gamma_2: \la = 0,\quad  \tau\leq s \leq 1 \qquad  &\Gamma_4: \la = \la_\infty, \quad \tau\leq s \leq 1
		\end{aligned}
	\end{equation}
	(see \cref{fig:MaslovBox}) and assign a direction to each of these intervals such that the entirety of the Maslov box is oriented in a clockwise fashion. We then appeal to the concatenation property in \cref{prop:enjoys} to rewrite \eqref{mas0} as
	\begin{align}\label{mas_concat}
		\Mas(\Lambda,\cD;\Gamma_1) + \Mas(\Lambda,\cD;\Gamma_2) + \Mas(\Lambda,\cD;\Gamma_3) + \Mas(\Lambda,\cD;\Gamma_4) =0.
	\end{align} 
	Taking $\la=\la_\infty$ large enough and $s=\tau$ small enough, it will follow (see \cref{lem:maslov_right_bottom_zero}) that there are no crossings along $\Gamma_1$ and $\Gamma_4$, and therefore that the Maslov indices of these pieces are zero. The crossing forms needed to analyse $\Mas(\Lambda,\cD;\Gamma_2)$ and $\Mas(\Lambda,\cD;\Gamma_3)$ are given in the next section.

	\subsection{Crossing forms}\label{sec:crossingforms}
	
	Our next task is the calculation of the crossing forms \eqref{maslovcrossform} associated with the trajectories through the crossing $(\la_0,s_0)$ where $\la=\la_0$ is held constant and $s$ increases, and vice versa. The key ingredient will be the Green's-type identity \eqref{greens_rescaled_mod}. The approach is inspired by Lemma 4.18 and the proof of Theorem 4.19 in \cite{LS20first}, as well as the crossing form calculation in \cite[Lemma 5.2]{CJLS14}. Before proceeding, we set some notation that will be useful in this section and throughout the rest of the paper.
	
	\begin{rem}\label{rem:notation}
		We denote by $\bu_{s_0}$ any eigenfunction $\bu_{s_0}\in\ker(N_{s_0} - s_0^2\la_0)$, and when $s_0=1$ we drop the subscript. If $\dim\ker(N_{s_0} - s_0^2\la_0)=n$, we denote a basis for this space by $\big \{\bu_{s_0}^{(1)}, \dots, \bu_{s_0}^{(n)} \big\}$, where $ \mathbf{u}_{s_0}^{(i)} = \big (u_{s_0}^{(i)}, v_{s_0}^{(i)}\big )^\top$.  The set $\big \{S\bu_{s_0}^{(1)}, \dots, S\bu_{s_0}^{(n)} \big\}$ is then a basis for the kernel of the adjoint operator, $\ker(N_{s_0}^*-s_0^2\la_0)$, since $\la_0$ is real. Note that $S$ (given in \eqref{def:JSsig}) merely swaps the entries of the vector it acts on. When $s_0=1$ we denote:
		\begin{equation}\label{notations0=1}
			\mathbf{u}_i \coloneqq \mathbf{u}_{1}^{(i)}, \qquad u_i\coloneqq u_{1}^{(i)},\,\, v_i\coloneqq v_{1}^{(i)}.
		\end{equation}
		Because $\ker(N_{s_0}) = \ker(L_+^{s_0}) \oplus \ker(L_-^{s_0})$, when $\la_0=0$ and $\dim\ker(N_{s_0})=1$ we have
		\begin{equation}\label{eq:simple_eigfns}
			\mathbf{u}_{s_0} = \begin{cases}
				(u_{s_0},0)^\top,  &0\in\spec(L_+^{s_0})\setminus \spec(L_-^{s_0}),\quad  \ker(L_+^{s_0})=\spn\{u_{s_0}\},\\
				(0, v_{s_0})^\top,  &0\in\spec(L_-^{s_0})\setminus \spec(L_+^{s_0}), \quad \ker(L_-^{s_0})=\spn\{v_{s_0}\}.
			\end{cases}
		\end{equation}
		When $\la_0=0$ and $\dim\ker(N_{s_0})=2$, we denote
		\begin{equation}
			\label{eq:doubleeigfns}
			\bu^{(1)}_{s_0} = \begin{pmatrix} u^{(1)}_{s_0} \\ 0 \end{pmatrix}, \qquad  \bu^{(2)}_{s_0} = \begin{pmatrix} 0 \\ v^{(2)}_{s_0} \end{pmatrix},
		\end{equation}
		where $\ker(L_+^{s_0})=\spn\{u_{s_0}^{(1)}\}$ and $\ker(L_-^{s_0})=\spn\{v_{s_0}^{(2)}\}$. 
		
		In the current paper where the potentials $g$ and $h$ from \eqref{LplusLminus} are scalar-valued, we will always have $n\leq 2$. However, if $g$ and $h$ are matrix-valued (and symmetric), so that $L_\pm$ are systems of selfadjoint Schr\"odinger operators, or if the operator $N$ acts on functions on a multidimensional domain, then we may have $n>2$. The results in this section and \cref{sec:proof1} have been stated for a general $n$ to indicate how the theory extends to these cases. 
	\end{rem}
	
	Returning to our computation of crossing forms, we first compute the crossing form \eqref{maslovcrossform} for the path of Lagrangian planes $s\mapsto \Lambda(\la_0,s)$, holding $\la=\la_0$ fixed. Recall that $N_s = D+B_s$, as in \eqref{rescaled_operators}, and that $S=S^T$.
	\begin{lemma}
		\label{lemma:scrossingform}
		Let $(\la_0,s_0)$ be a crossing and fix any nonzero $q\in\Lambda(\la_0,s_0) \cap\cD$. Then there exists a unique
		$\mathbf{u}_{s_0}\in\cK_{\la_0,s_0}$ such that $q=\Tr_{s_0}{\mathbf{u}_{s_0}}$, and the crossing form for the Lagrangian path $s\mapsto \Lambda(\la_0,s)$ at $s=s_0$ is given by 
		\begin{equation} 
			\label{eq:maslov_vertical}
			\mathfrak{m}_{s_0}(q)
			=  \f{1}{s_0} \left  \langle  \big(\p_sB_{s_0} - 2 s_0\lambda_0\big) \bu_{s_0}, S\bu_{s_0} \right \rangle,
		\end{equation}
		where $\p_sB_s=2sB(sx) + s^2 B'(sx)x$. In particular, along $\Gamma_2$ where $\la_0=0$, we have
		\begin{equation}\label{maslov_form_s_lambda=0}
			\mathfrak{m}_{s_0}(q) =  \f{\ell}{s_0^2} \left[ - \left ( u_{s_0}'(\ell)\right )^2 +  \left ( v_{s_0}'(\ell)\right )^2\right].
		\end{equation}
		In this case, if the crossing $(0,s_0)$ is simple, then the form \eqref{maslov_form_s_lambda=0} is non-degenerate.
	\end{lemma}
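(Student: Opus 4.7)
The plan begins with the observation that the existence of $\bu_{s_0}$ with $\Tr_{s_0}\bu_{s_0} = q$ is built into the definition of $\Lambda(\la_0, s_0)$, and uniqueness follows from the injectivity of the trace map on $\cK_{\la_0, s_0}$ recorded in \cref{rem:injective}. To compute the crossing form, I would extend $\bu_{s_0}$ to a smooth family $\bu_s \in \cK_{\la_0, s}$ using smooth parameter-dependence for linear ODEs (solving the IVP with data matching $\bu_{s_0}$ at a fixed point), so that $\gamma(s) \coloneqq \Tr_s \bu_s$ is a smooth curve in $\R^8$ satisfying $\gamma(s) \in \Lambda(\la_0, s)$ and $\gamma(s_0) = q$. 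The identity $\mathfrak{m}_{s_0}(q) = \w(q, \dot\gamma(s_0))$ then follows from the standard fact that the part of $\dot\gamma(s_0)$ tangential to $\Lambda(\la_0, s_0)$ is $\w$-orthogonal to $q$ (because $\Lambda(\la_0, s_0)$ is Lagrangian), leaving only the transverse component $\dot R_{s_0} q$ to contribute.

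Next, I would split
\[
\dot\gamma(s_0) = \Tr_{s_0}(\p_s \bu_s|_{s_0}) + (\p_s \Tr_s)|_{s_0}\,\bu_{s_0}.
\]
The second term is supported only on the last four (derivative) entries, since only those carry the $1/s$ prefactors; because $q \in \cD$ has its first four entries vanishing, a direct check with $J$ yields $\w(q, (\p_s \Tr_s)|_{s_0} \bu_{s_0}) = 0$. Hence $\mathfrak{m}_{s_0}(q) = \w(\Tr_{s_0}\bu_{s_0}, \Tr_{s_0}(\p_s \bu_s|_{s_0}))$. Differentiating $(N_s - s^2 \la_0)\bu_s = 0$ in $s$ gives $(N_{s_0} - s_0^2 \la_0)(\p_s \bu_s|_{s_0}) = -(\p_s B_{s_0} - 2s_0\la_0)\bu_{s_0}$, and applying the Green's-type identity \eqref{greens_rescaled_mod} with $\bu = \bu_{s_0}$ and $\bv = \p_s \bu_s|_{s_0}$ (using $S^{\top}=S$ to rearrange the inner product) delivers \eqref{eq:maslov_vertical}.

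To specialise to $\la_0 = 0$ along $\Gamma_2$, I would substitute $\p_s B_{s_0} = 2s_0 B(s_0 x) + s_0^2 x B'(s_0 x)$ and expand the integrand as $2s_0\bigl(h(s_0 x) v_{s_0}^2 - g(s_0 x) u_{s_0}^2\bigr) + s_0^2 x\bigl(h'(s_0 x) v_{s_0}^2 - g'(s_0 x) u_{s_0}^2\bigr)$. At $\la_0 = 0$ the system decouples into $L_+^{s_0} u_{s_0} = 0$ and $L_-^{s_0} v_{s_0} = 0$, which let me replace $s_0^2 g(s_0 x) u_{s_0}$ by $-u_{s_0}''$ and $s_0^2 h(s_0 x) v_{s_0}$ by $-v_{s_0}''$, and rewrite $s_0 h'(s_0 x) = \p_x h(s_0 x)$ (similarly for $g'$). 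I would then integrate by parts twice, using the Dirichlet conditions $u_{s_0}(0)=u_{s_0}(\ell)=v_{s_0}(0)=v_{s_0}(\ell)=0$ to annihilate all bulk contributions and leave only the boundary term $\tfrac{\ell}{s_0^2}\bigl[(v_{s_0}'(\ell))^2 - (u_{s_0}'(\ell))^2\bigr]$ at $x=\ell$; verifying these bulk cancellations exactly is the main bookkeeping obstacle. Non-degeneracy at a simple crossing then follows from \eqref{eq:simple_eigfns}: exactly one of $u_{s_0}, v_{s_0}$ is identically zero, so the form reduces to a single signed squared derivative at $x = \ell$, which cannot vanish, since otherwise ODE uniqueness applied to the nontrivial component (which already satisfies a Dirichlet condition at $\ell$) would force it to be identically zero, contradicting $q \neq 0$.
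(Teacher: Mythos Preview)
Your proposal is correct and follows essentially the same route as the paper: construct a smooth family in $\cK_{\la_0,s}$ through $\bu_{s_0}$, show the $\p_s\Tr_s$ piece contributes nothing to $\w(q,\cdot)$ because both vectors lie in $\cD$, differentiate the equation and apply the Green's-type identity \eqref{greens_rescaled_mod}, then reduce the $\la_0=0$ integral via the decoupled equations and Dirichlet boundary conditions.

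The one genuine (though minor) difference is in how the smooth family is produced. The paper builds $\mathbf w_s$ so that $\Tr_s\mathbf w_s = q + R_s q$ exactly, by inverting the trace map on $\cK_{\la_0,s}$; this makes $\w(q,\Tr_{s_0}\p_s\mathbf w_s|_{s_0}) = \w(q,\dot R_{s_0}q)$ hold on the nose. You instead take any smooth extension via ODE parameter-dependence and invoke the Lagrangian property of $\Lambda(\la_0,s_0)$ to kill the tangential component of $\dot\gamma(s_0)$. Both are standard; your version avoids constructing the specific graph family, at the cost of one extra line justifying why the choice of extension is irrelevant. For the $\la_0=0$ simplification, the paper packages the integrand as an exact $x$-derivative and integrates once, whereas you propose integrating by parts; these are the same computation.
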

	
	\begin{proof}
		Consider a $C^1$ family of vectors $s\mapsto \mathbf{w}_s\in \cK_{\la_0,s}$ satisfying
		\begin{subequations}
			\begin{align}
				N_s \mathbf{w}_s&= s^2\la_0 \mathbf{w}_s,  \qquad  x\in[0,\ell], \ \  s\in (s_0-\e,s_0+\e), \label{eq:K_eigv_eq}	\\
				\Tr_s\mathbf{w}_s &= \Tr_{s_0}\mathbf{u} _{s_0} + R_s\Tr_{s_0}\mathbf{u}_{s_0},  \qquad\mathbf{w}_{s_0} = \mathbf{u}_{s_0},
			\end{align}
		\end{subequations}
		where $R_s:\Lambda(\la_0,s_0) \ra \Lambda(\la_0,s_0)^\perp$ is the smooth family of matrices such that $\Lambda(\la_0,s) = \graph(R_s)$, cf. \eqref{eq:graph_of_R}. To prove the existence of such a family $s\mapsto \mathbf{w}_s$, consider the smooth family of vectors $h_s \coloneqq q + R_s q \in \Lambda(\la_0,s)$, where $h_{s_0}=q$ since $R_{s_0}=0$. The injectivity (and thus bijectivity) of the linear map
		\begin{equation*}
			\Tr_s: \cK_{\la_0,s} \lra \Tr_s(\cK_{\la_0,s}) =\Lambda(\la_0,s)
		\end{equation*}
		(see \cref{rem:injective}) then implies that for each $h_s \in \Lambda(\la_0,s)$ there exists a unique $\mathbf{w}_s\in\cK_{\la_0,s}$ such that $\Tr_s \mathbf{w}_s = h_s$, and in particular $\Tr_{s_0} \mathbf{w}_{s_0} = h_{s_0} = q$. 
		
		We now turn to the computation of \eqref{maslovcrossform}. We have
		\begin{align*}
			\mathfrak{m}_{s_0}(q)  &=\de{}{s}\, \w(q, R_sq )\bigg|_{s=s_0} \\
			&=\de{}{s}\, \w(\Tr_{s_0}{\mathbf{u}_{s_0}}, \Tr_{s}{\mathbf{w}_s} )\bigg|_{s=s_0} \\
			&= \omega \left (\Tr_{s_0}{\mathbf{u}_{s_0}}, \de{}{s} {\Tr}_{s}\big|_{s=s_0} \mathbf{u}_{s_0}\right  ) + \w\left (\Tr_{s_0}{\mathbf{u}_{s_0}}, \Tr_{s_0} \de{}{s}\mathbf{w}_s \big|_{s=s_0} \right ).
		\end{align*}
		The first term is zero since $\Tr_{s_0}\mathbf{u}_{s_0}  \in\cD$ implies  $\Tr_{s_0} \mathbf{u}_{s_0} =  \left (0, s_0^{-1} \gamma_N\mathbf{u}_{s_0}\right )$ and $\de{}{s}{\Tr}_s\big|_{s=s_0}\mathbf{u}_{s_0} = \left(0, - s_0^{-2} \gamma_N\mathbf{u}_{s_0}\right)$, where $
		\gamma_N \mathbf{u} \coloneqq  \left(-u'(0), v'(0), u'(\ell), -v'(\ell)) \right)^\top $. For the second term, we differentiate the equation in \eqref{eq:K_eigv_eq} with respect to $s$ and apply $\langle \cdot, S\mathbf{w}_s\rangle$, 
		\begin{align}
			\label{eq:BK_dash}
			\langle (\p_sB_s -2s\la_0) \mathbf{w}_s, S\mathbf{w}_s\rangle + \langle (N_{s}- s^2\la_0) \,\p_s \mathbf{w}_s,  S\mathbf{w}_s \rangle = 0.
		\end{align}
		From the Green's-type identity \eqref{greens_rescaled_mod} with $\mathbf{u} = \mathbf{w}_s$ and $\mathbf{v} = \p_s \mathbf{w}_s$, we have
		\begin{equation*} 
			s\,\w(\Tr_s \mathbf{w}_s, \Tr_s \p_s \mathbf{w}_s  ) = \langle (N_{s}- s^2\la_0) \mathbf{w}_s,S \,\p_s\mathbf{w}_s  \rangle - \langle S\mathbf{w}_s, (N_{s}- s^2\la_0) \p_s \mathbf{w}_s \rangle,
		\end{equation*}
		and using \eqref{eq:K_eigv_eq} and \eqref{eq:BK_dash} this reduces to 
		\begin{equation}
			\label{keyingredient}
			s\,\w(\Tr_s \mathbf{w}_s, \Tr_s \p_s \mathbf{w}_s ) =  \langle (\p_sB_s-2s\la_0) \mathbf{w}_s, S\mathbf{w}_s\rangle.
		\end{equation}
		Evaluating \eqref{keyingredient} at $s=s_0$ and dividing by $s_0$,  \eqref{eq:maslov_vertical} follows. When $\la_0=0$, substituting the stated expression for $\p_sB_{s_0}$ in \eqref{eq:maslov_vertical} gives
		\begin{align*}
			\label{eqmbs}
			\mathfrak{m}_{s_0}(q) &= \left \langle \left (2 B(s_0x) + s_0 B'(s_0x)x \right )\mathbf{u}_{s_0},S\mathbf{u}_{s_0}\right \rangle \\
			&= \int_{0}^{\ell} \Big \{ \left [2h(s_0x)+ s_0 x h'(s_0x)\right ] v_{s_0}^2(x) - \left [2g(s_0x)+s_0x g'(s_0x)\right ] u_{s_0}^2(x)\Big\} dx.
		\end{align*}
		A direct calculation using the equation $L_-^{s_0} v_{s_0}=0$, i.e. $ v_{s_0}''(x)+s_0^2h(s_0x)v_{s_0}(x)=0$, gives 
		\[
		\de{}{x} \left [ \f{1}{s_0^2} x\left ( v_{s_0}'(x)\right )^2 + xv^2_{s_0}(x)h(s_0x) -\f{1}{s_0^2}v_{s_0}(x) v_{s_0}'(x)\right ] = \left [2h(s_0x) +s_0x h'(s_0x) \right ]v_{s_0}^2(x).
		\]
		Integrating and using the fact that $v_{s_0}(0) = v_{s_0}(\ell)=0$, we get 
		\[
		\int_{0}^{\ell} \left [2h(s_0x) +s_0x h'(s_0x) \right ]v_{s_0}^2(x) dx = \f{\ell}{s_0^2}\left ( v_{s_0}'(\ell) \right )^2.
		\]
		Computing similarly for the second term, we arrive at \eqref{maslov_form_s_lambda=0}. That the form is nondegenerate in the simple case follows from \eqref{eq:simple_eigfns}: if $\dim\ker(N_{s_0})=1$ then exactly one of the entries of $\mathbf{u}_s = (u_s,v_s)^\top \in \ker(N_{s_0})$ is nontrivial. Since this function satisfies a second order differential equation with Dirichlet boundary conditions, its derivative is nonzero at $x=\ell$, and therefore \eqref{maslov_form_s_lambda=0} is nonzero. 
	\end{proof}

	\begin{cor}\label{cor:higherdims}
		Assume $\dim\ker (N_{s_0}-s_0^2\la_0) = n$ and let $\{\mathbf{u}_{s_0}^{(1)}, \mathbf{u}_{s_0}^{(2)}, \dots, \mathbf{u}_{s_0}^{(n)} \}$ be a basis for $\ker (N_{s_0} - s_0^2\la_0)$. The $n\times n$ symmetric matrix $\mathfrak{M}_{s_0}$ induced from the quadratic form \eqref{eq:maslov_vertical} is given by 
		\begin{equation}\label{M^s}
			\left [\mathfrak{M}_{s_0}\right ]_{ij} = \f{1}{s_0} \left  \langle  \big(\p_sB_{s_0} - 2 s_0\lambda_0\big) \mathbf{u}_{s_0}^{(i)}, S\mathbf{u}_{s_0}^{(j)} \right \rangle, \qquad i,j=1,\dots, n.
		\end{equation}
		Consequently, when $\la_0=0$ and $n=2$, the form $\mathfrak{m}_{s_0}$ is nondegenerate. 
	\end{cor}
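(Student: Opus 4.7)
The plan is to deduce the matrix formula \eqref{M^s} by applying the bilinear extension of \cref{lemma:scrossingform} to pairs of basis eigenfunctions, and then to read off nondegeneracy in the $\la_0=0$, $n=2$ case from the diagonal structure forced by \eqref{eq:doubleeigfns}.

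First, I would recall that by \cref{prop:Lag_interp} the trace map $\Tr_{s_0}$ restricts to a linear isomorphism from $\ker(N_{s_0}-s_0^2\la_0)$ onto $\Lambda(\la_0,s_0)\cap \cD$, so the vectors $q_i \coloneqq \Tr_{s_0}\mathbf{u}_{s_0}^{(i)}$ form a basis of the $n$-dimensional domain of $\mathfrak{m}_{s_0}$. The symmetric bilinear form $\widetilde{\mathfrak{m}}_{s_0}$ polarising $\mathfrak{m}_{s_0}$ then has matrix entries $[\mathfrak{M}_{s_0}]_{ij}=\widetilde{\mathfrak{m}}_{s_0}(q_i,q_j)$.

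Next I would repeat the derivation of \cref{lemma:scrossingform} with two distinct trajectories. Choose $C^1$ families $s\mapsto \mathbf{w}_s^{(i)},\mathbf{w}_s^{(j)}\in\cK_{\la_0,s}$ with $\mathbf{w}_{s_0}^{(k)}=\mathbf{u}_{s_0}^{(k)}$ and $\Tr_s\mathbf{w}_s^{(k)}=q_k+R_sq_k$ for $k=i,j$, whose existence again follows from the bijectivity of $\Tr_s$ on $\cK_{\la_0,s}$. Exactly as in the proof of \cref{lemma:scrossingform}, using the Green-type identity \eqref{greens_rescaled_mod} applied to $\mathbf{w}_s^{(i)}$ and $\p_s\mathbf{w}_s^{(j)}$ together with $s$-differentiation of $N_s\mathbf{w}_s^{(j)}=s^2\la_0\mathbf{w}_s^{(j)}$, one obtains
\begin{equation*}
    s\,\omega\bigl(\Tr_s\mathbf{w}_s^{(i)},\,\Tr_s\p_s\mathbf{w}_s^{(j)}\bigr)=\bigl\langle(\p_sB_s-2s\la_0)\mathbf{w}_s^{(j)},S\mathbf{w}_s^{(i)}\bigr\rangle,
\end{equation*}
and the boundary term $\omega(\Tr_{s_0}\mathbf{u}_{s_0}^{(i)},\,\frac{d}{ds}\Tr_s|_{s_0}\mathbf{u}_{s_0}^{(j)})$ vanishes as in the scalar case because $\Tr_{s_0}\mathbf{u}_{s_0}^{(i)}\in\cD$. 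Evaluating at $s=s_0$ yields the entry in \eqref{M^s}. Symmetry in $i,j$ of the right-hand side, which should be manifest from polarisation, can also be checked directly: the $2\times 2$ matrix $S(\p_sB_{s_0}-2s_0\la_0)$ is symmetric (the off-diagonals of $\p_sB_{s_0}$ pair with the off-diagonals of $S$, and the diagonal $-2s_0\la_0$ pairs with the zero diagonal of $S$), so $\langle X\mathbf{u},S\mathbf{v}\rangle=\langle S X\mathbf{u},\mathbf{v}\rangle=\langle\mathbf{u},SX\mathbf{v}\rangle=\langle X\mathbf{v},S\mathbf{u}\rangle$.

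For nondegeneracy in the case $\la_0=0$, $n=2$, I would plug the basis \eqref{eq:doubleeigfns} into the formula. The diagonal entries are computed by \eqref{maslov_form_s_lambda=0}, giving
\begin{equation*}
    [\mathfrak{M}_{s_0}]_{11}=-\tfrac{\ell}{s_0^2}\bigl(u_{s_0}^{(1)\prime}(\ell)\bigr)^2,\qquad [\mathfrak{M}_{s_0}]_{22}=\tfrac{\ell}{s_0^2}\bigl(v_{s_0}^{(2)\prime}(\ell)\bigr)^2,
\end{equation*}
and both boundary derivatives are nonzero since $u_{s_0}^{(1)}$ and $v_{s_0}^{(2)}$ solve second-order ODEs with Dirichlet endpoint conditions and are not identically zero. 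For the off-diagonal entry, since $\mathbf{u}_{s_0}^{(1)}=(u_{s_0}^{(1)},0)^\top$ and $S\mathbf{u}_{s_0}^{(2)}=(v_{s_0}^{(2)},0)^\top$, while $\p_sB_{s_0}$ is off-diagonal, direct inspection (at $\la_0=0$) shows the integrand of $\langle\p_sB_{s_0}\mathbf{u}_{s_0}^{(1)},S\mathbf{u}_{s_0}^{(2)}\rangle$ vanishes pointwise. Hence $\mathfrak{M}_{s_0}$ is diagonal with one strictly negative and one strictly positive entry, so $\mathfrak{m}_{s_0}$ is nondegenerate (in fact of signature $0$). There is no real obstacle here; the only mild care is in verifying the symmetry of the formula and that the cross terms truly vanish due to the block structure of $\p_sB_{s_0}$ and the eigenfunctions in \eqref{eq:doubleeigfns}.
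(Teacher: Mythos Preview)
Your proof is correct. The only point of departure from the paper is in how you obtain the off-diagonal entries $[\mathfrak{M}_{s_0}]_{ij}$: the paper applies the polarisation identity
\[
\mathfrak{m}_{s_0}(q_i,q_j)=\tfrac14\bigl[\mathfrak{m}_{s_0}(q_i+q_j)-\mathfrak{m}_{s_0}(q_i-q_j)\bigr]
\]
to reduce the bilinear computation to the quadratic formula \eqref{eq:maslov_vertical} already established in \cref{lemma:scrossingform}, and then expands using the symmetry of $S$ and $S\,\p_sB_{s_0}$. You instead re-run the Green's-identity argument of \cref{lemma:scrossingform} with two distinct families $\mathbf{w}_s^{(i)},\mathbf{w}_s^{(j)}$, directly computing $\omega(q_i,\dot R_{s_0}q_j)$. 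Both routes are equally short and rely on the same symmetry of $S(\p_sB_{s_0}-2s_0\la_0)$; the paper's polarisation avoids re-invoking the differential identities, while your direct bilinear derivation makes the symmetry more explicit. Your treatment of the $\la_0=0$, $n=2$ case (diagonality of $\mathfrak{M}_{s_0}$ via \eqref{eq:doubleeigfns} and nonvanishing of the boundary derivatives) is identical to the paper's.
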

	\begin{proof}		
		Letting $q_i \coloneqq \Tr_{s_0} \mathbf{u}^{(i)}_{s_0}$, it follows from the linearity and injectivity of the trace map that $\{q_i\}_{i=1}^n$ is a basis for $\Lambda(\la_0,s_0) \cap \cD$. To construct the symmetric bilinear form associated with the quadratic form \eqref{eq:maslov_vertical}, we compute the off-diagonal terms $\mathfrak{m}_{s_0}(q_i,q_j)$ via the real polarisation identity
		\begin{equation}\label{}
			\mathfrak{m}_{s_0}(q_i,q_j) = \f{1}{4} \big[\mathfrak{m}_{s_0}(q_i + q_j) - \mathfrak{m}_{s_0}(q_i-q_j)\big ].
		\end{equation}
		Since both $S$ and $S\,(\p_sB_{s_0})$ are symmetric, we obtain
		\begin{align*}
			\mathfrak{m}_{s_0}(q_i,q_j)
			&= \f{1}{4}\left  \langle  \big(\p_sB_{s_0} - 2 s_0\lambda_0\big) \big(\mathbf{u}_{s_0}^{(i)}+\mathbf{u}_{s_0}^{(j)}\big), S\big(\mathbf{u}_{s_0}^{(i)}+\mathbf{u}_{s_0}^{(j)} \big)\right \rangle \\
			&\quad -  \f{1}{4}\left  \langle  \big(\p_sB_{s_0} - 2 s_0\lambda_0\big) \big(\mathbf{u}_{s_0}^{(i)}-\mathbf{u}_{s_0}^{(j)}\big), S\big(\mathbf{u}_{s_0}^{(i)}-\mathbf{u}_{s_0}^{(j)} \big)\right \rangle \\
			&= \left  \langle  \big(\p_sB_{s_0} - 2 s_0\lambda_0\big) \mathbf{u}_{s_0}^{(i)}, S\mathbf{u}_{s_0}^{(j)} \right \rangle.
		\end{align*}
		The corresponding matrix elements with respect to the basis $\{q_i\}$ are $\left [\mathfrak{M}_{s_0}\right ]_{ij} = \mathfrak{m}_{s_0}(q_i,q_j)$,
		and the first statement of the corollary follows. In the case $\la_0=0$ and $n=2$, using \eqref{maslov_form_s_lambda=0} and recalling \eqref{eq:doubleeigfns}, the matrix \eqref{M^s} reduces to 
		\begin{equation}
			\label{eq:capitalmathfrakMn=2}
			\mathfrak{M}_{s_0}
			= \f{\ell}{s_0^2} \begin{pmatrix}
				- \big ( \p_xu_{s_0}^{(1)}(\ell)  \big )^2 &  0\\ 
				0	 & \big ( \p_xv_{s_0}^{(2)}(\ell)\big )^2 
			\end{pmatrix},
		\end{equation}
		which clearly has full rank. Nondegeneracy of the quadratic form $\mathfrak{m}_{s_0}$  follows.  
	\end{proof}
	
	We now move to the $\la$-direction. Holding $s=s_0$ fixed, we compute the crossing form \eqref{maslovcrossform} with respect to $\la$. We denote $d/d\la$ with a dot. 
	
	\begin{lemma}
		\label{lemma:lambdacrossform}
		Let $(\la_0,s_0)$ be a crossing and fix any nonzero $q\in\Lambda(\la_0,s_0) \cap\cD$. Then there exists a unique $\mathbf{u}_{s_0}\in\cK_{\la_0,s_0}$ such that $q=\Tr_{s_0}{\mathbf{u}_{s_0}}$, and the crossing form for the Lagrangian path $\la\mapsto \Lambda(\la,s_0)$ at $\la=\la_0$ is given by 
		\begin{equation}\label{maslov_horizontal}
			\mathfrak{m}_{\la_0}(q) 
			= -s_0 \left \langle \mathbf{u}_{s_0}, S\mathbf{u}_{s_0} \right  \rangle = -2s_0 \left \langle u_{s_0}, v_{s_0} \right  \rangle.
		\end{equation}
	\end{lemma}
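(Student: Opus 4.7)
The plan is to mirror the proof of \cref{lemma:scrossingform}, taking advantage of the fact that the $\lambda$-direction is actually the simpler one: unlike the $s$-direction, the rescaled trace map $\Tr_{s_0}$ has no $\lambda$-dependence, which kills one of the two boundary terms that appeared there.

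First I would construct a $C^1$ family $\la\mapsto \mathbf{w}_\la\in\cK_{\la,s_0}$ of solutions to the homogeneous equation $(N_{s_0}-s_0^2\la)\mathbf{w}_\la = 0$ such that $\mathbf{w}_{\la_0}=\mathbf{u}_{s_0}$ and $\Tr_{s_0}\mathbf{w}_\la=\Tr_{s_0}\mathbf{u}_{s_0}+R_\la\Tr_{s_0}\mathbf{u}_{s_0}$, where $R_\la:\Lambda(\la_0,s_0)\to\Lambda(\la_0,s_0)^\perp$ is the smooth family of matrices from \eqref{eq:graph_of_R} satisfied here in the $\la$-parameter. Existence follows exactly as in \cref{lemma:scrossingform}, from the bijectivity of $\Tr_{s_0}:\cK_{\la,s_0}\to\Lambda(\la,s_0)$ noted in \cref{rem:injective}. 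The uniqueness of $\mathbf{u}_{s_0}$ with $q=\Tr_{s_0}\mathbf{u}_{s_0}$ again follows from this same injectivity.

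Next I would unpack the definition of the crossing form. Since $\Tr_{s_0}$ does not depend on $\la$, differentiating $\w(q,q+R_\la q)=\w(\Tr_{s_0}\mathbf{u}_{s_0},\Tr_{s_0}\mathbf{w}_\la)$ yields only a single contribution,
\begin{equation*}
\mathfrak{m}_{\la_0}(q) = \w\bigl(\Tr_{s_0}\mathbf{u}_{s_0},\Tr_{s_0}\dot{\mathbf{w}}_{\la_0}\bigr).
\end{equation*}
To evaluate this, differentiate $(N_{s_0}-s_0^2\la)\mathbf{w}_\la=0$ in $\la$ to obtain $(N_{s_0}-s_0^2\la)\dot{\mathbf{w}}_\la = s_0^2\mathbf{w}_\la$, then substitute into the Green's-type identity \eqref{greens_rescaled_mod} with $\mathbf{u}=\mathbf{w}_\la$ and $\mathbf{v}=\dot{\mathbf{w}}_\la$. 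The first term of \eqref{greens_rescaled_mod} vanishes since $\mathbf{w}_\la\in\cK_{\la,s_0}$, and the second becomes $-s_0^2\langle \mathbf{w}_\la,S\mathbf{w}_\la\rangle$, giving
\begin{equation*}
s_0\,\w(\Tr_{s_0}\mathbf{w}_\la,\Tr_{s_0}\dot{\mathbf{w}}_\la) = -s_0^2\langle \mathbf{w}_\la, S\mathbf{w}_\la\rangle.
\end{equation*}
Evaluating at $\la=\la_0$ and dividing by $s_0$ produces the first equality in \eqref{maslov_horizontal}.

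The second equality is a direct calculation: since $S$ swaps the components of $\mathbf{u}_{s_0}=(u_{s_0},v_{s_0})^\top$, one has $\langle \mathbf{u}_{s_0},S\mathbf{u}_{s_0}\rangle = 2\langle u_{s_0},v_{s_0}\rangle$. There is no real obstacle here; the only subtlety worth flagging is making sure the sign convention for $R_\la$ and the ordering of arguments in $\w$ match those used in \cref{lemma:scrossingform}, so that the sign in \eqref{maslov_horizontal} comes out correctly (opposite to what one might naively expect from the $s$-direction formula, since here the boundary-term contribution from differentiating $\Tr_s$ is absent). An analogous higher-dimensional matrix version could be stated, paralleling \cref{cor:higherdims}, via the polarisation identity applied to \eqref{maslov_horizontal}.
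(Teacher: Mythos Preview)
Your proposal is correct and follows essentially the same approach as the paper's proof: construct the $C^1$ family $\la\mapsto\mathbf{w}_\la$ via bijectivity of the trace map, differentiate the eigenvalue equation, and feed the result into the Green's-type identity \eqref{greens_rescaled_mod} to evaluate $\w(\Tr_{s_0}\mathbf{u}_{s_0},\Tr_{s_0}\dot{\mathbf{w}}_{\la_0})$. Your observation that the $\la$-independence of $\Tr_{s_0}$ eliminates one boundary term is exactly right, and your aside about the higher-dimensional matrix version anticipates \cref{cor:higherdims_lambda}.
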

	
	\begin{proof}
		The argument is virtually identical to that in the $s$ direction. Fixing $s=s_0$, we consider a $C^1$ family of vectors $\la\mapsto \mathbf{w}_\la\in\cK_{\la,s_0}$ satisfying
		\begin{subequations}\label{wlambda_family}
			\begin{align}
				N_{s_0}\mathbf{w}_\la & = s_0^2\la \mathbf{w}_\la, \qquad x\in[0,\ell], \ \ \la\in (\la_0-\e, \la_0+\e) \label{eq:N_eigv_eq_s0}	
				\\
				\Tr_{s_0}\mathbf{w}_\la&= \Tr_{s_0}\mathbf{u} _{s_0} + R_\la \Tr_{s_0}\mathbf{u}_{s_0}, \qquad \mathbf{w}_{\la_0} = \mathbf{u}_{s_0},
			\end{align}
		\end{subequations}
		where now $R_\la : \Lambda(\la_0, s_0)\lra  \Lambda(\la_0,s_0)^\perp$ is such that $\Lambda(\la,s_0) = \graph(R_\la)$.  
		Similar to \eqref{eq:BK_dash} we have 
		\begin{equation*}\label{}
			\langle -s_0^2 \mathbf{w}_{\la}, S\mathbf{w}_{\la}\rangle  + \langle (N_{s_0}-s_0^2\la ) \dot{\mathbf{w}}_{\la}, S\mathbf{w}_{\la} \rangle =0,
		\end{equation*}
		and using the identity \eqref{greens_rescaled_mod} with $\mathbf{u} = \mathbf{w}_\la$ and $\mathbf{v} = \dot{\mathbf{w}}_\la$ yields
		\begin{equation*}\label{}
			s_0\,\w(\Tr_{s_0}  \mathbf{w}_{\la}, \Tr_{s_0}  \dot{\mathbf{w}}_{\la}  ) = \langle (N_{s_0}-s_0^2\la) \mathbf{w}_{\la}, S\dot{\mathbf{w}}_\la \rangle - \langle S\mathbf{w}_\la, (N_{s_0}-s_0^2\la) \dot{\mathbf{w}}_\la \rangle.
		\end{equation*} 
		The previous two equations along with \eqref{eq:N_eigv_eq_s0} give
		\begin{equation}\label{eq:Blambdadot}
			s_0\,\w(\Tr_{s_0}  \mathbf{w}_{\la}, \Tr_{s_0}  \dot{\mathbf{w}}_{\la}  ) =  -\langle s_0^2 \mathbf{w}_{\la}, S\mathbf{w}_{\la}\rangle.
		\end{equation}
		Therefore the crossing form \eqref{maslovcrossform} is
		\begin{align*}
			\mathfrak{m}_{\la_0}(q) 
			= \w\left (\Tr_{s_0}{\mathbf{u}_{s_0}}, \Tr_{s_0}{\dot{\mathbf{w}}_{\la}}\big|_{\la=\la_0} \right ) 
			= -s_0\langle \mathbf{u}_{s_0}, S\mathbf{u}_{s_0}\rangle=-2s_0 \left \langle u_{s_0}, v_{s_0} \right  \rangle,
		\end{align*}
		where we used \eqref{eq:Blambdadot} evaluated at $\la=\la_0$.
	\end{proof}
	
	Recalling \eqref{eq:simple_eigfns}, at a simple crossing $(0,s_0)$ one of $u_{s_0}$ or $v_{s_0}$ is always trivial. Degeneracy of the $\la$-crossing form immediately follows.
	\begin{cor}\label{cor:degeneracy_of_form_lambda}
		All conjugate points $(0,s_0)$ for which $\dim\ker(N_{s_0})=1$ are non-regular in the $\la$ direction, i.e. at all such points $\mathfrak{m}_{\la_0}=0$.
	\end{cor}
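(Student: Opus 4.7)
The plan is to observe that this is an immediate consequence of the formula \eqref{maslov_horizontal} in \cref{lemma:lambdacrossform} combined with the structure of $\ker(N_{s_0})$ described in \eqref{eq:simple_eigfns}. The rest is just bookkeeping.

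First I would invoke \cref{lemma:lambdacrossform} at the crossing $(\la_0, s_0) = (0, s_0)$. Given any nonzero $q \in \Lambda(0, s_0) \cap \cD$, there is a unique $\mathbf{u}_{s_0} = (u_{s_0}, v_{s_0})^\top \in \cK_{0, s_0}$ with $q = \Tr_{s_0} \mathbf{u}_{s_0}$, and by injectivity of the trace on $\cK_{0, s_0}$ (see \cref{rem:injective}) this $\mathbf{u}_{s_0}$ in fact lies in $\ker(N_{s_0})$. The formula gives $\mathfrak{m}_{\la_0}(q) = -2 s_0 \langle u_{s_0}, v_{s_0}\rangle$.

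Next, I would apply the hypothesis $\dim \ker(N_{s_0}) = 1$. Since $\la_0 = 0$, the decoupling $\ker(N_{s_0}) = \ker(L_+^{s_0}) \oplus \ker(L_-^{s_0})$ combined with simplicity of Sturm--Liouville eigenvalues forces $0$ to lie in exactly one of $\spec(L_+^{s_0})$ or $\spec(L_-^{s_0})$, and then \eqref{eq:simple_eigfns} tells us that $\mathbf{u}_{s_0}$ is either of the form $(u_{s_0}, 0)^\top$ or $(0, v_{s_0})^\top$. In either case $\langle u_{s_0}, v_{s_0}\rangle = 0$, so $\mathfrak{m}_{\la_0}(q) = 0$ for this (arbitrary) $q$.

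Finally, since $\dim \Lambda(0, s_0) \cap \cD = \dim \ker(N_{s_0}) = 1$ by \cref{prop:Lag_interp}, the quadratic form $\mathfrak{m}_{\la_0}$ is defined on a one-dimensional space and vanishes on every nonzero element, hence vanishes identically. There is no real obstacle here: the entire content of the corollary is the observation that the decoupling of $N_{s_0}$ at $\la_0 = 0$ forces the inner product $\langle u_{s_0}, v_{s_0}\rangle$ appearing in \eqref{maslov_horizontal} to be trivially zero, which is precisely the mechanism that makes the $\la$-crossing non-regular and motivates the more delicate analysis of later sections.
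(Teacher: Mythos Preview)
Your proof is correct and follows exactly the same approach as the paper: both simply observe that \eqref{eq:simple_eigfns} forces one component of $\mathbf{u}_{s_0}$ to vanish, so the inner product in \eqref{maslov_horizontal} is zero. The paper's argument is a one-line remark preceding the corollary; your version just spells out the bookkeeping in more detail.
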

	For the case of higher dimensional crossings, we have the following corollary to \cref{lemma:lambdacrossform}.
	
	\begin{cor}\label{cor:higherdims_lambda}
		Assume $\dim\ker (N_{s_0}-s_0^2\la_0) = n$ and let $\{\mathbf{u}_{s_0}^{(1)}, \mathbf{u}_{s_0}^{(2)}, \dots, \mathbf{u}_{s_0}^{(n)} \}$ be a basis for $\ker (N_{s_0} - s_0^2\la_0)$. The $n\times n$ symmetric matrix $\mathfrak{M}_{\la_0}$ induced from the $n$-dimensional quadratic form \eqref{maslov_horizontal} is given by 
		\begin{equation}\label{M^la}
			\left [\mathfrak{M}_{\la_0}\right ]_{ij} = -s_0 \left \langle \mathbf{u}_{s_0}^{(i)}, S\mathbf{u}_{s_0}^{(j)} \right  \rangle, \qquad i,j=1,\dots n.
		\end{equation}
		Consequently, when $\la_0=0$ and $n=2$,  $\mathfrak{m}_{\la_0}$ is nondegenerate if and only if $\big<u^{(1)}_{s_0}, v^{(2)}_{s_0} \big> \neq 0$.
	\end{cor}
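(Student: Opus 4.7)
The plan is to follow the template of the proof of \cref{cor:higherdims}, simply replacing the $s$-crossing form with the $\la$-crossing form from \cref{lemma:lambdacrossform}. The first step is to construct the associated symmetric bilinear form. Letting $q_i \coloneqq \Tr_{s_0}\mathbf{u}_{s_0}^{(i)}$, linearity and injectivity of the trace map (see \cref{rem:injective}) give that $\{q_i\}_{i=1}^n$ is a basis for $\Lambda(\la_0,s_0)\cap \cD$. By the real polarisation identity,
\begin{equation*}
\mathfrak{m}_{\la_0}(q_i,q_j) = \tfrac{1}{4}\bigl[\mathfrak{m}_{\la_0}(q_i+q_j) - \mathfrak{m}_{\la_0}(q_i-q_j)\bigr].
\end{equation*}
Substituting \eqref{maslov_horizontal} and using that $S=S^\top$ is symmetric, the cross-terms on opposite sides cancel and the quartic terms combine to yield $\mathfrak{m}_{\la_0}(q_i,q_j) = -s_0\langle \mathbf{u}_{s_0}^{(i)}, S\mathbf{u}_{s_0}^{(j)}\rangle$, which is precisely \eqref{M^la}.

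For the nondegeneracy claim in the case $\la_0=0$, $n=2$, I would plug in the explicit eigenfunctions from \eqref{eq:doubleeigfns}, namely $\mathbf{u}_{s_0}^{(1)} = (u_{s_0}^{(1)},0)^\top$ and $\mathbf{u}_{s_0}^{(2)} = (0,v_{s_0}^{(2)})^\top$. Since $S$ merely swaps entries of the vector it acts on (see \cref{rem:notation}), one computes directly that $\langle \mathbf{u}_{s_0}^{(i)}, S\mathbf{u}_{s_0}^{(i)}\rangle = 0$ for $i=1,2$ (as the nonzero components are supported on disjoint coordinates), while
\begin{equation*}
\langle \mathbf{u}_{s_0}^{(1)}, S\mathbf{u}_{s_0}^{(2)}\rangle = \bigl\langle (u_{s_0}^{(1)},0)^\top, (v_{s_0}^{(2)},0)^\top\bigr\rangle = \langle u_{s_0}^{(1)}, v_{s_0}^{(2)}\rangle.
\end{equation*}
Therefore the matrix \eqref{M^la} takes the purely off-diagonal form
\begin{equation*}
\mathfrak{M}_{\la_0} = -s_0 \begin{pmatrix} 0 & \langle u_{s_0}^{(1)}, v_{s_0}^{(2)}\rangle \\ \langle u_{s_0}^{(1)}, v_{s_0}^{(2)}\rangle & 0 \end{pmatrix},
\end{equation*}
whose determinant is $-s_0^2 \langle u_{s_0}^{(1)}, v_{s_0}^{(2)}\rangle^2$. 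Since $s_0>0$, this matrix (and hence $\mathfrak{m}_{\la_0}$) is nondegenerate if and only if $\langle u_{s_0}^{(1)}, v_{s_0}^{(2)}\rangle \neq 0$, completing the proof.

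There is really no main obstacle here: the structure is dictated by \cref{cor:higherdims}, and the only content beyond routine bookkeeping is the observation that the block-diagonal kernel decomposition at $\la_0 = 0$ forces the diagonal entries of $\mathfrak{M}_{\la_0}$ to vanish, so that nondegeneracy is governed entirely by the single cross inner product. It is worth noting the qualitative contrast with \eqref{eq:capitalmathfrakMn=2}: the $s$-crossing form at a double conjugate point is automatically nondegenerate because it encodes boundary derivatives of two distinct Dirichlet eigenfunctions, whereas the $\la$-crossing form can degenerate precisely when the two independent kernel elements of $L_+^{s_0}$ and $L_-^{s_0}$ happen to be $L^2$-orthogonal.
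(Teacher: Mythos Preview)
Your proof is correct and follows essentially the same approach as the paper: the first statement is reduced to the polarisation argument of \cref{cor:higherdims}, and the nondegeneracy claim is handled by substituting the explicit eigenfunctions \eqref{eq:doubleeigfns} to obtain the purely off-diagonal matrix \eqref{matrix_firstorder_lambda}. Your version is slightly more explicit in spelling out the polarisation step and the determinant computation, but the argument is the same.
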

	\begin{proof}
		The first statement is proved as in \cref{cor:higherdims}. When $\la_0=0$ and $n=2$, due to \eqref{eq:doubleeigfns}, \eqref{M^la} reduces to
		\begin{equation}\label{matrix_firstorder_lambda}
			\mathfrak{M}_{\la_0} = -s_0	\begin{pmatrix}
				\big \langle \mathbf{u}_{s_0}^{(1)}, S\mathbf{u}_{s_0}^{(1)} \big \rangle & 
				\big \langle  \mathbf{u}_{s_0}^{(1)}, S\mathbf{u}_{s_0}^{(2)} \big \rangle \\[2mm]
				\big \langle  \mathbf{u}_{s_0}^{(2)}, S\mathbf{u}_{s_0}^{(1)} \big \rangle &
				\big \langle  \mathbf{u}_{s_0}^{(2)}, S\mathbf{u}_{s_0}^{(2)} \big \rangle
			\end{pmatrix}
			= -s_0\begin{pmatrix}
				0 &  \big<u^{(1)}_{s_0}, v^{(2)}_{s_0} \big> \\[2mm]
				\big<u^{(1)}_{s_0}, v^{(2)}_{s_0} \big> & 0
			\end{pmatrix},
		\end{equation}
		from which nondegeneracy of  $\mathfrak{m}_{\la_0}$ occurs if and only if  the condition stated holds.
	\end{proof}
	It follows from \cref{cor:higherdims_lambda,cor:degeneracy_of_form_lambda} that a calculation of the Maslov index at $\la=0$ in the $\la$-direction is not possible using the first order crossing form \eqref{maslovcrossform} if $\dim\ker(N_{s_0})=1$, or if $\dim\ker(N_{s_0})=2$ and $\big<u^{(1)}_{s_0}, v^{(2)}_{s_0} \big> =0$. In light of this, we define:
	\begin{define}
		\label{defn:c}
		The correction term $\mathfrak{c}$ is 
		\begin{equation}\label{eq:defn_c}
			\mathfrak{c} := \Mas\big(\Lambda(s,\la),\cD;s\in[1-\e,1]\big) + \Mas\big(\Lambda(\la,1),\cD; \lambda\in[0,\e]\big)
		\end{equation}
		for $0<\e\ll 1$.
	\end{define}
	
	That is, $\mathfrak c$ denotes the contribution to the Maslov index from the top left corner of the Maslov box (consisting of the arrival along $\Gamma_2$ and the departure along $\Gamma_3$).
	
	\begin{rem}
		
		\label{rem:Lisolated}
		To see that this does not depend on the choice of $0 < \e \ll 1$, we observe that $(0,1)$ is an isolated crossing for both $\Gamma_2$ and $\Gamma_3$. For $\Gamma_2$ this follows from the non-degeneracy of $\mathfrak m_{s_0}$ in \cref{lemma:scrossingform,cor:higherdims}. For $\Gamma_3$ we use the fact that the set
		$
		\{ \lambda : \Lambda(\la,1) \cap \cD \neq \{0\} \} = \spec(N) \cap \R
		$
		is discrete (because $N$ has compact resolvent), so there exists $\hat\la>0$ such that $\Lambda(\la,1) \cap \cD = \{0\}$ for $0 < \la < \hat\la$.
		
	\end{rem}
	
	We circumvent the issue of the non-regular crossing in \cref{subsec:degenerate} via a homotopy argument. This will be possible after having analysed the local behaviour of the eigenvalue curves in \cref{sec:proof1}. In the meantime, we compute the second order crossing form \eqref{secondorderform} from \cite[Proposition 3.10]{DJ11}.
	
	\begin{lemma}\label{lemma:second_order_lambda_form}
		Assume the conditions of \cref{lemma:lambdacrossform}. If the first order quadratic form in \eqref{maslov_horizontal} is identically zero, then the second order quadratic form \eqref{secondorderform} is given by
		\begin{equation}\label{second_order_lambda_form}
			\mathfrak{m}^{(2)}_{\la_0}(q) = -2s_0^3\langle {\mathbf{v}}_{s_0}, S\mathbf{u}_{s_0} \rangle,  \qquad q=\Tr_{s_0}\mathbf{u}_{s_0},
		\end{equation}
		where $\mathbf{u}_{s_0} \in \ker(N_{s_0}-s_0^2\la_0)$ and ${\mathbf{v}}_{s_0}\in\dom(N_{s_0})$ solves $(N_{s_0}-s_0^2\la_0){\mathbf{v}}_{s_0}= \mathbf{u}_{s_0}$. The $n\times n$ matrix $\mathfrak{M}^{(2)}_{\la_0}$ of the symmetric bilinear form associated with $\mathfrak{m}^{(2)}_{\la_0}$ has entries
		\begin{equation}\label{bigmfM2}
			\left [\mathfrak{M}^{(2)}_{\la_0}\right ]_{ij} = -2s_0^3 \left \langle {\mathbf{v}}_{s_0}^{(i)}, S\mathbf{u}_{s_0}^{(j)} \right \rangle,
		\end{equation}
		where ${\mathbf{v}}_{s_0}^{(i)}\in\dom(N_{s_0})$ solves $(N_{s_0}-s_0^2\la_0){\mathbf{v}}_{s_0}^{(i)} = \mathbf{u}_{s_0}^{(i)}$. In the case $\la_0=0$ and $n=1$, we have
		\begin{align}\label{2ndorder_lambda_1dim}
			\mathfrak{m}^{(2)}_{\la_0}(q) = \begin{cases}
				-2s_0^3 \left \langle \widehat{v}_{s_0}, u_{s_0}\right \rangle & 0\in\spec(L_+^{s_0})\setminus  \spec(L_-^{s_0}), \vspace{1mm}\\
				-2s_0^3 \left  \langle \widehat{u}_{s_0}, v_{s_0} \right \rangle & 0\in\spec(L_-^{s_0})\setminus  \spec(L_+^{s_0}),
			\end{cases}
		\end{align}
		where $\widehat{v}_{s_0}\in\dom(L_-^{s_0})$ and $\widehat{u}_{s_0} \in\dom(L_+^{s_0})$ solve $-L_-^{s_0}\widehat{v}_{s_0} = u_{s_0}$ and $L_+^{s_0}\widehat{u}_{s_0} = v_{s_0}$ respectively. In the case $\la_0=0$ and $n=2$ we have
		\begin{equation}\label{bigmfM2_2dim}
			\mathfrak{M}^{(2)}_{\la_0} = -2s_0^3\begin{pmatrix}
				\big \langle \widehat{v}_{s_0}^{(1)}, u_{s_0}^{(1)}\big \rangle & 0 \\
				0 & \big \langle \widehat{u}_{s_0}^{(2)}, v_{s_0}^{(2)} \big \rangle
			\end{pmatrix},
		\end{equation}
		where $\widehat{v}_{s_0}^{(1)}\in\dom(L_-^{s_0})$ and $\widehat{u}_{s_0}^{(2)} \in\dom(L_+^{s_0})$ solve $-L_-^{s_0}\widehat{v}_{s_0}^{(1)} = u_{s_0}^{(1)}$ and $L_+^{s_0}\widehat{u}_{s_0}^{(2)} = v_{s_0}^{(2)}$ respectively.
	\end{lemma}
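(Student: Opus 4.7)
The plan is to push the Green-type identity argument from the proof of Lemma~\ref{lemma:lambdacrossform} one order higher in $\la$. Taking the smooth family $\mathbf{w}_\la \in \cK_{\la, s_0}$ from that proof (with $\mathbf{w}_{\la_0} = \mathbf{u}_{s_0}$ and $\Tr_{s_0}\mathbf{w}_\la = (I + R_\la)\Tr_{s_0}\mathbf{u}_{s_0}$), and using that the first-order form $\mathfrak{m}_{\la_0}$ vanishes identically by hypothesis so that the linear term in the Taylor expansion of $\w(\Tr_{s_0}\mathbf{u}_{s_0}, \Tr_{s_0}\mathbf{w}_\la)$ drops out, the second-order form collapses to
\[
\mathfrak{m}^{(2)}_{\la_0}(q) = \des{}{\la}\w(\Tr_{s_0}\mathbf{u}_{s_0}, \Tr_{s_0}\mathbf{w}_\la)\Big|_{\la=\la_0} = \w(\Tr_{s_0}\mathbf{u}_{s_0}, \Tr_{s_0}\ddot{\mathbf{w}}_{\la_0}).
\]
Differentiating $(N_{s_0} - s_0^2\la)\mathbf{w}_\la = 0$ twice in $\la$ and evaluating at $\la_0$ produces the cascade $(N_{s_0} - s_0^2\la_0)\dot{\mathbf{w}}_{\la_0} = s_0^2\mathbf{u}_{s_0}$ and $(N_{s_0} - s_0^2\la_0)\ddot{\mathbf{w}}_{\la_0} = 2s_0^2\dot{\mathbf{w}}_{\la_0}$. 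Feeding the latter into~\eqref{greens_rescaled_mod} with $\mathbf{u} = \ddot{\mathbf{w}}_{\la_0}$, $\mathbf{v} = \mathbf{u}_{s_0}$ (so the $S(N_{s_0}-s_0^2\la_0)\mathbf{u}_{s_0}$ term drops out) will yield the intermediate identity $\mathfrak{m}^{(2)}_{\la_0}(q) = -2s_0\langle \dot{\mathbf{w}}_{\la_0}, S\mathbf{u}_{s_0}\rangle$.

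To arrive at~\eqref{second_order_lambda_form}, I will replace $\dot{\mathbf{w}}_{\la_0}$ by $s_0^2\mathbf{v}_{s_0}$, where $\mathbf{v}_{s_0} \in \dom(N_{s_0})$ solves $(N_{s_0} - s_0^2\la_0)\mathbf{v}_{s_0} = \mathbf{u}_{s_0}$. Existence of such $\mathbf{v}_{s_0}$ comes from the Fredholm alternative: the intertwining $N_{s_0}^* = SN_{s_0}S$ gives $\ker(N_{s_0}^* - s_0^2\la_0) = S\ker(N_{s_0} - s_0^2\la_0)$, so the hypothesis $\mathfrak{m}_{\la_0}\equiv 0$ is exactly the solvability condition $\mathbf{u}_{s_0}\perp \ker(N_{s_0}^* - s_0^2\la_0)$. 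The difference $\mathbf{z} \coloneqq \dot{\mathbf{w}}_{\la_0} - s_0^2\mathbf{v}_{s_0}$ lies in $\cK_{\la_0,s_0}$, and I will show $\langle \mathbf{z}, S\mathbf{u}_{s_0}\rangle = 0$ by combining the invariance of $\mathfrak{m}^{(2)}_{\la_0}$ under the choice of transverse complement (valid precisely when $\mathfrak{m}_{\la_0}\equiv 0$) with the hypothesis.

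The matrix form~\eqref{bigmfM2} will then follow by polarization exactly as in Corollary~\ref{cor:higherdims}, after checking that the bilinear form $(\mathbf{u}_{s_0}^{(i)},\mathbf{u}_{s_0}^{(j)}) \mapsto \langle \mathbf{v}_{s_0}^{(i)}, S\mathbf{u}_{s_0}^{(j)}\rangle$ is symmetric; symmetry is a one-line integration-by-parts consequence of Green's identity and $N_{s_0}^* = SN_{s_0}S$, using that each $\mathbf{v}_{s_0}^{(i)}$ is in $\dom(N_{s_0})$. The explicit formulas~\eqref{2ndorder_lambda_1dim} and~\eqref{bigmfM2_2dim} at $\la_0 = 0$ are then a direct substitution: the system $N_{s_0}\mathbf{v}_{s_0}^{(i)} = \mathbf{u}_{s_0}^{(i)}$ decouples componentwise into scalar inversions of $L_\pm^{s_0}$, so with the kernel bases~\eqref{eq:simple_eigfns} and~\eqref{eq:doubleeigfns} in hand, the relevant components of $\mathbf{v}_{s_0}^{(i)}$ are precisely $\widehat u_{s_0}^{(i)}$ and $\widehat v_{s_0}^{(i)}$ as defined in the statement; any kernel components in $\mathbf{v}_{s_0}^{(i)}$ drop out of the $S$-pairing with $\mathbf{u}_{s_0}^{(j)}$ by the hypothesis.

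The main obstacle I anticipate is the identification step $\langle \mathbf{z}, S\mathbf{u}_{s_0}\rangle = 0$: the function $\dot{\mathbf{w}}_{\la_0}$ is an $H^2$-solution of the inhomogeneous equation whose trace $\dot R_{\la_0}\Tr_{s_0}\mathbf{u}_{s_0}$ depends on the choice of complement used to define $R_\la$, whereas the target formula~\eqref{second_order_lambda_form} is phrased in terms of the canonical domain solution $\mathbf{v}_{s_0}$. Making this replacement rigorous, rather than heuristic, requires a careful argument that the $\w$-dependence on the complement is cancelled by a compensating term that vanishes precisely under the hypothesis $\mathfrak{m}_{\la_0}\equiv 0$.
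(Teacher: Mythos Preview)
Your approach is essentially identical to the paper's: take the $C^2$ family $\mathbf{w}_\la$ from the proof of \cref{lemma:lambdacrossform}, differentiate the eigenvalue equation twice, feed $\ddot{\mathbf{w}}_\la$ into the Green-type identity \eqref{greens_rescaled_mod}, and arrive at the intermediate formula $\mathfrak{m}^{(2)}_{\la_0}(q) = -2s_0\langle \dot{\mathbf{w}}_{\la_0}, S\mathbf{u}_{s_0}\rangle$. The polarisation for \eqref{bigmfM2} and the decoupling at $\la_0=0$ for \eqref{2ndorder_lambda_1dim}--\eqref{bigmfM2_2dim} are handled exactly as you describe.

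The one place where you are \emph{more} careful than the paper is the identification step you flag as your ``main obstacle''. The paper does not address it: it simply writes ``Setting $s_0^2\mathbf{v}_{s_0}=\dot{\mathbf{w}}_{\la_0}$, equation \eqref{second_order_lambda_form} follows'', i.e.\ it \emph{defines} $\mathbf{v}_{s_0}$ to be $s_0^{-2}\dot{\mathbf{w}}_{\la_0}$ and observes this solves the inhomogeneous equation, without checking that $\dot{\mathbf{w}}_{\la_0}\in\dom(N_{s_0})$ or that the value of the pairing is independent of which $H^2$ solution is used. Remark~\ref{rem:contribute} only treats non-uniqueness within $\dom(N_{s_0})$, where the ambiguity lies in $\ker(N_{s_0}-s_0^2\la_0)$ and is killed by the hypothesis $\mathfrak{m}_{\la_0}\equiv0$; it does not cover the full $\cK_{\la_0,s_0}$ ambiguity. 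That said, the formula is independently re-derived in \cref{prop:M} via Lyapunov--Schmidt, where the domain condition is built in from the start, so the result itself is on firm ground.

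One caution about your proposed fix: invoking ``invariance under choice of transverse complement'' does not directly apply here, because $\cD$ is \emph{not} transverse to $\Lambda(\la_0,s_0)$ at a crossing, so you cannot simply choose the complement to be $\cD$. A cleaner route is to apply \eqref{greens_rescaled_mod} with $\mathbf{u}=\dot{\mathbf{w}}_{\la_0}$ and $\mathbf{v}=\mathbf{v}_{s_0}\in\dom(N_{s_0})$, which yields
\[
s_0^2\langle \mathbf{v}_{s_0}, S\mathbf{u}_{s_0}\rangle - \langle \dot{\mathbf{w}}_{\la_0}, S\mathbf{u}_{s_0}\rangle
= s_0\,\w\big(\dot R_{\la_0}q,\ \Tr_{s_0}\mathbf{v}_{s_0}\big),
\]
so the task reduces to showing that the boundary symplectic pairing on the right vanishes; this is where the hypothesis $\mathfrak{m}_{\la_0}\equiv0$ must enter, and you should argue directly at the level of traces rather than via complement choice.
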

	\begin{rem}\label{rem:contribute}
		The equation $(N_{s_0}-s_0^2\la_0){\mathbf{v}}^{(i)}_{s_0}= \mathbf{u}^{(i)}_{s_0}$ is always solvable by virtue of the Fredholm Alternative, since $\mathfrak m_{s_0} = 0$ means $\langle\mathbf{u}^{(i)}_{s_0}, S \mathbf{u}^{(j)}_{s_0}\rangle=0$ for all $i,j$ and hence implies $\mathbf{u}^{(i)}_{s_0}$ is orthogonal to $\ker(N^*_{s_0} - s_0^2\la_0)$. Such a solution is not unique; however, only the component of the solution in $\ker(N_{s_0}-s_0^2\la_0)^{\perp}$ (which is unique) contributes to \eqref{second_order_lambda_form}. It therefore suffices to consider those $\bv_{s_0}^{(i)}$ satisfying $\bv_{s_0}^{(i)}\perp \bu_{s_0}^{(j)}$ for all $j=1, \dots, n$. Notice that the $\mathbf{v}_{s_0}^{(i)}$ are generalised eigenfunctions: if $\mathfrak{m}_{\la_0} = 0$, the eigenvalue $s_0^2\la_0\in\spec(N_{s_0})$ has $n$ Jordan chains of length (at least) two. We thus see that loss of regularity of the crossing coincides precisely with loss of semisimplicity of the eigenvalue, which agrees with the result of \cite[Theorem 6.1]{Corn19}.
	\end{rem}
	
	\begin{proof}
		Consider a $C^2$ family of vectors $\la\mapsto \mathbf{w}_\la$ satisfying \eqref{wlambda_family}. Then
		\begin{align*}
			\mathfrak{m}^{(2)}_{\la_0}(q) 
			&=  \w\left ( \Tr_{s_0}\mathbf{u}_{s_0}, \Tr_{s_0}\ddot{\mathbf{w}}_\la \right )\big|_{\la=\la_0}.
		\end{align*} 
		Differentiating \eqref{eq:N_eigv_eq_s0} twice with respect to $\la$, applying  $\langle \cdot , S\mathbf{w}_{\la}\rangle$ and rearranging yields 
		\begin{equation*}
			\left \langle (N_{s_0}-s_0^2\la )\ddot{\mathbf{w}}_{\la},S\mathbf{w}_{\la}\right \rangle = 2s_0^2\langle \dot{\mathbf{w}}_{\la},S\mathbf{w}_{\la}\rangle.
		\end{equation*}	
		Now using \eqref{greens_rescaled_mod} with $\mathbf{u}=\mathbf{w}_{\la}$ and $\mathbf{v} = \ddot{\mathbf{w}}_{\la}$, we have
		\begin{equation*}
			s_0\,\w(\Tr_{s_0}  \mathbf{w}_{\la}, \Tr_{s_0}  \ddot{\mathbf{w}}_{\la}  ) = \langle (N_{s_0}-s_0^2\la) \mathbf{w}_{\la}, S\ddot{\mathbf{w}}_\la \rangle - \langle S\mathbf{w}_\la, (N_{s_0}-s_0^2\la) \ddot{\mathbf{w}}_\la \rangle.
		\end{equation*}
		Combining \eqref{eq:N_eigv_eq_s0} with the previous two equations, we get
		\begin{equation*}\label{}
			s_0\,\w(\Tr_{s_0}  \mathbf{w}_{\la}, \Tr_{s_0}  \ddot{\mathbf{w}}_{\la}  ) =  -2s_0^2\langle  \dot{\mathbf{w}}_{\la}, S\mathbf{w}_{\la}\rangle.
		\end{equation*} 
		Evaluating this last equation at $\la=\la_0$ and dividing through by $s_0$, we see that 
		\begin{align*}
			\mathfrak{m}^{(2)}_{\la_0}(q) = \w(\Tr_{s_0}  \mathbf{u}_{s_0}, \Tr_{s_0}  \ddot{\mathbf{w}}_{\la}  )\big|_{\la=\la_0} =  -2s_0\langle  \dot{\mathbf{w}}_{\la_0}, S\mathbf{u}_{s_0}\rangle.
		\end{align*} 
		To compute $\dot{\mathbf{w}}_{\la_0}$, we see that differentiating \eqref{eq:N_eigv_eq_s0} with resepct to $\la$, evaluating at $\la=\la_0$ and rearranging yields
		\begin{align}\label{inhomog1}
			\left ( N_{s_0} - s_0^2\la_0 \right  )\dot{\mathbf{w}}_{\la_0} = s_0^2 \mathbf{u}_{s_0}.
		\end{align}
		Setting $s_0^2\,{\mathbf{v}}_{s_0} =\dot{\mathbf{w}}_{\la_0}$, equation \eqref{second_order_lambda_form} follows. 
		
		The same arguments as in the proof of \cref{cor:higherdims} are used to prove \eqref{bigmfM2}. Equations \eqref{2ndorder_lambda_1dim} and \eqref{bigmfM2_2dim} follow from the structure of the eigenvectors and generalised eigenvectors when $\la_0=0$. If $0\in\spec(L_-^{s_0})\setminus \spec(L_+^{s_0})$ and $\widehat u_{s_0}$ is as stated in the lemma, we have
		\[
		\begin{pmatrix}
			0 & -L_-^{s_0} \\ L_+^{s_0} & 0
		\end{pmatrix} \begin{pmatrix} \widehat{u}_{s_0} \\ 0 \end{pmatrix} = \begin{pmatrix} 0 \\ v_{s_0} \end{pmatrix} = \bu_{s_0}, 
		\]
		so $\bv_{s_0} = (\widehat u_{s_0} ,0)^\top$ and hence $\big<\bv_{s_0}, S\bu_{s_0} \big> = \langle\widehat{u}_{s_0}, v_{s_0} \rangle$. If $0\in\spec(L_+^{s_0})\setminus \spec(L_-^{s_0})$, we similarly find that $\bv_{s_0} = ( 0, \widehat v_{s_0} )^\top$ and hence $\big<\bv_{s_0}, S\bu_{s_0} \big> = \langle\widehat{v}_{s_0}, u_{s_0} \rangle$.
		Finally, if $\dim\ker(N_{s_0})=2$, we have
		\begin{equation}
			\label{double_generalised_eigfns}
			\bv^{(1)}_{s_0} = \begin{pmatrix} 0 \\ \widehat v^{(1)}_{s_0} \end{pmatrix}, \quad   \bv^{(2)}_{s_0} = \begin{pmatrix} \widehat u^{(2)}_{s_0} \\ 0 \end{pmatrix},
		\end{equation}
		with $\bu_{s_0}^{(i)}$ given by \eqref{eq:doubleeigfns}. It follows that $\big<\bv_{s_0}^{(1)}, S\bu^{(2)}_{s_0} \big> = \big<\bv_{s_0}^{(2)}, S\bu^{(1)}_{s_0} \big> = 0$ and
		\begin{equation}
			\big<\bv_{s_0}^{(1)}, S\bu^{(1)}_{s_0} \big> = \langle \widehat{v}^{(1)}_{s_0}, u^{(1)}_{s_0}\rangle, \qquad
			\big<\bv_{s_0}^{(2)}, S\bu^{(2)}_{s_0}\big> = \langle \widehat{u}^{(2)}_{s_0}, v^{(2)}_{s_0}\rangle,
		\end{equation}
		which completes the proof.
	\end{proof}

	\begin{rem}\label{rem:not_monotone}
		
		The Maslov index is in general not monotone in $\la$, in the sense that the form \eqref{maslov_horizontal} is indefinite. Consequently, it does not necessarily give an exact count of the crossings along $\Gamma_3$ for $\la>0$, which by \cref{prop:Lag_interp} equals the number of real positive eigenvalues of $N$. Nonetheless, the Maslov index always provides a lower bound for this count, and this will be used in the proof of \cref{thm:N_bound}. In special cases it \emph{is} possible to have monotonicity in $\la$; this will be used to obtain stability results in \cref{thm:VK}, cf. \cref{lemma:sign_definite}.
		
	\end{rem}
	
	\subsection{Bounding the real eigenvalue count}\label{subsec:proof_thm22}
	
	Before proving \cref{thm:N_bound}, we list some preliminary results. The first is a version of the Morse Index theorem (see \cite[\S 15]{Milnor},\cite{Smale65}) for scalar-valued Schr\"odinger operators on bounded domains with Dirichlet boundary conditions. Recall that the Morse indices $P$ and $Q$ are the numbers of negative eigenvalues of the operators $L_+$ and $L_-$, respectively.

	\begin{lemma}\label{lemma:morse_schrodinger}
		The Morse index of $L_+$ equals the number of conjugate points for $L_+$ in $(0,1)$,
		\begin{align}\label{sturm_statements}
			\begin{gathered}
				P  = \# \{ s_0\in(0,1) : 0\in\spec(L_+^{s_0} ) \} , 
			\end{gathered}
		\end{align}
		and likewise for $L_-$ and $Q$.
	\end{lemma}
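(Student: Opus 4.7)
The plan is to deduce \cref{lemma:morse_schrodinger} from classical Sturm--Liouville theory by interpreting the rescaled spectral condition $0\in\spec(L_+^{s_0})$ as the condition that $0$ is a Dirichlet eigenvalue of $L_+$ on the shorter interval $[0,s_0\ell]$. The argument for $L_-$ is verbatim the same, so I focus on $L_+$.

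First I would undo the rescaling: if $u\in H^2(0,\ell)\cap H^1_0(0,\ell)$ satisfies $L_+^{s_0} u=0$, then $\tilde u(y):=u(y/s_0)$ lies in $H^2(0,s_0\ell)\cap H^1_0(0,s_0\ell)$ and satisfies $L_+\tilde u=0$ on $[0,s_0\ell]$, and this correspondence is bijective. Consequently,
\begin{equation*}
\#\{s_0\in(0,1): 0\in\spec(L_+^{s_0})\} \;=\; \#\{L\in(0,\ell): 0 \text{ is a Dirichlet eigenvalue of } L_+ \text{ on } [0,L]\},
\end{equation*}
so the task reduces to counting the latter.

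Next I would introduce the $L$-parameter family of Dirichlet eigenvalues $\nu_1(L)<\nu_2(L)<\cdots$ of $L_+$ on $[0,L]$, with corresponding normalised eigenfunctions $\phi_k(\,\cdot\,;L)$. Three classical facts I would invoke are: (i) continuity of $\nu_k$ in $L$; (ii) $\nu_k(L)\to+\infty$ as $L\to 0^+$, which follows from boundedness of $g$ combined with the fact that the Dirichlet Laplacian on $[0,L]$ has $k$-th eigenvalue $(k\pi/L)^2$; and (iii) strict monotonicity, $d\nu_k/dL=-(\partial_x\phi_k)^2\big|_{x=L}<0$, a Hadamard variational formula that is the scalar, self-adjoint analogue of the crossing form computation in \cref{lemma:scrossingform}. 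The nonvanishing of $\partial_x\phi_k$ at $x=L$ is forced by uniqueness for the Dirichlet initial value problem of the underlying ODE.

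The Morse index is $P=\#\{k:\nu_k(\ell)<0\}$. For each such $k$, properties (i)--(iii) together with the intermediate value theorem yield a unique $L_k\in(0,\ell)$ with $\nu_k(L_k)=0$. Conversely, every $L_*\in(0,\ell)$ at which $L_+$ on $[0,L_*]$ admits a Dirichlet-null eigenfunction satisfies $0=\nu_k(L_*)$ for a unique $k$ (by simplicity of Sturm--Liouville eigenvalues), and (iii) then forces $\nu_k(\ell)<0$. Thus $k\mapsto L_k$ is a bijection between $\{k:\nu_k(\ell)<0\}$ and the set of conjugate points in $(0,\ell)$, establishing \eqref{sturm_statements}. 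The only technical input is the strict monotonicity (iii); the main obstacle, if any, is verifying the Hadamard formula, which uses Green's identity together with differentiation of $L_+\phi_k(\,\cdot\,;L)=\nu_k(L)\phi_k(\,\cdot\,;L)$ in $L$ and is entirely parallel to the derivation of \eqref{eq:maslov_vertical}, with sign-definiteness replacing the indefinite signatures arising for the Hamiltonian $N$.
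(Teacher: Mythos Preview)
Your proposal is correct and is precisely the classical monotonicity argument the paper has in mind: the paper does not spell out a proof of \cref{lemma:morse_schrodinger} but treats it as a known Morse--Smale type result, and the proof of the very next lemma invokes exactly the strict monotonicity $\nu_k(L)$ decreasing in $L$ (citing Smale) that you use. Your write-up simply fills in the details the paper leaves to the literature.
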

	The following lemma will not be needed until the proof of \cref{lemma:imag_eigvals}, but we list it here since its proof uses the same ideas that are used to prove the previous lemma.
	\begin{lemma}
		\label{lemma:morse_schrodinger_nonnegative}
		If $Q=0$ (respectively, $P=0$) then $L_-^{s}$ (respectively, $L_+^s$) is a strictly positive operator for all $s\in(0,1)$, and is nonnegative for $s=1$.
	\end{lemma}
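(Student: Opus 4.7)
The plan is to track the eigenvalues of $L_-^s$ (respectively $L_+^s$) as functions of $s$ and show that the smallest one never becomes non-positive for $s \in (0,1)$. I will treat only the $L_-^s$ case; the $L_+^s$ case is identical upon replacing $h$ with $g$ and $Q$ with $P$. Enumerate the eigenvalues of $L_-^s = -\partial_{xx} - s^2 h(sx)$ with Dirichlet conditions on $[0,\ell]$ as $\mu_1(s) < \mu_2(s) < \cdots$; these are simple by Sturm--Liouville theory and depend continuously (indeed analytically) on $s$, since $s \mapsto s^2 h(s\cdot)$ is a continuous family of bounded multiplication operators. The equivalence of eigenvalue problems between $L_-|_{[0,s\ell]}$ and $L_-^s$ noted after \eqref{eq:N_EVP_rescaled} will not even be needed directly; what we use is continuity of $\mu_k(s)$ together with the hypothesis $Q=0$.

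First I would analyse the behaviour as $s \to 0^+$. Since $h \in C^2([0,\ell])$ is bounded, a min-max estimate gives
\begin{equation*}
    \mu_k(s) \;\geq\; \Big(\tfrac{k\pi}{\ell}\Big)^2 - s^2 \|h\|_{\infty},
\end{equation*}
so $\mu_k(s) \to (k\pi/\ell)^2 > 0$ as $s \to 0^+$; in particular, every eigenvalue is strictly positive for all sufficiently small $s>0$. Next I invoke \cref{lemma:morse_schrodinger}: the hypothesis $Q=0$ is equivalent to the statement that there is no $s_0 \in (0,1)$ with $0 \in \spec(L_-^{s_0})$, i.e.\ the curves $s \mapsto \mu_k(s)$ never cross zero on the open interval $(0,1)$.

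Combining continuity with these two facts, each curve $\mu_k(s)$ starts positive (for small $s$) and cannot change sign on $(0,1)$ without passing through zero, which is forbidden. Therefore $\mu_k(s) > 0$ for every $k$ and every $s \in (0,1)$, which is exactly the statement that $L_-^s$ is a strictly positive operator on that range. Finally, at $s=1$ the operator $L_-^1$ coincides with $L_-$, and the assumption $Q=0$ says $L_-$ has no negative eigenvalues, i.e.\ $L_-$ is nonnegative.

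There is no real obstacle in this argument; the only mildly subtle point is to justify continuity of the individual eigenvalue curves $\mu_k(s)$ globally on $(0,1]$ and the uniform convergence $\mu_k(s) \to (k\pi/\ell)^2$ as $s \to 0^+$. Both follow from standard analytic perturbation theory (see e.g.\ \cite[\S VII]{Kato}) applied to the family $L_-^s$, whose potentials $s^2 h(s\cdot)$ form a real-analytic one-parameter family of bounded perturbations of $-\partial_{xx}$.
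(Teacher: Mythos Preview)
Your proof is correct and takes a genuinely different route from the paper's. The paper argues via \emph{monotonicity}: the eigenvalues $\mu_k(s)$ of $L_-^s$ are strictly decreasing in $s$ (a classical fact, cited to Smale), so $\mu_k(1)\geq 0$ immediately forces $\mu_k(s)>0$ for all $s\in(0,1)$. You instead run an intermediate-value argument: continuity of $\mu_k(s)$, positivity for small $s$ via a min--max bound, and the absence of zero crossings on $(0,1)$ (imported from \cref{lemma:morse_schrodinger} with $Q=0$) together force $\mu_k(s)>0$ throughout.

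Both approaches are short and valid. The paper's is more direct and uses only the endpoint information $\mu_k(1)\geq 0$, whereas yours requires knowing the small-$s$ limit and leans on \cref{lemma:morse_schrodinger}. Since \cref{lemma:morse_schrodinger} is itself typically proved via the same monotonicity result, your argument is not really more elementary---but it is perfectly sound given that lemma as a black box.
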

	\begin{proof}
		This follows from monotonicity of the eigenvalues of the Schr\"odinger operators $L^s_\pm$ in the spatial parameter $s$, see \cite{Smale65}. Indeed, the eigenvalues $\lambda_j^\pm(s)\in\spec(L_\pm^s)$ are strictly decreasing functions of $s$, so $\lambda_j^\pm(1) \geq 0$ implies $\lambda_j^\pm(s) > 0$ for $s\in(0,1)$. 		
	\end{proof}
	The following selfadjoint formulation of the eigenvalue problem will be needed in \cref{lem:maslov_right_bottom_zero}. Some of the ideas used here, especially the use of the square root of a strictly positive operator to convert the eigenvalue problem to a selfadjoint one, can be found in \cite[\S 4]{pelinovsky}. 
	\begin{lemma}\label{lemma:equiv_problems}
		Fix $s\in(0,1]$ and suppose $\la\in\C \backslash \{0\}$. If $L_-^s$ is a nonnegative operator, the eigenvalue problem
		\begin{align}
			&\begin{cases}
				\text{There exists $v_s\in\dom(L_-^s)$, $u_s\in\dom(L_+^s)$ such that:} \\
				\quad  -L_-^sv_s = s^2\la u_s,   \quad L_+^su_s=s^2\la v_s
			\end{cases} \label{eq:VKsystem} && &&
		\end{align}
		is equivalent to
		\begin{align}
			&\begin{cases}
				\text{There exists $ w_s\in\dom\left ( L_-^s|_{X_c}\right )^{1/2}$ with $\Pi\left ( L_-^s|_{X_c}\right )^{1/2} w_s\in\dom(L_+^s)$} \\
				\text{and $L_+^s \Pi \left ( L_-^s|_{X_c}\right )^{1/2}  w_s \in \dom(L_-^s)$, such that:} \\
				\quad \left ( L_-^s|_{X_c}\right )^{1/2} \Pi L_+^s \Pi\left ( L_-^s|_{X_c}\right )^{1/2}  w_s= - s^4\la^2  w_s, 
			\end{cases}  \label{eq:VKsystem3}
		\end{align}
		where the domains $\dom(L_\pm^s)$ are given by \eqref{domLpm}, $X_c\coloneqq \ker(L_-^s)^{\perp}\subseteq L^2(0,\ell)$ and $\Pi$ is the orthogonal projection $\Pi: L^2(0,\ell) \ra X_c$. If $L_+^s$ is nonnegative, then \eqref{eq:VKsystem} is equivalent to
		\begin{align}
			\label{eq:VKsystem5}
			&\begin{cases}
				\text{There exists $ w_s\in\dom\left ( L_+^s|_{X_c}\right )^{1/2}$ with $\Pi\left ( L_+^s|_{X_c}\right )^{1/2} w_s\in\dom(L_-^s)$} \\
				\text{and $L_-^s \Pi\left ( L_+^s|_{X_c}\right )^{1/2}  w_s \in \dom(L_+^s)$, such that:} \\
				\quad \left ( L_+^s|_{X_c}\right )^{1/2} \Pi L_-^s \Pi\left ( L_+^s|_{X_c}\right )^{1/2}  w_s= - s^4\la^2  w_s,
			\end{cases}
		\end{align}
		where now $X_c\coloneqq \ker(L_+^s)^{\perp}\subseteq L^2(0,\ell)$.
	\end{lemma}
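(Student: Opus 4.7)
The plan is to reduce the coupled Hamiltonian system \eqref{eq:VKsystem} to a single selfadjoint eigenvalue equation by eliminating one of the two functions and then symmetrising the resulting composed operator via a square-root construction. The crucial observation is that, since $\la\neq 0$ and $L_-^s$ is selfadjoint, the first equation of \eqref{eq:VKsystem} forces $u_s=-(s^2\la)^{-1}L_-^s v_s\in\ran(L_-^s)\subseteq\ker(L_-^s)^{\perp}=X_c$, so the problem automatically lives on the invariant subspace on which $L_-^s|_{X_c}$ is strictly positive. Hence $A\coloneqq(L_-^s|_{X_c})^{1/2}$ is a well-defined positive selfadjoint operator on $X_c$ with bounded inverse, because the spectrum of the Sturm--Liouville operator $L_-^s|_{X_c}$ is bounded below by a positive constant.

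For the forward implication of the first equivalence, given a solution $(u_s,v_s)$ of \eqref{eq:VKsystem}, I would decompose $v_s=v_0+v_c$ with $v_0\in\ker(L_-^s)$ and $v_c=\Pi v_s\in\dom(L_-^s|_{X_c})\subseteq\dom(A)$, then set $w_s\coloneqq Av_c\in\dom(A)$. The identities $Aw_s=A^2 v_c=L_-^s v_s=-s^2\la u_s$ give $\Pi Aw_s=Aw_s\in\dom(L_+^s)$, and $L_+^s\Pi Aw_s=-s^2\la L_+^s u_s=-s^4\la^2 v_s\in\dom(L_-^s)$, so all three domain conditions in \eqref{eq:VKsystem3} hold. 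Applying $A\Pi$ once more, $A\Pi(-s^4\la^2 v_s)=-s^4\la^2 Av_c=-s^4\la^2 w_s$, which is precisely the equation in \eqref{eq:VKsystem3}.

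For the converse, given $w_s$ satisfying \eqref{eq:VKsystem3}, define
\begin{equation*}
 u_s\coloneqq -(s^2\la)^{-1}Aw_s, \qquad v_s\coloneqq -(s^4\la^2)^{-1}L_+^s\Pi Aw_s.
\end{equation*}
The domain hypotheses in \eqref{eq:VKsystem3} place $u_s\in\dom(L_+^s)$ and $v_s\in\dom(L_-^s)$, and by construction $L_+^s u_s=s^2\la v_s$. It remains to verify $-L_-^s v_s=s^2\la u_s$, i.e.\ $L_-^s L_+^s Aw_s=-s^4\la^2 Aw_s$. Decomposing $L_+^s Aw_s=z_0+z_c$ with $z_0\in\ker(L_-^s)$ and $z_c=\Pi L_+^s\Pi Aw_s\in X_c$, the equation in \eqref{eq:VKsystem3} reads $Az_c=-s^4\la^2 w_s$; applying $A$ again (which is legitimate because the hypothesis $L_+^s\Pi Aw_s\in\dom(L_-^s)$ ensures $z_c\in\dom(L_-^s|_{X_c})=\dom(A^2)$) yields $A^2 z_c=-s^4\la^2 Aw_s$, and since $L_-^s L_+^s Aw_s=L_-^s z_c=A^2 z_c$, this is what was required.

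The equivalence of \eqref{eq:VKsystem} and \eqref{eq:VKsystem5} follows from the symmetric argument with the roles of $L_+^s$ and $L_-^s$ interchanged: now the second equation of \eqref{eq:VKsystem} gives $v_s=(s^2\la)^{-1}L_+^s u_s\in\ran(L_+^s)\subseteq X_c=\ker(L_+^s)^{\perp}$, one eliminates $v_s$ in place of $u_s$, and replaces $A=(L_-^s|_{X_c})^{1/2}$ with $\widetilde A=(L_+^s|_{X_c})^{1/2}$, setting $w_s\coloneqq\widetilde A\Pi u_s$. The signs work out identically because the sole minus sign in \eqref{eq:VKsystem} is absorbed into the factor $-s^4\la^2$ on the right-hand side in both cases. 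The main obstacle is the careful tracking of domain memberships along the chain of operator compositions; the three conditions stated in \eqref{eq:VKsystem3} and \eqref{eq:VKsystem5} are precisely the translations of $v_s\in\dom(L_-^s)$ and $u_s\in\dom(L_+^s)$ under the correspondence $w_s\leftrightarrow v_c$ (respectively $w_s\leftrightarrow u_c$), and the finite-dimensionality of $\ker(L_\pm^s)$ is what allows one to pass freely between the two formulations.
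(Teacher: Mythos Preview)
Your proof is correct and follows essentially the same approach as the paper: both exploit that $u_s\in\ran(L_-^s)=X_c$, restrict $L_-^s$ to $X_c$ where it is strictly positive, and symmetrise via the square root. The only organisational difference is that the paper passes through an explicit intermediate second-order problem $L_-^sL_+^su_s=-s^4\la^2 u_s$ (proving a cycle of three implications) and defines $w_s\coloneqq(L_-^s|_{X_c})^{-1/2}u_s$, whereas you go directly and set $w_s\coloneqq(L_-^s|_{X_c})^{1/2}\Pi v_s$; since $u_s=-(s^2\la)^{-1}L_-^s v_s$, the two choices of $w_s$ differ only by the scalar $-(s^2\la)^{-1}$, so this is immaterial.
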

	\begin{proof}
		We begin with the case $L_-^s\geq0$. We prove the equivalence of \eqref{eq:VKsystem} and \eqref{eq:VKsystem3} via their equivalence with:
		\begin{align}
			&\begin{cases}
				\text{There exists $u_s\in\dom(L_+^s)\cap X_c$ with $L_+^su_s\in\dom(L_-^s)$, such that:} \\
				\quad L_-^s L_+^su_s=-s^4\la^2 u_s.
			\end{cases} \label{eq:VKsystem2}
		\end{align}
		Defining the restricted operator $L_-^s|_{X_c}$ acting in $X_c$ by
		\begin{align*}
			L_-^s|_{X_c}v\coloneqq L_-^s v, \qquad v\in\dom(L_-^s|_{X_c})  \coloneqq\dom(L_-^s) \cap X_c,
		\end{align*}
		note that $L_-^s|_{X_c}>0$ and $\left ( L_-^s|_{X_c}\right )^{1/2}$ is a well-defined and invertible operator acting in $X_c$.
		
		$\eqref{eq:VKsystem} \im \eqref{eq:VKsystem2}$: Clearly $ L_+^s u_s = s^2\la v_s \in \dom(L_-^s)$, and $u_s = -\f{1}{s^2\la} L_-^sv_s \in \ran L_-^s=X_c$ because $L_-^s$ is selfadjoint and Fredholm. Applying $L_-^s$ to the second equation in \eqref{eq:VKsystem} yields the equation in \eqref{eq:VKsystem2}. 
		
		$\eqref{eq:VKsystem2} \im \eqref{eq:VKsystem3}$: Set $ w_s \coloneqq \left ( L_-^s|_{X_c}\right )^{-1/2} u_s$. Then $ w_s \in \dom\left ( L_-^s|_{X_c}\right )^{1/2}$, and since $u_s \in X_c$ we have $\Pi\left ( L_-^s|_{X_c}\right )^{1/2} w_s = \Pi u_s = u_s \in \dom(L_+^s)$, and $L_+^s \Pi u_s = L_+^s u_s\in \dom(L_-^s)$. Now $ L_+^s u_s = \Pi L_+^s u_s + (I-\Pi) L_+^s u_s$, where the projection $(I-\Pi): L^2(0,\ell) \ra \ker (L_-^s) \subset \dom(L_-^s)$. Then $\Pi L_+^s u_s \in\dom(L_-^s)\cap X_c = \dom(L_-^s|_{X_c})$. Thus $L_-^s L_+^s u_s = L_-^s \Pi L_+^s \Pi u_s = L_-^s|_{X_c} \Pi L_+^s \Pi u_s = \left ( L_-^s|_{X_c}\right )^{1/2}\left ( L_-^s|_{X_c}\right )^{1/2}\Pi L_+^s \Pi u_s$. Substituting this into the equation in \eqref{eq:VKsystem2} and multiplying by $\left ( L_-^s|_{X_c}\right )^{-1/2}$ gives the equation in \eqref{eq:VKsystem3}.
		
		$\eqref{eq:VKsystem3} \im \eqref{eq:VKsystem}$: Set $u_s\coloneqq \Pi \left ( L_-^s|_{X_c}\right )^{1/2}  w_s \in \dom(L_+^s)$ and $v_s\coloneqq  \f{1}{s^2\la} L_+^s\Pi\left ( L_-^s|_{X_c}\right )^{1/2} w_s\in\dom(L_-^s)$. Then $L_+^s u_s = L_+^s \Pi \left ( L_-^s|_{X_c}\right )^{1/2}  w_s  = s^2\la v_s$, and since $\Pi$ projects onto $\ran(L_-^s)$,  $-L_-^s v_s = - \Pi L_-^s v_s=\f{-1}{s^2\la}\Pi L_-^s L_+^s \Pi \left ( L_-^s|_{X_c}\right )^{1/2} w_s $ $= \f{-1}{s^2\la}\Pi L_-^s(\Pi + (I - \Pi) L_+^s \Pi \left ( L_-^s|_{X_c}\right )^{1/2} w_s $ $= \f{-1}{s^2\la}\Pi L_-^s\Pi L_+^s \Pi \left ( L_-^s|_{X_c}\right )^{1/2} w_s =$ $ s^2\la \Pi \left (L_-^s|_{X_c}\right )^{1/2} w_s = s^2\la u_s$.
		
		The case $L_+^s\geq 0$ uses similar arguments, except now \eqref{eq:VKsystem} and \eqref{eq:VKsystem5} are equivalent via:
		\[
		\begin{cases}
			\text{There exists $v_s\in\dom(L_-^s)\cap X_c$ with $L_-^sv_s\in\dom(L_+^s)$, such that:} \\
			L_+^s L_-^sv_s=-s^4\la^2 v_s.
		\end{cases}
		\] 
		We omit the details.
	\end{proof} 
	
	We are now ready to compute the Maslov index of $\Gamma_2^\e$, the restriction of $\Gamma_2$ to $[\tau,1-\e]$.
	
	\begin{lemma}\label{lemma:Maslov_left}
		The Maslov index of the Lagrangian path $s\mapsto\Lambda(0,s) \subset\R^8$, $s\in[\tau,1-\e]$ is 
		\begin{equation}\label{Mas_left_shelf}
			\Mas(\Lambda,\cD; \Gamma_2^\e) = Q-P.
		\end{equation}
	\end{lemma}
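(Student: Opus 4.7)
The plan is to use the Robbin--Salamon definition \cref{defn:maslov} directly: identify all crossings along $\Gamma_2^\e$, show the endpoints $s=\tau$ and $s=1-\e$ are not crossings so that the boundary terms drop out, and compute the signature contributions via the crossing form \eqref{maslov_form_s_lambda=0}. The key structural observation is that at $\la=0$ the operator decouples, so a crossing $s_0 \in (\tau,1-\e)$ occurs precisely when $0 \in \spec(L_+^{s_0}) \cup \spec(L_-^{s_0})$, and the Morse index theorem for Schr\"odinger operators (\cref{lemma:morse_schrodinger}) tells us how many such values of $s$ there are: exactly $P$ from $L_+$ and exactly $Q$ from $L_-$, counted with their possible overlap.

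To handle the endpoints, I would first argue that for $\tau>0$ sufficiently small, the potentials in $L_\pm^s = -\p_{xx} - s^2\{g,h\}(sx)$ vanish uniformly as $s\to 0$, so the Dirichlet eigenvalues of $L_\pm^\tau$ converge to the strictly positive eigenvalues $(k\pi/\ell)^2$ of $-\p_{xx}$; hence $s=\tau$ is not a crossing. For the upper endpoint, the non-degeneracy of the $s$-crossing form \eqref{maslov_form_s_lambda=0} established in \cref{lemma:scrossingform} (and \eqref{eq:capitalmathfrakMn=2} in the $\dim\ker=2$ case) makes every conjugate point isolated in $s$, so $\e$ can be shrunk so that $s=1-\e$ is not a crossing. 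With both endpoints crossing-free, the boundary terms $-n_-(\mathfrak m_\tau)$ and $n_+(\mathfrak m_{1-\e})$ in \cref{defn:maslov} vanish, and the Maslov index reduces to a sum of signatures over interior crossings.

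Next I classify crossings as three disjoint types and compute their signatures. Let $A$, $B$, $C$ denote the number of $s_0\in(\tau,1-\e)$ where $0\in\spec(L_+^{s_0})\setminus\spec(L_-^{s_0})$, $0\in\spec(L_-^{s_0})\setminus\spec(L_+^{s_0})$, and $0\in\spec(L_+^{s_0})\cap\spec(L_-^{s_0})$, respectively. For $\tau,\e$ sufficiently small, \cref{lemma:morse_schrodinger} gives $P=A+C$ and $Q=B+C$. For a type-$A$ crossing, $\bu_{s_0}=(u_{s_0},0)^\top$ so formula \eqref{maslov_form_s_lambda=0} yields $\mathfrak m_{s_0} = -\tfrac{\ell}{s_0^2}(u_{s_0}'(\ell))^2 < 0$ by Hopf's lemma (the Dirichlet eigenfunction has nonzero outward derivative), hence signature $-1$. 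A type-$B$ crossing analogously contributes $+1$. For a type-$C$ crossing, \eqref{eq:capitalmathfrakMn=2} gives a diagonal matrix with one negative and one positive entry, so the signature is $0$. Summing:
\begin{equation*}
 \Mas(\Lambda,\cD;\Gamma_2^\e) = -A + B + 0\cdot C = -(P-C) + (Q-C) = Q-P.
\end{equation*}

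The main obstacle, and the reason this proof is not entirely routine, is the simultaneous-kernel case where $\dim\ker(N_{s_0})=2$. Here one must correctly interpret the crossing form as a matrix rather than just summing scalar contributions from $L_+$ and $L_-$ separately: the off-diagonal entries of $\mathfrak M_{s_0}$ vanish because the two eigenfunctions live in orthogonal components of $\R^2$, but one still must confirm that the indefinite signature $0$ is the correct contribution (so that a simultaneous crossing contributes as if it were a $+1$ from $L_-$ cancelling a $-1$ from $L_+$). The explicit form \eqref{eq:capitalmathfrakMn=2} handles this transparently. A secondary care point is verifying that no conjugate points of $N$ on $(0,1)$ are missed; this follows because the only possible conjugate point in $[1-\e,1)$ is $s=1$ itself (already excluded from $(0,1)$), and conjugate points in $(0,\tau]$ are ruled out by the eigenvalue-positivity argument above.
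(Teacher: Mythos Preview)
Your proposal is correct and follows essentially the same approach as the paper's proof: compute the signature of the crossing form \eqref{maslov_form_s_lambda=0} in the three cases $0\in\spec(L_+^{s_0})\setminus\spec(L_-^{s_0})$, $0\in\spec(L_-^{s_0})\setminus\spec(L_+^{s_0})$, and $0\in\spec(L_+^{s_0})\cap\spec(L_-^{s_0})$, obtain signatures $-1$, $+1$, $0$ respectively, and then invoke \cref{lemma:morse_schrodinger}. Your treatment is slightly more explicit about why the endpoints $s=\tau$ and $s=1-\e$ are not crossings, which the paper leaves largely implicit.
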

	
	\begin{proof}
		Consider the crossing form
		\[
		\mathfrak{m}_{s_0}(q) =  \f{\ell}{s_0^2} \left[ - \left ( u_{s_0}'(\ell)\right )^2 +  \left (v_{s_0}'(\ell)\right )^2\right]
		\]
		from \eqref{maslov_form_s_lambda=0} and recall \eqref{eq:simple_eigfns}. If $(0,s_0)$ is a simple crossing, we obtain $\mathfrak{m}_{s_0} < 0$ if $0\in\spec(L_+^{s_0})$ and $\mathfrak{m}_{s_0} > 0$ if $0\in\spec(L_-^{s_0})$. On the other hand, if $0\in\spec(L_+^{s_0}) \cap \spec(L_-^{s_0})$, the $2\times2$ matrix $\mathfrak{M}_{s_0}$ in \eqref{eq:capitalmathfrakMn=2} has eigenvalues of opposite sign, so we conclude that
		\begin{equation}
			\sgn(\mathfrak{m}_{s_0}) = \begin{cases} -1 & 0\in\spec(L_+^{s_0}) \setminus \spec(L_-^{s_0}), \\
				+1 & 0\in\spec(L_-^{s_0})\setminus \spec(L_+^{s_0}), \\
				0 & 0\in\spec(L_+^{s_0}) \cap \spec(L_-^{s_0}).
			\end{cases}
		\end{equation}
		From the definition \eqref{eq:maslov_defn} we then have
		\begin{align*}
			\begin{split}
				\Mas(\Lambda(0,s),\cD; s\in[\tau,1-\e]) &= -\#\{ s_0\in[\tau,1-\e] : 0\in\spec(L_+^{s_0} )\setminus \spec(L_-^{s_0}) \} \\ & \qquad  + \#\{ s_0 \in[\tau,1-\e]: 0\in\spec(L_-^{s_0} ) \setminus \spec(L_+^{s_0})\} 
			\end{split}  \\
			\begin{split}
				&= - \#\{ s_0\in[\tau,1-\e] : 0\in\spec(L_+^{s_0} ) \} \\ & \qquad  + \#\{ s_0 \in[\tau,1-\e]: 0\in\spec(L_-^{s_0} ) \}, 
			\end{split} 
		\end{align*}
		and the result follows using \cref{lemma:morse_schrodinger}.
	\end{proof}
	
	Next, we prove that there are no crossings along $\Gamma_1$ and $\Gamma_4$; we refer to \cref{fig:MaslovBox}.
	\begin{lemma}\label{lem:maslov_right_bottom_zero}
		$\Mas(\Lambda,\cD;\Gamma_1) = \Mas(\Lambda,\cD;\Gamma_4) =0$ provided $\tau>0$ is sufficiently small and $\la_\infty>0$ is sufficiently large.
	\end{lemma}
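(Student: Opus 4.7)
The plan is to prove the stronger statement that, for $\tau>0$ sufficiently small and $\la_\infty>0$ sufficiently large, the Lagrangian path $\Lambda$ has no crossings \emph{at all} along $\Gamma_1$ and $\Gamma_4$; vanishing of the two Maslov indices will then follow directly from \cref{defn:maslov}. By \cref{prop:Lag_interp}, a crossing $(\la_0,s_0)$ on either piece corresponds to $s_0^2\la_0\in\spec(N_{s_0})$, so in both cases it suffices to show the relevant portion of $\spec(N_{s_0})$ is empty.

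For $\Gamma_1$, where $s=\tau$ and $0\leq\la\leq\la_\infty$, I would exploit that as $\tau\to 0$ the rescaled potentials $\tau^2 g(\tau x)$ and $\tau^2 h(\tau x)$ vanish uniformly. Concretely, $L_\pm^\tau \geq (\pi/\ell)^2 - \tau^2 M$ where $M:=\max(\|g\|_\infty,\|h\|_\infty)$, since $(\pi/\ell)^2$ is the smallest eigenvalue of the Dirichlet Laplacian on $[0,\ell]$. For $\tau$ small enough, both $L_+^\tau$ and $L_-^\tau$ are therefore strictly positive. This immediately rules out $\la=0$ as a crossing via \eqref{eq:decouple_facts}. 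For $\la>0$, I invoke \cref{lemma:equiv_problems} with $L_-^\tau>0$ (so $X_c = L^2(0,\ell)$ and $\Pi=I$): existence of a nontrivial eigenpair as in \eqref{eq:VKsystem} is equivalent to $-\tau^4\la^2\in\spec\bigl((L_-^\tau)^{1/2} L_+^\tau (L_-^\tau)^{1/2}\bigr)$. Since $A^{1/2} B A^{1/2}>0$ whenever $A,B>0$, this selfadjoint operator is strictly positive, so the negative number $-\tau^4\la^2$ cannot lie in its spectrum. Hence $\Gamma_1$ contains no crossings.

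For $\Gamma_4$, where $\la=\la_\infty$ and $\tau\leq s\leq 1$, the plan is a resolvent estimate based on the decomposition $N_s = D + B_s$. A direct integration-by-parts calculation shows that $D$ with domain \eqref{domN} is skew-adjoint, and solving $D\mathbf{u}=\mu\mathbf{u}$ yields $\spec(D)=\{\pm i(k\pi/\ell)^2 : k\in\N\}\subset i\R$. Normality of $D$ then gives $\|(t-D)^{-1}\|=1/\dist(t,\spec(D))\leq 1/|t|$ for real $t\neq 0$, and a standard Neumann-series argument yields $t\in\rho(N_s)$ whenever $|t|>\|B_s\|$. Since $\|B_s\|\leq s^2 M$, any real $s^2\la\in\spec(N_s)$ must satisfy $|\la|\leq M$ \emph{uniformly in} $s\in[\tau,1]$, and taking $\la_\infty>M$ makes $\Gamma_4$ free of crossings. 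The main point requiring care is verifying the skew-adjointness of $D$ and the resulting resolvent bound; once those are in hand, the remainder is a routine perturbation estimate.
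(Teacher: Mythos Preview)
Your proof is correct and follows essentially the same approach as the paper. For $\Gamma_1$ the arguments are nearly identical: both establish strict positivity of $L_\pm^\tau$ for small $\tau$ (you via the explicit Dirichlet ground-state eigenvalue, the paper via Poincar\'e), then invoke \cref{lemma:equiv_problems} to reach the selfadjoint reformulation and derive a contradiction from positivity of $(L_-^\tau)^{1/2} L_+^\tau (L_-^\tau)^{1/2}$. For $\Gamma_4$ the paper phrases the same idea as ``$iD$ is selfadjoint and $iB_s$ is a bounded perturbation'' and cites Kato for the strip bound $|\operatorname{Im}\zeta|\leq\|B_s\|$, whereas you work directly with $D$ skew-adjoint and give the resolvent/Neumann-series argument explicitly; these are the same perturbation estimate, with your version slightly more self-contained and the paper's slightly terser.
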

	\begin{proof}
		For the case of no crossings along $\Gamma_1$, we prove that $N_s$ has no real eigenvalues for $s=\tau$ small enough. Seeking a contradiction, assume there exists $\tau^2\la \in \spec(N_\tau)\cap \R$ with eigenfunction $\mathbf{u}_\tau=(u_\tau,v_\tau)^\top$. 
		
		First, note that the operators $L_\pm^\tau$ with domains given by \eqref{domLpm} are strictly positive: by the Poincar\'e and Cauchy-Schwarz inequalities,
		\begin{align*}
			\langle L_+^\tau v,v \rangle
			= \| v' \|^2 - \langle \tau^2 g(\tau x)v,v \rangle 
			&\geq C\| v\|^2 - \tau^2\| g\|_{\infty} \|v\|^2
		\end{align*}
		for some $C>0$ and all $v\in\dom(L_+^\tau)$, so we choose $\tau$ small enough that $C> \tau^2\| g\|_{\infty}$. Owing to the decoupling of the eigenvalue equations for $N_\tau$ when $\la=0$, it follows that $0\notin \spec(N_\tau$).
		
		Next, for $\la \in \R\backslash \{0\}$, we note that by \cref{lemma:equiv_problems} the eigenvalue equations for $N_\tau$ are equivalent to 
		\begin{equation}\label{wtau}
			\left ( L_-^\tau\right )^{1/2} L_+^\tau \left ( L_-^\tau\right )^{1/2}  w_\tau = - \tau^4\la^2  w_\tau, 
		\end{equation}
		since the positivity of $L_-^\tau$ implies that $X_c = \ker(L_-^\tau)^{\perp}$ is all of $L^2(0,\ell)$ and hence the resulting projection $\Pi$ is the identity. Applying $\langle\, \cdot ,  w_\tau\rangle$ to \eqref{wtau}, we immediately see that the right hand side is negative, while for the left hand side we obtain 
		\begin{align*}
			\langle\left ( L_-^\tau \right )^{1/2} L_+^\tau \left ( L_-^\tau\right )^{1/2}  w_\tau,  w_\tau \rangle 
			&=  \langle L_+^\tau \left ( L_-^\tau\right )^{1/2}  w_\tau, \left ( L_-^\tau\right )^{1/2}  w_\tau \rangle \\
			& \geq C_+ \langle \left ( L_-^\tau\right )^{1/2}  w_\tau, \left ( L_-^\tau\right )^{1/2}  w_\tau \rangle \\
			& =  C_+ \langle  L_-^\tau w_\tau, w_\tau \rangle \\
			& \geq C_+ C_- \|  w_\tau\|^2 > 0,
		\end{align*}
		for some positive constants $C_\pm$ (using the positivity of $L_\pm^\tau$ and selfadjointness of $\left (L_-^\tau\right )^{1/2}$), a contradiction. We conclude that no such real $\tau^2\la\in\spec(N_\tau)$ exists, and there are no crossings along $\Gamma_1$.  
		
		Moving to $\Gamma_4$, we show that the spectrum of $N_s$ lies in a vertical strip around the imaginary axis in the complex plane for all $s\in(0,1])$. For this, it suffices to show that $\spec(iN_s)$ lies in a horizontal strip around the real axis, since $\spec(N_s) =  -i\spec(iN_s)$ by the spectral mapping theorem.
		Fixing $s\in(0,1]$ we have
		\begin{align}\label{}
			iN_s = i D + i B_s(x)
		\end{align}
		where $iD$ is selfadjoint and  $i B_s(x)$ is bounded. It then follows from \cite[Remark 3.2, p.208]{Kato} and \cite[eq.(3.16), p.272]{Kato} that
		\begin{align}
			\zeta \in \spec(iD+i B_s(x)) \im |\imp{\zeta}|  \leq  \|i B_s(x)\|,
		\end{align}
		as required.  Choosing $\la_\infty > \sup_{s\in(0,1]} \|B_s(x)\|$ ensures there are no crossings along $\Gamma_4$. 
	\end{proof}

	We are our ready to prove our first main result. 
	
	\begin{proof}[Proof of \cref{thm:N_bound}]
		As already observed in \eqref{mas_concat}, the homotopy invariance and additivity of the Maslov index yield
		\begin{equation}\label{eq:box=0}
			\Mas(\Lambda,\cD;\Gamma_1) + \Mas(\Lambda,\cD;\Gamma_2) + \Mas(\Lambda,\cD;\Gamma_3) + \Mas(\Lambda,\cD;\Gamma_4) =0 ,
		\end{equation}
		hence
		\begin{equation}\label{masleft=-mastop}
			\Mas(\Lambda,\cD;\Gamma_2) + \Mas(\Lambda,\cD;\Gamma_3) = 0
		\end{equation}
		by \cref{lem:maslov_right_bottom_zero}. Again using additivity and recalling the definition of $\mathfrak c$ in \cref{defn:c}, we rewrite this as
		\begin{equation}\label{masleft=-mastop2}
			\Mas(\Lambda,\cD;\Gamma_2^\e) +\mathfrak c +  \Mas(\Lambda,\cD;\Gamma^\e_3) = 0,
		\end{equation}
		where $\Gamma_2^\e$ was defined in \cref{lemma:Maslov_left} and $\Gamma_3^\e$ is the restriction of $\Gamma_3$ to $[\e,\la_\infty]$. Using \cref{lemma:Maslov_left} we thus obtain
		\begin{equation}\label{mas_top_fmla}
			\Mas(\Lambda,\cD;\Gamma^\e_3) = P - Q - \mathfrak c.
		\end{equation}
		As discussed in \cref{rem:not_monotone}, the lack of monotonicity in $\la$ means that $\Mas(\Lambda,\cD;\Gamma_3^\e)$ does not necessarily count the number of real, positive eigenvalues of $N$. Nonetheless, we still have that 
		\begin{equation}\label{eq:lowerbound}
			n_+(N) \geq |\Mas(\Lambda,\cD;\Gamma_3^\e)|,
		\end{equation}
		and \eqref{eq:bound_positive_evals} follows. 
	\end{proof}

	\section{The eigenvalue curves}\label{sec:proof1}
	
	In this section we analyse the real eigenvalue curves of $N_s$ in the $\la s$-plane. We consider the general case of a crossing $(\la_0,s_0)$ corresponding to an eigenvalue $s_0^2\la_0 \in \spec(N_{s_0}) $ with $\dim \ker(N_{s_0} - s_0^2\la_0)=n$, paying special attention to the cases $\la_0=0$ and $n=1,2$. We use the results obtained to compute the correction term $\mathfrak{c}$ from \cref{thm:N_bound}, and relate a component of it to the signature of the second order crossing form \eqref{second_order_lambda_form} in \cref{prop:c}.

	\subsection{Numerical description}\label{subsec:flow}
	
	We begin with a brief description of a tool that is useful for numerically computing the eigenvalue curves. The idea is to globally characterise the set of points $(\la,s)$ such that $s^2\la\in\spec(N_s)\cap \R$ as the zero set of a function called the \textit{characteristic determinant}. 
	
	Converting the restricted problem \eqref{eq:N_EVP_restricted} with $y\in[0,s\ell]$ to a first order system yields
	\begin{equation}
		\de{}{y}\left(\begin{array}{c}
			u \\ v \\ r \\ z
		\end{array} \right) =  \left(\begin{array}{cccc}
			0 & 0 & 1 & 0 \\ 0 & 0 & 0 & -1 \\ -g(y) & -\lambda & 0 & 0 \\ -\lambda & h(y) & 0 & 0 
		\end{array} \right)   \left(\begin{array}{c}
			u \\ v \\ r \\ z
		\end{array} \right).
	\end{equation} 
	
	Notice that we use the substitution $\p_yv= -z$ in order to preserve the Hamiltonian structure. Rescaling as in \cref{subsec:lagrangian}, we define $u_s(x) \coloneqq u(sx)$ for $x\in[0,\ell]$, and similarly for $v_s$, $r_s$ and $z_s$. Then, the equivalent system on $[0,\ell]$ is
	\begin{equation}\label{firstordersys}
		\de{}{x}\left(\begin{array}{c}
			u_s \\ v_s \\ r_s \\ z_s
		\end{array} \right) =  \left(\begin{array}{cccc}
			0 & 0 & s & 0 \\ 0 & 0 & 0 & -s \\ -sg(sx) & -s\lambda & 0 & 0 \\ -s\lambda & sh(sx) & 0 & 0 
		\end{array} \right)   \left(\begin{array}{c}
			u_s \\ v_s \\ r_s \\ z_s
		\end{array} \right).
	\end{equation} 
	Consider a fundamental matrix solution $\Phi(x;\la,s)\in\R^{4\times 4}$ to \eqref{firstordersys} with $\Phi(0;\la,s) = I_{4}$. For convenience, we write $\Phi$ as the block matrix
	\[
	\Phi(x;\la,s)= \left(\begin{array}{cc}
		U(x;\la,s) & X(x;\la,s) \\
		V(x;\la,s) & Y(x;\la,s)
	\end{array}\right ), \qquad U,V,X,Y \in \R^{2\times 2},
	\]
	where
	\begin{equation}\label{ics_matrix}
		U(0;\la,s) = Y(0;\la,s) = I_2, \qquad  V(0;\la,s) = X(0;\la,s) = 0_2.
	\end{equation}
	Because $\Phi$ is a matrix solution for \eqref{firstordersys}, we have
	\begin{equation}
		\label{}
		\de{}{x} \left(\begin{array}{cc}
			U & X\\
			V & Y
		\end{array}\right ) = \begin{pmatrix}  0 & s\sigma_3 \\ s\left (SB(sx)-\la S\right ) & 0 \end{pmatrix} \left(\begin{array}{cc}
			U & X\\
			V &  Y
		\end{array}\right ), \qquad \sigma_3  = \begin{pmatrix}
			1 & 0 \\ 0 & -1
		\end{pmatrix}.
	\end{equation}
	
	\begin{prop}
		\label{lemma:characteristic_det}
		For all $(\la,s) \in \R \times (0,1]$, the following are equivalent: 
		\vspace{-3mm}
		\begin{enumerate}
			\item $\la \in\spec(N|_{[0,s\ell]})\cap \R $,
			\item $ s^2\la \in \spec(N_s) \cap \R$,
			\item $\Lambda(\la,s)\cap \cD \neq\{0\}$,
			\item $\det X(\ell;\la,s) = 0$.
		\end{enumerate}
	\end{prop}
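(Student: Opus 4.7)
The plan is to prove the four-way equivalence by a short chain, using the Lagrangian formulation already established and a direct computation for the characteristic determinant. The implications $(1) \Leftrightarrow (2)$ and $(2) \Leftrightarrow (3)$ have essentially been shown in the paper already: the first follows from the isomorphism $\mathbf{u} \mapsto \mathbf{u}_s$, $\mathbf{u}_s(x) \coloneqq \mathbf{u}(sx)$ between $\ker(N|_{[0,s\ell]} - \lambda)$ and $\ker(N_s - s^2\lambda)$ spelled out in Section 3.2, while the second is precisely \cref{prop:Lag_interp}. Hence the only new content is the equivalence $(3) \Leftrightarrow (4)$.

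To establish $(3) \Leftrightarrow (4)$, I would use the fundamental matrix $\Phi(x;\lambda,s)$ of the first-order system \eqref{firstordersys} to parameterise $\cK_{\lambda,s}$. Any solution $\mathbf{u}_s \in \cK_{\lambda,s}$ is uniquely determined by its initial data $(u_s(0), v_s(0), r_s(0), z_s(0))^\top$ via
\[
\begin{pmatrix} u_s(x) \\ v_s(x) \\ r_s(x) \\ z_s(x) \end{pmatrix} = \Phi(x;\lambda,s)\begin{pmatrix} u_s(0) \\ v_s(0) \\ r_s(0) \\ z_s(0) \end{pmatrix}.
\]
By definition of the rescaled trace \eqref{Trace}, the condition $\Tr_s \mathbf{u}_s \in \cD$ amounts to $u_s(0) = v_s(0) = 0$ at the left endpoint and $u_s(\ell) = v_s(\ell) = 0$ at the right endpoint (the remaining four entries of $\Tr_s \mathbf{u}_s$ are unconstrained as they already live in the $\R^4$ factor of $\cD$).

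The left boundary condition forces the initial data to have the form $(0, 0, r_s(0), z_s(0))^\top$. Applying $\Phi(\ell;\lambda,s)$ and reading off the top block using the initial conditions \eqref{ics_matrix} on the blocks $U,V,X,Y$, we obtain
\[
\begin{pmatrix} u_s(\ell) \\ v_s(\ell) \end{pmatrix} = X(\ell;\lambda,s)\begin{pmatrix} r_s(0) \\ z_s(0) \end{pmatrix}.
\]
Thus $(0,0,r_s(0),z_s(0))^\top$ yields a nontrivial element of $\cK_{\lambda,s}$ with $\Tr_s\mathbf{u}_s \in \cD$ precisely when $(r_s(0), z_s(0)) \neq (0,0)$ lies in $\ker X(\ell;\lambda,s)$, which happens if and only if $\det X(\ell;\lambda,s) = 0$. (If the initial data $(r_s(0),z_s(0))$ were zero as well, uniqueness for the first-order system would force $\mathbf{u}_s \equiv 0$, contradicting nontriviality, matching \cref{rem:injective}.)

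There is no real obstacle here; the result is essentially a bookkeeping exercise once one correctly identifies which block of $\Phi$ propagates the ``shooting'' initial data $(r_s(0),z_s(0))$ to the right endpoint under the vanishing left boundary condition. The only point requiring attention is matching the rescaled derivatives $r_s = s^{-1} u_s'$, $z_s = -s^{-1} v_s'$ with the sign conventions in the trace map \eqref{Trace}, which affects the parameterisation but not the rank of $X(\ell;\lambda,s)$, and therefore not the vanishing of its determinant.
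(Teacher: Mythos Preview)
Your proof is correct and takes essentially the same approach as the paper: the equivalences $(1)\Leftrightarrow(2)$ and $(2)\Leftrightarrow(3)$ are cited from earlier results, and $(3)\Leftrightarrow(4)$ is reduced to the vanishing of $\det X(\ell;\la,s)$ via the fundamental matrix $\Phi$. The only cosmetic difference is that the paper writes out an explicit $8\times4$ frame $\cZ(\la,s)$ for $\Lambda(\la,s)$ and computes an $8\times8$ block determinant against a frame for $\cD$, whereas you argue by direct shooting (impose the left Dirichlet condition on the initial data, then read off the right condition as $X(\ell)(r_s(0),z_s(0))^\top=0$); these are two packagings of the same linear-algebra computation.
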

	
	We thus call $\det X(\ell;\la,s) $ the \emph{characteristic determinant}: the real eigenvalue curves in the $\la s$-plane are given by the zero set $\{(\la,s): \det X(\ell;\la,s) =0\}$. \Cref{spectralflow} illustrates some examples of these curves under \cref{hypo:NLS}.
	
	\begin{proof}
		The discussion following \eqref{eq:N_EVP_rescaled} gives the equivalence of (1) and (2), while the equivalence of (2) and (3) was given in \cref{prop:Lag_interp}. We show the equivalence of (3) and (4). Fix $s\in(0,1]$ and $\la \in\R$ and consider the $8\times 4$ matrix
		\begin{equation*}
			\cZ(\la,s)
			\coloneqq \begin{pmatrix}
				U(0;\la,s) & X(0;\la,s) \\
				U(\ell;\la,s) & X(\ell;\la,s)  \\
				-V(0;\la,s) & -Y(0;\la,s) \\
				V(\ell;\la,s)  & Y(\ell;\la,s) 
			\end{pmatrix} = \begin{pmatrix}
				I_2 & 0_2 \\
				U(\ell;\la,s) & X(\ell;\la,s)  \\
				0_2 & -I_2 \\
				V(\ell;\la,s)  & Y(\ell;\la,s)
			\end{pmatrix}.
		\end{equation*}
		Notice that the columns of $\cZ(\la,s)$ are precisely the rescaled trace (cf. \eqref{Trace}) of four linearly independent functions in $\cK_{\la,s}$ (recall that the entries of $Y(\cdot\,; \la,s)$ and $V(\cdot\,; \la,s)$ satisfy $r_s = s^{-1} \p_x u_s$ and $z_s = -s^{-1}  \p_x v_s$), and thus are a basis for our Lagrangian subspace $\Lambda(\la,s)$.
		
		A nontrivial intersection of the four-dimensional linear subspaces $\Lambda(\la, s)$ and $\cD$ of $\R^8$ occurs if and only if the $8\times 8$ matrix formed by their bases has zero determinant. Therefore, 
		\begin{equation*}
			\Lambda(\la, s) \cap \cD \neq \{0\} \iff \det \begin{pmatrix}
				I & 0 & 0 & 0  \\
				U(\ell;\la,s) & X(\ell;\la,s)  & 0 & 0 \\
				-0 & -I & I & 0 \\
				V(\ell;\la,s)  & Y(\ell;\la,s) & 0 & I
			\end{pmatrix} =0 \iff \det X(\ell;\la,s) =0,
		\end{equation*}
		as required.
	\end{proof}

	\begin{figure}[]
		\hspace*{\fill}
		\subcaptionbox{ \label{specflow1} }
		{\includegraphics[width=0.4\textwidth]{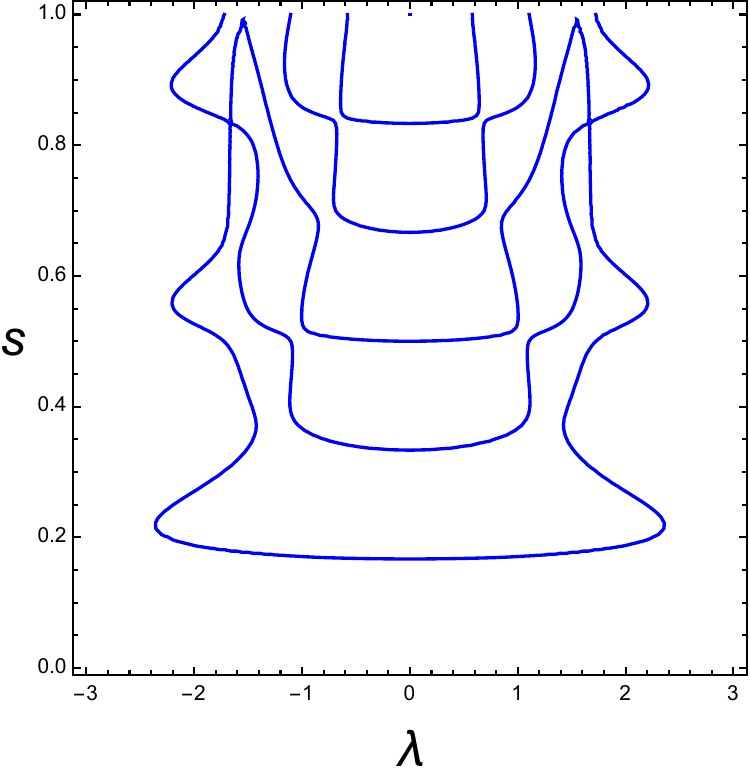}}\hfill  
		\subcaptionbox{\label{specflow3} }
		{\includegraphics[width=0.4\textwidth]{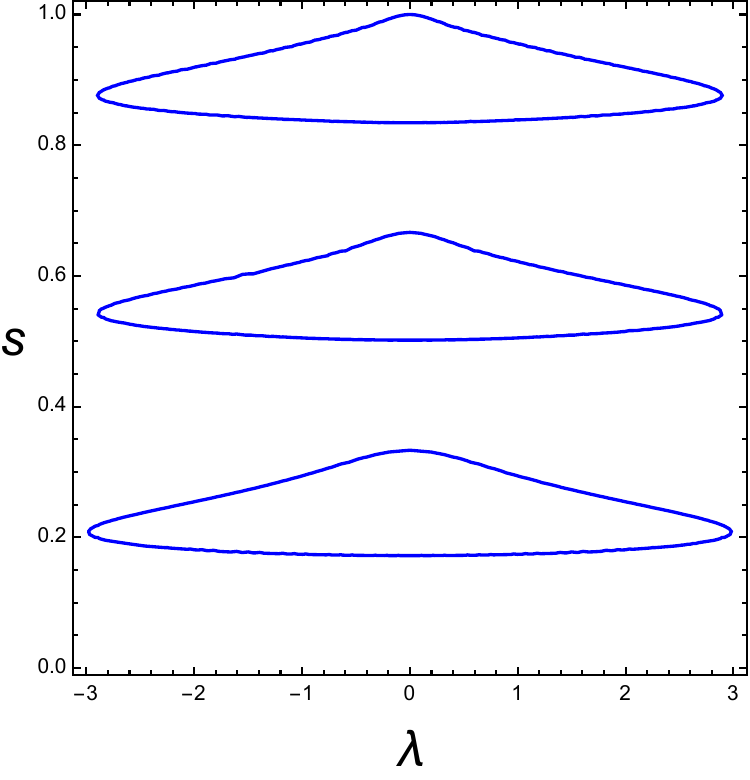}} 
		\hspace*{\fill}
		\caption{Real eigenvalue curves $s^2\la\in\spec(N_s)\cap\R$  {under \cref{hypo:NLS}(i) associated with a $T$-periodic stationary state $\phi_0$}   with nonlinearity $f(\phi^2)=\phi^2$ and $\be=-2$. In (a) $\phi_0$ is a  { positive Jacobi dnoidal function (i.e. an orbit located inside the homoclinic orbit in the right half plane in \cref{fig:phaseA})}   satisfying $\phi_0'(0)=\phi_0'(\ell)=0$ with $\ell = 3T=9.9398$. In (b) $\phi_0$ is a Jacobi cnoidal function  {(i.e. an orbit located outside the homoclinic orbit in \cref{fig:phaseA})}   satisfying $\phi_0(0)=\phi_0(\ell)=0$ with $\ell = 3T/2=10.0391$.}
		\label{spectralflow}
	\end{figure}

	\subsection{Analytic description}
	\label{subsec:Lyap_Schmidt}
	
	We will generalise \cref{thm:sL} to \cref{thm:sL:general}, which is a consequence of the following general result. We remind the reader that $n\leq 2$ in the current paper; see \cref{rem:notation}.  Below, dot denotes $d/d\la$.
	
	\begin{prop}\label{prop:M}
		Assume $\dim \ker(N_{s_0} - s_0^2\la_0)=n$ with basis $\{\bu_{s_0}^{(1)}, \dots, \bu_{s_0}^{(n)} \}$. There exists an $n\times n$ matrix $M(\lambda,s)$, defined near $(\lambda_0,s_0)$, such that $s^2\lambda \in \spec(N_s)$ if and only if $\det M(\lambda,s) = 0$. This matrix satisfies $M(\lambda_0,s_0) = 0$ and 
		\begin{equation}
			\label{41}
			\frac{\p M_{ij}}{\p \lambda}(\lambda_0,s_0) = - s_0^2 \left<\bu_{s_0}^{(i)}, S\bu_{s_0}^{(j)}\right>, \quad
			\frac{\p M_{ij}}{\p s}(\lambda_0,s_0) = \left< \big(\p_sB_{s_0} - 2 s_0\lambda_0\big) \bu_{s_0}^{(i)}, S\bu_{s_0}^{(j)}\right>.
		\end{equation}
		Moreover, if $\big<\bu_{s_0}^{(i)}, S\bu_{s_0}^{(j)}\big> = 0$ for all $i,j=1,\dots,n$, then
		\begin{equation}
			\label{M^lala}
			\frac{\p^2 M_{ij}}{\p \lambda^2}(\lambda_0,s_0) = -2s_0^4 \left<\bv_{s_0}^{(i)}, S\bu_{s_0}^{(j)}\right>,
		\end{equation}
		where $\bv_{s_0}^{(i)}\in\dom(N_{s_0})$ solves the inhomogeneous equation $(N_{s_0} - s_0^2 \lambda_0)\bv^{(i)}_{s_0} = \bu^{(i)}_{s_0}$.
	\end{prop}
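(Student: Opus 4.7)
My plan is to establish \cref{prop:M} via a Lyapunov--Schmidt reduction at the crossing $(\la_0,s_0)$, keyed by the observation that the Green's-type identity \eqref{greens_rescaled_mod} gives $N_s^{\ast}=SN_sS$ (the boundary term vanishes on $\dom(N_s)$ since Dirichlet conditions kill the first four slots of $\Tr_s$). Consequently $\ker(N_{s_0}-s_0^2\la_0)^{\ast} = S\ker(N_{s_0}-s_0^2\la_0)$ is spanned by $\{S\bu_{s_0}^{(1)},\dots,S\bu_{s_0}^{(n)}\}$. Let $P$ and $P^{\ast}$ denote the orthogonal projections in $L^2(0,\ell)$ onto $\ker(N_{s_0}-s_0^2\la_0)$ and onto $S\ker(N_{s_0}-s_0^2\la_0)$ respectively. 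Since $N_{s_0}-s_0^2\la_0$ is Fredholm (as $L_\pm^{s_0}$ have compact resolvent), the restriction $(I-P^{\ast})(N_s-s^2\la)\colon (I-P)\dom(N_s)\to (I-P^{\ast})L^2$ is an isomorphism at $(\la_0,s_0)$, and hence (by the implicit function theorem) remains invertible in a neighbourhood. I would then define the continuations
\[
\Phi_j(\la,s):=\bu_{s_0}^{(j)} - \big[(I-P^{\ast})(N_s-s^2\la)\big|_{(I-P)\dom(N_s)}\big]^{-1}(I-P^{\ast})(N_s-s^2\la)\bu_{s_0}^{(j)},
\]
which by construction satisfy $\Phi_j(\la_0,s_0)=\bu_{s_0}^{(j)}$ and $(I-P^{\ast})(N_s-s^2\la)\Phi_j(\la,s)\equiv 0$. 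Every nontrivial element of $\ker(N_s-s^2\la)$ for $(\la,s)$ near $(\la_0,s_0)$ has the form $\sum_j c_j\Phi_j(\la,s)$, and the full equation reduces to $M(\la,s)\bc=0$, where
\[
M_{ij}(\la,s):=\big\langle (N_s-s^2\la)\Phi_j(\la,s),\,S\bu_{s_0}^{(i)}\big\rangle.
\]
Thus $s^2\la\in\spec(N_s)$ iff $\det M(\la,s)=0$, and $M(\la_0,s_0)=0$ is immediate.

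The first-order derivatives follow by differentiating $M_{ij}$ in $\la$ or $s$, exploiting the identity
\[
\big\langle (N_{s_0}-s_0^2\la_0)\mathbf{w},S\bu_{s_0}^{(i)}\big\rangle = \big\langle \mathbf{w},S(N_{s_0}-s_0^2\la_0)\bu_{s_0}^{(i)}\big\rangle = 0, \qquad \mathbf{w}\in\dom(N_{s_0}),
\]
which is \eqref{greens_rescaled_mod} with vanishing boundary term. Since $\Phi_j$ and all its $(\la,s)$-derivatives lie in the parameter-independent space $\dom(N_s)$, the terms involving $(N_s-s^2\la)\p_\la\Phi_j$ and $(N_s-s^2\la)\p_s\Phi_j$ drop out at $(\la_0,s_0)$, leaving only the derivatives of $(N_s-s^2\la)=D+B_s-s^2\la I$ itself. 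This yields $-s_0^2\langle\bu_{s_0}^{(j)},S\bu_{s_0}^{(i)}\rangle$ and $\langle(\p_s B_{s_0}-2s_0\la_0)\bu_{s_0}^{(j)},S\bu_{s_0}^{(i)}\rangle$, which match \eqref{41} after swapping $i\leftrightarrow j$; the swap is legitimate because $S$ is real symmetric and $S(\p_s B_{s_0}-2s_0\la_0)$ is diagonal (a direct computation from \eqref{rescaled_operators}), hence symmetric.

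For \eqref{M^lala}, assume $\langle\bu_{s_0}^{(i)},S\bu_{s_0}^{(j)}\rangle=0$ for all $i,j$, which forces $P^{\ast}\bu_{s_0}^{(j)}=0$. Differentiating the constraint $(I-P^{\ast})(N_s-s^2\la)\Phi_j=0$ in $\la$ at $(\la_0,s_0)$ gives
\[
(I-P^{\ast})(N_{s_0}-s_0^2\la_0)\,\p_\la\Phi_j|_{(\la_0,s_0)} = s_0^2(I-P^{\ast})\bu_{s_0}^{(j)} = s_0^2\bu_{s_0}^{(j)}.
\]
Moreover the integration-by-parts identity above tells us that $P^{\ast}(N_{s_0}-s_0^2\la_0)\mathbf{w}=0$ for every $\mathbf{w}\in\dom(N_{s_0})$, so in fact $(N_{s_0}-s_0^2\la_0)\p_\la\Phi_j|_{(\la_0,s_0)} = s_0^2\bu_{s_0}^{(j)}$; that is, $\p_\la\Phi_j|_{(\la_0,s_0)} = s_0^2\bv_{s_0}^{(j)}$ for a legitimate choice of $\bv_{s_0}^{(j)}$ in the sense of \cref{prop:M}. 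Two more applications of the integration-by-parts trick in the $\p_\la^2$ expansion eliminate the $(N_s-s^2\la)\p_\la^2\Phi_j$ term, yielding
\[
\p_\la^2 M_{ij}\big|_{(\la_0,s_0)} = -2s_0^2\langle\p_\la\Phi_j|_{(\la_0,s_0)}, S\bu_{s_0}^{(i)}\rangle = -2s_0^4\langle\bv_{s_0}^{(j)}, S\bu_{s_0}^{(i)}\rangle,
\]
and a final integration by parts using $(N_{s_0}-s_0^2\la_0)\bv_{s_0}^{(k)}=\bu_{s_0}^{(k)}$ symmetrises $\langle\bv_{s_0}^{(j)},S\bu_{s_0}^{(i)}\rangle = \langle\bv_{s_0}^{(i)},S\bu_{s_0}^{(j)}\rangle$, recovering \eqref{M^lala}.

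The principal obstacle is the second-order step, specifically promoting the projected identity for $\p_\la\Phi_j$ to the full inhomogeneous equation $(N_{s_0}-s_0^2\la_0)\p_\la\Phi_j = s_0^2\bu_{s_0}^{(j)}$, and ensuring that the value of $\langle\bv_{s_0}^{(i)},S\bu_{s_0}^{(j)}\rangle$ does not depend on the (non-unique) choice of $\bv_{s_0}^{(j)}$. Both issues are resolved under the vanishing hypothesis, which kills the $P^{\ast}$ obstruction and makes $\ker(N_{s_0}-s_0^2\la_0)$ pair trivially with every $S\bu_{s_0}^{(i)}$; the remaining bookkeeping (that all $(\la,s)$-derivatives of $\Phi_j$ land in $\dom(N_s)$) is automatic since the Dirichlet boundary conditions are $(\la,s)$-independent.
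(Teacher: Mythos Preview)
Your proof is correct and follows essentially the same Lyapunov--Schmidt reduction as the paper: your $\Phi_j$ coincides with the paper's $(I+A(\la,s))\bu_{s_0}^{(j)}$, and the derivative computations proceed identically via the adjoint identity $\langle (N_{s_0}-s_0^2\la_0)\mathbf{w},S\bu_{s_0}^{(i)}\rangle=0$. Two cosmetic points: your index convention in $M_{ij}$ is transposed relative to the paper's, which is why you need the extra symmetry step; and your claim that $S(\p_sB_{s_0}-2s_0\la_0)$ is \emph{diagonal} is only true when $\la_0=0$ (the $-2s_0\la_0 S$ piece is off-diagonal), though the matrix is indeed symmetric, which is all you need.
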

	\begin{rem}
		Just as in \cref{rem:contribute}, for \eqref{M^lala} it suffices to consider those solutions to the inhomogeneous equation that satisfy $\bv^{(i)}_{s_0} \perp \bu^{(j)}_{s_0}$ for $i,j=1,\dots,n$.
	\end{rem}
	
	The definition of $M$, which requires some preparation, is given in \eqref{mdef}. 
	
	\begin{proof}
		We construct $M(\lambda,s)$ using Lyapunov--Schmidt reduction. The first step is to split the eigenvalue equation $(N_s - s^2\lambda)\bu = 0$ into two parts, one of which can always be solved uniquely. Let $P$ denote the $L^2$-orthogonal projection onto $\ker (N^*_{s_0} - s_0^2\lambda_0)$, so that $I-P$ is the projection onto $\ker (N^*_{s_0} - s_0^2\lambda_0)^\perp = \ran (N_{s_0} - s_0^2\lambda_0)$.  It follows that $s^2\lambda$ is an eigenvalue of $N_s$ if and only if there exists a nonzero $\bu\in\dom(N_s)$ such that both
		\begin{equation}
			\label{Gsplit1}
			P(N_s - s^2\lambda)\bu  = 0
		\end{equation}
		and
		\begin{equation}
			\label{Gsplit2}
			(I-P)(N_s - s^2\lambda)\bu  = 0
		\end{equation}
		hold.
		
		We first consider \eqref{Gsplit2}. Defining $X_0 = \ker (N_{s_0} - s_0^2\lambda_0) ^\perp \cap H^2(0,\ell) \cap H^1_0(0,\ell)$, we have that any $\bu \in H^2(0,\ell) \cap H^1_0(0,\ell)$ can be written uniquely as 
		\[
		\bu = \sum_{i=1}^n t_i \bu_{s_0}^{(i)} +  \tilde{\bu},
		\]
		where $t_i \in \R$ and $\tilde{\bu} \in X_0$. This means \eqref{Gsplit2} holds if and only if there exists a vector $\mathbf{t} = (t_1, \dots ,t_n) \in \R^n$ and a function $\tilde{\bu} \in X_0$ such that
		\begin{equation}
			\label{Gsplit3}
			(I-P)(N_s - s^2\lambda) \left (\sum_{i=1}^n t_i \bu_{s_0}^{(i)} + \tilde{\bu}\right ) = 0.
		\end{equation}
		We claim that for each $(\mathbf{t},\lambda,s)$ there exists a unique $\tilde{\bu} = \tilde{\bu}(\mathbf{t},\lambda,s) \in X_0$ satisfying \eqref{Gsplit3}. Writing this equation out explicitly, it is
		\[
		(I-P)(N_s - s^2\lambda)\tilde{\bu}(\mathbf{t},\lambda,s) = - (I-P)(N_s - s^2\lambda)\sum_{i=1}^n t_i \bu_{s_0}^{(i)}.
		\]
		We define
		\[
		T(\lambda,s) \colon X_0 \to \ran (N_{s_0} - s_0^2\lambda_0) , \qquad T(\lambda,s) = (I-P)\big(N_s - s^2\lambda\big)\Big|_{X_0},
		\]
		and observe that $T(\lambda_0,s_0)$ is invertible, hence $T(\lambda,s)$ is also invertible for nearby $(\lambda,s)$. Defining
		\[
		A(\lambda,s) : X_0^\perp  \to X_0, \qquad A(\lambda,s)  = -T^{-1}(\lambda,s) (I-P)\big(N_s - s^2\lambda\big)\Big|_{X_0^\perp},
		\]
		where $X_0^\perp= \ker (N_{s_0} - s_0^2\lambda_0) $, the unique solution to \eqref{Gsplit3} is thus
		\begin{equation}
			\label{uhat}
			\tilde{\bu}(\mathbf{t},\lambda,s) = A(\lambda,s) \sum_{i=1}^n t_i \bu_{s_0}^{(i)}.
		\end{equation}
		So far we have shown that the equation $(I-P)(N_s - s^2\lambda)\bu  = 0$ is satisfied if and only if $\bu$ has the form
		\begin{equation}
			\label{uform}
			\bu = \sum_{i=1}^n t_i \bu_{s_0}^{(i)} + A(\lambda,s)\sum_{i=1}^n t_i \bu_{s_0}^{(i)} = \big(I + A(\lambda,s)\big)\sum_{i=1}^n t_i \bu_{s_0}^{(i)}
		\end{equation}
		for some $\mathbf{t}\in \R^n$. We conclude that there exists $\bu$ for which $(N_s - s^2\lambda)\bu  = 0$ holds if and only if
		\begin{equation}
			\label{PG}
			P(N_s - s^2\lambda) \big(I + A(\lambda,s)\big)\left( \sum_{i=1}^n t_i \bu_{s_0}^{(i)}\right) = 0
		\end{equation}
		for some $\mathbf{t}\in \R^n$. Moreover, $\bu$ is nonzero if and only if $\mathbf{t}$ is nonzero. Finally, we observe that $\ker (N^*_{s_0} - s_0^2\lambda_0)$ is spanned by $\{S\bu_{s_0}^{(1)}, S\bu_{s_0}^{(2)}, \dots, S\bu_{s_0}^{(n)}\}$, and so \eqref{PG} is equivalent to
		\begin{equation}
			\label{systemeqs}
			\left<(N_s - s^2\lambda)\big(I + A(\lambda,s)\big) \left(\sum_{i=1}^n t_i \bu_{s_0}^{(i)}\right), S\bu_{s_0}^{(j)}\right> = 0, \qquad j=1,\dots, n. 
		\end{equation}
		Defining the $n\times n$ matrix $M(\la,s)$ by 
		\begin{equation}
			\label{mdef}
			M_{ij}(\la,s) = \left<(N_s - s^2\lambda) \big(I + A(\lambda,s)\big)\bu^{(i)}_{s_0}, S\bu_{s_0}^{(j)}\right>, \quad i,j=1, \dots, n,
		\end{equation}
		the system of $n$ equations \eqref{systemeqs} may be written as $M(\la,s) \mathbf{t} = 0$, which is satisfied for a nonzero vector $\mathbf{t}$ if and only if $\det M(\la,s)=0$. This completes the first part of the proof. 
		
		It follows that $M(\lambda_0,s_0) = 0$. We then compute
		\begin{align}
			\frac{\p M_{ij}}{\p \lambda}(\lambda_0,s_0) &= \left<-s_0^2 \big(I + A(\lambda_0,s_0)\big)\bu^{(i)}_{s_0} + (N_{s_0} - s_0^2\lambda_0) \p_\la A(\lambda_0,s_0) \bu^{(i)}_{s_0}, S\bu^{(j)}_{s_0}\right> \\
			&= -s_0^2 \left<\bu^{(i)}_{s_0}, S\bu^{(j)}_{s_0}\right>,
		\end{align}
		where in the second line we have used the fact that $A(\lambda_0,s_0) \bu^{(i)}_{s_0}  = 0$ and 
		\[
		\left<(N_{s_0} - s_0^2\lambda_0) \p_\la A(\lambda_0,s_0) \bu^{(i)}_{s_0}, S\bu^{(j)}_{s_0}\right> = 
		\left< \p_\la A(\lambda_0,s_0) \bu^{(i)}_{s_0}, (N^*_{s_0} - s_0^2\lambda_0) S\bu^{(j)}_{s_0}\right> = 0,
		\]
		because $S\bu^{(j)}_{s_0} \in \ker(N^*_{s_0} - s_0^2\lambda_0)$. The $s$ derivative is computed similarly.
		
		Finally, if $\p_\la M(\la_0,s_0)  = 0$, we have
		\begin{align}
			\frac{\p^2 M_{ij}}{\p \lambda^2}(\lambda_0,s_0) 
			&= -2s_0^2 \left<\p_\la A(\lambda_0,s_0)\bu^{(i)}_{s_0}, S\bu^{(j)}_{s_0}\right>,
		\end{align}
		where $
		\big<(N_{s_0} - s_0^2\lambda_0) \p_{\la\la} A(\lambda_0,s_0) \bu^{(i)}_{s_0}, S\bu^{(j)}_{s_0}\big> = 0
		$ again using $S\bu^{(j)}_{s_0} \in \ker(N^*_{s_0} - s_0^2\lambda_0)$. To compute $\p_\la A(\lambda_0,s_0)\bu^{(i)}_{s_0}$, we use the definition of $A(\la,s)$ to write
		\[
		T(\lambda,s)A(\lambda,s) \bu^{(i)}_{s_0} = - (I-P)\big(N_s - s^2\lambda\big)\bu^{(i)}_{s_0}.
		\]
		Differentiating in $\lambda$ and again using the fact that $A(\lambda_0,s_0) \bu^{(i)}_{s_0}  = 0$, we get
		\[
		T(\lambda_0,s_0)\p_\la A(\lambda_0,s_0) \bu^{(i)}_{s_0} = s_0^2(I-P)\bu^{(i)}_{s_0}.
		\]
		The fact that $\big<\bu^{(i)}_{s_0}, S\bu^{(j)}_{s_0}\big>=0$ for all $i,j$ implies $(I-P)\bu^{(i)}_{s_0} = \bu^{(i)}_{s_0}$. Setting $s_0^2\bv^{(i)}_{s_0} = \p_\la A(\lambda_0,s_0)\bu^{(i)}_{s_0}$, we see from the definition of $T$ that
		\[
		T(\lambda_0,s_0)(s_0^2\bv^{(i)}_{s_0}) = s_0^2(I-P)(N_{s_0} - s_0^2\lambda_0)\bv^{(i)}_{s_0} = s_0^2(N_{s_0} - s_0^2\lambda_0)\bv^{(i)}_{s_0}
		\]
		and the result follows. 
	\end{proof}
	
	Comparison with the symmetric matrices \eqref{M^s}, \eqref{M^la} and \eqref{bigmfM2} associated with the first and second order crossing forms reveals that the partial derivatives of the matrix $M$ satisfy
	\begin{equation}\label{MandmathfrakM}
		\frac{\p M}{\p s}(\lambda_0,s_0) = s_0 \,\mathfrak M_{s_0}, \quad 	 \frac{\p M}{\p \lambda}(\lambda_0,s_0) = s_0 \,\mathfrak M_{\la_0}, \quad  \pdes{M}{\la}(\la_0,s_0) = s_0\,\mathfrak{M}^{(2)}_{\la_0},
	\end{equation}
	where the last formula holds when $\p_\la M(\la_0,s_0)=0$. In particular, in the case $\dim\ker(N_{s_0}-s_0^2\la_0) = 1$ (so that $M$ is a scalar), we have
	\begin{equation}
		\label{mcomp}
		\frac{\p M}{\p s}(\lambda_0,s_0) = s_0 \,\mathfrak m_{s_0}(q), \quad 	 \frac{\p M}{\p \lambda}(\lambda_0,s_0) = s_0 \,\mathfrak m_{\la_0}(q), \quad \pdes{M}{\la}(\la_0,s_0) = s_0\mathfrak{m}^{(2)}_{\la_0}(q),
	\end{equation}
	where again the last formula holds when $\p_\la M(\la_0,s_0)=0$. Combining \eqref{mcomp} with the implicit function theorem immediately yields the following Hadamard-type formulas for the derivatives of the real eigenvalue curves in terms of the crossing forms. 
	
	\begin{cor}\label{cor:Hadamard}
		Under the assumption that $\dim \ker(N_{s_0} - s_0^2\la_0)=1$, the following hold:
		\begin{enumerate}
			\item If $\mathfrak m_{\lambda_0} \neq 0$, then there exists a $C^2$ curve $\lambda(s)$ near $s_0$ such that
			\begin{equation}\label{eq:lamdash}
				\lambda'(s_0) = - \frac{\mathfrak m_{s_0}(q)}{\mathfrak m_{\la_0}(q)}.
			\end{equation}
			\item If $\mathfrak m_{s_0} \neq 0$, then there exists a $C^2$ curve $s(\lambda)$ near $\lambda_0$ such that
			\begin{equation}\label{eq:sdot}
				\dot s(\lambda_0) = - \frac{\mathfrak m_{\la_0}(q)}{\mathfrak m_{s_0}(q)}.
			\end{equation}
			Moreover, $\dot{s}(\lambda_0) = 0$ if and only if $\mathfrak{m}_{\la_0}(q)=0$, and in this case
			\begin{equation}\label{eq:sddot}
				\ddot s(\lambda_0) = - \frac{\mathfrak m_{\la_0}^{(2)}(q)}{\mathfrak m_{s_0}(q)}.
			\end{equation}
		\end{enumerate}
	\end{cor}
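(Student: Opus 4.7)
The plan is to treat this as a direct consequence of \cref{prop:M} combined with the implicit function theorem applied in the scalar case $n=1$. Since $\dim\ker(N_{s_0}-s_0^2\lambda_0)=1$, the function $M(\lambda,s)$ from \cref{prop:M} is scalar-valued, and the condition $s^2\lambda\in\spec(N_s)$ locally amounts to $M(\lambda,s)=0$. Because $M(\lambda_0,s_0)=0$, the real eigenvalue curve through $(\lambda_0,s_0)$ is exactly the zero level set of $M$, so everything reduces to analysing this level set. The identifications \eqref{mcomp} translate the partial derivatives of $M$ into crossing-form data.

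For part (1), I would observe that $\mathfrak{m}_{\lambda_0}\ne 0$ gives $\p_\lambda M(\lambda_0,s_0)=s_0\,\mathfrak{m}_{\lambda_0}(q)\ne 0$, so the implicit function theorem produces a $C^2$ function $\lambda(s)$ defined near $s_0$ with $M(\lambda(s),s)\equiv 0$; differentiating this identity once at $s=s_0$ and using \eqref{mcomp} yields \eqref{eq:lamdash}. For part (2), $\mathfrak{m}_{s_0}\ne 0$ analogously provides a $C^2$ function $s(\lambda)$ with $M(\lambda,s(\lambda))\equiv 0$, and differentiating once gives \eqref{eq:sdot}; the ``if and only if'' assertion about $\dot s(\lambda_0)=0$ is then immediate from the formula itself (the denominator is nonzero).

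For the second-derivative formula \eqref{eq:sddot}, the plan is to differentiate $M(\lambda,s(\lambda))\equiv 0$ twice at $\lambda=\lambda_0$:
\begin{equation*}
\p_{\lambda\lambda}M + 2\p_{\lambda s}M\,\dot s + \p_{ss}M\,\dot s^{\,2} + \p_s M\,\ddot s = 0.
\end{equation*}
When $\dot s(\lambda_0)=0$ (equivalently, $\mathfrak{m}_{\lambda_0}(q)=\p_\lambda M(\lambda_0,s_0)=0$), the middle two terms drop out and solving for $\ddot s(\lambda_0)$ gives $-\p_{\lambda\lambda}M/\p_s M$ at $(\lambda_0,s_0)$. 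The vanishing of the first-order form in $\lambda$ is precisely the hypothesis under which \cref{prop:M} provides the formula \eqref{M^lala}, so the third identity in \eqref{mcomp} applies and yields \eqref{eq:sddot}. There is no genuine obstacle here; the only subtle point worth flagging is ensuring that the hypothesis $\dot s(\lambda_0)=0$ is equivalent to $\p_\lambda M(\lambda_0,s_0)=0$, so that the second-order crossing form $\mathfrak{m}^{(2)}_{\lambda_0}$ is actually defined and the third formula of \eqref{mcomp} is applicable — but this equivalence is transparent from \eqref{eq:sdot} together with the fact that $\mathfrak{m}_{s_0}(q)\ne 0$.
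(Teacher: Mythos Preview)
Your proposal is correct and follows exactly the approach indicated by the paper: the text immediately preceding \cref{cor:Hadamard} states that the result follows by combining the identities \eqref{mcomp} with the implicit function theorem, and you have supplied precisely those details, including the twice-differentiated identity needed for \eqref{eq:sddot}.
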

	
	Using this, we can construct a curve $s(\lambda)$ through any simple conjugate point and determine its concavity by an explicit formula.
	
	\begin{theorem}
		\label{thm:sL:general}
		If $\dim \ker N_{s_0} = 1$, then for $|\lambda| \ll 1$ there exists a $C^2$ curve $s(\lambda)$ such that $s(\la)^2\lambda\in\spec(N_{s(\la)})$, and a continuous curve $\bu_{s(\lambda)}$ of eigenfunctions such that $\bu_{s(\lambda)} \to \bu_{s_0}$ as $\lambda \to 0$. Moreover, $s(0) = s_0$, $\dot s(0) = 0$, and the concavity of $s(\lambda)$ can be determined as follows:
		\begin{enumerate}
			\item If $0 \in \spec(L_-^{s_0}) \setminus\spec(L_+^{s_0})$ with eigenfunction $v_{s_0} \in \ker L_-^{s_0}$, then
			\begin{equation}
				\label{s''L-1}
				\ddot s(0) = \f{2s_0^5}{\ell} \,\f{\langle \widehat{u}_{s_0}, v_{s_0} \rangle }{ \left( v_{s_0}'(\ell)\right )^2}
			\end{equation}
			where $\widehat{u}_{s_0} \in H^2(0,\ell) \cap H^1_0(0,\ell)$ is the unique solution to $L_+^{s_0} \widehat{u}_{s_0}  = v_{s_0}$.
			
			\item If $0 \in \spec(L_+^{s_0}) \setminus \spec(L_-^{s_0})$ with eigenfunction $u_{s_0} \in \ker L_+^{s_0}$, then
			\begin{equation}
				\label{s''L+2}
				\ddot s(0) = - \f{2s_0^5}{\ell}  \f{\langle  \widehat{v}_{s_0},  u_{s_0} \rangle}{ \left (u_{s_0}'(\ell)\right )^2 }
			\end{equation}
			where $\widehat{v}_{s_0}\in H^2(0,\ell) \cap H^1_0(0,\ell)$ is the unique solution to $-L_-^{s_0} \widehat{v}_{s_0} = u_{s_0}$.
		\end{enumerate}
	\end{theorem}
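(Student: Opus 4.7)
The plan is to derive both statements as direct consequences of the Lyapunov--Schmidt construction in \cref{prop:M} together with the Hadamard-type formulas in \cref{cor:Hadamard}. Since $\dim \ker N_{s_0} = 1$, the matrix $M(\lambda,s)$ produced by \cref{prop:M} is scalar-valued, and the equation $s^2\lambda \in \spec(N_s)$ is locally equivalent to $M(\lambda,s) = 0$. By \cref{lemma:scrossingform}, the simple conjugate point $(\lambda_0,s_0) = (0,s_0)$ has a nondegenerate $s$-crossing form, so by \eqref{mcomp} we have $\p_s M(0,s_0) = s_0\,\mathfrak m_{s_0}(q) \neq 0$. The implicit function theorem then produces a unique $C^2$ curve $s(\lambda)$ with $s(0) = s_0$ such that $M(\lambda,s(\lambda)) = 0$, i.e.\ $s(\lambda)^2\lambda \in \spec(N_{s(\lambda)})$, for $|\lambda|\ll 1$. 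A continuous curve of eigenfunctions is read off from the Lyapunov--Schmidt form \eqref{uform}: setting $\mathbf t = 1$ gives $\bu_{s(\lambda)} = (I + A(\lambda,s(\lambda)))\bu_{s_0}$, which depends continuously on $\lambda$ and reduces to $\bu_{s_0}$ at $\lambda = 0$ because $A(0,s_0)\bu_{s_0} = 0$.

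Next I apply \cref{cor:Hadamard}(2). By \cref{cor:degeneracy_of_form_lambda}, the first-order $\lambda$-crossing form $\mathfrak m_{\lambda_0}$ vanishes identically at a simple conjugate point at $\lambda_0 = 0$, so formula \eqref{eq:sdot} yields $\dot s(0) = 0$. The vanishing of $\mathfrak m_{\lambda_0}$ is exactly the hypothesis required by \eqref{eq:sddot}, which then gives
\begin{equation*}
\ddot s(0) = -\f{\mathfrak m^{(2)}_{\lambda_0}(q)}{\mathfrak m_{s_0}(q)}.
\end{equation*}
It remains to substitute the explicit expressions already derived for numerator and denominator. In case (1), \eqref{eq:simple_eigfns} forces $\bu_{s_0} = (0,v_{s_0})^\top$, so \eqref{maslov_form_s_lambda=0} reduces to $\mathfrak m_{s_0}(q) = \tfrac{\ell}{s_0^2}(v_{s_0}'(\ell))^2$ and \eqref{2ndorder_lambda_1dim} gives $\mathfrak m^{(2)}_{\lambda_0}(q) = -2s_0^3 \langle \widehat u_{s_0}, v_{s_0}\rangle$, producing \eqref{s''L-1} upon division. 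Case (2) is parallel: $\bu_{s_0} = (u_{s_0},0)^\top$, so $\mathfrak m_{s_0}(q) = -\tfrac{\ell}{s_0^2}(u_{s_0}'(\ell))^2$ and $\mathfrak m^{(2)}_{\lambda_0}(q) = -2s_0^3 \langle \widehat v_{s_0}, u_{s_0}\rangle$, whence \eqref{s''L+2}.

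The only conceptually subtle point is the need to invert the roles of the two parameters at this crossing. Because the eigenfunction lives entirely in a single coordinate block, one automatically has $\langle u_{s_0}, v_{s_0}\rangle = 0$ and hence $\mathfrak m_{\lambda_0}\equiv 0$, so the usual first-order Hadamard description of $\lambda$ as a function of $s$ is unavailable; geometrically the Lagrangian path merely touches the train tangentially rather than crossing it transversally. One must therefore solve instead for $s$ as a function of $\lambda$ (which is possible precisely because the \emph{other} crossing form $\mathfrak m_{s_0}$ is nondegenerate) and invoke the second-order form \eqref{second_order_lambda_form} to capture the concavity. Once this structural observation is in place the argument is just bookkeeping against the apparatus assembled in \cref{sec:symplectic_view} and earlier in \cref{sec:proof1}, with no genuine analytic obstacle remaining.
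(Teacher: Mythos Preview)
Your proof is correct and follows essentially the same route as the paper: invoke \cref{lemma:scrossingform} to get $\mathfrak m_{s_0}\neq 0$, apply \cref{cor:Hadamard}(2) (equivalently, the implicit function theorem on the scalar $M$ from \cref{prop:M}) to produce $s(\lambda)$, read off the eigenfunction family from \eqref{uform}, and then substitute \eqref{maslov_form_s_lambda=0} and \eqref{2ndorder_lambda_1dim} into \eqref{eq:sddot}. Your closing paragraph on the geometric reason for inverting the roles of $s$ and $\lambda$ is a nice expository addition not present in the paper's terse version.
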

	
	\begin{proof}
		\cref{lemma:scrossingform} implies $\mathfrak m_{s_0} \neq 0$, so the existence of $s(\lambda)$ follows from \cref{cor:Hadamard}. \cref{cor:degeneracy_of_form_lambda} then gives $\dot s(0) = 0$. From \eqref{uform} we see that $\bu_{s(\lambda)} = \big(I + A(\lambda,s(\lambda)) \big) \bu_{s_0}$ is an eigenfunction of $N_{s(\lambda)}$ for the eigenvalue $s^2(\lambda)\lambda$. Since $A(\lambda,s(\lambda))$ is continuous in $\lambda$ and $A(0,s_0) \bu_{s_0} = 0$, the convergence of $\bu_{s(\lambda)}$ to $\bu_{s_0}$ follows.
		
		It thus remains to prove \eqref{s''L-1} and \eqref{s''L+2}.
		If $0 \in \spec(L_-^{s_0}) \setminus \spec(L_+^{s_0})$ then $u_{s_0}$ is trivial, so equations \eqref{maslov_form_s_lambda=0} and \eqref{2ndorder_lambda_1dim} give
		\begin{equation}\label{L-den}
			\mathfrak{m}_{s_0}(q)  = \f{\ell}{s_0^2}\left (v_{s_0}'(\ell)\right )^2, \qquad \mathfrak m_{\la_0}^{(2)}(q) = - 2 s_0^3  \langle \widehat{u}_{s_0}, v_{s_0} \rangle.
		\end{equation}
		Substituting these into \eqref{eq:sddot} immediately gives \eqref{s''L-1}.
		The case $0 \in \spec(L_+^{s_0}) \setminus \spec(L_-^{s_0})$ is almost identical. Here we have
		\[
		\mathfrak{m}_{s_0}(q)  = -\f{\ell}{s_0^2}\left (u_{s_0}'(\ell)\right )^2, \qquad \mathfrak m_{\la_0}^{(2)}(q) = - 2 s_0^3  \langle \widehat{v}_{s_0}, u_{s_0} \rangle,
		\]
		and \eqref{s''L+2} follows.
	\end{proof}

	\subsection{When $\la_0=0$ has geometric multiplicity two}\label{subsec:multiplicity2}
	
	In this section we focus on the case of a geometrically double eigenvalue at zero.  Since $0 \in \spec(L_+^{s_0}) \cap \spec(L_-^{s_0})$, we have $\ker( N_{s_0}) = \spn\{ \bu^{(1)}_{s_0},\bu^{(2)}_{s_0} \}$ where the $\bu_{s_0}^{(i)}$ are given in \eqref{eq:doubleeigfns}. Applying \cref{prop:M} with $\lambda_0 = 0$ and $n=2$, we will show the following. Again, dot denotes $d/d\la$.
	
	\begin{theorem}
		\label{thm:touching}
		Suppose $\dim \ker N_{s_0} = 2$, and denote the corresponding eigenfunctions of $L_+^{s_0}$ and $L_-^{s_0}$ by $u^{(1)}_{s_0}$ and $v^{(2)}_{s_0}$, respectively.
		\begin{enumerate}
			\item  If $\big<u^{(1)}_{s_0}, v^{(2)}_{s_0} \big> \neq 0$, then $s^2\lambda \notin \spec(N_s)$ for $(\lambda,s)$ in a punctured neighbourhood of $(0,s_0)$.
			\item If $\big<u^{(1)}_{s_0}, v^{(2)}_{s_0} \big> = 0$ and
			\begin{equation}\label{uglycondition}
				\f{\big \langle \widehat{v}^{(1)}_{s_0}, u^{(1)}_{s_0} \big \rangle}{\big (\p_x u^{(1)}_{s_0}(\ell)\big )^2} + \f{\big \langle \widehat{u}^{(2)}_{s_0}, v^{(2)}_{s_0}\big \rangle }{\big ( \p_x v^{(2)}_{s_0}(\ell)\big )^2 } \neq 0,
			\end{equation}
			where $\widehat{u}^{(2)}_{s_0}\in\dom(L_+^{s_0})$ and $\widehat{v}^{(1)}_{s_0}\in\dom(L_-^{s_0})$ denote solutions to
			\begin{equation}
				\label{LUV}
				L_+^{s_0} \widehat{u}^{(2)}_{s_0} = v^{(2)}_{s_0}, \qquad -L_-^{s_0} \widehat v_{s_0}^{(1)} = u^{(1)}_{s_0},
			\end{equation}
			then for $|\lambda| \ll 1$ there exist $C^2$ curves $s_1(\lambda)$ and $s_2(\lambda)$ such that
			\begin{enumerate}
				\item[(i)] $s^2_{1,2}(\lambda) \lambda \in \spec\big(N_{s_{1,2}(\la)}\big)$,
				\item[(ii)] $s_{1,2}(0) = s_0$,
				\item[(iii)] $\dot s_{1,2}(0) = 0$, 
			\end{enumerate}
			and the concavities satisfy
			\begin{equation}\label{concavities}
				\ddot s_1(0) = -\frac{2s_0^5}{\ell} \f{\big \langle \widehat{v}^{(1)}_{s_0}, u^{(1)}_{s_0} \big \rangle}{\big ( \p_x u^{(1)}_{s_0}(\ell)\big )^2 }, \qquad 
				\ddot s_2(0) = \frac{2s_0^5}{\ell} \f{\big \langle \widehat{u}^{(2)}_{s_0}, v^{(2)}_{s_0}\big \rangle }{\big ( \p_x v^{(2)}_{s_0}(\ell)\big )^2 }.
			\end{equation}
			Moreover, there exist continuous curves $\bu_{s_1(\lambda)}$ and $\bu_{s_2(\lambda)}$ of eigenfunctions such that
			\begin{equation}
				\bu_{s_1(\lambda)} \to \bu^{(1)}_{s_0} = \begin{pmatrix} u^{(1)}_{s_0} \\ 0 \end{pmatrix}, \qquad 
				\bu_{s_2(\lambda)} \to \bu^{(2)}_{s_0} = \begin{pmatrix} 0 \\ v^{(2)}_{s_0}  \end{pmatrix}
			\end{equation}
			as $\lambda \to 0$.
			
		\end{enumerate}
	\end{theorem}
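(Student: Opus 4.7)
The strategy is to apply \cref{prop:M} with $n=2$ and $\la_0=0$: this produces a smooth $2\times 2$ matrix $M(\la,s)$ with $M(0,s_0)=0$ whose determinant vanishes precisely where $s^2\la\in\spec(N_s)$. Using the special form \eqref{eq:doubleeigfns} of the eigenfunctions together with the identifications \eqref{MandmathfrakM} and the matrices \eqref{eq:capitalmathfrakMn=2}, \eqref{matrix_firstorder_lambda}, \eqref{bigmfM2_2dim}, one reads off the Taylor data. Setting $a_1 \coloneqq -\frac{\ell}{s_0}\bigl(\p_x u^{(1)}_{s_0}(\ell)\bigr)^2 < 0$, $a_2 \coloneqq \frac{\ell}{s_0}\bigl(\p_x v^{(2)}_{s_0}(\ell)\bigr)^2 > 0$, $b_1 \coloneqq -s_0^4\bigl\langle \widehat v^{(1)}_{s_0}, u^{(1)}_{s_0}\bigr\rangle$, and $b_2 \coloneqq -s_0^4\bigl\langle\widehat u^{(2)}_{s_0}, v^{(2)}_{s_0}\bigr\rangle$, one finds $\p_s M(0,s_0) = \diag(a_1,a_2)$, $\p_{\la\la}M(0,s_0) = \diag(2b_1,2b_2)$ (this latter formula being valid when $\p_\la M(0,s_0)=0$), while $\p_\la M(0,s_0)$ is antidiagonal with both nonzero entries equal to $-s_0^2\langle u^{(1)}_{s_0}, v^{(2)}_{s_0}\rangle$.

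For \textbf{Part (1)}, writing $c \coloneqq \langle u^{(1)}_{s_0}, v^{(2)}_{s_0}\rangle \neq 0$, the second-order Taylor expansion gives
\begin{equation*}
\det M(\la,s) = a_1 a_2 (s-s_0)^2 - s_0^4 c^2 \la^2 + R(\la,s),
\end{equation*}
with $R$ of cubic or higher order in $(\la, s-s_0)$. Since $a_1 a_2 < 0$, the leading quadratic form is negative definite in $(s-s_0,\la)$, so $\det M(\la,s) < 0$ throughout a punctured neighbourhood of $(0,s_0)$, which proves the assertion.

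For \textbf{Part (2)}, the assumption $c=0$ forces $\p_\la M(0,s_0)=0$, and direct computation of the relevant Taylor coefficients of $\det M$ shows
\begin{equation*}
\det M(\la,s) = \bigl(a_1(s-s_0) + b_1\la^2\bigr)\bigl(a_2(s-s_0) + b_2\la^2\bigr) + R(\la,s),
\end{equation*}
where $R$ is of strictly higher weighted order (treating $s-s_0$ as having the same weight as $\la^2$). Since $\p_{ss}\det M(0,s_0) = 2 a_1 a_2 \neq 0$, the Malgrange preparation theorem applied to the smooth function $\det M$ furnishes a factorisation $\det M(\la,s) = u(\la,s)\bigl[(s-s_0)^2 + p(\la)(s-s_0) + q(\la)\bigr]$, with $u,p,q$ smooth, $u(0,s_0)\neq 0$, $p(\la) = \f{a_1 b_2 + a_2 b_1}{a_1 a_2}\la^2 + O(\la^3)$, and $q(\la) = \f{b_1 b_2}{a_1 a_2}\la^4 + O(\la^5)$. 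The discriminant is
\begin{equation*}
p(\la)^2 - 4q(\la) = \f{(a_1 b_2 - a_2 b_1)^2}{(a_1 a_2)^2}\la^4 + O(\la^5),
\end{equation*}
which is positive for $0<|\la|\ll 1$ exactly when $a_1 b_2 \neq a_2 b_1$; a direct check identifies this inequality with condition \eqref{uglycondition}. Writing $p^2-4q = \la^4 h(\la)$ with $h$ smooth and $h(0)>0$ (so $\la^2\sqrt{h(\la)}$ is smooth in $\la$), the two real roots of the quadratic,
\begin{equation*}
s_{1,2}(\la) = s_0 - \tfrac12 p(\la) \pm \tfrac12 \la^2\sqrt{h(\la)},
\end{equation*}
are smooth in $\la$, satisfy $s_i(0)=s_0$ and $\dot s_i(0)=0$, and with appropriate labelling yield $\ddot s_i(0) = -2b_i/a_i$, which reduces to the stated formulas \eqref{concavities} after substituting the values of $a_i$, $b_i$.

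The continuous eigenfunction families come directly from the construction in \cref{prop:M}: every eigenfunction is of the form $\bu = (I + A(\la,s))\sum_j t_j \bu^{(j)}_{s_0}$ for $(t_1,t_2)\in\ker M(\la,s)$. On the curve $s_1(\la)$ one has $M_{11}(\la,s_1(\la))$ vanishing to leading order while $M_{22}(\la,s_1(\la)) = -\f{a_1 b_2 - a_2 b_1}{a_1}\la^2 + O(\la^3)$ is nonzero for small $\la\neq 0$, so $\ker M(\la,s_1(\la))$ is spanned by a vector converging to $(1,0)^\top$; continuity of $A$ together with $A(0,s_0)\bu^{(1)}_{s_0}=0$ then gives $\bu_{s_1(\la)}\to\bu^{(1)}_{s_0}$, and the symmetric argument applies along $s_2$. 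The principal obstacle is the nonstandard singularity at $(0,s_0)$ in Part~(2), where all first-order partials of $\det M$ vanish so the implicit function theorem is not directly applicable; resolving this requires the preparation theorem to separate the two branches, and condition \eqref{uglycondition} is exactly the nondegeneracy that renders the discriminant strictly positive.
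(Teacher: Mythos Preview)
Your proposal is correct and follows essentially the same route as the paper: both arguments reduce to analysing $\det M(\la,s)$ via \cref{prop:M}, use the negative-definite Hessian for Part~(1), and for Part~(2) apply the Malgrange preparation theorem in $s$, compute the leading Taylor data of the coefficients, and extract the two branches from the quadratic formula. Your bookkeeping in terms of $a_i,b_i$ matches the paper's explicit derivative computations (its \cref{m:deriv}), and your identification of \eqref{uglycondition} with $a_1b_2\neq a_2b_1$ is exactly the paper's $\alpha>0$.

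One small point worth noting: your regularity argument is actually cleaner than the paper's. You observe that $p(\la)=\la^2\tilde p(\la)$ and $q(\la)=\la^4\tilde q(\la)$ with $\tilde p,\tilde q$ smooth, so $p^2-4q=\la^4 h(\la)$ with $h$ smooth and $h(0)>0$, whence $\sqrt{p^2-4q}=\la^2\sqrt{h(\la)}$ is smooth. The paper instead proves only $C^2$ regularity of $\sqrt{\Delta(\la)}$ by a direct limit computation (its \cref{lem:Delta}). Your eigenfunction argument---showing $M_{22}(\la,s_1(\la))\sim \tfrac{a_1b_2-a_2b_1}{a_1}\la^2\neq0$ so the kernel direction converges to $(1,0)^\top$---is the same idea as the paper's, modulo a harmless sign (you wrote $-\tfrac{a_1b_2-a_2b_1}{a_1}$; it should be $+$).
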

	
	The condition \eqref{uglycondition} will be discussed in \cref{rem:concavities_signs} below.
	
	\begin{rem}
		As in \cref{rem:contribute} the solutions $\widehat{u}^{(2)}_{s_0}$ and $\widehat{v}^{(1)}_{s_0}$ in \eqref{LUV} are not unique, but the expressions in \eqref{uglycondition} and \eqref{concavities} do not depend on the choice of solution.
	\end{rem}
	
	We prove the theorem by studying the zero set of $m(\la,s) \coloneqq \det M(\la,s)$, where $M$ is given in \eqref{mdef}. We thus start with some elementary calculations for the higher order derivatives of $m$. These will be used to prove the existence of the eigenvalue curves $s_{1,2}(\lambda)$ and also to evaluate their first and second derivatives.
	
	\begin{lemma}
		\label{m:deriv}
		Under the assumptions of \cref{thm:touching}, we have
		\begin{align}
			m(0,s_0) = \frac{\p m}{\p s}(0,s_0) = \frac{\p m}{\p \lambda}(0,s_0) = 
			\frac{\p^2 m}{\p s \p \lambda}(0,s_0) = 0
		\end{align}
		and
		\begin{equation}
			\label{m2nd_derivs}
			\frac{\p^2 m}{\p s^2}(0,s_0) =  -\f{2\ell^2}{s_0^2}  \left ( \p_x u^{(1)}_{s_0}(\ell) \right )^2 \left ( \p_x v^{(2)}_{s_0}(\ell) \right )^2, \qquad
			\frac{\p^2 m}{\p \lambda^2}(0,s_0) =  -2s_0^4  \left<u^{(1)}_{s_0}, v^{(2)}_{s_0}\right>^2.
		\end{equation}
		Moreover, if $\big<u^{(1)}_{s_0},v^{(2)}_{s_0}\big> = 0$, then
		\begin{align}
			&\frac{\p^3 m}{\p s\p \lambda^2}(0,s_0) = 2 \ell s_0^3 \left ( \p_x u^{(1)}_{s_0}(\ell) \right )^2 \left \langle \widehat{u}^{(2)}_{s_0}, v^{(2)}_{s_0}\right \rangle - 2\ell s_0^3 \left ( \p_x v^{(2)}_{s_0}(\ell) \right )^2\left \langle  \widehat{v}^{(1)}_{s_0}, u^{(1)}_{s_0}\right \rangle \label{m3mixed}\\
			&\frac{\p^3 m}{\p \lambda^3}(0,s_0) = 0, \qquad 
			\frac{\p^4 m}{\p \lambda^4}(0,s_0) =  24 s_0^8 \left \langle \widehat{u}^{(2)}_{s_0}, v^{(2)}_{s_0}\right \rangle\left \langle\widehat{v}^{(1)}_{s_0}, u^{(1)}_{s_0} \right \rangle
			\label{m4}
		\end{align}
		with $\widehat{u}^{(2)}_{s_0}$ and $\widehat{v}^{(1)}_{s_0}$ as in \eqref{LUV}.
	\end{lemma}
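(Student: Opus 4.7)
The plan is to exploit the $2\times 2$ structure of $M(\lambda,s)$ from \cref{prop:M} by writing $m = M_{11}M_{22} - M_{12}M_{21}$ and reading the various partial derivatives off a Taylor expansion about $(\lambda_0,s_0) = (0,s_0)$. The key input is the structure of the eigenfunctions in \eqref{eq:doubleeigfns}, which gives a clean diagonal/antidiagonal decomposition of the relevant derivative matrices.

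First I would compute the matrix derivatives at $(0,s_0)$ using \eqref{41}. Substituting $\bu_{s_0}^{(1)} = (u^{(1)}_{s_0},0)^\top$ and $\bu_{s_0}^{(2)} = (0,v^{(2)}_{s_0})^\top$ shows that $\p_\lambda M(0,s_0) = -s_0^2 \langle u^{(1)}_{s_0},v^{(2)}_{s_0}\rangle \bigl(\begin{smallmatrix} 0 & 1\\ 1 & 0\end{smallmatrix}\bigr)$ is antidiagonal, while $\p_s M(0,s_0) = s_0\,\mathfrak M_{s_0}$ (cf. \eqref{MandmathfrakM} and \eqref{eq:capitalmathfrakMn=2}) is diagonal with entries $-\ell s_0^{-1}(\p_x u^{(1)}_{s_0}(\ell))^2$ and $\ell s_0^{-1}(\p_x v^{(2)}_{s_0}(\ell))^2$. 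Since $M(0,s_0)=0$, the Leibniz rule shows that every term in $\p_\lambda m$ and $\p_s m$ contains an undifferentiated factor of $M$, so both vanish; the mixed derivative $\p^2_{s\lambda}m$ vanishes because $\p_s M$ has no off-diagonal entries and $\p_\lambda M$ has no diagonal entries, killing every cross product. The second-order identities \eqref{m2nd_derivs} then read off directly as $\p^2_{ss}m(0,s_0) = 2(B_{11}B_{22}-B_{12}B_{21})$ and $\p^2_{\lambda\lambda}m(0,s_0) = -2A_{12}A_{21}$, where $A = \p_\lambda M(0,s_0)$ and $B = \p_s M(0,s_0)$.

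For the higher-order formulas, the extra assumption $\langle u^{(1)}_{s_0},v^{(2)}_{s_0}\rangle = 0$ makes $\p_\lambda M(0,s_0)$ vanish entirely. Then \eqref{M^lala}, together with the generalised eigenfunctions \eqref{double_generalised_eigfns}, gives
\[
\p^2_{\lambda\lambda}M(0,s_0) = -2s_0^4\,\diag\big(\langle\widehat v^{(1)}_{s_0},u^{(1)}_{s_0}\rangle,\, \langle\widehat u^{(2)}_{s_0},v^{(2)}_{s_0}\rangle\big),
\]
which is again \emph{diagonal}. The remaining three identities follow from expanding $m$ to total degree four in $(\lambda,\tilde s)$, $\tilde s = s-s_0$, and collecting the relevant monomials. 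The main obstacle — really the only point requiring care — is verifying that the off-diagonal entries $M_{12}$ and $M_{21}$ cannot contribute to $\p^3_{\lambda\lambda s}m$, $\p^3_{\lambda\lambda\lambda}m$ or $\p^4_{\lambda\lambda\lambda\lambda}m$. This follows because $A_{12}=A_{21}=B_{12}=B_{21}=0$ and the $(1,2)$, $(2,1)$ entries of $\p^2_{\lambda\lambda}M(0,s_0)$ also vanish: every monomial $\lambda^a\tilde s^b$ in $M_{12}$ or $M_{21}$ has total degree $\geq 2$, and an exhaustive check on the partition $(a,b)+(c,d)=(2,1), (3,0)$ and $(4,0)$ forces every product of such monomials with total degree $\leq 4$ to vanish. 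Consequently only $M_{11}M_{22}$ contributes: pairing the $\tilde s B_{ii}$ and $\tfrac12\lambda^2 D_{jj}$ factors produces the coefficient of $\lambda^2\tilde s$, yielding \eqref{m3mixed}; pairing two $\tfrac12\lambda^2 D$ factors produces the coefficient of $\lambda^4$, yielding \eqref{m4}; and the absence of a pure $\lambda^3$ monomial in $M_{11}M_{22}$ (odd degree against even-degree factors) gives $\p^3_{\lambda\lambda\lambda}m(0,s_0)=0$.
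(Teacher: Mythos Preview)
Your proposal is correct and follows essentially the same approach as the paper: both write $m=M_{11}M_{22}-M_{12}M_{21}$, compute $\p_\lambda M$, $\p_s M$ and $\p^2_\lambda M$ at $(0,s_0)$ from \cref{prop:M}, exploit the diagonal/antidiagonal structure coming from \eqref{eq:doubleeigfns} and \eqref{double_generalised_eigfns}, and then read off the derivatives of $m$ via the product rule. The only difference is packaging: the paper carries out the Leibniz rule term by term, whereas you organise the same computation as a Taylor expansion with a degree-counting argument to rule out the $M_{12}M_{21}$ contributions.
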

	
	\begin{proof}
		Writing $M = \begin{pmatrix} a & b \\ c & d \end{pmatrix}$, so that $m = ad - bc$, we compute
		\begin{align*}
			\p_s m &= (\p_s a) \,d + a \,(\p_s d) - (\p_s b) \,c - b \,(\p_s c), \quad \\
			\p_{s}^2 m &= (\p_{s}^2 a)\, d + 2\, (\p_s a)\, (\p_s d) + a \,(\p_{s}^2 d) - (\p_{s}^2  b) \,c - 2 \,(\p_{s} b)\, (\p_s c) - b \,(\p_{s}^2  c)
		\end{align*}
		and so at $(0,s_0)$ we have
		\begin{subequations}
			\begin{equation}
				\p_s m = 0, \qquad
				\p_{s}^2 m = 2 \,(\p_s a) \,(\p_s d) - 2\, (\p_s b) \,(\p_s c)
			\end{equation}
			because $a=b=c=d = 0$ there (recall that $M(\la_0,s_0)=0$). Similarly, we find that
			\label{subeqns}
			\begin{align}
				\p_{\la} m &= 0, \\
				\p_{\la}^2m &= 2 (\p_{\la}a) (\p_{\la} d) - 2 (\p_{\la} b) (\p_{\la} c), \\
				\p_{s\la} m &= (\p_s a) (\p_{\la} d) + (\p_{\la} a) (\p_{s} d) - (\p_{s} b) (\p_{\la} c) - (\p_{\la} b) (\p_{s} c).
			\end{align}			
		\end{subequations}
		at $(0,s_0)$. To evaluate the second derivatives, it remains to differentiate the components of $M$. By \cref{prop:M}, for $i,j=1,2$ we have
		\begin{equation}
			\label{433}
			\frac{\p M_{ij}}{\p \lambda}(0,s_0) = - s_0^2 \left<\bu^{(i)}_{s_0},  S\bu^{(j)}_{s_0}\right>, \qquad
			\frac{\p M_{ij}}{\p s}(0,s_0) = \left< \p_sB_{s_0} \bu^{(i)}_{s_0}, S\bu^{(j)}_{s_0}\right>.
		\end{equation}
		It follows from \eqref{MandmathfrakM} and \eqref{matrix_firstorder_lambda} that
		\[
		\frac{\p M}{\p \lambda}(0,s_0) = - s_0^2 \begin{pmatrix} 0 & \big<u^{(1)}_{s_0},v^{(2)}_{s_0}\big> \\ \big<u^{(1)}_{s_0},v^{(2)}_{s_0}\big> & 0 \end{pmatrix},
		\]
		so that at $(0,s_0)$, we have $\p_{\la} a = \p_{\la} d = 0$ and $\p_{\la} b = \p_{\la} c= -s_0^2 \big<u^{(1)}_{s_0},v^{(2)}_{s_0}\big>$. Similarly, it follows from \eqref{MandmathfrakM} and \eqref{eq:capitalmathfrakMn=2} that 
		\begin{equation}\label{matrix:Ms}
			\frac{\p M}{\p s}(0,s_0) = \f{\ell}{s_0} \begin{pmatrix} - \big ( \p_x u^{(1)}_{s_0}(\ell) \big )^2 & 0 \\ 0 & \big ( \p_x v^{(2)}_{s_0}(\ell) \big )^2 \end{pmatrix},
		\end{equation}
		hence at $(0,s_0)$ we have $\p_{s} a = - s_0^{-1} \ell \big ( \p_x u^{(1)}_{s_0}(\ell) \big )^2 $, $\p_{s} d = s_0^{-1} \ell\big ( \p_x v^{(2)}_{s_0}(\ell) \big )^2 $ and $\p_{s} b = \p_{s} c = 0$. The claimed formulas for $\p_{s}^2 m$, $\p_{s\la} m$ and $\p_{\la}^2 m$ now follow from \eqref{subeqns}.
		
		If $\big<u^{(1)}_{s_0},v^{(2)}_{s_0}\big> = 0$, then $\p_\lambda b = \p_\lambda c = 0$ at $(0,s_0)$. This implies that $\p_{\la}^3 m= 0$ and 
		\begin{equation}\label{higher_derivs_m}
			\p_{\la}^4 m = 6 \left ((\p_{\la}^2 a) \,(\p_{\la}^2 d) - (\p_{\la}^2 b) \,(\p_{\la}^2 c) \right ), \quad \p_{s\la\la} m = (\p_s a)\,( \p_{\la}^2 d) + (\p_{\la}^2a)\,( \p_s d	)
		\end{equation}
		at $(0,s_0)$. 
		Using \eqref{MandmathfrakM} and \eqref{bigmfM2_2dim} we obtain
		\begin{equation}\label{MLL}
			\frac{\p^2 M}{\p \lambda^2}(0,s_0) = -2s_0^4\begin{pmatrix}
				\big \langle \widehat{v}_{s_0}^{(1)}, u_{s_0}^{(1)}\big \rangle & 0 \\
				0 & \big \langle \widehat{u}_{s_0}^{(2)}, v_{s_0}^{(2)} \big \rangle
			\end{pmatrix},
		\end{equation}
		hence $\p_{\la}^2 b  = \p_{\la}^2 c = 0$ and it follows that
		\begin{align*}
			\p_{\la}^4 m &= 6 (\p_{\la}^2 a) (\p_{\la}^2 d) = 24 s_0^8  \big \langle \widehat{v}_{s_0}^{(1)}, u_{s_0}^{(1)}\big \rangle   \big \langle \widehat{u}_{s_0}^{(2)}, v_{s_0}^{(2)} \big \rangle.
		\end{align*}
		The claimed formula for $\p_{s\la\la} m$ follows directly from \eqref{higher_derivs_m}.
	\end{proof}
	
	The next elementary lemma will be used to prove differentiability of the eigenvalue curves in the second part of \cref{thm:touching}. In what follows, dot denotes $d/d\la$.

	\begin{lemma}
		\label{lem:Delta}
		If $\Delta$ is a smooth function with $\Delta(\lambda) = \alpha \lambda^4 + O(\lambda^5)$ as $|\la|\to 0$ for some $\alpha>0$, then $\delta(\lambda) := \sqrt{\Delta(\lambda)}$ is $C^2$ near $\lambda=0$, with $\dot\delta(0) = 0$ and $\ddot \delta(0) = 2\sqrt\alpha$.
	\end{lemma}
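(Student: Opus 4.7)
The plan is to factor out the leading $\lambda^4$ from $\Delta$ so that what remains under the square root is a smooth strictly positive function near $\lambda = 0$, and then use the fact that the square root is smooth on $(0,\infty)$.

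More precisely, I would first observe that the hypothesis $\Delta(\lambda) = \alpha \lambda^4 + O(\lambda^5)$ together with smoothness of $\Delta$ and $\Delta(0) = \dot\Delta(0) = \ddot\Delta(0) = \dddot\Delta(0) = 0$ allows one to write $\Delta(\lambda) = \lambda^4 g(\lambda)$ for some smooth function $g$ defined near $0$ with $g(0) = \alpha$. (This is a standard consequence of Taylor's theorem with integral remainder, or equivalently Hadamard's lemma applied four times.) Since $\alpha > 0$ and $g$ is continuous, we have $g(\lambda) > 0$ for all $\lambda$ in a sufficiently small neighbourhood of $0$, and in particular the composition $\lambda \mapsto \sqrt{g(\lambda)}$ is $C^\infty$ there.

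Next I would note that, since $\lambda^2 \geq 0$,
\begin{equation*}
\delta(\lambda) = \sqrt{\Delta(\lambda)} = \sqrt{\lambda^4 g(\lambda)} = \lambda^2 \sqrt{g(\lambda)}
\end{equation*}
for all $\lambda$ in that neighbourhood. This exhibits $\delta$ as a product of two $C^\infty$ functions and hence $\delta$ is itself $C^\infty$ (in particular $C^2$) near $0$. Writing $h(\lambda) := \sqrt{g(\lambda)}$ so that $\delta(\lambda) = \lambda^2 h(\lambda)$, differentiation gives
\begin{equation*}
\dot\delta(\lambda) = 2\lambda h(\lambda) + \lambda^2 \dot h(\lambda), \qquad
\ddot\delta(\lambda) = 2 h(\lambda) + 4\lambda \dot h(\lambda) + \lambda^2 \ddot h(\lambda),
\end{equation*}
and evaluating at $\lambda = 0$ yields $\dot\delta(0) = 0$ and $\ddot\delta(0) = 2 h(0) = 2\sqrt{\alpha}$, as claimed.

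There is no real obstacle here: the only subtlety is checking that the factor $g(\lambda)$ really is smooth (not merely continuous) and strictly positive on a neighbourhood of $0$, which is immediate from Hadamard's lemma and continuity, respectively. The positivity of $\alpha$ is essential, since it is what ensures the square root remains differentiable through $\lambda = 0$ rather than developing a cusp.
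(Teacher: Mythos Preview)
Your proof is correct and in fact cleaner than the paper's. The paper proceeds by direct computation: it treats $\lambda = 0$ separately, verifying first that $\dot\delta(0)$ exists as a limit of the difference quotient $\delta(\lambda)/\lambda$, then that $\dot\delta(\lambda) = \tfrac12 \Delta^{-1/2}\dot\Delta$ tends to $0$ as $\lambda\to 0$ (hence $\delta$ is $C^1$), and finally repeats the same two-step argument for the second derivative using the explicit formula for $\ddot\delta(\lambda)$ and the asymptotics $\Delta = \alpha\lambda^4 + O(\lambda^5)$, $\dot\Delta = 4\alpha\lambda^3 + O(\lambda^4)$, $\ddot\Delta = 12\alpha\lambda^2 + O(\lambda^3)$.

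Your route via Hadamard's lemma, writing $\Delta(\lambda) = \lambda^4 g(\lambda)$ with $g$ smooth and $g(0)=\alpha>0$, and hence $\delta(\lambda) = \lambda^2\sqrt{g(\lambda)}$, is more structural: it exhibits $\delta$ as a product of smooth functions and therefore shows at once that $\delta$ is $C^\infty$ near $0$, not merely $C^2$. The paper's bare-hands approach has the minor advantage of not invoking Hadamard's lemma, but your factorisation argument is shorter, gives a stronger conclusion, and makes the role of the hypothesis $\alpha>0$ completely transparent.
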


	\begin{proof}
		It is clear that $\delta$ is smooth except possibly at $\lambda=0$. For the first derivative we note that $\delta(\lambda)/\lambda \to 0$ as $\lambda \to 0$, so $\dot\delta(0) = 0$. For $\lambda \neq 0$ we compute
		\[
		\dot\delta(\lambda) = \frac12 \Delta(\lambda)^{-1/2} \dot \Delta(\lambda).
		\]
		Using $\Delta(\lambda) = \alpha \lambda^4 + O(\lambda^5)$ and $\dot\Delta(\lambda) = 4\alpha \lambda^3 + O(\lambda^4)$, we see that $\dot\delta(\lambda) \to 0$ as $\lambda \to 0$ and conclude that $\delta$ is $C^1$. Next, we observe that
		\[
		\frac{\dot\delta(\lambda) - \dot\delta(0)}{\lambda} = \frac12 \frac{\lambda^2}{\sqrt{\Delta(\lambda)}} \frac{\dot\Delta(\lambda)}{\lambda^3} \to 2 \sqrt\alpha,
		\]
		and hence $\ddot\delta(0)$ exists. A similar argument gives
		\[
		\ddot\delta(\lambda) = -\frac14 \frac{\dot\Delta(\lambda)^2}{\Delta(\lambda)^{3/2}} + \frac12 \frac{\ddot\Delta(\lambda)}{\sqrt{\Delta(\lambda)}} \to 2\sqrt\alpha
		\]
		as $\lambda \to 0$, so $\delta$ is $C^2$.
	\end{proof}

	\begin{proof}[Proof of Theorem \ref{thm:touching}]
		By assumption we have $m(0,s_0) = 0$. If $\big<u^{(1)}_{s_0}, v^{(2)}_{s_0} \big> \neq 0$, \cref{m:deriv} implies $m$ has a strict local maximum at $(0,s_0)$, so $m$ is negative (and in particular nonzero) in a punctured neighborhood of $(0,s_0)$. This proves the first case.

		For the second case we use the Malgrange preparation theorem (see \cite[\S IV.2]{Golubitsky_Guillemin}). We know from Lemma~\ref{m:deriv} that $m(0,s_0) = \p_{s} m(0,s_0) = 0$ and $\p_{s}^2 m(0,s_0) < 0$, so we can write
		\begin{equation}
			m(\lambda,s) = Q(\lambda,s)P(\lambda,s)
		\end{equation}
		in a neighbourhood of $(0,s_0)$, where
		\begin{equation}
			\label{eq:P}
			P(\lambda,s) = (s-s_0)^2 + B(\lambda)(s-s_0) + C(\lambda),
		\end{equation}
		$Q$, $B$ and $C$ are smooth, real-valued functions, and $Q$ does not vanish in a neighbourhood of $(0,s_0)$. This means $m$ locally has the same zero set as $P$.

		We claim that the discriminant $\Delta(\lambda) = B^2(\lambda) - 4 C(\lambda)$ satisfies
		\begin{equation}
			\label{claim}
			\Delta(\lambda) = \alpha \lambda^4 + O(\lambda^5) \,\,\,\text{as}\,\,\,|\la|\to 0, \qquad \alpha = \frac{\ddot B(0)^2}{4} - \frac{C^{(4)}(0)}{6} > 0.
		\end{equation}
		To see this, we compute the Taylor expansion of $\Delta(\la) = B(\la)^2 - 4C(\la)$ about $\la=0$ and show that $\Delta(0) = \dot \Delta(0) = \ddot\Delta(0) = \dddot\Delta(0) = 0$. For this it suffices to show that $B(0) = \dot B(0) = C(0) = \dot C(0) = \ddot C(0) = \dddot C(0) = 0$. That $\Delta^{(4)}(0) = 4!\alpha$ follows from the definition of $\Delta(\la)$.

		Using \cref{m:deriv} we obtain
		\[
		m(0,s_0) = Q(0,s_0) C(0) = 0.
		\]
		Since $Q(0,s_0) \neq 0$, this implies $C(0) = 0$. Similarly, we find that
		\begin{align*}
			\p_\la m(0,s_0) &= Q(0,s_0) \dot C(0) = 0 \\
			\p_\la^2 m(0,s_0) &= Q(0,s_0) \ddot C(0) = 0\\
			\p_\la ^3 m(0,s_0) &= Q(0,s_0) \dddot C(0) = 0\\
			\p_\la ^4 m(0,s_0) &= Q(0,s_0) C^{(4)}(0)
		\end{align*}
		and
		\begin{align*}
			\p_s m(0,s_0) &= Q(0,s_0) B(0) = 0 \\
			\p_{s \la} m(0,s_0) &= Q(0,s_0) \dot B(0) = 0 \\
			\p_{s \la\la} m(0,s_0) &= Q(0,s_0) \ddot B(0),
		\end{align*}
		which gives
		\[
		B(0) = \dot B(0) = C(0) = \dot C(0) = \ddot C(0) = \dddot C(0) = 0.
		\]
		We now observe that
		\[
		\p_{s}^2 m(0,s_0) = Q(0,s_0) \, \p_{s}^2 P(0,s_0) = 2 Q(0,s_0).
		\]
		Using the first formula from \eqref{m2nd_derivs}, this implies that
		\begin{equation}\label{Q(0,s_0)}
			Q(0,s_0) = -\f{\ell^2}{s_0^2}  \left ( \p_x u^{(1)}_{s_0}(\ell) \right )^2 \left ( \p_x v^{(2)}_{s_0}(\ell) \right )^2.
		\end{equation}
		Therefore, using \eqref{m4}, 
		\begin{equation}\label{C^{(4)}(0) }
			C^{(4)}(0) = \frac{\p_{\la}^4 m(0,s_0)}{Q(0,s_0)} =
			-24\f{s_0^{10}}{\ell^2} \f{ \big \langle \widehat{v}^{(1)}_{s_0}, u^{(1)}_{s_0}\big \rangle \big\langle\widehat{u}^{(2)}_{s_0}, v^{(2)}_{s_0}\big \rangle}
			{  \big ( \p_x u^{(1)}_{s_0}(\ell) \big )^2 \big ( \p_x v^{(2)}_{s_0}(\ell) \big )^2}\,.
		\end{equation}
		We similarly use \eqref{m3mixed} to compute
		\begin{equation}\label{ddotB(0)}
			\ddot B(0) = \frac{\p_{s \la\la} m(0,s_0)}{Q(0,s_0)}  = \f{2s_0^5}{\ell}  \left \{ \f{\big\langle\widehat{v}^{(1)}_{s_0}, u^{(1)}_{s_0}\big \rangle}{\big ( \p_x u^{(1)}_{s_0}(\ell) \big )^2 } - \f{\big\langle\widehat{u}^{(2)}_{s_0}, v^{(2)}_{s_0} \big\rangle}{\big ( \p_x v^{(2)}_{s_0}(\ell) \big )^2}  \right \}.
		\end{equation}
		
		Therefore
		\begin{align}\label{alpha}
			\alpha &= \frac{\ddot B(0)^2}{4} - \frac{C^{(4)}(0)}{6} = \f{s_0^{10}}{\ell^2} \left ( \f{\big\langle\widehat{v}^{(1)}_{s_0}, u^{(1)}_{s_0}\big \rangle}{\big ( \p_x u^{(1)}_{s_0}(\ell) \big )^2 } + \f{\big\langle\widehat{u}^{(2)}_{s_0}, v^{(2)}_{s_0} \big\rangle}{\big ( \p_x v^{(2)}_{s_0}(\ell) \big )^2}  \right )^2>0
		\end{align}
		on account of \eqref{uglycondition}, thus proving the claim. 
		
		Given \eqref{claim}, we have $\Delta(\lambda) > 0$ for small nonzero $\lambda$, and so the equation $P(\lambda,s) = 0$ has two solutions in $s$,
		\begin{equation}
			\label{spm}
			s_\pm(\lambda) := \frac{-B(\lambda) \pm \sqrt{\Delta(\lambda)}}{2} + s_0.
		\end{equation}
		It then follows from Lemma~\ref{lem:Delta} that both $s_\pm(\lambda)$ are $C^2$ in a neighbourhood of $\lambda=0$, with $\dot s_\pm(0) = -\dot B(0)/2 = 0$ and
		\begin{equation}
			\label{s:deriv}
			\ddot s_\pm(0) = \frac{-\ddot B(0) \pm 2\sqrt\alpha}{2},
		\end{equation}
		so the curves $s_\pm(\lambda)$ satisfy properties (i)--(iii) in the theorem. Substituting \eqref{ddotB(0)} and \eqref{alpha} into \eqref{s:deriv}, we obtain
		\begin{equation}\label{concavitiesexact}
			\ddot s_\pm(0) = \f{s_0^5}{\ell} \left\{ \f{\big\langle\widehat{u}^{(2)}_{s_0}, v^{(2)}_{s_0} \big\rangle}{\big ( \p_x v^{(2)}_{s_0}(\ell) \big )^2} - \f{\big\langle\widehat{v}^{(1)}_{s_0}, u^{(1)}_{s_0}\big \rangle}{\big ( \p_x u^{(1)}_{s_0}(\ell) \big )^2 } \pm 
			\left| \f{\big\langle\widehat{v}^{(1)}_{s_0}, u^{(1)}_{s_0}\big \rangle}{\big ( \p_x u^{(1)}_{s_0}(\ell) \big )^2 } + \f{\big\langle\widehat{u}^{(2)}_{s_0}, v^{(2)}_{s_0} \big\rangle}{\big ( \p_x v^{(2)}_{s_0}(\ell) \big )^2}\right | \right\}.
		\end{equation}
		If the quantity inside the absolute value (which is nonzero by \eqref{uglycondition}) is positive, we get
		\begin{equation}\label{s+s-1}
			\ddot s_+(0) =  \f{2s_0^5}{\ell}\f{\big \langle \widehat{u}^{(2)}_{s_0}, v^{(2)}_{s_0}\big \rangle }{\big ( \p_x v^{(2)}_{s_0}(\ell) \big )^2 }, \qquad  \ddot s_-(0) = - \f{2s_0^5}{\ell}\f{\big \langle \widehat{v}^{(1)}_{s_0}, u^{(1)}_{s_0} \big \rangle}{\big ( \p_x u^{(1)}_{s_0}(\ell) \big )^2 },
		\end{equation}
		in which case we define $s_1 \coloneqq s_-$ and $s_2 \coloneqq s_+$. If it is negative we get
		\begin{equation}\label{s+s_2}
			\ddot s_-(0)= \f{2s_0^5}{\ell}\f{\big \langle \widehat{u}^{(2)}_{s_0}, v^{(2)}_{s_0}\big \rangle }{\big ( \p_x v^{(2)}_{s_0}(\ell) \big )^2 }, \qquad\ddot s_+(0) = - \f{2s_0^5}{\ell}\f{\big \langle \widehat{v}^{(1)}_{s_0}, u^{(1)}_{s_0} \big \rangle}{\big ( \p_x u^{(1)}_{s_0}(\ell) \big )^2 },
		\end{equation}
		and we define $s_1\coloneqq s_+$ and $s_2 \coloneqq s_-$.
		
		To prove the existence of a continuous family of eigenfunctions, we define $M_1(\lambda) = M(\lambda, s_1(\lambda))$. If $\big(t_1(\lambda), t_2(\lambda) \big)^\top \in \ker M_1(\lambda)$ is nonzero, we know from \eqref{uform} that
		\[
		\bu_{s_1(\lambda)} = \big(I + A(\lambda,s_1(\lambda)) \big) \left( t_1(\lambda) \bu^{(1)}_{s_0} + t_2(\lambda) \bu^{(2)}_{s_0} \right)
		\]
		is an eigenfunction of $N_{s_1(\lambda)}$ for the eigenvalue $s_1^2(\lambda)\lambda$. We therefore need to understand the kernel of $M_1(\lambda)$.
		
		By construction we have $M_1(0) = 0$. Since $(\partial_\lambda M)(0,s_0) = 0$ and $\dot s_1(0) = 0$, we find that $\dot M_1(0) = 0$ and $\ddot M_1(0) = (\partial^2_\lambda M)(0,s_0) + (\partial_s M)(0,s_0) \ddot s_1(0)$.
		Using \eqref{concavities}, \eqref{matrix:Ms} and \eqref{MLL}, we get
		\begin{equation}
			\ddot M_1(0) = -2 s_0^4 \big ( \p_x v^{(2)}_{s_0}(\ell) \big )^2 \left( \frac{\big<\widehat v_{s_0}^{(1)}, u_{s_0}^{(1)}\big>}{\big ( \p_x u^{(1)}_{s_0}(\ell) \big )^2 } + \frac{\big<\widehat u_{s_0}^{(2)}, v_{s_0}^{(2)} \big>}{\big ( \p_x v^{(2)}_{s_0}(\ell) \big )^2 } \right) \begin{pmatrix} 0 & 0 \\ 0 & 1 \end{pmatrix},
		\end{equation}
		which is nonzero by \eqref{uglycondition}. Writing $M_1(\lambda) = \left(\begin{smallmatrix} a(\lambda) & b(\lambda) \\ c(\lambda) & d(\lambda) \end{smallmatrix}\right)$, it follows that $d(\lambda) \neq 0$ for small, nonzero values of $\lambda$, and so we can choose
		\[
		\begin{pmatrix} t_1(\lambda) \\ t_2(\lambda) \end{pmatrix} = \begin{pmatrix} 1 \\ - c(\lambda)/d(\lambda) \end{pmatrix} \in \ker M_1(\lambda)
		\]
		for $\lambda \neq 0$. Since $c(0) = \dot c(0) = \ddot c(0) = d(0) = \dot d(0) = 0$ but $\ddot d(0) \neq 0$, we get $c(\lambda)/d(\lambda) \to 0$ as $\lambda \to 0$, and so
		\[
		\lim_{\lambda \to 0}\big(I + A(\lambda,s_1(\lambda)) \big) \left( t_1(\lambda) \bu^{(1)}_{s_0} + t_2(\lambda) \bu^{(2)}_{s_0} \right) = \bu^{(1)}_{s_0}
		\]
		as claimed. The result for $\bu_{s_2(\lambda)}$ is proved in the same way.
	\end{proof}

	\begin{rem}\label{rem:concavities_signs}
		The condition \eqref{uglycondition} implies $\Delta(\lambda) > 0$ for small nonzero $\lambda$, and hence guarantees the existence of $s_\pm(\lambda)$. It also guarantees that $\ddot s_+(0) \neq \ddot s_-(0)$, as can be seen from \eqref{concavitiesexact}. If \eqref{uglycondition} fails then $\al=0$ and we cannot use the result of \cref{lem:Delta}. In this (nongeneric) case one may compute higher derivatives of $m$ in order to determine higher order coefficients in the Taylor expansion of $\Delta(\la)$, but we do not pursue this here.
	\end{rem}

	The following examples illustrate the two scenarios detailed in \cref{thm:touching}.
	
	\begin{ex}
		The conditions in case (1) of \cref{thm:touching} are satisfied if we take $L_+^{s} = L_-^{s}$, in which case $u^{(1)}_{s_0} = v^{(2)}_{s_0}$ at any crossing $(0,s_0)$, so that $\langle u^{(1)}_{s_0} , v^{(2)}_{s_0} \rangle\neq0$. Each isolated crossing $(\la,s)=(0,s_0)$ is a consequence of a pair of purely imaginary eigenvalues passing through the origin as $s$ increases. For clarity, in \cref{fig:isolated} we have plotted the \textit{imaginary} eigenvalue curves $s^2\la\in \spec(N_s)\cap i\R$ for the case when $L_-^s=L_+^s=-\p_{xx} - 4s^2$ and $\ell=12$ (here $\la\in\C$).
		\begin{figure}[]
			\hfill
			{\includegraphics[width=0.4\textwidth]{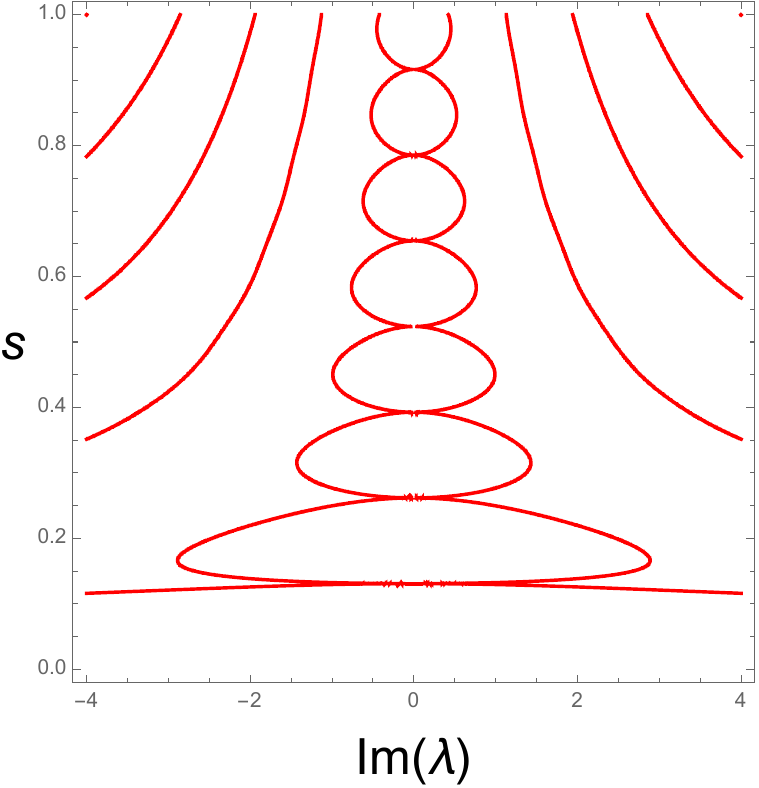}} \hfill\hfill
			\caption{Imaginary eigenvalue curves $s^2\la \in\spec(N_s)\cap i\R $, where $L_-^s=L_+^s=-\p_{xx} - 4s^2$ and $\ell=12$. Viewed from the $\eta s$-plane where $\eta=\text{Re}(\la)$, a series of isolated crossings appear at $\eta=0$ as $s$ increases from $0$ to $1$.}
			\label{fig:isolated}
		\end{figure}
	\end{ex}
	
	\begin{ex}
		Let $L=-\p_{xx} + V(x)$ with domain \eqref{domLpm}, and define $L_\pm = L  - \la_\pm$, where $\la_\pm \in \spec(L)$ are distinct eigenvalues with eigenfunctions $u_1$ and $v_2$, so that $L_+u_1 = L_-v_2 = 0$. Since $L_\pm$ is selfadjoint and $\la_+\neq \la_-$, we have $\langle u_1,v_2 \rangle=0$, and the conditions of case (2) in \cref{thm:touching} are satisfied. (Recall the notation of \eqref{notations0=1} when $s_0=1$.)
		
		The equations $L_+\widehat{u} _2 = v_2$ and $-L_-\widehat{v}_1  = u_1$
		are solved by $\widehat{u}_2  = \f{1}{\la_--\la_+}v_2 $ and $\widehat{v}_1  = \f{1}{\la_--\la_+}u_1$,
		and it follows that
		\[
		\int_{0}^{\ell} \widehat{u}_2\, v_2 \,dx =  \f{1}{\la_--\la_+}\int_{0}^{\ell}v_2^2 \,dx \quad \text{and}\quad  \int_{0}^{\ell} \widehat{v}_1\, u_1 dx = \f{1}{\la_--\la_+}\int_{0}^{\ell} u_1^2\, dx
		\]
		are nonzero and have the same sign. According to \eqref{concavities} this means the curves $s_{1,2}(\la)$ passing through $(0,1)$ will have opposite concavity. This is illustrated in \cref{fig:touching}, where we have plotted the real eigenvalue curves for a domain of length $\ell = 1$, choosing $L=-\p_{xx}$, $\la_+ = 9\pi^2$ and $\la_-=4\pi^2$. 
	\end{ex}

	\begin{figure}
		\hspace*{\fill}
		\subcaptionbox{ \label{} }
		{\includegraphics[width=0.35\textwidth]{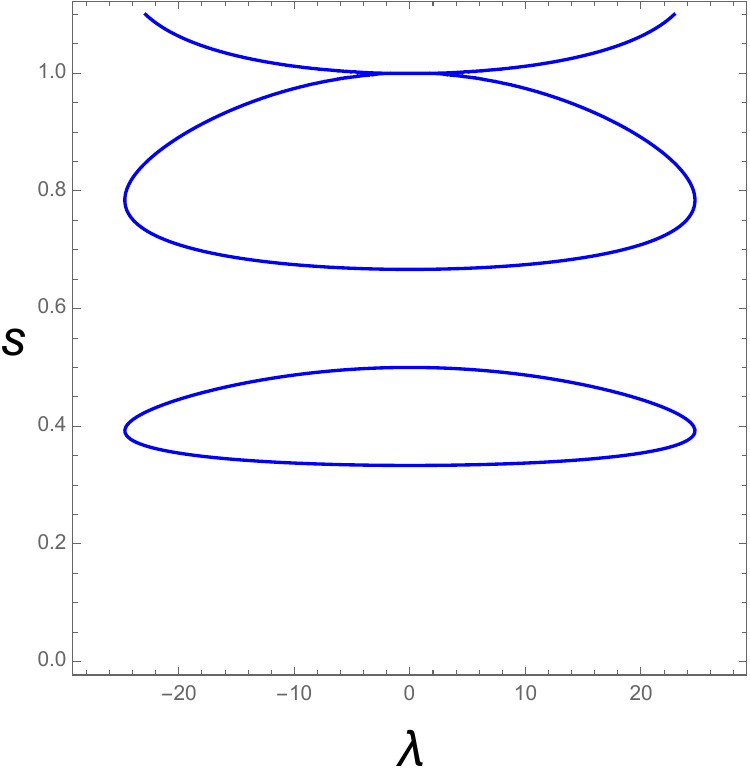}}\hfill 
		\subcaptionbox{\label{} }
		{\includegraphics[width=0.355\textwidth]{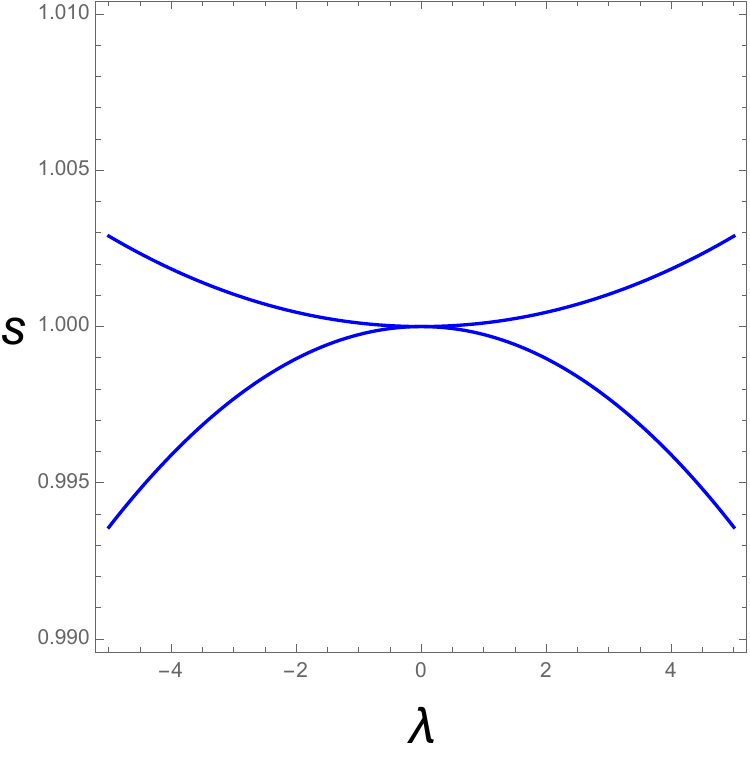}} 
		\hspace*{\fill}
		\caption{(a) Real eigenvalue curves $s^2\la \in\spec(N_s) \cap \R$ where $L_-^s = -\p_{xx} - 4\pi^2s^2$, $L_+^s = -\p_{xx} - 9\pi^2s^2$ and $\ell=1$, and (b) a blow-up of the conjugate point $(\la,s)=(0,1)$.}
		\label{fig:touching}
	\end{figure}

	\subsection{The Maslov index at the non-regular corner}\label{subsec:degenerate}
	
	We are now in a position to calculate the corner term $\mathfrak c$ appearing in \cref{thm:N_bound} (and defined in \cref{defn:c}) using the tools developed in \cref{subsec:Lyap_Schmidt,subsec:multiplicity2}. 
	
	Since a non-regular crossing occurs at the initial point of $\Gamma_3$, we cannot use \eqref{eq:maslov_defn} to compute the Maslov index. We therefore take advantage of homotopy invariance, deforming the corner of the Maslov box to a path that only has simple regular crossings.
	
	The index can then be deduced from the local behaviour of the eigenvalue curves through $(0,1)$ (see \cref{thm:touching,thm:sL}), which we quantify as follows. Given the curve $s(\lambda)$ from \cref{thm:sL}, there is an interval $(0,\hat\la)$ on which either $s(\la) > 1$ or $s(\la) < 1$, since the set $\{ \lambda : s(\lambda) = 1\}$ is discrete; cf. \cref{rem:Lisolated}. Therefore, the quantity
	\begin{equation}\label{s_sharp}
		s^\sharp(0) := \lim_{\lambda \to 0^+} \sgn \big( s(\lambda) - 1 \big) \in \{\pm 1\}
	\end{equation}
	is well-defined. In the case that $s=s(\la)$ is analytic, $s^\sharp(0) $ is the sign of the first nonzero Taylor coefficient at $\la=0$.
	
	\begin{rem}\label{rem:s_sharp}
		Recall from  \cref{thm:sL} that $\dot s(0)=0$. Therefore, in the generic case where $\ddot s(0) \neq 0$, we simply have
		\begin{equation}\label{ddot s0}
			s^\sharp(0) = \sgn \ddot s(0).
		\end{equation}
		That is, the VK-type integrals in \cref{thm:sL} determine $s^\sharp(0)$ (and hence the index $\mathfrak c$) provided the integrals are nonzero. However, it is important to note that the dichotomy $s^\sharp(0) = \pm 1$ holds even if $\ddot s(0) = 0$. 
	\end{rem}
	
	The same considerations apply to the curves $s_{1,2}(\lambda)$ from \cref{thm:touching} (for which $\dot s_{1,2}(0)=0$), so we define $s^\sharp_{1,2}(0)$ analogously, and emphasize that in the generic case $\ddot s_{1,2}(0)\neq 0$ we have
	\begin{equation}\label{ddot s120}
		s_{1,2}^\sharp(0) = \sgn \ddot s_{1,2}(0).
	\end{equation}
	
	With this notation in place, we are ready to calculate $\mathfrak c$.
	\begin{theorem}\label{thm:compute_c}
		The corner term $\mathfrak{c}$ from \cref{defn:c} is calculated as follows:
		\begin{itemize}
			\item[(1)] Suppose $\dim\ker(N)=1$, and let $s=s(\la)$ be the eigenvalue curve through $(0,1)$. 
			\begin{itemize}
				\vspace{2mm}
				\item[(i)] If $0\in\spec(L_+)\backslash \spec(L_-)$ then 
				 
				\[\mathfrak{c} = \f{1}{2} (s^\sharp (0)-1).\]
				That is, $\mathfrak{c} = 0$ if $ s^\sharp(0) = +1$ and $\mathfrak{c} = -1$ if $ s^\sharp(0) = -1$.
				\vspace{2mm}
				\item[(ii)] If $0\in\spec(L_-)\backslash \spec(L_+)$ then 
				 
				\[\mathfrak{c} = \f{1}{2} (1-s^\sharp (0)).\]
				That is, $\mathfrak{c} = 0$ if $ s^\sharp(0) = +1$ and $\mathfrak{c} = +1$ if $ s^\sharp(0) = -1$.
			\end{itemize}
			\vspace{2mm}
			\item[(2)] Suppose $\dim\ker(N)=2$, with $\ker(L_+) = \spn\{ u_1\}$ and $\ker(L_-) = \spn\{ v_2\}$. If $\langle u_1,v_2 \rangle \neq 0$, then $\mathfrak{c}=0$.  If $\langle u_1,v_2 \rangle = 0$ and the condition \eqref{uglycondition} holds, we denote by $s_{1,2}(\la)$ the eigenvalue curves passing through $(0,1)$, as in \cref{thm:touching}. Then 
			 
			\begin{equation}\label{cdim2}
				\mathfrak{c} = \f{1}{2}(s^\sharp_1(0) - s^\sharp_2(0)).
			\end{equation}
		\end{itemize}
	\end{theorem}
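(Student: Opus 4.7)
The plan is to invoke homotopy invariance (\cref{prop:enjoys}) to replace the L-shaped corner path defining $\mathfrak c$ with an alternative path that enters the interior of the Maslov box, bypassing the non-regular crossing at $(\lambda,s)=(0,1)$. For small generic $\varepsilon>0$ I consider the squared-off path
\[
\tilde\gamma:\ (0,1-\varepsilon)\to(\varepsilon,1-\varepsilon)\to(\varepsilon,1).
\]
Since $\tilde\gamma$ has the same endpoints as the L-path in \cref{defn:c} and both lie in the simply connected rectangle $[0,\varepsilon]\times[1-\varepsilon,1]$, they are homotopic in $\cL(4)$ with fixed endpoints, so $\mathfrak c=\Mas(\Lambda,\cD;\tilde\gamma)$. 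It then suffices to enumerate and sign the crossings of $\tilde\gamma$.

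Using \cref{thm:sL:general} in case (1) and \cref{thm:touching} in case (2), each eigenvalue curve $s_j(\lambda)$ through $(0,1)$ satisfies $\dot s_j(0)=0$ and $\sgn(s_j(\lambda)-1)=s_j^\sharp(0)$ for $\lambda>0$ small. If $s_j^\sharp(0)>0$ the curve leaves the box and does not meet $\tilde\gamma$; if $s_j^\sharp(0)<0$ it enters the box, and the quadratic vanishing $s_j(\lambda)-1=O(\lambda^2)$ implies that $\tilde\gamma$ has no crossings on the horizontal segment $\{s=1-\varepsilon\}$ (any solution of $s_j(\lambda)=1-\varepsilon$ would force $\lambda\sim\sqrt\varepsilon\gg\varepsilon$), while the vertical segment $\{\lambda=\varepsilon\}$ contains exactly one crossing at $s=s_j(\varepsilon)\in(1-\varepsilon,1)$. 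The isolation of $(0,1)$ noted in \cref{rem:Lisolated} excludes contributions from distant eigenvalue curves, and case (2)(i) is immediate from \cref{thm:touching}(1), which already furnishes $\mathfrak c=0$ via the absence of any crossings of $\tilde\gamma$.

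At each vertical crossing $(\varepsilon,s_j(\varepsilon))$ the eigenfunction $\bu_{s_j(\varepsilon)}$ converges as $\varepsilon\to0$ to $\bu^{(j)}_{s_0}$ by the continuity statements in \cref{thm:sL:general,thm:touching}, and by \eqref{eq:simple_eigfns}--\eqref{eq:doubleeigfns} this limit has the form $(u,0)^\top$ for curves attached to $\ker L_+$ (case (1)(i); the curve $s_1$ in (2)(ii)) and $(0,v)^\top$ for curves attached to $\ker L_-$ (case (1)(ii); the curve $s_2$ in (2)(ii)). Evaluating the $s$-direction crossing form \eqref{eq:maslov_vertical} at such an eigenfunction and passing to the $\lambda_0=0$ expression \eqref{maslov_form_s_lambda=0}, one obtains a nondegenerate form of sign $-1$ in the first family and $+1$ in the second (since Dirichlet eigenfunctions have $u'(\ell),v'(\ell)\neq 0$). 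For generic small $\varepsilon$ neither endpoint of $\tilde\gamma$ is itself a crossing, so \cref{defn:maslov} yields $\mathfrak c$ as the sum of these signatures over the interior crossings of $\tilde\gamma$, reproducing the tabulated values in every case.

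The hard part is not the bookkeeping but the localization, i.e.\ the passage from the global homotopy invariance down to the precise analytic picture near $(0,1)$: one must ensure that for small $\varepsilon$ the path $\tilde\gamma$ avoids the horizontal portion of each entering eigenvalue curve, meets each such curve transversally and exactly once on the vertical portion, and is not intersected by any eigenvalue curve not passing through $(0,1)$. These facts rest on the quadratic vanishing of $s_j(\lambda)-1$, the nondegeneracy of $\mathfrak m_{s_0}$ provided by \cref{lemma:scrossingform,cor:higherdims}, and the discreteness of $\spec(N)\cap\R$ near $(0,1)$ recorded in \cref{rem:Lisolated}.
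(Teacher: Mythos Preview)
Your argument is correct and is essentially the same as the paper's: both replace the L-shaped corner by a nearby path via homotopy invariance, locate the crossings on the deformed path using the local eigenvalue-curve analysis of \cref{thm:sL:general,thm:touching}, and sign each crossing with the $s$-direction form \eqref{eq:maslov_vertical}, whose value is read off from the limiting eigenfunction via \eqref{maslov_form_s_lambda=0}. Your explicit ``squared-off'' path $(0,1-\varepsilon)\to(\varepsilon,1-\varepsilon)\to(\varepsilon,1)$ together with the $O(\lambda^2)$ scaling argument makes the absence of horizontal crossings quantitative, whereas the paper appeals to a figure; conversely, the paper notes explicitly (via \cref{rem:concavities_signs}) that $s_1(\varepsilon)\neq s_2(\varepsilon)$, which ensures the two vertical crossings in case~(2) are simple --- a point you use implicitly but do not state.
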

	 
	We remark that formula \eqref{cdim2} is simply the sum of the formulas for $\mathfrak{c}$ in cases (i) and (ii) of the simple case, identifying $s$ with $s_1$ if $0\in\spec(L_+)\backslash\spec(L_-)$ and $s$ with $s_2$ if $0\in\spec(L_-)\backslash\spec(L_+)$. It is perhaps interesting to note that in \eqref{cdim2} we have $\mathfrak{c}\in \{ -1,0,1 \}$, so that $\mathfrak{c}$ can never be $+2$ or $-2$, despite it being the contribution to the Maslov index from a two dimensional crossing in this case.

	\begin{proof}
		We use a homotopy argument, deforming the top left corner of the Maslov box as shown in \cref{fig:alternate_paths_up}.
		
		We first consider the case $\dim\ker(N) = 1$. If $s^\sharp(0) > 0$ then the deformed path does not intersect $\cD$, so we have $\mathfrak c = 0$. On the other hand, if $s^\sharp(0) < 0$, there will be a crossing at some point $(\lambda_*,s_*) = (\lambda_*, s(\lambda_*))$ with $0 < \lambda_* \ll 1$. This segment of the deformed path is parameterized by increasing $s$, so the relevant crossing form is
		\begin{equation}
			\label{mstar}
			\mathfrak m_{s_*}(q) = \frac{1}{s_*} \big<\big(\p_s B_{s_*} - 2 s_*\lambda_* \big) \bu_{s_*}, S \bu_{s_*} \big>,
		\end{equation}
		where $q = \Tr_{s_*}{\mathbf{u}_{s_*}}$. From \cref{thm:sL:general} we obtain a continuous family of eigenfunctions with $\bu_{s(\lambda)} \to \bu$ as $\lambda \to0$, so we can use \cref{lemma:scrossingform} to compute
		\[
		\lim_{\lambda\to0}\frac{1}{s(\lambda)} \big<\big(\p_s B_{s(\lambda)} - 2 s(\lambda)\lambda \big) \bu_{s(\lambda)}, S \bu_{s(\lambda)} \big> = \big<\p_s B_1 \bu_1, S \bu_1 \big>
		= {\ell} \left[ - \left (u_{1}'(\ell)\right )^2 +  \left (  v_{1}'(\ell)\right )^2\right].
		\]
		By continuity this has the same sign as the crossing form \eqref{mstar} at $(\lambda_*,s_*)$, so we conclude that $\mathfrak c = -1$ if $0 \in \spec(L_+)$ and $\mathfrak c = 1$ if $0 \in \spec(L_-)$.
		
		The argument for the case $\dim\ker(N) = 2$ is similar. Depending on the values of $s_1^\sharp(0)$ and $s_2^\sharp(0)$, there will be zero, one or two crossings that contribute to the index $\mathfrak c$. These are necessarily simple crossings, since $s_1(\lambda) \neq s_2(\lambda)$ for $\lambda \neq 0$ (see \cref{rem:concavities_signs}). Moreover, if either $s^\sharp_1(0)$ or $s^\sharp_2(0)$ is positive, it does not contribute to the index.
		
		Suppose $s^\sharp_1(0) < 0$, so there is a crossing at some point $(\lambda_*,s_*) = (\lambda_*, s_1(\lambda_*))$. As in the first case, we need to compute the crossing form
		\[
		\mathfrak m_{s_*}(q) = \frac{1}{s_*} \big<\big(\p_s B_{s_*} - 2 s_*\lambda_* \big) \bu_{s_*}, S \bu_{s_*} \big>.
		\]
		We use \cref{thm:touching} to get
		\[
		\lim_{\lambda\to0}\frac{1}{s_1(\lambda)} \big<\big(\p_s B_{s_1(\lambda)} - 2 s_1(\lambda)\lambda \big) \bu_{s_1(\lambda)}, S \bu_{s_1(\lambda)} \big> =  \big<\p_s B_1 \bu_1^{(1)}, S \bu_1^{(1)} \big> = -{\ell}\left ( \p_x u^{(1)}_{1}(\ell) \right )^2  < 0,
		\]
		and hence conclude that the crossing form at $(\lambda_*,s_*)$ is negative.
		Similarly, if $s^\sharp_2(0) < 0$, there is a crossing at some point $(\lambda_*, s_2(\lambda_*))$ whose crossing form is positive, because
		\[
		\lim_{\lambda\to0}\frac{1}{s_2(\lambda)} \big<\big(\p_s B_{s_2(\lambda)} - 2 s_2(\lambda)\lambda \big) \bu_{s_2(\lambda)}, S \bu_{s_2(\lambda)} \big> =  \big<\p_s B_1 \bu_1^{(2)}, S \bu_1^{(2)} \big> = {\ell}\left ( \p_x v^{(2)}_{1}(\ell) \right )^2> 0.
		\]
		In summary, the curve $s_1$ contributes $0$ to $\mathfrak c$ if $s^\sharp_1(0) > 0$ and $-1$ if $s^\sharp_1(0) < 0$, whereas $s_2$ contributes $0$ if $s^\sharp_2(0) > 0$ and $1$ if $s^\sharp_2(0) < 0$. Adding these contributions  completes the proof.
	\end{proof}
	
	\begin{figure}
		\hspace*{\fill} 
		\subcaptionbox{\label{fig:alternate_a}}{\includegraphics[width=0.25\textwidth]{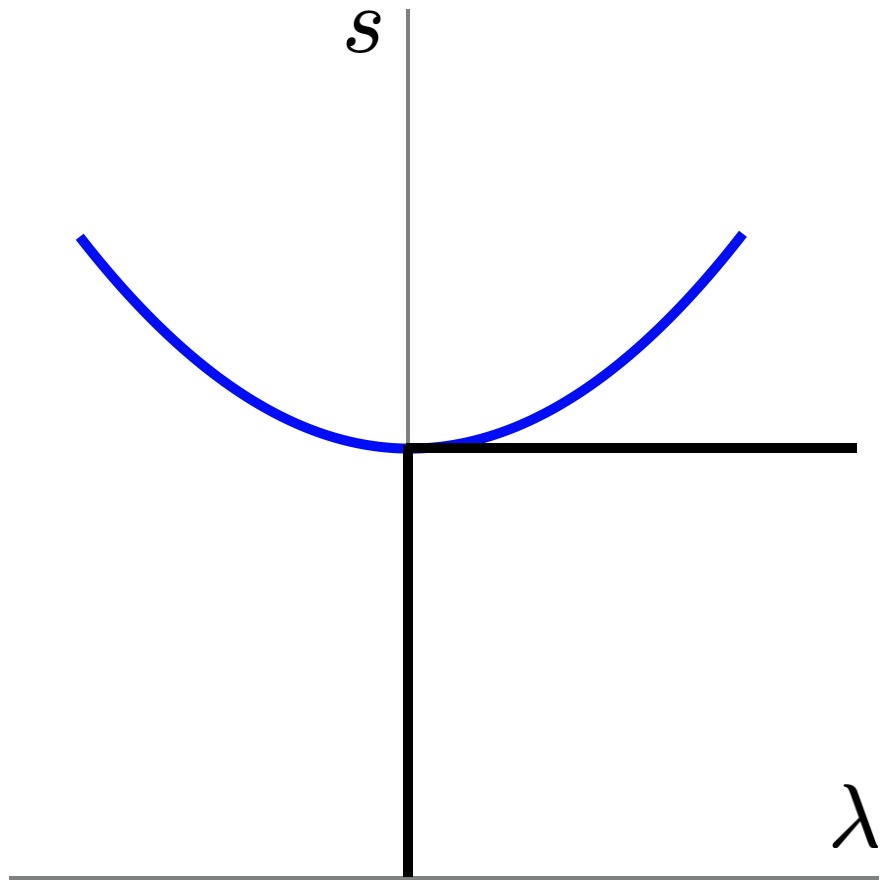}}
		\hspace*{2mm}
		\subcaptionbox{\label{fig:alternate_b}}{\includegraphics[width=0.25\textwidth]{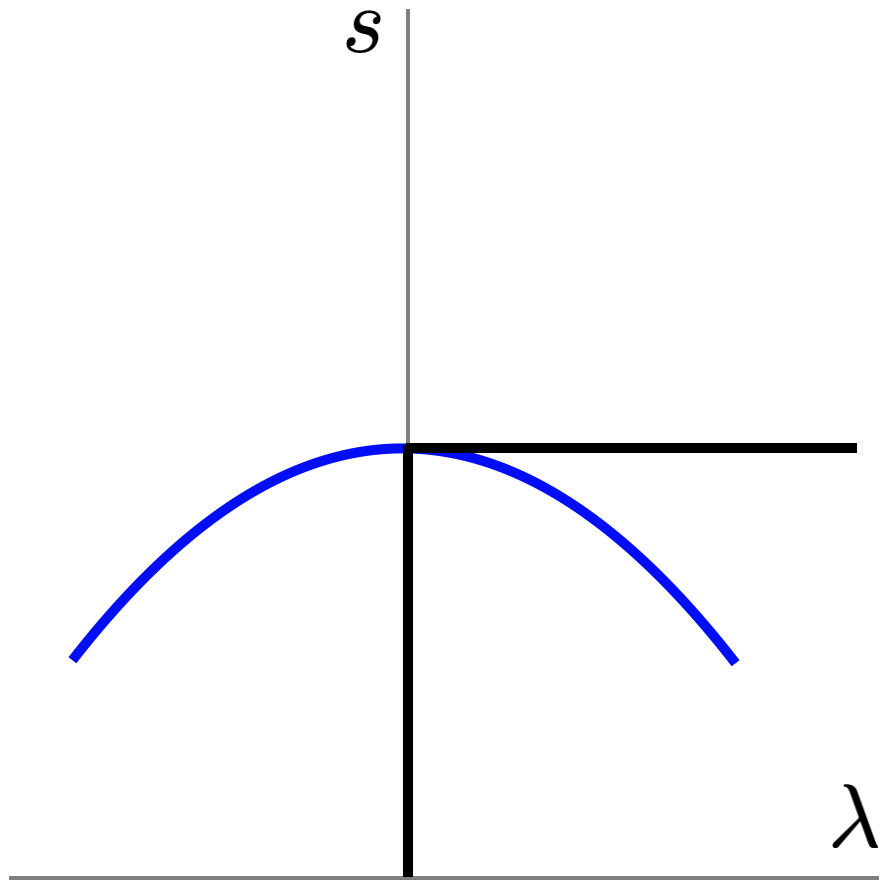}} \hspace*{2mm}
		\subcaptionbox{\label{fig:alternate_c}}{\includegraphics[width=0.25\textwidth]{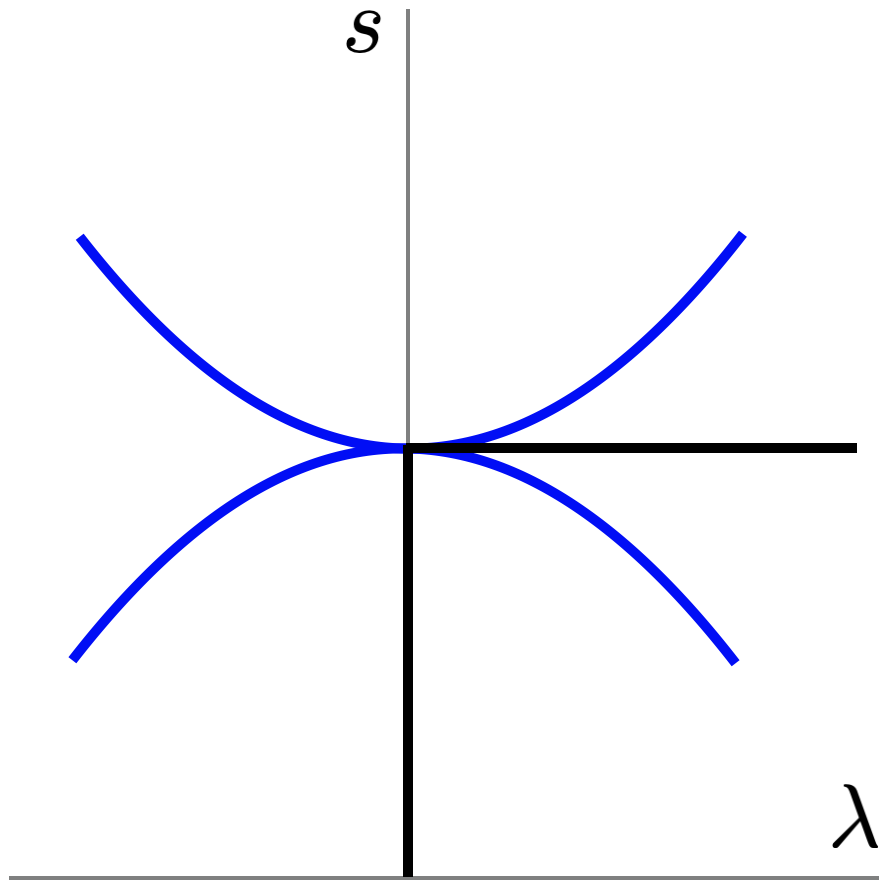}}
		\hspace*{\fill} \\[5mm]
		\hspace*{\fill}
		\subcaptionbox{\label{fig:alternate_d}}{\includegraphics[width=0.25\textwidth]{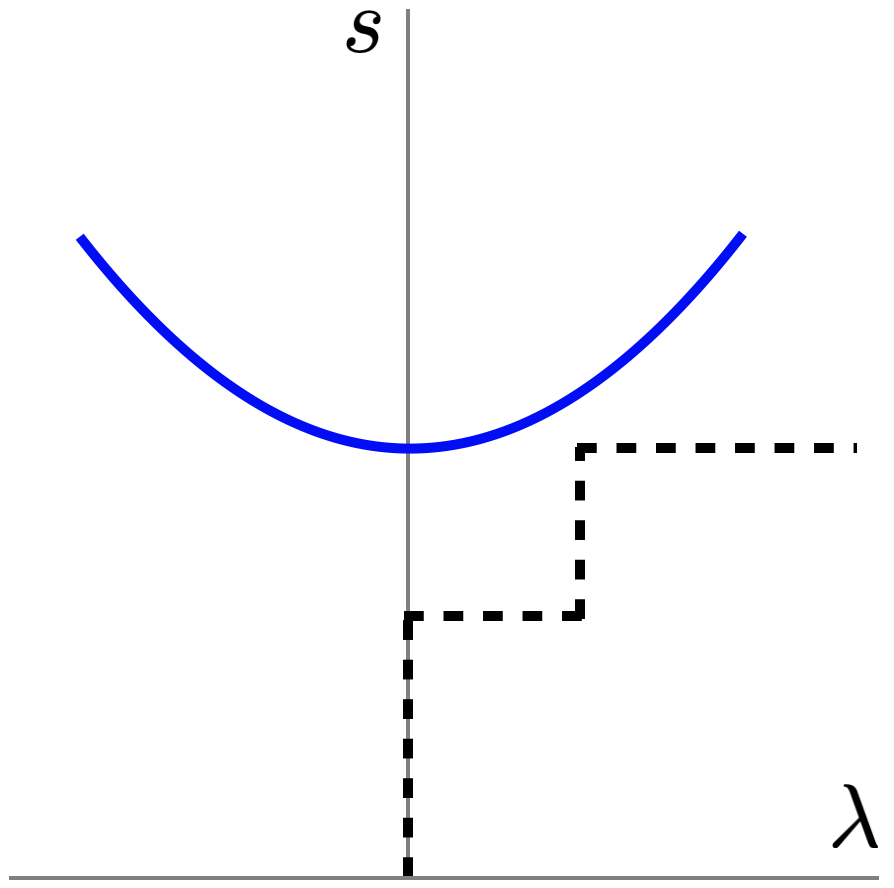}} 
		\hspace*{2mm}
		\subcaptionbox{\label{fig:alternate_e}}{\includegraphics[width=0.25\textwidth, trim= 0 0 0 0]{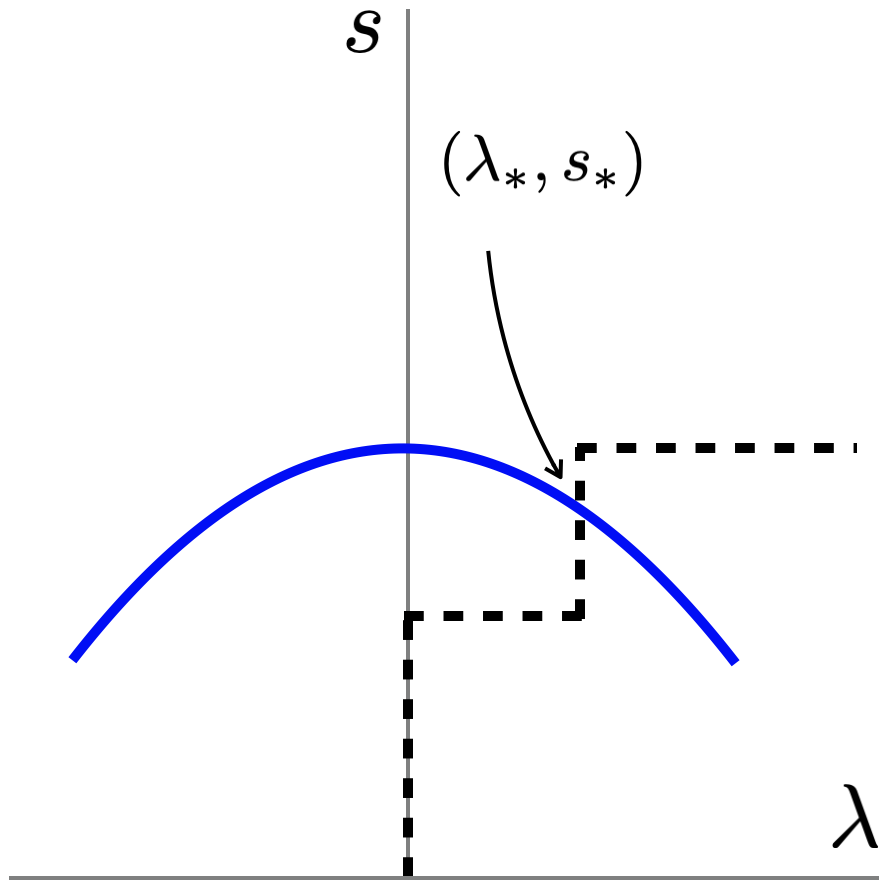}}
		\hspace*{2mm}	\subcaptionbox{\label{fig:alternate_f}}{\includegraphics[width=0.25\textwidth,trim= 0 0 0 0]{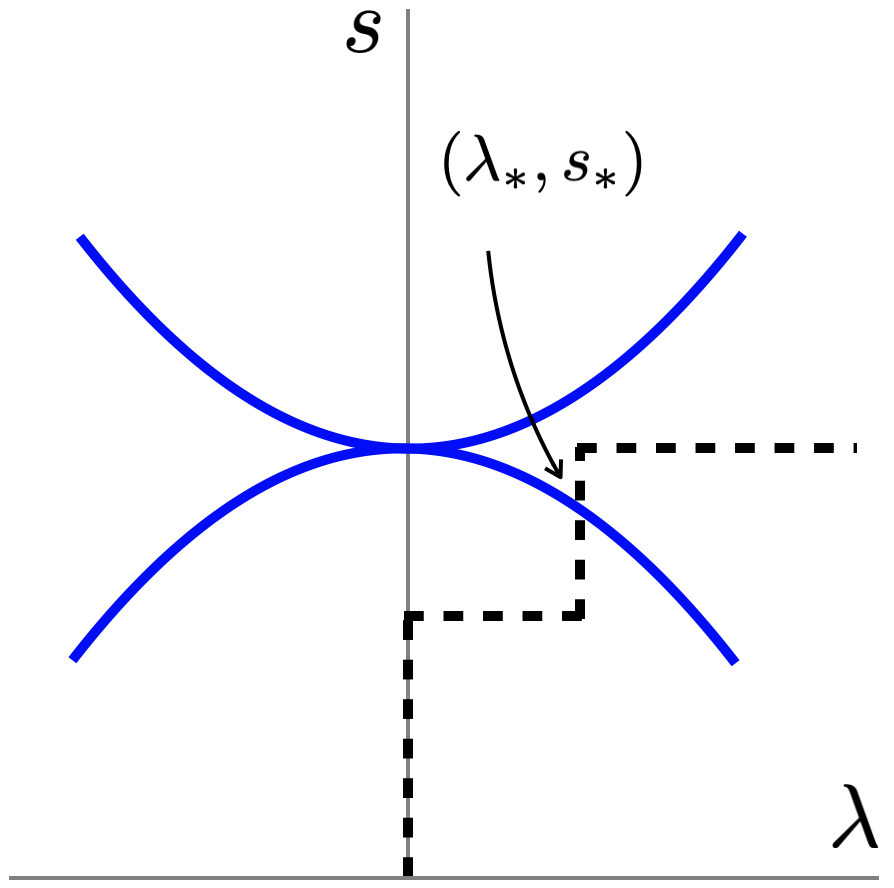}} 
		\hspace*{\fill}
		\caption{Neighbourhood of the crossing $(\la_0,s_0)=(0,1)$ featuring the eigenvalue curves (parabolas in blue) and the portion of the Maslov box passing through the corner $(0,1)$ (in black) when (a) $\dim\ker(N)=1$ and $s^\sharp(0)>0$,  (b) $\dim\ker(N)=1$ and $s^\sharp(0)<0$, and (c) $\dim\ker(N)=2$ and $s^\sharp_1(0) s^\sharp_2(0) < 0$. The path (dashed) to which we homotope the top left corner of the Maslov box in (a), (b) and (c) is given in (d), (e) and (f) respectively.}
		\label{fig:alternate_paths_up}
	\end{figure}
	
	We conclude this section by relating the concavity of the eigenvalue curves to the second order Maslov crossing form.
	
	\begin{prop}\label{prop:c}
		Assume the first order crossing form $\mathfrak{m}_{\la_0}$ is identically zero at the crossing $(\la_0,s_0)=(0,1)$. If  the second order crossing form $\mathfrak{m}_{\la_0}^{(2)}$ given in \cref{lemma:second_order_lambda_form} is nondegenerate, then
		\begin{equation}\label{b=sign_second_form}
			\Mas(\Lambda(\la,1),\cD;\la\in[0,\e]) = -n_-(\mathfrak{m}_{\la_0}^{(2)}).
		\end{equation}
	\end{prop}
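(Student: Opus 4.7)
The plan is to isolate the contribution of $\Gamma_3\cap[0,\e]$ to the corner term using additivity of the Maslov index (\cref{prop:enjoys}) together with \cref{defn:c}, which gives
\[
\Mas\bigl(\Lambda,\cD;\Gamma_3\cap[0,\e]\bigr) = \mathfrak{c} - \Mas\bigl(\Lambda,\cD;\Gamma_2\cap[1-\e,1]\bigr).
\]
For small $\e$, the only intersection on $\Gamma_2\cap[1-\e,1]$ is at the arrival endpoint $(0,1)$, which by \cref{lemma:scrossingform,cor:higherdims} is regular in the $s$-direction. The endpoint convention in \cref{defn:maslov} then assigns it the value $n_+(\mathfrak m_{s_0=1})$, and the explicit expressions \eqref{maslov_form_s_lambda=0} and \eqref{eq:capitalmathfrakMn=2} yield $n_+(\mathfrak m_{1}) = 0$ when $\dim\ker N = 1$ with $0\in\spec(L_+)\setminus\spec(L_-)$, and $n_+(\mathfrak m_{1}) = 1$ in the remaining cases covered by the proposition.

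Next I would invoke \cref{thm:compute_c} to express $\mathfrak{c}$ in terms of $s^\sharp(0)$ or $s^\sharp_{1,2}(0)$. Comparing \eqref{2ndorder_lambda_1dim} with \eqref{s''L-1}--\eqref{s''L+2}, respectively \eqref{bigmfM2_2dim} with \eqref{concavities}, shows that the sign of each diagonal entry of $\mathfrak M^{(2)}_{\la_0}$ is determined by the sign of the corresponding concavity $\ddot s(0)$ or $\ddot s_{1,2}(0)$, up to a possible sign flip whose placement depends on which of $L_{\pm}$ contains zero. Since nondegeneracy of $\mathfrak m^{(2)}_{\la_0}$ forces $\ddot s\neq 0$ (or $\ddot s_{1,2}\neq 0$), \cref{rem:s_sharp} and \eqref{ddot s120} convert these into the signs of $s^\sharp$, thereby expressing $n_-(\mathfrak m^{(2)}_{\la_0})$ in terms of the same data that enters \cref{thm:compute_c}.

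The remainder is a case-by-case verification of the identity $\mathfrak{c} - n_+(\mathfrak m_{1}) = -n_-(\mathfrak m^{(2)}_{\la_0})$: four sub-cases for $\dim\ker N = 1$ (two choices of which of $L_\pm$ contains zero, two signs of $s^\sharp(0)$), and four sub-cases indexed by the signs of $s^\sharp_{1,2}(0)$ for $\dim\ker N = 2$; each reduces to a one-line arithmetic identity. The main obstacle will be sign bookkeeping and making sure the endpoint conventions from \cref{defn:maslov} are applied correctly (final endpoint of the $\Gamma_2$ sub-path versus initial endpoint of the $\Gamma_3$ sub-path). A mild edge case occurs when $\dim\ker N = 2$ and condition \eqref{uglycondition} fails, which is consistent with nondegeneracy of $\mathfrak m^{(2)}_{\la_0}$ (only nonzero diagonal entries are required); in that situation \cref{thm:compute_c} does not directly apply, and I would perturb $g$ and $h$ slightly to restore \eqref{uglycondition} and appeal to homotopy invariance of the Maslov index.
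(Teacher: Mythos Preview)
Your proposal is correct and follows essentially the same route as the paper: both decompose $\mathfrak c$ into the arrival contribution along $\Gamma_2\cap[1-\e,1]$ (which the paper computes as $\mathfrak a = n_+(\mathfrak m_{s_0=1}) = \dim\ker L_-$, matching your case split) and the departure contribution $\mathfrak b$ along $\Gamma_3\cap[0,\e]$, then invoke \cref{thm:compute_c} for $\mathfrak c$ and verify $\mathfrak b = -n_-(\mathfrak m^{(2)}_{\la_0})$ case by case using the concavity formulas.

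One remark: your identification of the edge case where $\dim\ker N = 2$, $\mathfrak m^{(2)}_{\la_0}$ is nondegenerate, yet \eqref{uglycondition} fails (the two terms cancel) is a genuine observation; the paper's proof tacitly uses \eqref{concavities} and \cref{thm:compute_c}, both of which assume \eqref{uglycondition}. Your proposed perturbation-and-homotopy patch is the right idea but would need care to execute, since perturbing $g,h$ changes the path $\Lambda(\la,1)$ itself rather than homotoping with fixed endpoints; one must argue that the integer $\mathfrak b$ and the integer $n_-(\mathfrak m^{(2)}_{\la_0})$ are both locally constant in the perturbation (the latter by nondegeneracy, the former by showing the endpoint intersection $\Lambda(\e,1)\cap\cD$ stays trivial).
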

	\begin{proof}
		We will prove this statement in the cases relevant to the current paper, that is, when $\dim\ker(N)=1,2$. Recall that nondegeneracy of $\mathfrak{m}_{\la_0}^{(2)}$ implies that $\ddot s(0)\neq 0$ if $\dim\ker(N)=1$ and $\ddot s_{1,2}(0)\neq 0$ if  $\dim\ker(N)=2$. Therefore, \eqref{ddot s0} and \eqref{ddot s120} hold.
		
		For the right hand side of \eqref{b=sign_second_form}, if $\dim\ker(N)=1$, \cref{thm:sL} shows that the sign of $\ddot s(0)$ determines the sign of the VK-type integrals in \eqref{s''L-} and \eqref{s''L+}, and therefore the sign of $\mathfrak{m}^{(2)}_{\la_0}$ given in \eqref{2ndorder_lambda_1dim}. In particular, we observe:
		\begin{itemize}
			\vspace{-4mm}
			\item[(i)] If  $0\in\spec(L_+)\backslash\spec(L_-)$ then
			$n_-(\mathfrak{m}_{\la_0}^{(2)}) =\begin{cases}
				0\qquad \ddot s(0)>0, \\
				1 \qquad \ddot s(0)<0.
			\end{cases}$
			\vspace{2mm}
			\item[(ii)] If $0\in\spec(L_-)\backslash\spec(L_+)$ then $n_-(\mathfrak{m}_{\la_0}^{(2)}) =\begin{cases}
				1\qquad \ddot s(0)>0, \\
				0 \qquad \ddot s(0)<0.
			\end{cases}$
			\vspace{-4mm}
		\end{itemize}
		If $\dim\ker(N)=2$, consider the matrix $\mathfrak{M}_{\la_0}^{(2)}$ of the second order form $\mathfrak{m}_{\la_0}^{(2)}$, which is given in \eqref{bigmfM2_2dim}. Using \eqref{concavities}, we see that:
		\begin{itemize}
			\item[(iii)] If  $0\in\spec(L_+)\cap\spec(L_-)$ then
			$n_-(\mathfrak{m}_{\la_0}^{(2)}) =\begin{cases}
				0 \qquad &\ddot s_1(0)>0, \ \ddot s_2(0) <0, \\
				1 \qquad  &\ddot s_1(0) \ddot s_2(0) >0, \\
				2 \qquad &\ddot s_1(0)<0, \ \ddot s_2(0) >0.
			\end{cases}$
		\end{itemize}
		For the left hand side of \eqref{b=sign_second_form}, let us define $\mathfrak{a}\coloneqq \Mas(\Lambda(s,0),\cD;s\in[1-\e,1])$ and  $\mathfrak{b}\coloneqq \Mas(\Lambda(\la,1),\cD;\la\in[0,\e])$, and notice from \eqref{eq:defn_c} that $\mathfrak{c}=\mathfrak{a}+\mathfrak{b}$. 
		From the proof of \cref{lemma:Maslov_left} we know that the crossing form at $(0,1)$ has $n_+(\mathfrak m_{s_0}) = \dim \ker(L_-)$, so \cref{defn:maslov} gives $\mathfrak{a}=\dim\ker(L_-)$. Therefore 
		\begin{equation}\label{bc_dimkerL}
			\mathfrak b = \mathfrak c - \dim\ker(L_-).
		\end{equation}
		Using the values of $\mathfrak{c}$ computed in in \cref{thm:compute_c}, we confirm that $\mathfrak{b}=-n_-(\mathfrak{m}_{\la_0}^{(2)})$ in cases (i), (ii) and (iii) described above, as claimed.
	\end{proof}

	\section{Applications}\label{sec:applications}
	
	In this section we give some applications of the theory of \cref{sec:symplectic_view,sec:proof1}. We begin with the proof of  \cref{cor:JonesGrillakis,cor:exact_count_Q=0,thm:VK}, which are consequences of \cref{thm:N_bound} and \cref{thm:compute_c}. We then give formulas for the concavity of the NLS spectral curves, and recover the classical VK criterion for a particular one-parameter family of stationary states. Finally, we relate our results to the Krein index theory.

	\subsection{The Jones--Grillakis instability theorem}\label{subsec:51}
	
	We first prove the compact interval analogue of the Jones--Grillakis instability theorem,  \cref{cor:JonesGrillakis}, and its consequence \cref{cor:exact_count_Q=0}.
	
	\begin{proof}[Proof of \cref{cor:JonesGrillakis}]
		From \cref{thm:N_bound} we have $n_+(N) \geq 1$ provided $P-Q \neq \mathfrak c$. The result now follows from \cref{thm:compute_c}, which guarantees $\mathfrak c \in \{-1,0\}$ when $0 \in \spec(L_+) \setminus \spec(L_-)$, and $\mathfrak c \in \{0,1\}$ when $0 \in \spec(L_-) \setminus \spec(L_+)$.
	\end{proof}
	
	\begin{proof}[Proof of \cref{cor:exact_count_Q=0}]
		We claim that $Q = 0$,  $P \geq 1$ and $0 \in \spec(L_+) \setminus \spec(L_-)$ {  under the assumptions of the Corollary}. Once this has been shown, the result follows immediately from \cref{cor:JonesGrillakis}.
		
		Since $\phi$ is nonconstant and satisfies Neumann boundary conditions, we have $0 \in \spec(L_+)$, with eigenfunction $\phi'$.  Moreover, each stationary point of $\phi$ in the interior of its domain corresponds to a conjugate point for $L_+$: If $\phi'(x_0) = 0$ for some $x_0 \in (0,\ell)$, then $0 \in \spec(L_+^{s_0})$ for $s_0 = x_0/\ell$, with eigenfunction $\phi(s_0 x)$. It then follows from \cref{lemma:morse_schrodinger} that $P \geq 1$.
		
		We next consider $L^s_-$ for $s\in(0,1]$. Under \cref{hypo:NLS}, the general solution to the differential equation $L_-^{s}w=0$ is
		\begin{align}
			w(x) = c_1\phi(sx) + c_2 \phi(sx) \int_{0}^{x} \f{1}{\phi(st)^2} \,dt,
		\end{align}
		where the second fundamental solution was obtained via the method of reduction of order, and is well defined since $\phi(x) \neq 0 $ for all $x\in[0,\ell]$ implies $1/\phi^2$ is integrable. It follows that 
		\begin{equation}\label{}
			\phi(sx) \int_{0}^{x} \f{1}{\phi(st)^2} \,dt \geq 0 
		\end{equation}
		for all $x\in[0,\ell]$, with equality when $x=0$. Dirichlet boundary conditions on $w$ then dictate that $c_1=c_2=0$, and we conclude that $0\notin\spec(L_-^{s})$ for all ${s}\in(0,1]$. In particular, $0 \notin \spec(L_-)$, and \cref{lemma:morse_schrodinger} implies $Q=0$.
	\end{proof}

	\subsection{VK-type (in)stability criteria}\label{subsec:52}
	
	For the proof \cref{thm:VK} we will need two preliminary results. The first of these mimics \cite[Corollary 1.1]{Grill88}, and follows from the equivalent selfadjoint formulation of the eigenvalue problem \eqref{eq:VKsystem}; see \cref{lemma:equiv_problems}.
	
	\begin{lemma}\label{lemma:imag_eigvals}
		If $Q=0$ or $P=0$ then $\spec(N_s) \subset \R\cup i\R$ for all $s\in(0,1]$.
	\end{lemma}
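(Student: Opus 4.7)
The plan is to reduce the spectral question for the non-selfadjoint operator $N_s$ to a selfadjoint problem via \cref{lemma:equiv_problems}, exploiting the one-sided positivity hypothesis $Q=0$ (or $P=0$). First I would handle the case $Q=0$. By \cref{lemma:morse_schrodinger_nonnegative}, $L_-^s$ is strictly positive for $s\in(0,1)$ and nonnegative for $s=1$; in either case it is $\ge 0$, so $(L_-^s|_{X_c})^{1/2}$ from \cref{lemma:equiv_problems} is well-defined, with $X_c=\ker(L_-^s)^{\perp}$ and $\Pi$ the orthogonal projection onto $X_c$.

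Now let $\mu\in\spec(N_s)$. If $\mu=0$ there is nothing to show, so assume $\mu\neq 0$ and write $\mu=s^2\la$ with $\la\in\C\setminus\{0\}$. The existence of an eigenfunction for $N_s$ at $s^2\la$ is exactly the assertion that \eqref{eq:VKsystem} has a nontrivial solution, so by \cref{lemma:equiv_problems} there exists $0\neq w_s\in\dom(L_-^s|_{X_c})^{1/2}$ satisfying
\begin{equation*}
(L_-^s|_{X_c})^{1/2}\,\Pi L_+^s \Pi\, (L_-^s|_{X_c})^{1/2} w_s = -s^4 \la^2\, w_s .
\end{equation*}
Pairing with $w_s$ in $L^2(0,\ell)$ and using the selfadjointness of $(L_-^s|_{X_c})^{1/2}$, of $\Pi$, and of $L_+^s$, together with the fact that $\Pi(L_-^s|_{X_c})^{1/2} w_s\in\dom(L_+^s)$, I would rewrite the left-hand side as
\begin{equation*}
\bigl\langle L_+^s\,\Pi (L_-^s|_{X_c})^{1/2} w_s,\,\Pi (L_-^s|_{X_c})^{1/2} w_s\bigr\rangle\in\R.
\end{equation*}
This forces $-s^4\la^2\|w_s\|^2\in\R$, and since $w_s\neq 0$ we conclude $\la^2\in\R$. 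Hence $\la\in\R\cup i\R$ and $\mu=s^2\la\in\R\cup i\R$, as required.

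For the case $P=0$ the argument is entirely symmetric: by \cref{lemma:morse_schrodinger_nonnegative}, $L_+^s\ge 0$ for all $s\in(0,1]$, so now I would invoke the second equivalence in \cref{lemma:equiv_problems}, namely that \eqref{eq:VKsystem} is equivalent to \eqref{eq:VKsystem5}, and repeat the pairing argument with the corresponding $w_s$ to conclude again that $\la^2\in\R$. I do not anticipate any substantive obstacle here: once one has the selfadjoint reformulation of \cref{lemma:equiv_problems} the whole claim is a one-line inner-product computation, and the only thing that needs care is tracking the domains (so that $\Pi(L_\mp^s|_{X_c})^{1/2}w_s$ actually lies in $\dom(L_\pm^s)$, which is built into the statement of \cref{lemma:equiv_problems}) and verifying that the case $\mu=0$ and the case $s=1$ with nontrivial $\ker(L_\mp^s)$ are correctly covered by the nonnegative (rather than strictly positive) version of the equivalence.
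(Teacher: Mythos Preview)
Your proposal is correct and follows essentially the same approach as the paper: both invoke \cref{lemma:morse_schrodinger_nonnegative} to get nonnegativity of $L_\mp^s$, then use \cref{lemma:equiv_problems} to reduce to the selfadjoint problem, and conclude that $s^4\la^2\in\R$. The only difference is cosmetic---the paper simply notes that the operator $(L_-^s|_{X_c})^{1/2}\Pi L_+^s\Pi(L_-^s|_{X_c})^{1/2}$ is selfadjoint on $X_c$ and hence has real spectrum, whereas you spell out the equivalent pairing computation explicitly.
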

	\begin{proof}
		Fix $s\in(0,1]$. If $Q=0$ then $L_-^s$ is nonnegative by \cref{lemma:morse_schrodinger_nonnegative}. By \cref{lemma:equiv_problems} the eigenvalue problem \eqref{eq:VKsystem} is equivalent to \eqref{eq:VKsystem3}. The operator $\left ( L_-^s|_{X_c}\right )^{1/2} \Pi L_+^s \Pi\left ( L_-^s|_{X_c}\right )^{1/2}$ acting in $X_c$ is selfadjoint, and therefore $s^4\la^2 \in \R$. Then $s\in\R$ implies $\la\in\R \cup i\R$. The case $P=0$ follows similarly.
	\end{proof}
	
	We next prove that the Maslov index is monotone in $\la$ if either $Q=0$ or $P=0$.
	
	\begin{lemma}
		\label{lemma:sign_definite}
		If $Q=0$ then the crossing form $\mathfrak{m}_{\la_0}$ is strictly positive for any crossing with $\la_0>0$ and $s_0=1$, while if $P=0$ then $\mathfrak{m}_{\la_0}$ is strictly negative at all such crossings. Consequently,
		\begin{equation}\label{P=0_Q=0}
			n_+(N) = \begin{cases}
				\Mas(\Lambda,\cD;\Gamma^\e_3)\qquad &\text{if}\,\,\, Q=0, \\
				- \Mas(\Lambda,\cD;\Gamma^\e_3)\qquad  &\text{if}\,\,\, P=0.
			\end{cases}
		\end{equation}
		(Recall that $\Mas(\Lambda,\cD;\Gamma^\e_3)=\Mas(\Lambda(\la,1),\cD;\la\in[\e, \la_\infty]) $.)
	\end{lemma}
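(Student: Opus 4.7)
The plan is to derive the sign of the $\lambda$-crossing form directly from the nonnegativity of $L_\mp$ provided by Lemma \ref{lemma:morse_schrodinger_nonnegative}, and then read off the Maslov index from the resulting definiteness. Fix a crossing $(\lambda_0,1)$ with $\lambda_0>0$ and a nonzero $q \in \Lambda(\lambda_0,1) \cap \cD$; by Proposition \ref{prop:Lag_interp} there is a unique eigenfunction $\mathbf{u} = (u,v)^\top \in \ker(N - \lambda_0)$ with $q = \Tr_1\mathbf{u}$. Pairing the eigenvalue equations $L_+ u = \lambda_0 v$ and $-L_- v = \lambda_0 u$ with $u$ and $v$ respectively gives $\langle L_+ u, u\rangle = \lambda_0\langle u, v\rangle$ and $\langle L_- v, v\rangle = -\lambda_0\langle u, v\rangle$, which combined with Lemma \ref{lemma:lambdacrossform} yields
\[
\mathfrak{m}_{\lambda_0}(q) \;=\; -2\langle u, v\rangle \;=\; \frac{2}{\lambda_0}\langle L_- v, v\rangle \;=\; -\frac{2}{\lambda_0}\langle L_+ u, u\rangle.
\]

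I would next observe that for $\lambda_0 \neq 0$ neither $v \in \ker L_-$ nor $u \in \ker L_+$ is possible: if $v \in \ker L_-$, then the second eigenvalue equation forces $u = -\lambda_0^{-1} L_- v = 0$, and then the first gives $v = \lambda_0^{-1}L_+ u = 0$, contradicting $q \neq 0$; the alternative is symmetric. Assuming $Q=0$, Lemma \ref{lemma:morse_schrodinger_nonnegative} makes $L_-$ a nonnegative selfadjoint operator, so $\langle L_- v, v\rangle = \|L_-^{1/2}v\|^2 > 0$ whenever $v \notin \ker L_-$, whence $\mathfrak{m}_{\lambda_0}(q) > 0$. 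The $P=0$ case is entirely analogous and yields $\mathfrak{m}_{\lambda_0}(q) < 0$. Since this holds for every nonzero $q \in \Lambda(\lambda_0,1)\cap\cD$, the full symmetric matrix $\mathfrak{M}_{\lambda_0}$ from Corollary \ref{cor:higherdims_lambda} is positive (respectively negative) definite, and in particular nondegenerate, so every crossing along $\Gamma_3^\e$ is regular.

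For the second statement, I would appeal to Definition \ref{defn:maslov}: the endpoint $\lambda=\e$ is not a crossing for $\e$ small by the discreteness of $\spec(N)\cap\R$ (Remark \ref{rem:Lisolated}), and $\lambda=\lambda_\infty$ is not a crossing for $\lambda_\infty$ large by Lemma \ref{lem:maslov_right_bottom_zero}, so only interior regular crossings contribute. Each interior signature equals $\pm \dim(\Lambda(\lambda_0,1)\cap\cD)$, which by Proposition \ref{prop:Lag_interp} is the geometric multiplicity of $\lambda_0\in\spec(N)$; regularity of the form combined with Remark \ref{rem:contribute} identifies this with the algebraic multiplicity. Summing the signed signatures then produces $\pm n_+(N)$, giving \eqref{P=0_Q=0}. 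No step poses a genuine obstacle: the one place requiring care is the ``kernel-avoidance'' argument in the middle paragraph, which is exactly what rules out any vanishing of $\mathfrak{m}_{\lambda_0}$ and hence upgrades nonnegativity to strict definiteness.
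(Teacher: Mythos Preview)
Your argument is correct and follows essentially the same route as the paper: both pair the eigenvalue equations with the eigenfunction components to obtain $\mathfrak{m}_{\lambda_0}(q)=\tfrac{2}{\lambda_0}\langle L_-v,v\rangle=-\tfrac{2}{\lambda_0}\langle L_+u,u\rangle$, and then use nonnegativity of $L_\mp$ together with a kernel-avoidance argument (you show $v\notin\ker L_-$ directly; the paper observes $0\neq u\in\ran L_-$ forces $v$ to have a component in $\ker(L_-)^\perp$). Your treatment of the second statement is in fact more careful than the paper's, which leaves the passage from definiteness of $\mathfrak{m}_{\lambda_0}$ to \eqref{P=0_Q=0} implicit; your endpoint and semisimplicity remarks are correct and fill that gap.
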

	\begin{proof}
		Assume $\la_0>0$ with eigenfunction $\mathbf{u}_1=(u_1,v_1)^\top$, so that \eqref{eq:VKsystem} holds with $\la=\la_0$ and $s=1$. Note that both $u_1$ and $v_1$ are necessarily nontrivial due to the coupling of the eigenvalue equations for $\la\neq0$.  If $Q=0$, we apply $\langle\cdot, v_1\rangle$ to the first equation of \eqref{eq:VKsystem} to obtain
		\begin{align}\label{L-_quad_form}
			\langle L_- v_1, v_1 \rangle =  -\la_0 \langle u_1, v_1 \rangle = \f{\la_0}{2} \mathfrak{m}_{\la_0}(q), \quad q=\Tr \mathbf{u}_1,
		\end{align}
		using formula \eqref{maslov_horizontal}. Now $0\neq u_1\in\ran(L_-)$ implies $v_1$ has a component lying in $\ker(L_-)^\perp$. Since $Q=0$, it follows that  $\langle L_- v_1, v_1 \rangle > 0$. Thus $\mathfrak{m}_{\la_0}(q)>0$ at all crossings along $\Gamma^\e_3$ if $Q=0$. If $P=0$, one applies $\langle \cdot, u_1\rangle$ to the second equation of \eqref{eq:VKsystem} at $(\la_0,1)$, and a similar argument yields that $\langle L_+ u_1, u_1 \rangle = -\f{\la_0}{2} \mathfrak{m}_{\la_0}(q)> 0$. Thus $\mathfrak{m}_{\la_0}(q)<0$ at all crossings on $\Gamma^\e_3$ if $P=0$. 
	\end{proof}

	\begin{proof}[Proof of \cref{thm:VK}]
		Consider the eigenvalue curve $s=s(\la)$ through the point $(\la,s)=(0,1)$, for which $\dot s(0)=0$ as stated in \cref{thm:sL}. 
		
		We start with the case $P=1, Q=0$ and $0\in\spec(L_-)\backslash \spec(L_+)$. If $\ddot s(0)>0$, then by \cref{thm:compute_c} we have $\mathfrak{c}=0$. {  Since $Q=0$, by \cref{lemma:sign_definite} and \eqref{mas_top_fmla} we have $n_+(N) = P - \mathfrak{c} = 1$}. On the other hand, if $\ddot s(0)<0$, then by \cref{thm:compute_c} we have $\mathfrak{c}=1$, and {  by the same argument $n_+(N)=P-\mathfrak{c} =0$. It then follows from \cref{lemma:imag_eigvals} that $\spec(N)\subset i\R$.}
		
		The case where $Q=1, P=0$ and $0\in\spec(L_+)\backslash \spec(L_-)$ is similar. If $\ddot s(0)>0$, then $\mathfrak{c}=0$ by \cref{thm:compute_c}, and { \cref{lemma:sign_definite} and \eqref{mas_top_fmla} imply $n_+(N)=Q+\mathfrak{c} = 1$}. If $\ddot s(0)<0$, then $\mathfrak{c}=-1$ by \cref{thm:compute_c}, {  hence $n_+(N)=0$. By  \cref{lemma:imag_eigvals} we deduce that $\spec(N)\subset i\R$.}
	\end{proof}
	
	\subsection{Concavity computations for NLS}
	
	Working under \cref{hypo:NLS}, in this subsection we compute the sign of $\ddot s(0)$ via the VK-type integrals given in \cref{thm:sL}. In what follows, $s(\la)$ is the eigenvalue curve through $(\la_0,s_0)=(0,1)$.
	
	\subsubsection{The $L_+$ integral}
	We first consider the case when $L_+$ has a nontrivial kernel. The following result allows us to compute $\ddot s(0)$ when $\phi$ satisfies Neumann boundary conditions.
	
	\begin{prop}\label{prop:concave_up}
		Assume \cref{hypo:NLS} and that $0\in\spec(L_+)\backslash$ $\spec(L_-)$ with eigenfunction $\phi'$. If $\{p,q\}$ is a fundamental set of solutions to the differential equation $L_-v=0$ initialised at the identity, then $q(\ell)\neq0$ and 
		\begin{align}\label{eq:convexity_s_L+}
			\sgn \ddot{s}(0) = \sgn \left [ \left( \int_{0}^{\ell} p^2 dx\right) - \f{ p(\ell)}{q(\ell)} \ell^2 \right ].
		\end{align}
	\end{prop}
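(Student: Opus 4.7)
The plan is to invoke case (2) of \cref{thm:sL} together with \cref{rem:positivity}, which reduces the proposition to a sign calculation:
\[
\sgn \ddot s(0) = -\sgn \int_0^\ell \widehat v \, \phi' \,dx,
\]
where $\widehat v \in H^2(0,\ell) \cap H^1_0(0,\ell)$ is the unique solution of $L_- \widehat v = -\phi'$. The nonvanishing of $q(\ell)$ is immediate: if $q(\ell)=0$, then $q$ (which already satisfies $q(0)=0$) would be a nontrivial Dirichlet eigenfunction of $L_-$ with eigenvalue zero, contradicting $0\notin\spec(L_-)$.

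The central observation is that $L_-\widehat v = -\phi'$ admits an explicit particular solution. Since $L_-\phi=0$, i.e.\ $-\phi'' = h\phi$, a direct calculation gives $L_-(x\phi)= -2\phi'$, so $\frac{1}{2}x\phi$ is a particular solution. The general $H^2$ solution therefore has the form $\widehat v = \frac{1}{2}x\phi + c_1 p + c_2 q$, and the Dirichlet conditions force $c_1=0$ and $c_2 = -\ell\phi(\ell)/[2q(\ell)]$. At this point I would use the Neumann boundary condition $\phi'(0)=0$: combined with $L_-\phi=0$, expansion in the basis $\{p,q\}$ gives $\phi = \phi(0)\,p$, and in particular $\phi(\ell)=\phi(0)\,p(\ell)$.

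With $\widehat v$ written in terms of $p$, $q$ and $\phi(0)$, the integral $\int_0^\ell \widehat v \,\phi'\,dx = \phi(0)\int_0^\ell \widehat v \, p'\,dx$ reduces to two pieces, $\int_0^\ell x p p'\,dx$ and $\int_0^\ell q p'\,dx$. The first is elementary via $xpp' = \frac{1}{2}x(p^2)'$. For the second I would use constancy of the Wronskian $W = pq'-p'q\equiv 1$ (normalised by the initial conditions and preserved because $L_-$ has no first-order term), combined with one integration by parts, to obtain $\int_0^\ell q p'\,dx = \frac{1}{2}\bigl[p(\ell)q(\ell)-\ell\bigr]$. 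Assembling the pieces, the $\ell p(\ell)^2$ terms cancel, leaving
\[
\int_0^\ell \widehat v\,\phi'\,dx = \frac{\phi(0)^2}{4}\left[\frac{\ell^2 p(\ell)}{q(\ell)} - \int_0^\ell p^2 \,dx\right],
\]
from which \eqref{eq:convexity_s_L+} follows (with $\phi(0)\neq 0$, else $\phi\equiv 0$).

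No individual step is hard; the main conceptual point is spotting the identity $L_-(x\phi)=-2\phi'$, which replaces a variation-of-parameters representation of $\widehat v$ with an elementary closed-form expression and thereby makes the final integral tractable by hand.
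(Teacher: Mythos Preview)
Your proof is correct and follows essentially the same approach as the paper. The only cosmetic difference is a sign convention: the paper writes the inhomogeneous problem as $L_-\widehat v=\phi'$ with $\sgn\ddot s(0)=\sgn\int_0^\ell\widehat v\,\phi'\,dx$, whereas you use $L_-\widehat v=-\phi'$ with the opposite sign outside the integral; these are equivalent and lead to the same particular solution $\tfrac{1}{2}x\phi$, the same identification $p=\phi/\phi(0)$ from the Neumann condition $\phi'(0)=0$, and the same evaluation of $\int xpp'\,dx$ and $\int qp'\,dx$ via integration by parts and the Wronskian identity.
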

	\begin{proof}
		First, note that $\ker(N) = \spn \{(\phi' ,0)^\top\}$. Now by case (2) of \cref{thm:sL} we have
		\[
		\sgn \ddot{s}(0) = \sgn \int_0^{\ell}  \widehat{v} \,\phi' \,dx
		\]
		where $\widehat{v}$ is the unique solution to the inhomogeneous boundary value problem
		\begin{align}\label{eq:inhomog_eqn}
			L_-\widehat{v} = \phi', \qquad \widehat{v}(0)=\widehat{v}(\ell)=0.
		\end{align} 
		Let $\{p, q\}$ be a fundamental set of solutions to the homogeneous equation $L_{-}\widehat{v}= 0$ such that 
		\begin{equation}\label{initial_pq}
			\begin{pmatrix}
				p(0) & q(0) \\ p'(0) & q'(0)
			\end{pmatrix} = \begin{pmatrix}
				1 & 0 \\ 0 & 1
			\end{pmatrix}.
		\end{equation}
		Since $\phi(0) \neq 0$, the first solution is given by $p(x) = \phi(x)/\phi(0)$. We have $p'(\ell) =0, \,p(\ell)\neq 0$, while $q(\ell)\neq 0$ since $q(0)=0$ and $0\notin \spec(L_-)$. By Abel's identity,
		\begin{align}\label{eq:abels}
			p(x)q'(x) - q(x) p'(x) =1 \quad \forall \,\,x\in[0,\ell].
		\end{align}
		The general solution to the differential equation $L_-\widehat{v}= \phi'$ is thus
		\begin{equation}\label{xphi2}
			\widehat{v}(x) = Ap(x) + Bq(x) - \f{x\phi(x)}{2},
		\end{equation}
		where it is easily verified that $-x\phi(x)/2$ is a particular solution. Imposing the boundary conditions on $\widehat{v}$ to determine the constants $A$ and $B$, we find that the unique solution to \eqref{eq:inhomog_eqn} is 
		\[
		\widehat{v}(x) =   \f{1}{2} \left ( \f{\ell\phi(\ell)}{q(\ell)} q(x) -  x\phi(x)\right ).
		\]
		It remains to compute $\sgn\int_{0}^{\ell}\widehat{v}\phi'dx$.  Since $\phi(x) =p(x) \phi(0)$, we have 
		\begin{align*}
			\int_{0}^{\ell}\widehat{v}(x)\phi'(x) dx & = \int_{0}^{\ell} \f{1}{2} \left ( \f{\ell\phi(\ell)}{q(\ell)} q(x) -  x\phi(x)\right ) p'(x)\phi(0)\,  dx \\
			&= \f{\phi(0) ^2 \ell p(\ell)}{2q(\ell)}\int_0^\ell q(x)p'(x)dx - \f{\phi(0)^2}{2} \int_0^\ell xp(x)p'(x) dx.
		\end{align*}
		For the second integral we obtain 
		\[
		\int_{0}^{\ell} x p(x) p'(x) dx = \frac{1}{2}\left(\ell p(\ell)^2- \int_{0}^{\ell}p(x)^{2} dx \right),
		\]
		while for the first we integrate by parts and appeal to \eqref{eq:abels} to arrive at 
		\[
		\int_0^\ell q(x)p'(x)dx =\f{1}{2}\left (q(\ell) p(\ell) -  \ell\right ).
		\]
		Therefore
		\begin{align*}
			\int_{0}^{\ell}\widehat{v}(x)\phi'(x) dx & = \f{\phi(0)^2  \ell p(\ell)}{4q(\ell)} \left (q(\ell) p(\ell) -  \ell\right ) -   \f{\phi(0)^2}{4}\left(\ell p(\ell)^2- \int_{0}^{\ell}p(x)^{2} dx \right) \\
			&= \f{\phi(0)^2}{4}\left (  \int_{0}^{\ell}p(x)^{2} dx  -\f{ p(\ell)}{q(\ell)} \ell^2  \right )
		\end{align*}
		and \eqref{eq:convexity_s_L+} follows. 
	\end{proof}
	
	\begin{rem}
		If $\phi$ is nonvanishing, the second solution $q$ can be determined using reduction of order; see \eqref{qsoln} and also the proof of \cref{cor:exact_count_Q=0}. When $\phi$ has zeros the second solution is given by the Rofe--Beketov formula \cite[Lemma 2]{Schmidt}; however, the resulting expression is significantly more complicated and does not appear to be useful for our analysis.
	\end{rem}
	
	The following result serves as an application of \cref{prop:concave_up} in the case when the stationary state is either strictly positive or strictly negative over its domain.
	\begin{cor}\label{cor:concaveup}
		Under the assumptions of \cref{prop:concave_up}, for nonconstant solutions to \eqref{standwave} satisfying $\phi(x)\neq0$ for all $x\in[0,\ell]$,  we have $\ddot{s}(0)>0$.
	\end{cor}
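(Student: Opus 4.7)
The plan is to reduce the sign condition from \cref{prop:concave_up} to a strict Cauchy--Schwarz inequality. Since $\phi$ is nonvanishing on $[0,\ell]$, the first fundamental solution $p(x)=\phi(x)/\phi(0)$ to $L_-v=0$ (normalized so that $p(0)=1$, $p'(0)=0$) has constant sign, so we may use reduction of order to write the second solution as
\[
q(x)=p(x)\int_{0}^{x}\frac{dt}{p(t)^{2}}.
\]
A direct check confirms $q(0)=0$ and $q'(0)=1$, so $\{p,q\}$ matches the initial conditions in \cref{prop:concave_up}.

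Substituting into \eqref{eq:convexity_s_L+} yields
\[
\sgn \ddot s(0)=\sgn\!\left[\int_{0}^{\ell}p^{2}\,dx-\frac{\ell^{2}}{\int_{0}^{\ell}p^{-2}\,dx}\right],
\]
so the statement reduces to showing
\[
\int_{0}^{\ell}p^{2}\,dx\;\cdot\;\int_{0}^{\ell}\frac{1}{p^{2}}\,dx\;>\;\ell^{2}.
\]
This is exactly the Cauchy--Schwarz inequality applied to $p$ and $1/p$, since $\int_{0}^{\ell}p\cdot p^{-1}\,dx=\ell$.

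The remaining step, and the only one requiring care, is to establish that the inequality is \emph{strict}. Equality in Cauchy--Schwarz would force $p$ and $1/p$ to be linearly dependent in $L^{2}(0,\ell)$, which forces $p^{2}$ to be constant and hence $p$ (and thus $\phi$) to be constant on $[0,\ell]$. This contradicts the hypothesis that $\phi$ is nonconstant, so the inequality is strict and $\ddot s(0)>0$, completing the proof. No serious obstacle arises: the whole argument is an application of \cref{prop:concave_up} combined with reduction of order and Cauchy--Schwarz.
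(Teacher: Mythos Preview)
Your proposal is correct and follows essentially the same route as the paper: reduction of order to compute $q$ explicitly, substitution into the formula from \cref{prop:concave_up} to obtain the expression $\int_0^\ell p^2\,dx \cdot \int_0^\ell p^{-2}\,dx - \ell^2$, and then the strict Cauchy--Schwarz inequality with equality ruled out by nonconstancy of $\phi$. The paper's proof is identical in structure and in the key steps.
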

	
	\begin{proof}
		In the case when $\phi$ has no zeros on the interval $[0,\ell]$, the method of reduction of order allows us to write
		\begin{equation}\label{qsoln}
			q(x) = p(x) \int_{0}^{x}\frac{1}{p(t)^{2}} dt,
		\end{equation}
		where the nonvanishing of $p$ ensures $1/p^2$ is integrable. This gives
		\begin{align*}
			\int_{0}^{\ell}p(x)^{2} dx  -\f{ p(\ell)}{q(\ell)} \ell^2
			&= \f{\left( \int_{0}^{\ell} \frac{1}{p^{2}} dx\right)\left( \int_{0}^{\ell} p^{2} dx\right) - \ell^2}{\left( \int_{0}^{\ell} \frac{1}{p^{2}} dx\right)},
		\end{align*}
		and so
		\begin{equation}\label{eq:sign_intermediate}
			\sgn  \ddot s(0)= \sgn \left [\left( \int_{0}^{\ell} \frac{1}{p^{2}} dx\right)\left( \int_{0}^{\ell} p^{2} dx\right) - \ell^2 \right ].
		\end{equation}
		By virtue of the Cauchy Schwarz inequality,
		\begin{equation*}
			\ell =  \int_0^\ell p(x) \f{1}{p(x)} \,dx 
			\leq \sqrt{\int_0^\ell p^2(x)  dx }\,\sqrt{\int_0^\ell \f{1}{p(x)^2} dx}
		\end{equation*}
		where we have equality only when $p$ and $1/p$ are linearly dependent, that is, when $\phi$ is constant. Since we have assumed a nonconstant solution, the inequality is strict, and we conclude that \eqref{eq:sign_intermediate} is positive.
	\end{proof}
	
	\begin{rem}
		The statement of \cref{cor:concaveup} may also be proven using \cref{rem:positivity}, since $L_->0$ for stationary states that are nonvanishing over $[0,\ell]$ (as was shown in the proof of \cref{cor:exact_count_Q=0}). However, the proof given above is a nice illustration of \cref{prop:concave_up}, a more general result that holds for any nonconstant $\phi$. 
	\end{rem}

	\subsubsection{The $L_-$ integral: Recovering classical VK}
	\label{sec:VKrecover} 
	We now consider the case when $L_-$ has a nontrivial kernel (spanned by $\phi$). We show that the associated VK-type integral in equation \eqref{s''L-} of \cref{thm:sL} recovers a compact interval analogue of the classical VK integral expression
	\begin{equation}\label{VK_classical}
		\pde{}{\beta} \int_{-\infty}^{\infty} \phi^2 \,dx
	\end{equation}
	associated with a stationary state $\phi\in L^2(\R)$ solving \eqref{standwave} (see \cite[Theorem 4.4, p.215]{pelinovsky}). The key observation is that $\p_\beta \phi(\cdot;\be)$ solves the differential equation $L_+\widehat{u} = \phi$ associated with case (1) of \cref{thm:sL}, and this naturally leads to the expressions \eqref{VK_integral_classical} and \eqref{eqn:VK}, which clearly resemble \eqref{VK_classical}. This is not true for the equation $L_-\widehat{v} = \phi'$ associated with case (2) of \cref{thm:sL}, for which a recovery of a compact interval analogue of \eqref{VK_classical} is thus not possible. In what follows, $\phi'(x;\be)$ refers to $\de{\phi}{x} (x;\be)$, while the $\be$ derivative will be denoted by $\p_\be$. 
	
	\begin{prop}\label{prop:VK_compact}
		Assume \cref{hypo:NLS} and let $\phi_0$ be a solution to \eqref{standwave} with parameter $\be_0$ that satisfies $\phi_0(0) = \phi_0(\ell)=0$. There exists a unique one-parameter family of solutions $\beta \mapsto \hat\phi(\cdot;\beta)$ to \eqref{standwave}, defined in a neighbourhood of $\be_0$, such that
		\begin{equation}
			\label{BC}
			\hat\phi(0;\beta) = \hat\phi(\ell;\beta) = 0
		\end{equation}
		for all $\be$ near $\be_0$ and $\hat\phi(\cdot;\beta_0) = \phi_0$. In terms of this family, the VK-type integral in \eqref{s''L-} is
		\begin{align}
			\label{VK_integral_classical}
			\int_0^\ell \widehat{u}\,v \,dx  = \f{1}{2} \pde{}{\beta}\bigg|_{\be=\be_0} \int_{0}^{\ell} \hat\phi(x;\beta) ^2 \,dx.
		\end{align}
		More generally, if $\beta \mapsto \phi(\cdot;\beta)$ is any $C^1$ family of solutions to \eqref{standwave} satisfying $\phi(\cdot;\beta_0) = \phi_0$, then the integral in \eqref{s''L-} can be written
		\begin{align}\label{eqn:VK}
			\begin{split}
				&\int_{0}^{\ell} \widehat{u}\, v \,dx = \f{1}{2}\pde{}{\beta}\bigg|_{\be=\be_0} \int_{0}^{\ell} \phi(x;\beta) ^2 \,dx \\ &\,\,\,\,\,\,+\big ((-1)^{Q}\p_{\beta}\phi(0;\beta_0)+ \p_{\beta}\phi(\ell;\beta_0)\big ) \left (\f{\p_{\beta}\phi(0;\beta_0) + (-1)^{Q}\p_{\beta}\phi(\ell;\beta_0)}{q(\ell)} + \p_{\beta}\phi'(\ell;\beta_0)  \right ) .
			\end{split}
		\end{align}
		Furthermore, if $P=1$, $Q=0$ and \eqref{VK_integral_classical} or \eqref{eqn:VK} is positive (resp. negative), then the standing wave $\widehat{\psi}(x,t) = e^{i\be_0 t} \phi_0(x)$ is spectrally unstable (resp. spectrally stable).
	\end{prop}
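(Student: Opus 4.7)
The strategy is to realise the unique solution $\widehat u \in H^2 \cap H^1_0$ of $L_+\widehat u = \phi_0$ (from \cref{thm:sL}(1), with $v=\phi_0$) as the $\be$-derivative of a smooth family of Dirichlet stationary states, and then use Green's identity to handle a general $C^1$ family. For the first step I would apply the implicit function theorem to the map $F(\phi,\be) = \phi_{xx} + f(\phi^2)\phi + \be\phi$ on $(H^2(0,\ell) \cap H^1_0(0,\ell)) \times \R$: at $(\phi_0,\be_0)$ the partial derivative $D_\phi F$ equals $-L_+$, which under \cref{hypo:NLS} is an isomorphism because $0 \notin \spec(L_+)$. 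IFT therefore produces a unique $C^1$ branch $\be \mapsto \hat\phi(\cdot;\be)$ through $\phi_0$ of solutions to \eqref{standwave} that automatically satisfies \eqref{BC}.

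Differentiating \eqref{standwave} in $\be$ and evaluating at $\be_0$ gives $L_+(\p_\be \hat\phi|_{\be_0}) = \phi_0 = v$; since $\p_\be\hat\phi|_{\be_0}$ lies in $H^2 \cap H^1_0$ by the preceding step and $L_+$ is injective there, uniqueness forces $\widehat u = \p_\be\hat\phi|_{\be_0}$. Formula \eqref{VK_integral_classical} is then immediate:
\[
\int_0^\ell \widehat u\, v\,dx = \int_0^\ell (\p_\be\hat\phi|_{\be_0})\phi_0 \,dx = \tfrac{1}{2} \p_\be\Big|_{\be_0} \int_0^\ell \hat\phi(x;\be)^2\,dx.
\]
For a general $C^1$ family $\phi(\cdot;\be)$ with $\phi(\cdot;\be_0) = \phi_0$, I would set $w = \p_\be\phi|_{\be_0}$; the same differentiation yields $L_+ w = \phi_0$ pointwise, but $w$ need not vanish at the endpoints. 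Green's identity for $L_+$, using the Dirichlet conditions on $\widehat u$, produces
\[
\int_0^\ell \widehat u\,\phi_0\,dx - \int_0^\ell w\,\phi_0\,dx = \widehat u'(\ell)w(\ell) - \widehat u'(0)w(0),
\]
reducing the task to expressing $\widehat u'(0)$ and $\widehat u'(\ell)$ in terms of the endpoint data $w(0), w(\ell), w'(\ell)$.

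Under \cref{hypo:NLS}, differentiating the standing wave equation shows $L_+\phi_0' = 0$ pointwise, so I would take $p_+ := \phi_0'/\phi_0'(0)$ together with the complementary solution $q_+$ of $L_+u=0$ normalised by $q_+(0)=0,\,q_+'(0)=1$ as a basis of the ODE kernel of $L_+$, and set $q(\ell) := q_+(\ell) \neq 0$. The pivotal input is the conservation law $\tfrac{1}{2}(\phi_0')^2 + F(\phi_0^2) + \tfrac{\be}{2}\phi_0^2 \equiv \text{const}$ (with $F'=f$) along $\phi_0$, which uses autonomy of \eqref{standwave} and forces $\phi_0'(\ell)^2 = \phi_0'(0)^2$, hence $p_+(\ell) = \pm 1$; Sturm--Liouville oscillation, applied via the fact that $\phi_0$ is the $(Q+1)$-th Dirichlet eigenfunction of $L_-$ and therefore has exactly $Q$ interior zeros, pins this down to $p_+(\ell) = (-1)^{Q+1}$. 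The Wronskian normalisation $W(p_+,q_+)\equiv 1$ combined with $\phi_0''(\ell)=0$ yields $q_+'(\ell) = (-1)^{Q+1}$. Writing $\widehat u = w - w(0)p_+ - \mu_2 q_+$ and imposing $\widehat u(\ell)=0$ gives $\mu_2 = (w(\ell)+(-1)^Q w(0))/q(\ell)$, after which direct substitution (and the relation $\widehat u'(0) = p_+(\ell)\widehat u'(\ell)$, obtained from Green's identity applied to the pair $(\widehat u, p_+)$ together with $\int_0^\ell p_+ \phi_0\,dx = [\phi_0^2/(2\phi_0'(0))]_0^\ell = 0$) collapses $\widehat u'(\ell)w(\ell) - \widehat u'(0)w(0)$ to precisely the product form on the right-hand side of \eqref{eqn:VK}.

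The spectral (in)stability conclusion is then immediate from \cref{thm:VK}: under $P=1,\,Q=0$ with $0\in\spec(L_-)\setminus\spec(L_+)$, positivity of the VK-type integral in \eqref{s''L-} (equivalently, positivity of the right-hand side of \eqref{VK_integral_classical} or \eqref{eqn:VK}) gives $n_+(N)\geq 1$ and hence spectral instability of $\widehat\psi$, while negativity yields $\spec(N)\subset i\R$ and spectral stability. The principal technical obstacle I anticipate is the Sturm--Wronskian sign analysis in the third paragraph: ensuring that the autonomous conservation law, the nodal count of $\phi_0$, and the Wronskian normalisation combine to propagate the sign $(-1)^Q$ consistently through the decomposition of $\widehat u$ and reassemble into the precise algebraic structure of \eqref{eqn:VK}. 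The other steps are routine applications of the implicit function theorem and Green's identity once the identification $\widehat u = \p_\be\hat\phi|_{\be_0}$ is in place.
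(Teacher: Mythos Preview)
Your proposal is correct and follows essentially the same approach as the paper: the implicit function theorem on $F(\phi,\be)=\phi''+f(\phi^2)\phi+\be\phi$ to produce the Dirichlet family, the identification $\widehat u=\p_\be\hat\phi|_{\be_0}$, and the fundamental set $\{p,q\}$ for $L_+$ (with $p=\phi_0'/\phi_0'(0)$) together with the conservation-law/Sturm/Wronskian sign analysis giving $p(\ell)=(-1)^{Q+1}$, $p'(\ell)=0$, $q'(\ell)=(-1)^{Q+1}$. The only cosmetic difference is that the paper writes $\widehat u$ explicitly as $Ap+Bq+\p_\be\phi$ and then integrates $\widehat u\,\phi_0$ term by term, whereas you first apply Green's identity to isolate the boundary contribution $\widehat u'(\ell)w(\ell)-\widehat u'(0)w(0)$ and then evaluate it from the decomposition; both routes lead to \eqref{eqn:VK} by the same algebra.
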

	
	\begin{proof}
		The existence of $\phi_0$ implies that the associated operators
		\begin{equation*}
			\begin{aligned}
				L_-&=-\partial_{xx}-f(\phi_0^2)-\be_0,  \\
				L_+&=-\partial_{xx}-2f'(\phi_0^2)\phi_0^2-f(\phi_0^2)-\be_0
			\end{aligned}
		\end{equation*}
		have $\phi_0\in\ker(L_-)$ and hence $0\in\spec(L_-)\backslash\spec(L_+)$.
		Consider the function
		\begin{equation}
			F \colon \big(H^2(0,\ell) \cap H^1_0(0,\ell)\big) \times \R \longrightarrow L^2(0,\ell), \qquad
			F(\phi,\beta) = \phi'' + f(\phi^2)\phi + \beta \phi,
		\end{equation}
		in terms of which \eqref{standwave} and \eqref{BC} become $F(\phi,\beta) = 0$. It can be shown that $F$ is continuously Fr\'echet differentiable (see \cite[\S 2.2]{coleman}), with 
		\begin{equation}
			\label{DF}
			DF(\phi_0,\beta_0)(u,\gamma) = \gamma \phi_0 - L_+u.
		\end{equation}
		Since $0\notin \spec(L_+)$, this implies $DF(\phi_0,\beta_0)(\cdot,0) = - L_+$ is invertible, so the implicit function theorem guarantees the existence of a $C^1$ function
		\begin{equation}\label{beta_fam}
			(\beta_0 - \epsilon, \beta_0 + \epsilon) \to H^2(0,\ell) \cap H^1_0(0,\ell),\quad  \beta \mapsto \hat\phi(\cdot;\beta),
		\end{equation}
		such that $F(\hat\phi(\cdot;\beta),\beta) = 0$ for all $|\beta - \beta_0| < \epsilon$.
		
		Turning to the integral in \eqref{s''L-}, where now  $v=\phi_0$, we need to solve
		\begin{equation}\label{inhomog_eq}
			L_+\widehat{u} = \phi_0, \qquad \widehat{u}(0) = \widehat{u}(\ell)=0.
		\end{equation}
		Using the family constructed above, which is $C^1$ in $\be$, we differentiate \eqref{standwave} with respect to $\beta$ and evaluate at $\be_0$ to obtain
		\begin{equation}
			\label{Lplus_inhomog}
			L_+ \p_{\beta}\hat\phi(x;\beta_0)= \phi_0(x).
		\end{equation}
		Now differentiating \eqref{BC} (which holds for \textit{all} $\be$ near $\be_0$) with respect to $\be$  and evaluating at $\be_0$ yields 
		\begin{equation}\label{}
			\p_{\beta} \hat\phi(0;\beta_0) = \p_{\beta} \hat\phi(\ell;\beta_0) = 0.
		\end{equation}
		Therefore, $\widehat{u}(x)=\p_{\beta}\hat\phi(x;\beta_0)$ is the \textit{unique} solution to \eqref{inhomog_eq}, and substituting this into the VK-type integral in \eqref{s''L-} with $v=\phi_0$ yields \eqref{VK_integral_classical}.
		
		Now let $\beta \mapsto \phi(\cdot;\beta)$ be an arbitrary family of solutions to \eqref{standwave} (again for $\beta$ close to $\beta_0$) such that $\phi(x;\beta_0)= \phi_0(x)$. To solve \eqref{inhomog_eq}, note that \eqref{Lplus_inhomog} still holds for the family $\phi(\cdot;\be_0)$, and thus the general solution to $L_+ \widehat{u} = \phi_0$ is
		\begin{equation}\label{gensoln}
			\widehat{u}(x) = Ap(x) + Bq(x) + \p_{\beta}\phi(x;\beta_0),
		\end{equation}
		where $\{p,q\}$ is now a fundamental set of solutions to the homogeneous equation $L_+\widehat u=0$ satisfying \eqref{initial_pq}. Since $\phi'(0;\be_0)\neq0$, we may set $p(x) = \phi'(x;\be_0) / \phi'(0;\be_0)$. A brief look at the  Hamiltonian for \eqref{standwave} indicates that intersections of any fixed orbit with $\phi=0$ are symmetric about $\phi'=0$; from this, along with Sturm-Liouville theory applied to $\phi(\cdot;\be_0) = \phi_0\in\ker(L_-)$, we deduce that we necessarily have $\phi'(\ell;\be_0) = (-1)^{Q+1} \phi'(0;\be_0)$, and therefore that $p(\ell)=(-1)^{Q+1}$. Evaluating \eqref{standwave} at $x=\ell$ we also find that $\phi''(\ell;\be_0)=0$, hence $p'(\ell) = 0$. Thus
		\begin{align}
			\label{BCs_at_ell}
			\begin{pmatrix}
				p(\ell) & q(\ell) \\ p'(\ell) & q'(\ell)
			\end{pmatrix} = \begin{pmatrix}
				(-1)^{Q+1} & * \vspace{0.7mm}\\ 0 & (-1)^{Q+1} 
			\end{pmatrix}
		\end{align}
		where $q'(\ell)=(-1)^{Q+1} $ because \eqref{BCs_at_ell} must have unit determinant by virtue of Abel's identity (see \eqref{eq:abels}). In addition, $q(\ell)\neq0$ since $0\notin\spec(L_+)$ and $q(0)=0$.
		
		Imposing the boundary conditions $\widehat{u}(0)=\widehat{u}(\ell)=0$ and using \eqref{BCs_at_ell} allows us to determine the constants $A$ and $B$. We find that the unique solution to \eqref{inhomog_eq} is 
		\begin{equation}\label{soln}
			\widehat{u}(x) = -\p_{\beta}\phi(0;\be_0)\,p(x) + \f{(-1)^{Q+1}\p_{\beta}\phi(0;\be_0) - \p_{\beta}\phi(\ell;\be_0)}{q(\ell)}\,q(x) + \p_{\beta}\phi(x;\beta_0 ). 
		\end{equation}
		Multiplying \eqref{soln} by $\phi_0$ and integrating the first two terms by parts yields \eqref{eqn:VK}. The statement regarding spectral stability follows immediately from \cref{thm:VK}.
	\end{proof}
	
	\begin{rem}
		The one-parameter family constructed abstractly in \eqref{beta_fam} via the implicit function theorem leads to the simplest expression for the VK-type integral on a compact interval. However, this is only useful in practice if one can determine this family explicitly, which may not be possible. For this reason, we have included formula \eqref{eqn:VK}, which holds for \textit{any}  one-parameter family of solutions to the standing wave equation that starts at $\phi_0$.
	\end{rem}
	
	\begin{rem}
		When the spatial domain is the entire real line, it is known that for power-law nonlinearities of the form $f(\phi^2) = \phi^{2p}$, $p>0$,  {strictly positive localised stationary states}   (for which $\be<0$, $P=1$ and $Q=0$) are spectrally stable\footnote{The critical case $p=2$ is spectrally stable but nonlinearly unstable due to algebraically growing solutions of the linearised system; see \cite[Remark 4.3, p.217]{pelinovsky}.} for $p\leq 2$ and spectrally unstable for $p>2$ (see \cite[Corollary 4.3, p.216]{pelinovsky}). The result follows from a change in sign of the VK integral \eqref{VK_classical} (see \cite[Theorem 4.4, p.215]{pelinovsky}).  {Moving to the compact interval, we investigated whether an analogous phenomenon holds for stationary states $\phi_0$ that likewise satisfy $\be<0$, $P=1$ and $Q=0$}.   We found that our numerical experiments were in line with the result on the real line when $p=1,2$, for which we found no spectrally unstable waves. Interestingly, however, for $p\in (2,p_0), p_0\approx5$, we observed the existence of a $\be$-dependent threshold value of the interval length $\ell=\ell^*$ separating spectral stability ($\ell<\ell^*)$ and spectral instability ($\ell>\ell^*$). This agrees with the instability result on the real line (for these values of $p$), in the sense that we recover it (numerically) upon taking $\ell\ra +\infty$.  {\Cref{thm:VK} indicates that this change in stability at $\ell=\ell^*$ should be reflected in a change in concavity of the eigenvalue curve passing through $(\la,s)=(0,1)$, and indeed we observe this numerically. }   \Cref{finalfig} displays the real eigenvalue curves for three  {$T$-periodic stationary states $\phi_0$ satisfying the Dirichlet boundary conditions $\phi_0(0)=\phi_0(\ell)=0$, $\ell=T/2$, for differing $\ell$.}   The sign of $\ddot s(0)$ at $(\la,s)=(0,1)$ switches from negative to positive as $\ell$ increases through $\ell=\ell^*$. By \cref{thm:VK} the underlying standing wave becomes unstable, which is confirmed by the emergence of a positive real eigenvalue in \cref{finalfigC}.
	\end{rem}
	
	\begin{figure}[h!]
		\hspace*{\fill}
		\subcaptionbox{ \label{finalfigA}$\ell \approx 2.1274$}
		{\includegraphics[width=0.28\textwidth]{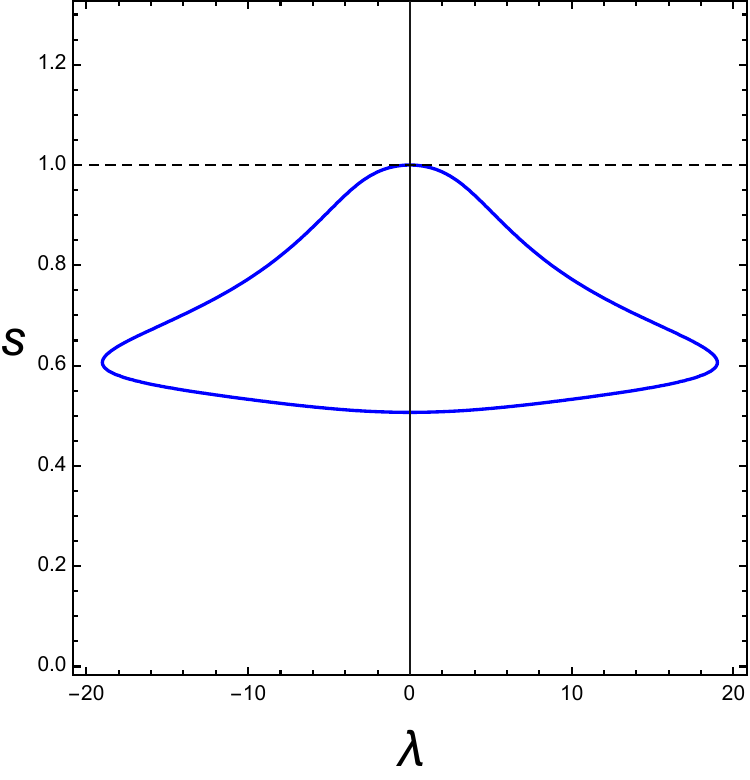}}\hfill 
		\subcaptionbox{\label{finalfigB}$\ell=\ell^* \approx 2.5666$ } 
		{\includegraphics[width=0.28\textwidth]{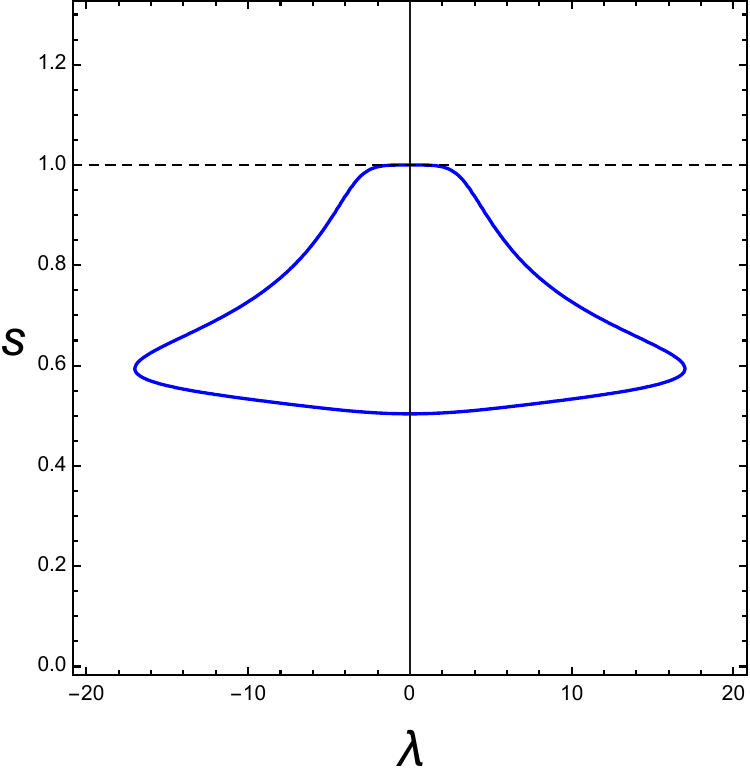}} \hfill 
		\subcaptionbox{\label{finalfigC}$\ell \approx 2.7760$}
		{\includegraphics[width=0.28\textwidth]{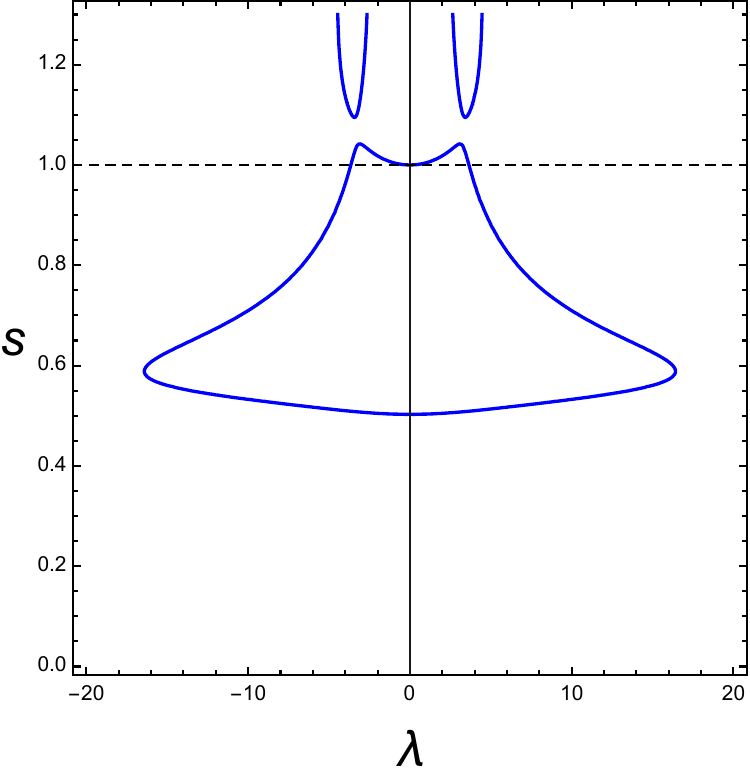}} 
		\hspace*{\fill}
		\caption{Eigenvalue curves $s^2\la \in\spec(N_s)\cap \R$ under \cref{hypo:NLS}(i) for {  $T$-periodic stationary states $\phi_0$ satisfying $\phi_0(0)=\phi_0(\ell)=0$}, with nonlinearity $f(\phi^2)=\phi^{6}$, $\be=-2$, and domain length $\ell=T/2$ indicated. {  These $\phi_0$ correspond to orbits located outside the homoclinic orbit and in the right half plane of \cref{fig:phaseA}. (Note the phase plane for \eqref{standwave} with $f(\phi^2)=\phi^{6}$ is qualitatively similar to \cref{fig:phaseA}.)} Eigenvalues of $N$ are given by intersections with the dashed line at $s=1$. At $\ell=\ell^*$, we computed $\ddot s(0)\approx0$ to four decimal places.} 
		\label{finalfig}
	\end{figure}

	\begin{rem}
		In the previous example, note that at the critical value $\ell=\ell^*$ we have $\dim\ker(N)=1$ and $\ddot s(0) = 0$. This corresponds to the non-generic case in \cref{rem:s_sharp} where $s^\sharp(0) \neq \sgn \ddot s(0)$ and the second order crossing form $\mathfrak{m}_{\la_0}^{(2)}$ in \cref{lemma:second_order_lambda_form} is degenerate. A brief calculation using the Fredholm Alternative indicates that the algebraic multiplicity of $\la=0\in\spec(N)$ is at least four. 
	\end{rem}
	
	\subsection{Connections with existing eigenvalue counts}\label{subsec:comparison}
	
	We now give a comparison of our lower bound \eqref{eq:bound_positive_evals} with the one given in \cite[Eq.(3.9)]{KKS04}  (see \eqref{Krein_lower_bound} below); see also \cite[Theorem 7.1.16]{KapProm}. We will show that the contribution to the Maslov index from the non-regular crossing (see \cref{defn:c}) is equal to the difference in negative indices of matrices arising in constrained eigenvalue counts for $L_\pm$. We refer the reader to \cite{CoxMarz19} for an alternate approach to the constrained eigenvalue problem using the Maslov index. Throughout this section, $\{\mathbf{u}_{1},\dots \mathbf{u}_{n} \}$ is a basis for $\ker(N)$ with $n\leq 2$. We assume the crossing $(\la_0,s_0)=(0,1)$ is non-regular in the $\la$ direction, with first order crossing form $\mathfrak{m}_{\la_0}$ in \eqref{maslov_horizontal} that is identically zero. We further assume that the second-order crossing form $\mathfrak{m}_{\la_0}^{(2)}$ in \eqref{second_order_lambda_form} is nondegenerate. The notation $n_-(A)$ refers to the number of negative eigenvalues of the selfadjoint operator or symmetric matrix $A$. Recall then that $P=n_-(L_+)$ and $Q=n_-(L_-)$. 
	
	Define the diagonal, selfadjoint operator
	\begin{equation}\label{}
		L \coloneqq \begin{pmatrix}
			L_+ & 0 \\ 0 & L_-
		\end{pmatrix}, \qquad \dom(L) \coloneqq \dom(N),
	\end{equation}
	so that $N=JL$. The eigenvalue problem \eqref{eq:N_EVP_full} may then be written as 
	\begin{equation}\label{}
		JL\mathbf{u} = \la \mathbf{u}, \qquad \mathbf{u}(0)=\mathbf{u}(\ell)=0.
	\end{equation}
	We denote the generalised eigenvectors of $N=JL$ by $\widehat{\mathbf{v}}_i$, i.e.
	\begin{equation}\label{genker}
		JL\widehat{\mathbf{v}}_i = \mathbf{u}_i, \quad JL\mathbf{u}_i=0, \quad \quad i=1,\dots,n.
	\end{equation}
	As in \cref{rem:contribute}, the Fredholm Alternative and the fact that $\mathfrak{m}_{\la_0}=0$ guarantee the existence of solutions to the first $n$ equations in \eqref{genker}, so the algebraic multiplicity of $\la=0$ is at least $2n$. Nondegeneracy of $\mathfrak{M}^{(2)}_{\la_0}$ then  implies the algebraic multiplicity is exactly $2n$. 
	
	The matrix $D$ in \cite[eq.(3.1)]{KKS04} is the $n\times n$ matrix with entries
	\begin{equation}\label{}
		D_{ij} = \langle \widehat{\mathbf{v}}_i, L \widehat{\mathbf{v}}_j\rangle=-\langle \widehat{\mathbf{v}}_i, J\mathbf{u}_j \rangle, 
	\end{equation}
	where the second equality follows since $ JL\widehat{\mathbf{v}}_i  = \mathbf{u}_i$ implies $L\widehat{\mathbf{v}}_i = J^{-1} \mathbf{u}_i = -J\mathbf{u}_i$. It is used to determine the number of negative eigenvalues of $L$ restricted to $\ran JL = \left [\ker (JL)^*\right ]^{\perp}$ (see \cite[Theorem 3.1]{KKS04}). Denoting $\dim\ker L_\pm = z_\pm \in\{0,1\}$ so that $z_+ + z_-=n$, notice that the off-diagonal structure of $JL$ implies that its eigenvectors and generalised eigenvectors may be written as
	\begin{align}\label{structure_evects}
		\mathbf{u}_i = \begin{cases}
			(u_i,0)^\top, \\
			(0,v_{i})^\top, 
		\end{cases} 
		\quad \widehat{\mathbf{v}}_i = \begin{cases}
			(0,\widehat{v}_i)^\top, \qquad &i=1,\dots,z_+, \\
			(\widehat{u}_{i},0)^\top, \qquad &i=z_++1,\dots,n, 
		\end{cases}
	\end{align}
	where, by \eqref{genker}, the functions $u_i, v_i, \widehat{u}_i, \widehat{v}_i$ satisfy
	\begin{align*}\label{}
		-L_-\widehat{v}_i& = u_i, \quad L_+u_i=0, \qquad  i=1,\dots,z_+,  \\
		L_+\widehat{u}_i& = v_i, \quad L_-v_i=0, \qquad \,i=z_++1,\dots,n. 
	\end{align*}
	The matrix $D$ thus has the block form (as in \cite[\S 3.3]{KKS04})
	\begin{equation*}\label{}
		D = \begin{pmatrix}
			D_- & 0 \\ 0 & D_+
		\end{pmatrix},
	\end{equation*}
	where
	\begin{gather}\label{D+D-}
		\begin{aligned}
			&   &&\left [D_-\right ]_{ij} = \langle \widehat{v}_i, L_-\widehat{v}_j\rangle = -\langle\widehat{v}_i, u_j \rangle,  &&i,j=1,\dots,z_+, & \\ 
			&   &&\left [D_+\right ]_{ij} =  \langle \widehat{u}_{z_++i}, L_+\widehat{u}_{z_++j}\rangle = \langle\widehat{u}_{z_++i}, v_{z_++j}\rangle,  &&i,j=1,\dots,z_-.&  
		\end{aligned}
	\end{gather}
	The matrices $D_+$ and $D_-$ are themselves used in constrained eigenvalue counts. Namely, if $D_+$ and $D_-$ are nondegenerate, then 
	\begin{equation}\label{constrained}
		n_-(\Pi L_+ \Pi) = P - n_-(D_+), \qquad  n_-(\Pi L_- \Pi) = Q - n_-(D_-), 
	\end{equation}
	where $\Pi$ is the orthogonal projection onto $ [\ker(L_-)\oplus \ker(L_+)]^\perp$ (see \cite[Lemma 3.1]{KKS04}).
	
	Now noticing that the entries of $\mathfrak{M}^{(2)}_{\la_0}$ are given by
	\begin{equation*}\label{}
		\left [\mathfrak{M}^{(2)}_{\la_0}\right ]_{ij} =  -2 \langle \widehat{\mathbf{v}}_i, S\mathbf{u}_{j} \rangle = \begin{cases}
			-2 \langle \widehat{v}_i, u_j \rangle, \qquad &i,j=1,\dots, z_+ \\
			-2 \langle\widehat{u}_i,v_j \rangle, \qquad &i,j=z_++1,\dots,n, \\
			0 \qquad& \text{elsewhere},
		\end{cases}
	\end{equation*}
	on account of \eqref{bigmfM2} and \eqref{structure_evects}, we are lead to the observation that 
	\begin{equation}\label{MD}
		\mathfrak{M}^{(2)}_{\la_0} = 2\begin{pmatrix}
			D_- & 0 \\ 0 & -D_+
		\end{pmatrix}.
	\end{equation}
	Clearly $\mathfrak{M}^{(2)}_{\la_0}$ is nonsingular if and only if $D_+$ and $D_-$ are nonsingular. Under this condition, in the notation of the current paper equation (3.9) from \cite{KKS04} reads
	\begin{equation}\label{Krein_lower_bound}
		n_+(N) \geq |n_-(\Pi L_+\Pi) - n_-(\Pi L_- \Pi)| = |P-Q-n_-(D_+) + n_-(D_-) |.
	\end{equation}
	Comparing \eqref{Krein_lower_bound} with \eqref{eq:bound_positive_evals}, we might na\"ively expect that $\mathfrak{c}= n_-(D_+) - n_-(D_-)$. We confirm this in the following proposition.
	
	\begin{prop}\label{prop:KreinMaslov}
		If $n\leq 2$ and $\mathfrak{M}^{(2)}_{\la_0}$ is nondegenerate, then
		\begin{equation}\label{Krein_Maslov}
			\mathfrak{c} = n_-(D_+) - n_-(D_-).
		\end{equation}
	\end{prop}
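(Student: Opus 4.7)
The plan is to deduce \eqref{Krein_Maslov} by combining the already-established identification \eqref{MD} of $\mathfrak{M}^{(2)}_{\la_0}$ with the block matrix $\diag(2D_-, -2D_+)$, and the relation between the corner term and the second-order crossing form supplied by \cref{prop:c}. Crucially, both of these ingredients are already at hand, so the argument reduces to a bookkeeping computation with signatures; no new analysis of crossings or Lagrangian paths is needed.

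First I would unpack what nondegeneracy of $\mathfrak{M}^{(2)}_{\la_0}$ means: in view of \eqref{MD} it is equivalent to $D_+$ and $D_-$ each being nondegenerate. Setting $z_\pm \coloneqq \dim\ker(L_\pm)$, the matrices $D_-$ and $D_+$ have sizes $z_+\times z_+$ and $z_-\times z_-$ respectively (with the obvious conventions when one of the $z_\pm$ vanishes). The block-diagonal structure then gives
\[
n_-\!\left(\mathfrak{M}^{(2)}_{\la_0}\right) = n_-(2D_-) + n_-(-2D_+) = n_-(D_-) + n_+(D_+),
\]
and nondegeneracy allows the replacement $n_+(D_+) = z_- - n_-(D_+)$, yielding
\[
n_-\!\left(\mathfrak{M}^{(2)}_{\la_0}\right) = n_-(D_-) + z_- - n_-(D_+).
\]

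Next I would appeal to the proof of \cref{prop:c}, which shows that $\mathfrak{c} = \mathfrak{a} + \mathfrak{b}$ with $\mathfrak{a} = \dim\ker(L_-) = z_-$ (cf.\ \eqref{bc_dimkerL}), and simultaneously that $\mathfrak{b} = -n_-\!\left(\mathfrak{M}^{(2)}_{\la_0}\right)$ under the nondegeneracy hypothesis. Substituting the expression above gives
\[
\mathfrak{c} = z_- - n_-\!\left(\mathfrak{M}^{(2)}_{\la_0}\right) = z_- - \bigl[n_-(D_-) + z_- - n_-(D_+)\bigr] = n_-(D_+) - n_-(D_-),
\]
which is the desired identity.

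There is no genuine obstacle here; the argument is essentially a signature tally once \cref{prop:c} and \eqref{MD} are invoked. If one prefers to avoid \cref{prop:c} altogether, an alternative route is to verify \eqref{Krein_Maslov} case-by-case using the explicit values of $\mathfrak{c}$ in \cref{thm:compute_c}, matching the signs of $s^\sharp(0)$ (or $s_{1,2}^\sharp(0)$) against the signs of $\langle\widehat v_i,u_j\rangle$ and $\langle\widehat u_i,v_j\rangle$ as read off from \cref{thm:sL:general,thm:touching}; this is straightforward but more tedious, and only serves as a consistency check. I would therefore present the short signature computation as the proof proper.
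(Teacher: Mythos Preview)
Your proposal is correct and follows essentially the same approach as the paper's proof: both invoke \eqref{MD} to compute $n_-(\mathfrak{M}^{(2)}_{\la_0}) = n_-(D_-) + z_- - n_-(D_+)$, then use the decomposition $\mathfrak{c} = \mathfrak{a} + \mathfrak{b}$ from the proof of \cref{prop:c} (equivalently \eqref{bc_dimkerL}) with $\mathfrak{a} = \dim\ker(L_-) = z_-$ and $\mathfrak{b} = -n_-(\mathfrak{M}^{(2)}_{\la_0})$ to conclude.
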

	That is, the contribution to the Maslov index from the crossing $(\la,s)=(0,1)$ is precisely the difference of the ``correction factors" counting the mismatch in negative dimensions between $L_\pm$ and their constrained counterparts (see \eqref{constrained}).
	\begin{proof}
		Recall the definition of $\mathfrak{b}$ given in the proof of \cref{prop:c}. By the same Proposition, if $n\leq 2$ we have 
		\begin{equation}\label{ceqn1}
			\mathfrak{b} 
			= -n_-(\mathfrak{M}^{(2)}_{\la_0}) = - \big ( n_-(D_-) + n_-(-D_+) \big ),
		\end{equation}
		where the last equality follows from \eqref{MD}. Notice that $D_+$ is a $z_-\times z_-$ matrix. Since $D_+$ is nondegenerate, it follows that
		\begin{equation}\label{}
			n_-(-D_+) = z_- - n_-(D_+).
		\end{equation}
		Thus, by \eqref{ceqn1},
		\begin{equation}\label{}
			\mathfrak{b} = -n_-(D_-) - (\dim\ker L_- - n_-(D_+)),
		\end{equation}
		and using \eqref{bc_dimkerL} and rearranging gives \eqref{Krein_Maslov}.
	\end{proof}
	
	A direct relationship between the matrices $D_\pm$ and the concavities of the eigenvalue curves follows from \cref{thm:sL},  \cref{lemma:second_order_lambda_form}, \cref{thm:touching} and equation \eqref{MD}. In particular, it is straightforward to show that:
	\begin{itemize}
		\item[(i)] If $0\in\spec(L_-) \backslash \spec(L_+)$ then $z_+=0$ and
		\begin{subequations}\label{eq:computingD+-}
			\begin{equation}
				\sgn \mathfrak{m}^{(2)}_{\la_0}(q) = - \sgn D_+  = -\sgn \ddot{s}(0).
			\end{equation}
			\item[(ii)] If $0\in\spec(L_+) \backslash \spec(L_-)$ then $z_-=0$ and
			\begin{equation}
				\sgn \mathfrak{m}^{(2)}_{\la_0}(q)  = \sgn D_- = \sgn \ddot{s}(0).
			\end{equation}
			\item[(iii)] If $0\in\spec(L_-) \cap \spec(L_+)$ then $z_-=z_+=1$ and 
			\begin{equation}\label{}
				\sgn \ddot s_1(0) = \sgn D_-, \qquad \sgn \ddot s_2(0) = \sgn D_+
			\end{equation}
		\end{subequations}
		(provided \eqref{uglycondition} holds so that $\sgn \ddot s_1(0) = - \sgn \langle \widehat{v}_1, u_{1} \rangle$ and $\sgn \ddot s_2(0) = \sgn \langle \widehat{u}_2, v_2 \rangle $).
	\end{itemize}
	
	We finish the present work with an application of our results to a formula relating the number of eigenvalues of $JL$ that are either unstable or susceptible to instability-inducing bifurcations, to the negative index of the constrained operator $L|_{X_c}$, $X_c\coloneqq \ran(JL)$, known as the \emph{Hamiltonian--Krein index theorem} (see \cite[Theorem 7.1.5]{KapProm} or \cite[Theorem 2.3]{LZ22}). For the eigenvalue problem \eqref{eq:N_EVP_full} -- \eqref{LplusLminus}, because $L$ is diagonal and the symplectic matrix $J$ is invertible, this formula reduces to that in \cite[Theorem 3.3]{KKS04}, which in the notation of the current paper reads 
	\begin{equation}\label{KKS}
		k_r + 2k_c +2k_i^- = P+Q - n_-(D_-)- n_-(D_+).
	\end{equation}
	Here, $k_r\coloneqq n_+(N)$, $k_c$ is the number eigenvalues lying in the open first quadrant, and $k_i^-$ is the number of eigenvalues on the positive imaginary axis with negative Krein signature (see \cite{KKS04}). Note that \eqref{KKS} holds provided $D_+$ and $D_-$ are nonsingular (and since $P,Q$ and $n$ are finite, where $\dim\ker(JL)= \f{1}{2}\dim\gker(JL) = n$; see \cite[\S7.1.3]{KapProm} or \cite{KKS04} for details). In light of our earlier results, this leads to the following.
	\begin{prop}\label{prop:HKM_fmls}
		Equation \eqref{KKS} may be written in one of the following equivalent forms:
		\vspace{-4mm}
		\begin{align}
			k_r + 2k_c +2k_i^- &= -\Mas(\Lambda,\cD;\Gamma_3^\e)  + 2P - 2n_-(D_+), \label{Krein_P} \\
			&= \Mas(\Lambda,\cD;\Gamma_3^\e)  + 2Q - 2n_-(D_-).\label{Krein_Q}
		\end{align}
	\end{prop}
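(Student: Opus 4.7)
The plan is to show that \eqref{Krein_P} and \eqref{Krein_Q} each reduce to \eqref{KKS} after substituting two identities already established in the paper. The key ingredients are the formula \eqref{mas_top_fmla} from the proof of \cref{thm:N_bound}, namely
\[
\Mas(\Lambda,\cD;\Gamma_3^\e) = P - Q - \mathfrak{c},
\]
and the identification \eqref{Krein_Maslov} from \cref{prop:KreinMaslov},
\[
\mathfrak{c} = n_-(D_+) - n_-(D_-).
\]
These hold under the standing assumptions of the subsection (in particular that $\mathfrak{M}^{(2)}_{\la_0}$ is nondegenerate, which is equivalent to nondegeneracy of $D_\pm$ and is already needed for \eqref{KKS} to apply).

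First I would combine the two displays above to obtain the single relation
\[
\Mas(\Lambda,\cD;\Gamma_3^\e) = P - Q - n_-(D_+) + n_-(D_-).
\]
Then I would substitute this into the right-hand side of \eqref{Krein_P}: the terms $-P$ from $-\Mas$ and $+2P$ combine to give $+P$, while the terms $+n_-(D_+)$ from $-\Mas$ and $-2n_-(D_+)$ combine to give $-n_-(D_+)$, leaving exactly $P + Q - n_-(D_+) - n_-(D_-)$, which matches the right-hand side of \eqref{KKS}. The analogous substitution into the right-hand side of \eqref{Krein_Q} performs the same cancellation with opposite signs and again recovers \eqref{KKS}.

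There is essentially no obstacle here: the proposition is a bookkeeping statement that repackages \eqref{KKS} in terms of the Maslov index on $\Gamma_3^\e$, highlighting the two equivalent ways of combining the correction factor $\mathfrak c$ with the Morse indices $P$ and $Q$. The only subtlety worth remarking on is confirming that the hypotheses of \cref{prop:KreinMaslov} hold throughout; this is where I would point out that nondegeneracy of $D$ (assumed in \cite{KKS04} for \eqref{KKS}) is precisely nondegeneracy of $\mathfrak{M}^{(2)}_{\la_0}$ by \eqref{MD}, so our machinery applies and the substitution is legitimate.
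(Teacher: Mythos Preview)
Your proof is correct and is essentially the same as the paper's: both rely on \cref{prop:KreinMaslov} together with the identity $\Mas(\Lambda,\cD;\Gamma_3^\e) = P - Q - \mathfrak{c}$. The paper arrives at this identity by separately invoking \cref{lemma:Maslov_left} and \eqref{masleft=-mastop2}, whereas you cite the combined form \eqref{mas_top_fmla} directly, which is a slight streamlining but not a different argument.
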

	\begin{proof}
		Using \cref{prop:KreinMaslov} and \cref{lemma:Maslov_left} we can rearrange \eqref{KKS} to read
		\begin{equation}
			\quad  k_r + 2k_c +2k_i^- = \Mas(\Lambda,\cD;\Gamma_2^\e) + \mathfrak{c} + 2P - 2n_-(D_+). \label{Krein_P0}
		\end{equation}
		Then \eqref{Krein_P} follows from \eqref{Krein_P0} using \eqref{masleft=-mastop2}. A similar manipulation yields
		\begin{equation}
			k_r + 2k_c +2k_i^- = -\Mas(\Lambda,\cD;\Gamma_2^\e) - \mathfrak{c} + 2Q - 2n_-(D_-), \label{Krein_Q0}
		\end{equation}
		in which case \eqref{Krein_Q} follows from \eqref{Krein_Q0} via \eqref{masleft=-mastop2}.
	\end{proof}
	\begin{cor}\label{cor:PQ0}
		If $P=0$ or $Q=0$, then $k_c = k_i^-=0$.
		\vspace{-2.2mm}
	\end{cor}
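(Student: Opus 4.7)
The plan is to combine three ingredients already established in the paper: the monotonicity of the Maslov index along $\Gamma_3^\e$ under the hypothesis $P=0$ or $Q=0$ (\cref{lemma:sign_definite}), the nonnegativity of $L_-$ or $L_+$ in these cases (\cref{lemma:morse_schrodinger_nonnegative}), and the two formulas \eqref{Krein_P}--\eqref{Krein_Q} from \cref{prop:HKM_fmls}.

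First I would treat the case $Q=0$. By \cref{lemma:sign_definite} we have
\begin{equation*}
k_r = n_+(N) = \Mas(\Lambda,\cD;\Gamma_3^\e).
\end{equation*}
Next, I claim $n_-(D_-)=0$. Indeed, by \cref{lemma:morse_schrodinger_nonnegative}, $Q=0$ implies $L_- \geq 0$, so for any coefficient vector $c=(c_1,\ldots,c_{z_+})$ the quadratic form associated with $D_-$ satisfies
\begin{equation*}
c^{\top} D_- c = \sum_{i,j} c_i c_j \langle \widehat v_i, L_- \widehat v_j\rangle = \langle w, L_- w\rangle \geq 0, \qquad w := \sum_i c_i \widehat v_i,
\end{equation*}
so $D_- \geq 0$ and $n_-(D_-)=0$. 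Substituting both identities into \eqref{Krein_Q} yields
\begin{equation*}
k_r + 2k_c + 2k_i^- = \Mas(\Lambda,\cD;\Gamma_3^\e) + 2Q - 2n_-(D_-) = k_r,
\end{equation*}
hence $2k_c+2k_i^- = 0$, and since $k_c, k_i^- \in \Z_{\geq 0}$ we conclude $k_c = k_i^- = 0$.

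The case $P=0$ is completely symmetric: now \cref{lemma:sign_definite} gives $k_r = -\Mas(\Lambda,\cD;\Gamma_3^\e)$, and $L_+ \geq 0$ implies $D_+ \geq 0$ by the same quadratic-form argument applied to $[D_+]_{ij} = \langle \widehat u_{z_++i}, L_+ \widehat u_{z_++j}\rangle$, giving $n_-(D_+) = 0$. Substituting into \eqref{Krein_P} produces the identical cancellation and the conclusion follows.

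There is no real obstacle here; the statement is essentially a bookkeeping consequence of results already in place. The only minor point to verify carefully is that the hypotheses needed for \cref{prop:HKM_fmls} (and thus for \eqref{KKS}) are compatible with the hypothesis $P=0$ or $Q=0$—specifically, the nondegeneracy assumption on $\mathfrak{M}^{(2)}_{\la_0}$ used throughout \cref{subsec:coda}. If $0 \notin \spec(N)$ there is nothing to check, and if $0 \in \spec(N)$ the assumption is inherited from the standing setup of the subsection.
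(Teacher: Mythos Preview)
Your proof is correct and follows essentially the same approach as the paper's: both combine \cref{lemma:sign_definite} with the nonnegativity of the relevant $L_\pm$ (hence $n_-(D_\pm)=0$) and substitute into the appropriate formula from \cref{prop:HKM_fmls} to force $2k_c+2k_i^-=0$. The only cosmetic differences are that you handle $Q=0$ first and spell out the quadratic-form argument for $n_-(D_\pm)=0$ in slightly more detail.
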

	\begin{proof}
		If $P=0$, then by \cref{lemma:sign_definite}, we have $k_r=n_+(N) = -\Mas(\Lambda,\cD;\Gamma_3^\e) $. Furthermore, if $P=0$ then $L_+$ is a nonnegative operator in $L^2(0,\ell)$, and in particular $n_-(D_+)=0$. Cancelling terms on both sides of \eqref{Krein_P}, we get
		\begin{equation}\label{}
			2k_c +2k_i^- =0,
		\end{equation}
		as required. Note we could have argued that $k_c=0$ using \cref{lemma:imag_eigvals}. The case $Q=0$ is similar: $k_r=n_+(N) = \Mas(\Lambda,\cD;\Gamma_3^\e) $ by \cref{lemma:sign_definite}, and we have $L_-\geq 0$ in $L^2(0,\ell)$. Thus $n_-(D_-)=0$, and \eqref{Krein_Q} yields the result. 
	\end{proof}
	 {In the case that $L_\pm$ are invertible, the previous result agrees with that given in \cite[Corollary 2.26]{HK08}, where the dimension of intersecting cones is zero because $P=0$ or $Q=0$. The result for $Q=0$ is a special case of the formula in  \cite[Remark 3.1, Eq.(3.10)]{KKS04}.}
	 
	\begin{cor}\label{cor:kr=0}
		If either $k_r=0$ or the Maslov index of the path $\la \to \Lambda(\la,1), \la\in [\e,\la_\infty],\, 0<\e\ll 1$ is monotone in $\la$, then $k_c+k_i^- = Q-n_-(D_-) = P-n_-(D_+)$.
	\end{cor}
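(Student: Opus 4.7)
The plan is to apply Proposition~\ref{prop:HKM_fmls} and pin down the Maslov term $\Mas(\Lambda,\cD;\Gamma_3^\e)$ under each hypothesis, then read off $k_c + k_i^-$ by substitution into \eqref{Krein_P} and \eqref{Krein_Q}. The core observation is that these two formulas for $k_r + 2k_c + 2k_i^-$ differ only by the sign of the Maslov index and by the replacement $P, n_-(D_+) \leftrightarrow Q, n_-(D_-)$, so controlling $\Mas(\Lambda,\cD;\Gamma_3^\e)$ collapses them to the stated identities.

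For the first case ($k_r = 0$), I would invoke Theorem~\ref{thm:N_bound} in the form \eqref{eq:lowerbound}, namely $k_r \geq |\Mas(\Lambda,\cD;\Gamma_3^\e)|$, which forces $\Mas(\Lambda,\cD;\Gamma_3^\e) = 0$. Substituting both $k_r=0$ and $\Mas(\Lambda,\cD;\Gamma_3^\e)=0$ into \eqref{Krein_P} and \eqref{Krein_Q}, then dividing by $2$, gives $k_c + k_i^- = P - n_-(D_+)$ and $k_c + k_i^- = Q - n_-(D_-)$ simultaneously. This case is immediate.

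For the monotone case, I first observe that the hypothesis means every crossing form $\mathfrak{m}_{\la_0}$ along $\Gamma_3^\e$ is sign-definite with uniform sign across the path. Since by Proposition~\ref{prop:Lag_interp} each real positive eigenvalue of $N$ produces a crossing, and all crossings contribute $\pm 1$ with matching sign, one obtains $|\Mas(\Lambda,\cD;\Gamma_3^\e)| = k_r$. Substituting $\Mas = +k_r$ into \eqref{Krein_Q} (or $\Mas = -k_r$ into \eqref{Krein_P}) causes the $k_r$ term on one side to cancel and directly yields the corresponding identity. The complementary equality is then extracted by combining $\Mas(\Lambda,\cD;\Gamma_3^\e) = P - Q - \mathfrak{c}$ from \eqref{mas_top_fmla} with $\mathfrak{c} = n_-(D_+) - n_-(D_-)$ from Proposition~\ref{prop:KreinMaslov}, linking the two right-hand sides $P-n_-(D_+)$ and $Q-n_-(D_-)$ through the monotonicity-imposed relation $\pm k_r = (P-n_-(D_+)) - (Q-n_-(D_-))$.

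The main obstacle lies in the monotone case: a single substitution recovers only one of the asserted equalities, and reconciling both simultaneously boils down to the algebraic consistency check $(P - n_-(D_+)) - (Q - n_-(D_-)) = P - Q - \mathfrak{c} = \Mas(\Lambda,\cD;\Gamma_3^\e)$, which in the monotone regime where $|\Mas| = k_r$ is a sign-tracking exercise. The first case, by contrast, is a transparent substitution with no hidden subtleties and should be disposed of in a single line.
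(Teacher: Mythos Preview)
Your proposal is correct and follows essentially the same route as the paper: in both cases you use Proposition~\ref{prop:HKM_fmls} together with the identification $k_r = |\Mas(\Lambda,\cD;\Gamma_3^\e)|$ (from \eqref{eq:lowerbound} when $k_r=0$, from sign-definiteness of the crossing form in the monotone case) and then substitute into \eqref{Krein_P} and \eqref{Krein_Q}. The paper's proof is equally terse in the monotone case, writing only that ``the statement follows from \eqref{Krein_P} or \eqref{Krein_Q}'', so your extra bookkeeping via \eqref{mas_top_fmla} and Proposition~\ref{prop:KreinMaslov} to link the two right-hand sides is a reasonable elaboration rather than a deviation.
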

	\begin{proof}
		If $k_r=0$, the statement follows from \eqref{Krein_P} and \eqref{Krein_Q} upon noticing that $k_r=n_+(N)=0$ implies $\Mas(\Lambda,\cD;\Gamma_3^\e) =0$ by \eqref{eq:lowerbound}. 
		
		Monotonicity of the Lagrangian path stated means that the crossing form \eqref{maslov_horizontal} has the same sign at all crossings along $\Gamma_3$. In this case, $k_r=n_+(N) = \pm\Mas(\Lambda,\cD;\Gamma_3^\e) $ and the statement follows from \eqref{Krein_P} or \eqref{Krein_Q}.
	\end{proof}
	\begin{rem}
		Monotonicity in $\la$ is guaranteed if $P=0$ or $Q=0$. However, the Maslov index is in general not monotone when $P,Q\geq1$, and attempts to compute the terms $k_c$ and $k_i^-$ in these cases using the formulas above have so far been limited. 
	\end{rem}
	We finish with a numerical example to illustrate the scenario in \cref{cor:kr=0}.   In \cref{fig:complex} we have plotted the \textit{complex} eigenvalue curves for $s\in(0,1]$ under \cref{hypo:NLS}(i),   associated with a Jacobi cnoidal function $\phi_0$ (see \cref{fig:phaseA}) satisfying $\phi_0'(0) = \phi_0'(\ell)=0$.   Precisely, the blue curves represent real eigenvalues, the red curves represent imaginary eigenvalues, and the purple curves represent eigenvalues lying off the real and imaginary axes. It was computed that the minimum point of each blue connected component (for which $\la=0$) corresponds to a point of nontrivial kernel for $L_+^s$, while the maximum point of each such component corresponds to a point of nontrivial kernel for $L_-^s$.  Note that by a simple rescaling we can apply the formulas of the current section to the rescaled operators $N_s, L_\pm^s$ for \textit{any} $s\in(0,1]$. Consider then a horizontal plane at $s=s_*\approx 0.85$ in \cref{fig:complex}, which coincides with the maximum point of the top blue connected component. By the above considerations and \cref{lemma:morse_schrodinger} applied to the interval $(0,s^*)$ instead of $(0,1)$, we have $P=n_-(L_+^{s_*}) =3$ and $Q=n_-(L_-^{s_*})=2$. Since $0\in\spec(L_-^{s^*})\backslash\spec(L_+^{s^*})$, $D_-$ is null (see \eqref{D+D-}) and hence $n_-(D_-)=0$. \Cref{fig:complex} clearly shows $k_r=0$ for $s=s^*$, and by \cref{cor:kr=0} we deduce that $n_-(D_+)=1$ and $k_c+k_i^-=2$. (It was confirmed numerically that $k_c=2$.) A similar analysis can be done for any of the minima or maxima of the blue connected components in \cref{fig:complex}, or indeed for any horizontal plane which does not intersect the blue curves (for which $k_r=0$). 
	
	\begin{figure}[ht!]
		\hspace*{\fill}
		\subcaptionbox{ \label{} }{\includegraphics[width=0.45\textwidth,trim=0 -5em 0 0]{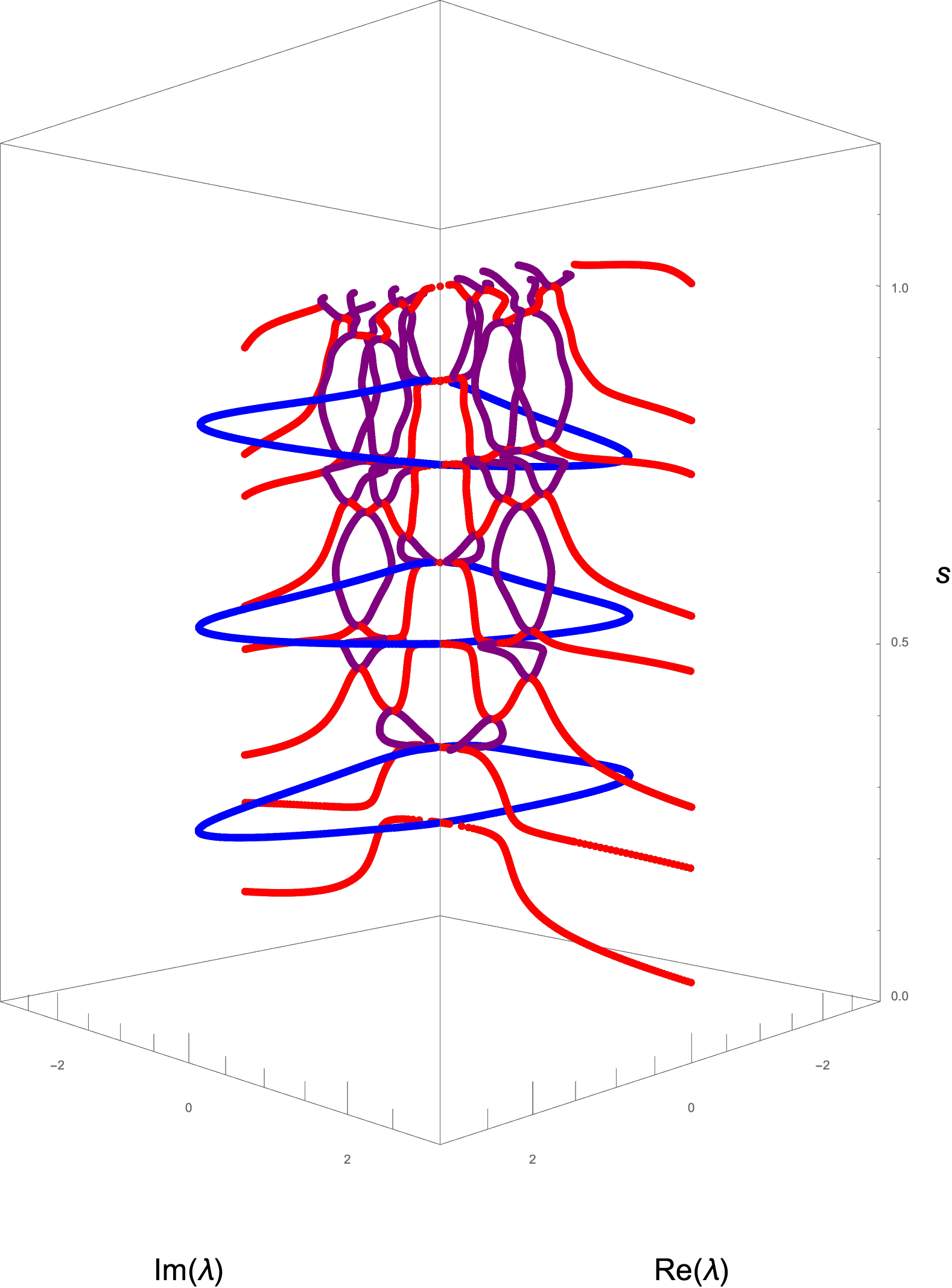}}
		\hfill
		\subcaptionbox{\label{} }{\includegraphics[width=0.45\textwidth]{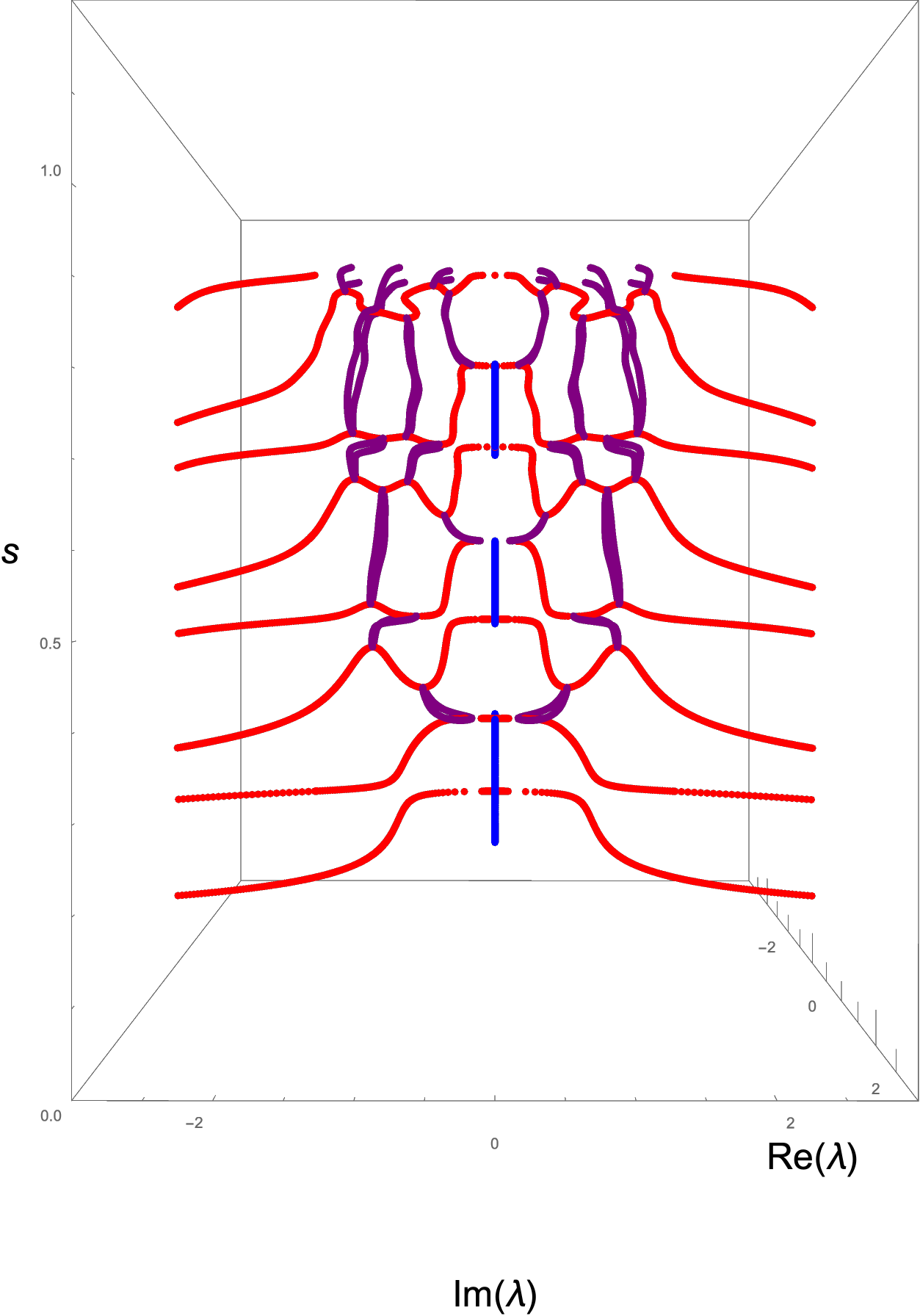}} 
		\hspace*{\fill}
		\caption{ Real (blue), imaginary (red) and complex (purple) eigenvalue curves $s^2\la \in \spec(N_s)\cap \C$, {  $\la\in [-3,3]\times [-3i,3i] \subset \C$, $s\in(0,1]$, under \cref{hypo:NLS}(i) for a $T$-periodic stationary state $\phi_0$ with $f(\phi^2)=\phi^2$ satisfying $\phi'_0(0)=\phi'_0(\ell)=0$, where $\ell=2T=13.3854$. Here, $\phi_0$ is a Jacobi cnoidal function corresponding to an orbit located outside the homoclinic orbit in \cref{fig:phaseA}. Figures (a) and (b) give two different viewpoints of the same curves.} The eigenvalues were computed using Mathematica's \texttt{NDEigenvalues} command.
		}
		\label{fig:complex}
	\end{figure}

	\subsection*{Acknowledgments}
	G.C. acknowledges the support of NSERC grant RGPIN-2017-04259. Y.L. was supported by the NSF grant DMS-2106157, and would like to thank the Courant Institute of Mathematical Sciences at New York University and especially Prof.\ Lai-Sang Young for the opportunity to visit CIMS. R.M. acknowledges the support of the Australian Research Council under grant DP210101102.

	\bibliographystyle{alpha}
	\bibliography{mybib}

\end{document}